\apptocmd{\sloppy}{\hbadness 10000\relax}{}{}
\apptocmd{\sloppy}{\vbadness 10000\relax}{}{}
\numberwithin{equation}{section}
\theoremstyle{plain}
\newtheorem{thm}{Theorem}[section]
\newtheorem{prop}[thm]{Proposition}
\newtheorem{lem}[thm]{Lemma}
\newtheorem{claim}[thm]{Claim}
\theoremstyle{definition}
\newtheorem{rem}[thm]{Remark}
\newtheorem{definition}[thm]{Definition}
\newtheorem{ex}[thm]{Example}
\newtheorem{question}[thm]{Question}
\def\LL{\mathcal{L}}
\def\R{\mathbb{R}}
\def\N{\mathbb{N}}
\def\e{\epsilon}
\def\g{\gamma}
\def\d{\delta}
\def\D{\Delta}
\def\G{\Gamma}
\newcommand{\diam}{\mathop\mathrm{diam}\nolimits}
\newcommand{\dist}{\mathop\mathrm{dist}\nolimits}
\newcommand{\cA}{\mathcal{A}}
\newcommand{\card}{\operatorname{card}}
\begin{document}

\title{Bi-Lipschitz geometry of quasiconformal trees} 

\author{Guy C. David \and Vyron Vellis}
\thanks{G.~C.~David was partially supported by NSF DMS grants 1758709 and 2054004. V.~Vellis was partially supported by NSF DMS grants 1800731 and 1952510.}
\date{\today}
\subjclass[2010]{Primary 30L05, 30L10; Secondary 05C05, 51F99}
\keywords{quasiconformal tree, quasi-arc, bounded turning, doubling, bi-Lipschitz embedding}

\address{Department of Mathematical Sciences\\ Ball State University\\ Muncie, IN 47306}
\email{gcdavid@bsu.edu}
\address{Department of Mathematics\\ The University of Tennessee\\ Knoxville, TN 37966}
\email{vvellis@utk.edu}

\begin{abstract}
A quasiconformal tree is a doubling metric tree in which the diameter of each arc is bounded above by a fixed multiple of the distance between its endpoints. We study the geometry of these trees in two directions. First, we construct a catalog of metric trees in a purely combinatorial way, and show that every quasiconformal tree is bi-Lipschitz equivalent to one of the trees in our catalog. This is inspired by results of Herron-Meyer and Rohde for quasi-arcs. Second, we show that a quasiconformal tree bi-Lipschitz embeds in a Euclidean space if and only if its set of leaves admits such an embedding. In particular, all quasi-arcs bi-Lipschitz embed into some Euclidean space.
\end{abstract}

\maketitle

\section{Introduction}\label{sec:intro}

In this paper, a \textit{(metric) tree} is a compact, connected, locally connected metric space with the property that each pair of distinct points forms the endpoints of a unique arc. In some sense, trees make up the simplest class of one-dimensional continua, and are ubiquitous in analysis and geometry.

Within the class of all trees, an important role has been played by the class of \textit{quasiconformal trees} studied in \cite{BM, BM2, Kinneberg}. By definition, these are trees $T$ that satisfy two simple geometric properties:
\begin{itemize}
\item $T$ is \textit{doubling}: there is a constant $N$ such that each ball in $T$ can be covered by $N$ balls of half the radius.
\item $T$ is \textit{bounded turning}: there is a constant $C$ such that each pair of points $x,y \in T$ can be joined by a continuum whose diameter is at most $Cd(x,y)$.
\end{itemize}
These conditions are both invariant under quasisymmetric mappings, making the class of quasiconformal trees a natural quasisymmetrically invariant class. We do not recall the definition of quasisymmetric mappings here (see \cite{Heinonen} or \cite{BM}), but merely note that they are an important generalization of conformal mappings to arbitrary metric spaces.

Quasiconformal trees appear in several fields of analysis. For instance, Julia sets of semihyperbolic polynomials (e.g. $z^2+i$) are quasiconformal trees (see \cite{CJY} and \cite[p. 95]{CG-dynamics}), and quasiconformal trees $T$ in $\R^2$ (often called \emph{Gehring trees}), were recently characterized by Rohde and Lin \cite{RohdeLin} in terms of the laminations of the conformal map $f: \mathbb{C}\setminus\mathbb{D} \to \mathbb{C}\setminus T$.

Quasiconformal trees generalize two more well-known types of spaces. For one, quasiconformal trees that are simply topological arcs (i.e., have no branching) are called \textit{quasi-arcs}, and have been studied in complex analysis and analysis on metric spaces for decades \cite{GH}. For example, the famous von Koch snowflake is a quasi-arc. A well-known result of Tukia and V\"ais\"al\"a \cite{TuVa} shows that quasi-arcs are exactly those spaces that are quasisymmetrically equivalent to the unit interval $[0,1]$.

Quasiconformal trees also generalize (doubling) \textit{geodesic trees}. Geodesic trees are trees in which, for each pair of points $x,y$, the unique arc joining them has (finite) length equal to $d(x,y)$. Thus, in geodesic trees all paths are ``straight'' (isometric to intervals in the real line), whereas paths in quasiconformal trees may be fractal, like the von Koch snowflake. Geodesic trees are of course standard objects of study in many parts of mathematics and computer science. Recently, Bonk and Meyer \cite{BM} generalized the result of Tukia and V\"ais\"al\"a mentioned above by showing that each quasiconformal tree is quasisymmetric to a geodesic tree.

Rather than studying the quasisymmetric geometry of quasiconformal trees, this paper is concerned with the finer notion of bi-Lipschitz geometry. Recall that a mapping $f$ between two metric spaces is called \textit{bi-Lipschitz} (or $L$-bi-Lipschitz to emphasize the constant) if there is a constant $L\geq 1$ such that
$$ L^{-1} d(x,y) \leq d(f(x),f(y)) \leq L d(x,y), \quad \text{ for all } x,y\in X.$$
Thus, bi-Lipschitz mappings preserve distances up to constant factors. All bi-Lipschitz mappings are quasisymmetric, but the converse is false. For example, one may parametrize the von Koch snowflake $K$ by a quasisymmetric map $[0,1]\rightarrow K$, but not by a bi-Lipschitz map.

Given a metric space $X$, natural questions in the bi-Lipschitz world are: 
\begin{itemize}
\item \emph{Uniformization}: Which metric spaces are bi-Lipschitz equivalent to $X$, i.e., admit a surjective bi-Lipschitz mapping onto $X$?
\item \emph{Embeddability}: Does $X$ admit a bi-Lipschitz embedding into some Euclidean space $\R^n$, i.e., a bi-Lipschitz mapping from $X$ into $\R^n$?
\end{itemize}
The first of these questions is about recognizing or providing models for spaces up to bi-Lipschitz equivalence, i.e., up to bounded distortion of their metrics. The second is about understanding which spaces can be viewed as subsets of Euclidean space up to bounded distortion, and in complete generality is a major problem in analysis, geometry, and computer science \cite{Heinonen_embed, Ostrovskii}.

We study both of these questions for quasiconformal trees. Concerning the first, we give a ``combinatorial model'' for generating quasiconformal trees based on a purely discrete construction, and then show that every quasiconformal tree is bi-Lipschitz equivalent to one of our combinatorial constructions. This is in the vein of the combinatorial models for quasi-arcs up to bi-Lipschitz equivalence given by Rohde \cite{Rohde} and Herron-Meyer \cite{HM}, although the construction for trees is more elaborate. Our main theorem on this topic is Theorem \ref{thm:maincombthm}.

Concerning the second question, we build on ideas from \cite{RV} to show that every \textbf{quasi-arc} admits a bi-Lipschitz embedding into some Euclidean space, and use this to show that the bi-Lipschitz embedding properties of quasiconformal trees are completely controlled by their sets of \textit{leaves} (Theorem \ref{thm:mainembed}). We leave open the main question of whether all quasiconformal trees admit bi-Lipschitz embeddings into Euclidean space; see below for additional background and discussion.

We now discuss these ideas in more detail.

\subsection{Combinatorial models for quasiconformal trees up to bi-Lipschitz equivalence}\label{sec:introcomb}
We first give a way to define metric spaces using certain sequences of combinatorial graphs, that is, $G=(V,E)$ where $V$ is the vertex set and $E$ is the edge set. This is inspired by the ideas of \cite{Rohde, HM} concerning quasi-arcs, with a number of new wrinkles in the case of trees. To simplify the presentation as much as possible, a number of definitions are postponed until Section \ref{sec:prelim}.

Let $A$ be an ``alphabet'': a set of the form $\{1,\dots,n\}$ or $A=\N$. Denote by $\varepsilon$ the empty word and by $|w|$ the length of a word, i.e., the number of letters. Let $A^0 = \{\varepsilon\}$, and for each $k\in\N$ denote by $A^k$ the set of all words made from the alphabet $A$ of length exactly $k$. Define the set of finite words
\[ A^* = \bigcup_{k=0}^{\infty}A^k.\]
Denote also by $A^{\N}$ the set of infinite words formed by the alphabet $A$, and $A^\N_u\subseteq A^\N$ the set of all infinite words that begin with a given finite word $u\in A^*$.

\begin{definition}\label{def:combdata}
We consider the following \textbf{combinatorial data} $\mathscr{C} = (A,(G_k)_{k\in\N})$ where:
\begin{enumerate}
\item\label{eq:combdata1} $A$ is a finite or infinite alphabet: $A=\{1,\dots, M\}$ for some integer $M\geq 2$, or $A=\mathbb{N}$.
\item\label{eq:combdata2} For each $k\in\mathbb{N}$, $G_k=(A^k, E_k)$ is a connected combinatorial graph on the vertex set $A^k$ with the following properties:
\begin{enumerate}
\item\label{eq:combdata2a} For each $w\in A^k$ the subgraph of $G_{k+1}$ induced by the vertex set $\{wi: i\in A\}$ is connected.
\item\label{eq:combdata2b} If $\{w,u\}\in E_k$, then there is a pair $(i,j)\in A\times A$ such that $\{wi ,uj\}\in E_{k+1}$. 
\end{enumerate}
\end{enumerate}
\end{definition}

We next define a way to ``move between'' different infinite word sets $A^\N_u$ using the structure of the combinatorial data. Moves between $A^\N_u$ and $A^\N_v$ are always permitted if $u$ and $v$ are adjacent words of equal length, but in general we take into account the full scope of the combinatorial data.

Thus, given combinatorial data $\mathscr{C} = (A,(G_k)_{k\in\N})$, we say that two infinite word sets $A^\N_{u_1}$ and $A^\N_{u_2}$ \textit{combinatorially intersect}, and write $A^\N_{u_1} \wedge_{\mathscr{C}} A^\N_{u_2} \neq \emptyset$, if the following holds:
\begin{align}
&\text{For each } n > \max\{|u_1|,|u_2|\}, \text{ there exist words } w_1, w_2\in A^n\text{, beginning with } u_1 \text{ and } u_2\text{,}\label{eq:combintersection}\\&\text{respectively, that are adjacent in } G_n.\nonumber
\end{align}

In other words, two word sets $A^{\N}_{u_1}$ and $A^{\N}_{u_2}$ combinatorially intersect if their restrictions to every sufficiently large finite level are adjacent. Below, in Definition \ref{def:combintersection}, we will give a precise definition of the set $A^\N_{u_1} \wedge_{\mathscr{C}} A^\N_{u_2}$ and show that its non-emptiness is equivalent to \eqref{eq:combintersection}.

Given this notion of combinatorial intersection, we can describe how to move between two infinite words:
\begin{definition}\label{def:chain}
Given two words $w,w' \in A^{\N}$ we say that $\{A^{\N}_{w_1},\dots, A^{\N}_{w_n}\}$ is a \emph{chain joining $w$ with $w'$} if $w\in A^{\N}_{w_1}$, $w'\in A^{\N}_{w_n}$ and for every $i=1,\dots,n-1$, we have $A^{\N}_{w_i}\wedge_{\mathscr{C}}A^{\N}_{w_{i+1}} \neq \emptyset$.
\end{definition}

Now that we have a way to move between two infinite words, we can define a distance on $A^\N$ by assigning costs to each chain with a ``diameter function'':

\begin{definition}\label{def:diam}
Given an alphabet $A$, a \textit{diameter function} is a function $\D:A^* \to [0,1]$ that satisfies:
\begin{enumerate}
\item\label{eq:diam1} $\D(\varepsilon)=1$;
\item\label{eq:diam3} for each $w\in A^k$ and $i\in A$, $\Delta(wi)=0$ for all but finitely many $i\in A$;
\item\label{eq:diam4} $\lim_{n\to \infty}\max\{\D(w): w\in A^n\} = 0.$
\end{enumerate} 
The class of all diameter functions on $A$ is defined by $\mathscr{D}(A)$. Given $0< \delta_1 \leq \delta_2 \leq 1$ and finite $A$, we denote by $\mathscr{D}(A,\d_1, \d_2)$ the collection of all diameter functions on the alphabet $A$ such that,
\[\text{for each $w\in A^*$ and $i,j \in A$}, \qquad \D(wi)=\D(wj)\quad \text{and}\quad\frac{\Delta(wi)}{\Delta(w)}\in\{\delta_1, \delta_2\}.\]
\end{definition}

Note that, in Definition \ref{def:diam}, \eqref{eq:diam3} is automatic if $A$ is finite, and \eqref{eq:diam4} is automatic if $\D \in \mathscr{D}(A,\d_1,\d_2)$ and $\delta_2<1$. In \eqref{eq:diam4}, condition \eqref{eq:diam3} implies that the maximum is actually achieved, even if $A$ is infinite. 

Given combinatorial data $\mathscr{C} = (A,(G_k)_{k\in\N})$ and $\D\in \mathscr{D}(A)$, we define a pseudometric $D_{\mathscr{C},\D}$ on $A^\mathbb{N}$ by:
\begin{equation}\label{eq:pseudometric}
 D_{\mathscr{C},\D}(w, u) = \inf \sum_{i=0}^N \Delta(v_i)
\end{equation}
where the infimum is taken over all chains $\{A^\N_{v_i}\}$ joining $w$ with $u$.  

We prove in Lemma \ref{lem:pseudometric} that $D_{\mathscr{C},\D}$ is indeed always a pseudometric on $A^\N$. Taking the quotient space $\mathcal{A} :=A^\N/\sim$ under the equivalence relation $w \sim w'$ whenever$D_{\mathscr{C},\D}(w,w')=0$, we obtain a metric space
$$ (\mathcal{A}, d_{\mathscr{C},\D}), $$
where $d_{\mathscr{C},\D}([w],[v]) = D_{\mathscr{C},\D}(w,v)$.

To help digest the definition, we provide a number of examples illustrating this combinatorial construction in Section \ref{sec:examples} below.

Our main theorem on these combinatorial models is as follows:

\begin{thm}\label{thm:maincombthm}
\hfill
\begin{enumerate}
\item\label{eq:maincomb1} If $\mathscr{C}$ defines combinatorial data and $\Delta\in \mathscr{D}(A)$, then the space $(\mathcal{A}, d_{\mathscr{C},\D})$ is compact, connected, and bounded turning with constant $C$=1.
\item\label{eq:maincomb2} If in addition each graph $G_k$ in the combinatorial data is a combinatorial tree, then the space $(\mathcal{A}, d_{\mathscr{C},\D})$ is a metric tree.
\item\label{eq:maincomb3} Conversely, if $X$ is an arbitrary quasiconformal tree, then there exist combinatorial data $\mathscr{C}=(A,(G_k)_{k\in\N})$ and a diameter function $\D\in\mathscr{D}(A,K_1, K_2)$ such that each $G_k$ is a combinatorial tree and $X$ is bi-Lipschitz equivalent to the space $(\mathcal{A}, d_{\mathscr{C},\D})$. The choice of alphabet, the constants $K_1$ and $K_2$, and the bi-Lipschitz constant depend only on the doubling and bounded turning constants of $X$, and $\diam(X)$.
\end{enumerate}
\end{thm}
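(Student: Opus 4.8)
The plan is to treat the three parts in order, with the bulk of the work concentrated in part \eqref{eq:maincomb3}. For part \eqref{eq:maincomb1}, the key observations are that the infimum in \eqref{eq:pseudometric} is always realized (or nearly realized) by chains of length controlled by the doubling-type decay in condition \eqref{eq:diam4} of the diameter function, so the metric is genuine and the space is totally bounded; completeness follows because a Cauchy sequence of classes $[w^{(n)}]$ can be promoted to a convergent infinite word by a diagonal argument over finite levels, using that adjacent words at a given level stay within a controlled distance. Connectedness is built into the definition: any two points are joined by chains of word sets whose diameters tend to zero (by \eqref{eq:diam4}), and concatenating these geometrically-shrinking ``beads'' produces a connected set; the bounded turning constant $C=1$ is essentially immediate from the fact that a single chain realizing (nearly) the distance $d_{\mathscr{C},\D}(x,y)$ is a connected set whose diameter is at most $\sum \Delta(v_i)$, which we can take arbitrarily close to $d_{\mathscr{C},\D}(x,y)$. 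For part \eqref{eq:maincomb2}, one must check that when each $G_k$ is a combinatorial tree the resulting space has no loops: a loop would force two disjoint chains between some pair of points, which one rules out by descending through the levels and using conditions \eqref{eq:combdata2a}–\eqref{eq:combdata2b} to see that the combinatorial tree structure obstructs this; local connectedness and uniqueness of arcs then follow from compactness plus bounded turning.

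The heart of the theorem is the converse, part \eqref{eq:maincomb3}. Here the strategy is to build, for a given quasiconformal tree $X$, a sequence of finite subsets $X = X_0 \supseteq$ ``net points at scale $\rho^k$'' and a corresponding nested family of pieces, much as in the Herron–Meyer and Rohde constructions for quasi-arcs but adapted to branching. Concretely: fix a small ratio $\rho \in (0,1)$ depending on the doubling and bounded-turning constants; inductively choose a maximal $\rho^k\diam(X)$-separated set, and decompose $X$ into ``tiles'' by cutting along suitable points. Each tile at level $k$ should itself be a quasiconformal tree (with uniform constants) of diameter comparable to $\rho^k \diam(X)$, and the tiles at level $k+1$ inside a given level-$k$ tile form a connected cover — this connectivity is what gives condition \eqref{eq:combdata2a}, while the inheritance of adjacency across scales gives \eqref{eq:combdata2b}. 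The alphabet $A$ is then the (uniformly bounded, by doubling) number of children a tile can have, $G_k$ records adjacency of level-$k$ tiles, and the diameter function is $\Delta(w) = \diam(\text{tile}_w)/\diam(X)$ — after possibly rounding the ratios $\Delta(wi)/\Delta(w)$ to one of two values $K_1, K_2$ to land in $\mathscr{D}(A, K_1, K_2)$, which costs only a bounded bi-Lipschitz error. One then defines the obvious map $\mathcal{A} \to X$ sending an infinite word to the unique point in the nested intersection of its tiles, and checks it is a bi-Lipschitz bijection: the upper bound on $d(f(w), f(u))$ comes from following a chain of tiles of geometrically decreasing size (summable), and the lower bound — the delicate direction — comes from the bounded turning property of $X$, which forces any continuum joining two points to have diameter at least a fixed multiple of their distance, hence any chain of tiles connecting them must have total diameter bounded below.

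I expect the main obstacle to be the tile decomposition in part \eqref{eq:maincomb3}: arranging that the pieces at each scale are genuinely trees with uniform quasiconformality constants, that they overlap only in controlled ways (ideally in single ``gate'' points, so that the combinatorial graph $G_k$ faithfully records the metric adjacency), and that the parent–child structure satisfies \emph{both} \eqref{eq:combdata2a} and \eqref{eq:combdata2b} simultaneously. In the arc case one cuts an interval into subintervals and the bookkeeping is linear; for trees one must handle branch points, decide how a branch point is assigned to tiles, and ensure that cutting near a branch point does not destroy the diameter comparability or the bounded-turning constant of the sub-tiles. A secondary technical point is the rounding of diameter ratios to the two-value set $\{K_1, K_2\}$: one must verify that replacing the true diameters by comparable dyadic-type values changes the metric $d_{\mathscr{C}, \D}$ by only a bounded factor, uniformly in the depth — this is a telescoping estimate but needs the geometric decay to be preserved. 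Once the decomposition is in place with all the stated properties, identifying $(\mathcal{A}, d_{\mathscr{C}, \D})$ with $X$ up to bi-Lipschitz equivalence is a comparison of two explicit metrics and should be routine, with the bounded-turning hypothesis on $X$ supplying the one non-formal inequality.
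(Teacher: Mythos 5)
Your overall strategy for part \eqref{eq:maincomb3} matches the paper's (tile a quasiconformal tree along a Bonk--Meyer-type decomposition, read off combinatorial data from tile adjacency, read off $\Delta$ from tile diameters, and prove bi-Lipschitzness via bounded turning for the hard direction). Your sketches for parts \eqref{eq:maincomb1} and \eqref{eq:maincomb2} are also in the same spirit as the paper's: the paper proves compactness by a diagonal argument, proves bounded turning by forming $\epsilon$-chains of closed (connected) balls and taking a Hausdorff limit to pass from $C=1+\epsilon$ to $C=1$, and proves the tree property by exhibiting, for any two points, a single compact continuum $K$ that every path joining them must contain. Your ``concatenate a near-optimal chain of $\mathcal{A}_{v_i}$'' idea for bounded turning essentially works but requires the (not completely trivial) facts that each $\mathcal{A}_{v_i}$ is connected and that combinatorial intersection of word sets forces actual intersection of the corresponding quotient sets, plus the Hausdorff-limit step to sharpen the constant to exactly $1$.

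However, there is a concrete gap in your plan for part \eqref{eq:maincomb3} that is in fact the technical heart of the paper's argument. You say the alphabet $A$ is ``the (uniformly bounded, by doubling) number of children a tile can have.'' The Bonk--Meyer-type decomposition gives tiles whose number of children varies between $2$ and some $M(N,\delta)$, and a uniform \emph{upper} bound on the child count is not enough: the combinatorial data lives on the homogeneous word sets $A^k$, so one needs every tile at every level to have \emph{exactly} $M$ children for the indexing by words in $\{1,\dots,M\}^k$ to make sense. The paper achieves this in Lemma \ref{lem:tiles} by a delicate case analysis (descend one or two more levels in the Bonk--Meyer decomposition and regroup or split tiles so each parent gets exactly $M$ children), and the crucial constraint is that this regrouping must preserve properties (4)--(6) there: that tiles meet other same-level tiles only at leaves, that same-level tiles meet in at most one point, and above all that distinct points of $X_w \cap \overline{X\setminus X_w}$ are separated by a fixed fraction of $\diam X_w$. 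That separation property is exactly what feeds the lower Lipschitz bound and condition \ref{P4} of Proposition \ref{prop:doubling}, and it is not automatic under naive regrouping. Your proposal flags the tile decomposition as the ``main obstacle'' but does not identify the equal-child-count normalization nor the boundary-separation invariant that must survive it; as written, a reader following your outline would get stuck at the point of defining $G_k$ on $A^k$. A secondary point: your ``round the diameter ratios to $\{K_1,K_2\}$'' step is plausible, but the paper instead defines $\Delta$ by a two-rule recursion and proves $\Delta(w)\asymp\diam X_w$ (Equation \eqref{eq:comparable}) directly; either route should work, but the telescoping comparability estimate does need to be written out since it underlies both Lipschitz inequalities.
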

Parts \eqref{eq:maincomb1} and \eqref{eq:maincomb2} of Theorem \ref{thm:maincombthm} are proven in Proposition \ref{prop:comb-tree}, and part \eqref{eq:maincomb3} is proven (with a more detailed statement) in Theorem \ref{thm:main}.

We emphasize that an important feature of Theorem \ref{thm:maincombthm} is that all quasiconformal trees are built (up to bi-Lipschitz equivalence) not only from combinatorial objects but from the simple \emph{homogeneous} word sets $A^\mathbb{N}$ and the additional data provided by $\{G_k\}$ and the diameter function. In some sense, one can view the construction in \cite{HM}, which combinatorially builds bi-Lipschitz models of all quasi-\emph{arcs}, as being a special case of the above construction in the case where $A$ has two elements and the graphs $G_k$ are combinatorial arcs, and so we show that the above re-interpretation and expansion of their construction yields all quasiconformal \emph{trees} up to bi-Lipschitz equivalence. Later, in Section \ref{sec:examples}, we provide some concrete examples and pictures of the combinatorial construction described above, including describing in more detail how quasi-arcs fit into our picture.

\begin{rem}
The metric space $(\mathcal{A}, d_{\mathscr{C},\D})$ constructed from given combinatorial data and diameter function need not be doubling in general, even if the alphabet $A$ is finite, the graphs $G_k$ are all combinatorial trees, and the diameter function $\D$ lies in $\mathscr{D}(A,\delta_1,\delta_2)$ for $0<\delta_1<\delta_2<1$.

However, in Proposition \ref{prop:doubling} we give some sufficient conditions that imply that the space $(\mathcal{A}, d_{\mathscr{C},\D})$ is doubling. In Theorem \ref{thm:maincombthm}\eqref{eq:maincomb3}, the space $(\mathcal{A}, d_{\mathscr{C},\D})$ that we construct always satisfies these conditions. This is stated explicitly in Theorem \ref{thm:main}.
\end{rem}

\subsection{Combinatorial descriptions of metric spaces with good tilings} 
Some techniques in the proof of Theorem \ref{thm:maincombthm}(3) can be used for a more general class of metric spaces that can be tiled in a uniform fashion. Roughly speaking, we say that a metric space has a ``good tiling'' if there exists an alphabet $A$, a constant $r\in(0,1)$ and a tiling decomposition $\{X_w : w\in A^*\}$ of $X$ such that each tile $X_w$ has diameter comparable to $r^{|w|}$ and any two non-intersecting tiles $X_w$, $X_u$ have distance at least a fixed multiple of $\max\{r^{|w|},r^{|u|}\}$. See Section \ref{sec:gencombdata} for a precise definition.

In Proposition \ref{prop:goodtiles} we show that any such space is bi-Lipschitz equivalent to a space $(\mathcal{A},d_{\mathscr{C},\D})$ for some combinatorial data $\mathscr{C}$ and $\D(w)=r^{|w|}$. Spaces with good tilings include many attractors of iterated function systems such as the square, the cube, the Sierpi\'nski carpet, the Sierpi\'nski gasket and others; see Example \ref{ex:gasket} for further discussion.

We note that Proposition \ref{prop:goodtiles} is not a generalization of Theorem \ref{thm:maincombthm}: if $X$ is a quasiconformal tree, the combinatorial data that Proposition \ref{prop:goodtiles} will provide may not consist of combinatorial trees, as required by Theorem \ref{thm:maincombthm}. The proof also proceeds differently, and in fact we do not know if every quasiconformal tree possesses a good tiling in the sense given in Section \ref{sec:gencombdata}.

\subsection{Bi-Lipschitz embeddings of quasi-arcs and quasiconformal trees}\label{sec:introembed}
We now turn our attention to the problem of finding bi-Lipschitz embeddings of quasiconformal trees into Euclidean space. The most natural question is:
\begin{question}\label{q:embed}
Does every quasiconformal tree have a bi-Lipschitz embedding into some Euclidean space $\R^n$?
\end{question}
We do not answer this question here and, indeed, it may be rather difficult. In the special case of doubling, \textit{geodesic} trees, the answer is known to be positive, by a theorem of Gupta-Krauthgammer-Lee \cite{GKL}; see also \cite[Corollary 8]{GT11}. Lee-Naor-Peres also give an alternative proof of the result for geodesic trees in \cite[Theorem 2.12]{LNP}.
% I reconsidered what I wrote slightly. I think for our purposes the ``unweighted'' result in GKL is already sufficient, though I will keep the reference to GT11.
% In the special case of doubling, \textit{geodesic} trees, the answer is known to be positive, by combining a construction for unweighted trees due to Gupta-Krauthgammer-Lee \cite{GKL} with a theorem of Gupta-Talwar \cite{GT11}. (See \cite[Corollary 8]{GT11}.) Lee-Naor-Peres also give an alternative proof of the result for geodesic trees in \cite[Theorem 2.12]{LNP}.

By adapting techniques of Romney and the second named author, we make progress on Question \ref{q:embed} in the case where the quasiconformal tree has no branching:
\begin{prop}\label{prop:introqcircles}
Every quasi-arc admits a bi-Lipschitz embedding into some Euclidean space $\R^n$.
\end{prop}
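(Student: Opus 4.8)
The plan is to reduce the statement to the combinatorial model of Theorem~\ref{thm:maincombthm}\eqref{eq:maincomb3}, and then build a bi-Lipschitz embedding of that model into $\R^n$ ``by hand'' using the linear ordering that an arc carries. First I would invoke Theorem~\ref{thm:maincombthm}\eqref{eq:maincomb3}: since a quasi-arc is in particular a quasiconformal tree, it is bi-Lipschitz equivalent to a space $(\mathcal{A}, d_{\mathscr{C},\D})$ with $\D\in\mathscr{D}(A,K_1,K_2)$, where each $G_k$ is a combinatorial tree. But the space is an \emph{arc}, so I expect one can further arrange that each $G_k$ is in fact a combinatorial \emph{path}: the branching structure of $(\mathcal{A},d_{\mathscr{C},\D})$ reflects that of the graphs $G_k$ (this should follow from part~\eqref{eq:maincomb2} and its proof, Proposition~\ref{prop:comb-tree}), and a combinatorial tree that is not a path would force a branch point. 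So the reduction produces a linearly ordered sequence of tilings: at level $k$ there are $|A|^k$ (or countably many) subarcs $\mathcal{A}_w$, $w\in A^k$, arranged consecutively along $\mathcal{A}$, each of diameter $\asymp \D(w) \asymp r_w$ where $r_w$ is a product of factors in $\{K_1,K_2\}$, with the separation property built into the bounded-turning/doubling estimates.

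Next I would construct the embedding following the circle/arc strategy of \cite{RV} (Romney and Vellis): the model $(\mathcal{A},d_{\mathscr{C},\D})$ now looks exactly like a Herron--Meyer/Rohde combinatorial quasi-arc, so one builds a snowfl-type polygonal approximation at each scale. Concretely, one fixes the endpoints of $\mathcal{A}$ in $\R^n$, and at each level $k$ replaces the image of each subarc $\mathcal{A}_w$ by a short ``zig-zag'' polygonal arc whose endpoints match those assigned at level $k$ and whose intermediate vertices are placed in the orthogonal directions so that (i) consecutive level-$(k{+}1)$ pieces stay quantitatively separated, and (ii) the diameters telescope correctly. The dimension $n$ is chosen depending on $|A|$ and the ratio $K_2/K_1$: one needs enough orthogonal room to spread out the $|A|$ children of each tile without collisions, which is precisely where a uniform bound on $|A|$ (from Theorem~\ref{thm:maincombthm}\eqref{eq:maincomb3}, $|A|$ depends only on the data) is essential, and also where the doubling property of the original quasi-arc gets used. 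The resulting map is a uniform limit of these polygonal maps; the upper bi-Lipschitz bound comes from summing a geometric-type series in the diameters, and the lower bound from the enforced separation at each scale together with the fact that two points in distinct tiles at level $k$ have $d_{\mathscr{C},\D}$-distance $\gtrsim r^k$.

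The main obstacle, and the heart of the argument, is the \textbf{lower bi-Lipschitz estimate}: one must verify that the zig-zags introduced at all finer scales never bring back together two points whose $d_{\mathscr{C},\D}$-distance is large, i.e., that the perturbations are summable \emph{and} genuinely transverse. This requires a careful inductive bookkeeping of the positions and local ``frames'' assigned to each tile, ensuring the angles of successive zig-zags are bounded away from degenerate configurations; this is exactly the technical core of \cite{RV} adapted to the (possibly non-dyadic, $\{K_1,K_2\}$-ratio) combinatorial data here. A secondary technical point is confirming the reduction to combinatorial \emph{paths} — one should check that Theorem~\ref{thm:maincombthm}\eqref{eq:maincomb3} applied to an arc genuinely yields path graphs, or else modify the model afterward — but this is routine given Proposition~\ref{prop:comb-tree}. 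Once the embedding of the model is in hand, composing with the bi-Lipschitz equivalence from Theorem~\ref{thm:maincombthm}\eqref{eq:maincomb3} finishes the proof.
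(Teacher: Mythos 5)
Your high-level strategy --- pass to a linearly ordered combinatorial model and then build a Romney--Vellis-style polygonal embedding scale by scale --- is the same as the paper's (Proposition~\ref{prop:qcircles}, proved in Section~\ref{sec:quasiarcs}). But the reduction step you propose has a gap: you invoke Theorem~\ref{thm:maincombthm}\eqref{eq:maincomb3}, which hands you $\D\in\mathscr{D}(A,K_1,K_2)$ with parameters $(|A|,K_1,K_2)$ dictated by the Bonk--Meyer tiling. The paper deliberately does \emph{not} go through Theorem~\ref{thm:main} here. Instead it uses the arc-specific Herron--Meyer reduction (Proposition~\ref{lem:HM}): it first fixes $M$ and $\delta=M_0^{-1}$ as functions of the homogeneity exponent $s$, arranging two crucial arithmetic facts --- $\delta>M^{-1/s}$ and $\delta$ an \emph{integer multiple} of $M^{-1}$ --- and only then produces $\D\in\mathscr{D}(A,M^{-1},\delta)$ and a bi-Lipschitz map onto $(\mathcal{A},d_{\mathscr{C}_M,\D})$. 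The grid condition $\delta\in M^{-1}\Z$ is not cosmetic: it is what makes the cubic thickenings $Q(\tau)$ and the arcs $\mathcal{J}(\tau),\mathcal{J}_0(\tau)$ line up on an $M^{-2}$-lattice, which is the source of the clean quantitative separation bounds (Lemmas~\ref{lem:pipelines} and~\ref{lem:pipes2}) that drive the lower bi-Lipschitz estimate in Lemma~\ref{lem:snowembed} --- exactly the step you correctly identify as the heart of the matter. Parameters coming out of Theorem~\ref{thm:main} have no reason to satisfy these constraints, so ``choosing $n$ depending on $|A|$ and $K_2/K_1$'' does not by itself repair the mismatch; you would in effect have to re-derive Proposition~\ref{lem:HM} with the right parameters, which is what the paper does.

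A secondary issue: your claim that Theorem~\ref{thm:main} applied to an arc produces path graphs $G_k$ is plausible, but it does not follow from Proposition~\ref{prop:comb-tree}, which only gives the converse implication (path/tree graphs $\Rightarrow$ the quotient is an arc/tree); justifying it would require going back into the tiling construction of Lemma~\ref{lem:tiles}. The paper sidesteps this by constructing the arc model $(\mathscr{C}_M,\D)$ directly (Example~\ref{ex:quasiarc} and Proposition~\ref{lem:HM}), so the graphs are paths by definition. Finally, note that the sharp target dimension $\lfloor s\rfloor+1$ in the paper's Proposition~\ref{prop:qcircles} is only attainable because $M$ and $\delta$ are calibrated to $s$ before the reduction; starting from Theorem~\ref{thm:main} would lose this sharpness.
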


Proposition \ref{prop:introqcircles} is a simplified version of Proposition \ref{prop:qcircles} below, where we identify the sharp dimension $n$ for the embedding. We note that Herron and Meyer proved Proposition \ref{prop:introqcircles} in the special case of quasi-arcs with Assouad dimension less than 2; see \cite[Theorem C]{HM}.

Using Proposition \ref{prop:qcircles}, and results of Lang-Plaut \cite{LP} and Seo \cite{Seo}, we end by giving a criterion that can answer Question \ref{q:embed} in certain examples. If $X$ is a metric tree, we denote by $\LL(X)$ be the set of \textit{leaves} of $X$, i.e.,
$$ \LL(X) := \{x\in X: X\setminus \{x\} \text{ is connected}\}.$$

\begin{thm}\label{thm:mainembed}
A quasiconformal tree $X$ admits a bi-Lipschitz embedding into some Euclidean space if and only if $\LL(X)$ admits a bi-Lipschitz embedding into some Euclidean space.
\end{thm}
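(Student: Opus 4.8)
The forward direction is trivial: restricting a bi-Lipschitz embedding $X\to\R^N$ to $\LL(X)$ gives a bi-Lipschitz embedding of the leaf set. So the plan is entirely about the converse. Assume $f_0\colon\LL(X)\to\R^n$ is $L_0$-bi-Lipschitz; I will build a bi-Lipschitz embedding of $X$ into some $\R^N$.

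First I would reduce to a statement about finite subtrees. One should \emph{not} decompose $X$ along its (countable) branch set, since in a quasiconformal tree the branch points can be dense in a sub-arc, so maximal branch-point-free sub-arcs may degenerate. Instead, fix a nested sequence $N_1\subseteq N_2\subseteq\cdots$ of finite $\e_j$-nets with $\e_j\to0$, and let $X_j$ be the Steiner subtree spanned by $N_j$ (the union of the arcs $[p,q]$ with $p,q\in N_j$). Each $X_j$ is then a finite metric tree — finitely many vertices and edges, each edge a nondegenerate sub-arc of $X$, hence a quasi-arc with bounded-turning and doubling constants controlled by those of $X$ — the $X_j$ increase, and $\bigcup_j X_j$ is dense in $X$. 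A normal-families argument, in the spirit of Lang-Plaut \cite{LP}, then reduces everything to producing $L$-bi-Lipschitz embeddings of the $X_j$ into a \emph{single} $\R^N$, with $L$ and $N$ depending only on $L_0$, $n$, $\diam(X)$, and the doubling and bounded-turning constants of $X$: granting this, normalize the embeddings at a common base point $x_0\in N_1$, apply Arzel\`a--Ascoli to the restrictions to each compact $X_i$, diagonalize, and extend the resulting $L$-bi-Lipschitz map off the dense set $\bigcup_j X_j$.

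The heart of the proof is the uniform embedding of the finite subtrees, and here Proposition \ref{prop:qcircles}, the Gupta-Krauthgammer-Lee embedding of doubling geodesic trees \cite{GKL}, and the hypothesis on $\LL(X)$ all come in. Each edge of $X_j$ is a quasi-arc with controlled constants, hence embeds bi-Lipschitzly into a fixed $\R^{n_1}$ by Proposition \ref{prop:qcircles}; and since each vertex of $X_j$ separates $X$, bounded turning gives $d(x,y)\asymp d(x,v)+d(v,y)$ whenever a vertex $v$ lies on $[x,y]$, which is what is needed to glue embeddings of incident edges along a shared vertex. Since the number of edges of $X_j$ is unbounded, a direct sum over edges is not allowed; instead one would color the edges with a bounded number of colors (depending only on the doubling constant), so that edges of a single color are quantitatively separated at each scale, and recycle coordinates within each color class. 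The embedding I have in mind has a ``local shape'' block, assembled from the Proposition \ref{prop:qcircles} embeddings of the edges with coordinates reused across each color class, together with a ``global position'' block — an embedding of an associated finite geodesic tree in the manner of \cite{GKL}, with the vertices of $X_j$ lying in $\LL(X)$ placed according to $f_0$. This last point is where the hypothesis is used: the ``global position'' block must bi-Lipschitz-encode exactly those distances in $X_j$ not witnessed inside a single edge or across a single separating vertex, and the leaf set is the only genuine obstruction to doing so in bounded dimension (as the forward direction shows). Finally, the doubling of $X$ and Seo's local-to-global criterion \cite{Seo} would be used to assemble these uniformly-local data into a bi-Lipschitz embedding of $X_j$ into one $\R^N$ of controlled dimension.

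The hard part is this last step: keeping the target dimension \emph{fixed}, independent of $j$. This forces the quasi-arc embeddings of the unboundedly many edges of $X_j$ to be folded into finitely many shared coordinate directions, compatibly at every scale with the branching combinatorics and with the positions $f_0$ of the leaves. Folding a single quasi-arc into bounded dimension is already the core of Proposition \ref{prop:qcircles}; the obstacle is to run it simultaneously over all edges with the leaf embedding as the only anchor for the large-scale geometry, and to verify the bi-Lipschitz lower bound — that no two far-apart points of $X_j$ are folded onto nearby images — with a constant uniform over the whole exhaustion $\{X_j\}$.
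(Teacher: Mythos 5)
Your forward direction is correct, and the exhaustion-plus-normal-families scaffolding in the converse is in principle a reasonable framework, but you have correctly identified the core difficulty and left it unresolved: producing, for \emph{all} the finite Steiner subtrees $X_j$ simultaneously, $L$-bi-Lipschitz embeddings into a single $\R^N$ with $L$ and $N$ independent of $j$. That is a genuine gap, and the ideas you sketch to fill it (color the edges, recycle coordinates within color classes, add a ``global position'' block via a GKL-type embedding anchored on $f_0$) are not carried out and face concrete obstructions. Two of them: (i) the Steiner subtree $X_j$ spanned by an $\e_j$-net may have \emph{no} vertices in $\LL(X)$ at all --- net points are arbitrary points of $X$, and branch points of $X_j$ are branch points of $X$, hence never leaves --- so it is unclear what the $f_0$-anchored ``global position'' block is actually positioning, and for such $X_j$ your construction does not invoke the leaf hypothesis in any way, yet is supposed to have constants uniform in $j$; (ii) when $\LL(X)$ is dense in $X$ (which the paper warns can happen), every edge of $X_j$ passes arbitrarily close to leaves, and the geometric complexity of the tree near those leaves is exactly what the hypothesis on $\LL(X)$ is meant to control --- but in your scheme each edge is fed to Proposition \ref{prop:qcircles} as an isolated quasi-arc, with no mechanism for the leaf data to enter. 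You also place Seo's criterion at the wrong point: applying it to a finite tree $X_j$ with finitely many leaves gives nothing, since the candidate closed set $Y$ would then be a finite set.

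The paper avoids all of this by locating the leaf hypothesis differently: apply Seo's criterion (Theorem \ref{thm:seo}) \emph{directly to $X$} with $Y=\overline{\LL(X)}$, and take a Christ--Whitney decomposition of $X\setminus Y$. Every Whitney cube $Q$ then sits at distance comparable to $\diam(Q)$ from $\LL(X)$, and the key observation (Lemma \ref{lem:Whitneycube}) is that such a region is contained in a \emph{boundedly} finite union of quasi-arcs with controlled constants --- because the ``exit points'' of arcs running from the center of $Q$ toward distinct leaves are, by bounded turning, separated at scale $\diam(Q)$ in a slightly larger ball, and doubling caps how many there can be. Thus each cube embeds by Proposition \ref{prop:qcircles} plus the Lang--Plaut welding theorem (Theorem \ref{thm:welding}), uniformly over all cubes, and the leaf hypothesis enters only as the required embedding of $Y$ in Seo's theorem. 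The one step you are missing is this decomposition idea --- restrict attention to regions quantitatively far from $\LL(X)$, where the tree locally looks like a bounded bouquet of quasi-arcs --- which replaces your open-ended uniform-over-$j$ embedding problem with a local statement whose constants are automatic.
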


Theorem \ref{thm:mainembed} is a simplified version of the quantitative statement of Theorem \ref{thm:mainembed2}.

\begin{rem}
If $X$ is a quasiconformal tree, the set $\LL(X)$ need not be closed and may even be dense in $X$. Thus, Theorem \ref{thm:mainembed} does not necessarily always reduce Question \ref{q:embed} to a simpler problem.

In many particular cases, however, it may be significantly easier to check embeddability of $\LL(X)$ than $X$ itself. For example, in many concrete settings, the leaf set $\LL(X)$ is an \textit{ultrametric space}, and every doubling ultrametric space bi-Lipschitz embeds into some Euclidean space \cite{LML}.
\end{rem}

\begin{rem}
An equivalent reformulation of Theorem \ref{thm:mainembed} is that a subset $E$ of a quasiconformal tree $X$ admits a bi-Lipschitz embedding into some Euclidean space if and only if the minimal sub-tree of $X$ containing $E$ does.
\end{rem}

\subsection{Outline of the paper}
In Section \ref{sec:prelim}, we review some elementary notions from graph theory concerning combinatorial graphs and trees. In Section \ref{sec:combtrees}, we provide more details on our combinatorial models and prove parts \eqref{eq:maincomb1} and \eqref{eq:maincomb2} of Theorem \ref{thm:maincombthm}. In Section \ref{sec:doubling}, we work in the case of combinatorial trees and identify conditions on $A$, $\mathscr{C}$, and $\D$ that guarantee that the metric tree $(\mathcal{A}, d_{\mathscr{C},\D})$ is doubling. 

In Section \ref{sec:characterization} we prove a more detailed version of part \eqref{eq:maincomb3} of Theorem \ref{thm:maincombthm}. The basic idea is to construct an $n$-adic decomposition $(X_w)_{w\in \{1,\dots,n\}^*}$ of a given quasiconformal tree $X$, for some $n\geq 2$ that satisfies the following properties:
\begin{enumerate}
\item Each $X_w$ is the union of its children $X_{w1},\dots,X_{wn}$, which are themselves trees. Each two of the children intersect in at most one point, which has valency 2 in $X$.
\item Each child $X_{wi}$ of $X_w$ has diameter comparable to that of $X_w$.
\item Any two points $x,y$ on $X_w\cap \overline{X\setminus X_w}$ have distance comparable to the diameter of $X_w$.
\end{enumerate}
This is accomplished by performing certain subdivisions and gluings on top of a construction of Bonk and Meyer \cite{BM}. Once we have such a decomposition, we can build combinatorial data $\mathscr{C}$ and a diameter function $\D$ such that $(\mathcal{A}, d_{\mathscr{C},\D})$ is bi-Lipschitz equivalent to $X$.

Section \ref{sec:examples} contains some examples and pictures that illustrate how our combinatorial data yields metric spaces in a few concrete cases.

Section \ref{sec:gencombdata} considers more general metric spaces, not necessarily trees, that admit a notion of ``good tiling''. We show that such spaces can also be viewed from our combinatorial data, in a slightly different way than Theorem \ref{thm:maincombthm}. In particular, we describe how some self-similar spaces like the unit square and the Sierpi\'nski gasket can be constructed in our framework.

Finally, in Section \ref{sec:embed}, we prove a quantitative version of Proposition \ref{prop:introqcircles} and then apply a bi-Lipschitz welding result of Lang and Plaut \cite{LP} and a bi-Lipschitz embedding characterization of Seo \cite{Seo} to complete the proof of Theorem \ref{thm:mainembed}. 

\section{Preliminaries}\label{sec:prelim}
In this section, we introduce some further preliminary definitions and results related to the combinatorial models defined in Section \ref{sec:introcomb}.

\subsection{Words}

Recall from Section \ref{sec:introcomb} that we start with an alphabet $A = \{1, \dots, M\}$, for some integer $M\geq 2$, or $A=\N$. In addition to the sets $A^*$, $A^\N$, $A^\N_u$ defined above, we also set a few other pieces of notation. For $w\in A^*$ and $k\geq |w|$ define
\[ A^k_w = \{wu : u\in A^{k-|w|}\}, \qquad A^*_w = \{wu : u\in A^{*}\}.\]

Given $n\in\N$ and $w\in A^{\N}$ denote by $w(n)$ the unique word $u\in A^n$ such that $w=uw'$ for some $w'\in A^{\N}$. Similarly, if $n\in\mathbb{N}$ and $w\in A^{*}$, $w(n)$ denotes the initial subword of $w$ with length $n$, and we set $w(n)=w$ if $n\geq |w|$.

Finally, given $k\in\N$ and $u \in A^k$ denote by $u^{\uparrow}$ the unique element of $A^{k-1}$ such that $u \in A^k_{u^{\uparrow}}$.

\subsection{Combinatorial graphs and trees}
Definition \ref{def:combdata} above uses some graph theory terminology. A \emph{combinatorial graph} is a pair $G=(V,E)$ of a finite or countable vertex set $V$ and an edge set 
\[E \subset \{\{v,v'\} : v,v' \in V\text{ and }v\neq v'\}.\]
If $\{v,v'\}\in E$, we say that the vertices $v$ and $v'$ are \textit{adjacent} in $G$.

A combinatorial graph $G' = (V',E')$ is a \emph{subgraph} of $G=(V,E)$ (and we write $G\subset G'$) if $V'\subset V$ and $E'\subset E$. We commonly generate subgraphs of $G=(V,E)$ by starting with a vertex set $V'\subset V$ and considering the \textit{subgraph of $G$ induced by $V'$}: the graph $G'=(V',E')$ where $E'$ is the set of all edges between two vertices of $V'$.

A \emph{path} in $G$ is a set $ \g = \{\{v_1,v_2\}, \dots, \{v_{n-1},v_n\}\} \subset E$; in this case we say that $\g$ joins $v_1$, $v_n$. The path $ \g = \{\{v_1,v_2\}, \dots, \{v_{n-1},v_n\}\}$ is a \emph{combinatorial arc} or \emph{simple path} if for all $i, j \in \{1,\dots,n\}$, $v_i = v_j$ if and only if $i=j$; in this case we say that the endpoints of the arc $\g$ are the points $v_1,v_n$. A combinatorial graph $G = (V,E)$ is connected, if for any distinct $v,v' \in V$ there exists a path $\g$ in $G$ that joins $v$ with $v'$. A \emph{component} of a combinatorial graph $G$ is a maximal connected subgraph of $G$.

A graph $T = (V,E)$ is a \emph{combinatorial tree} if for any distinct $v,v'$ there exists unique combinatorial arc $\g$ whose endpoints are $v$ and $v'$. Given a combinatorial tree $T = (V,E)$ and a point $v\in V$, define the valencies 
\[ \text{Val}(T,v) := \card\{e \in E : v\in e\} \qquad\text{and}\qquad  \text{Val}(T) := \max_{v\in V} \text{Val}(T,v)\]
and the set of leaves $\text{Leaves}(T) := \{v\in V : \text{Val}(T,v) = 1\}$. Here $\card$ denotes the cardinality of a finite or countable set, taking values in $\mathbb{N} \cup \{\infty\}$.

Given a combinatorial graph $G=(V,E)$ and a vertex $v\in V$, we write $G\setminus\{v\}$  to be the subgraph of $G$ induced by $V\setminus\{v\}$. Note that, if $T$ is a tree, then every component of $T\setminus\{v\}$ is a tree.

\section{A model for bounded turning metric spaces and trees}\label{sec:combtrees}

\subsection{Combinatorial data}
Recall the notion of combinatorial data $\mathscr{C} = (A, (G_k)_{k\in\N})$ from Definition \ref{def:combdata}, where $A$ is an alphabet and $G_k=(A^k,E_k)$ are combinatorial graphs on the vertex sets $A^k$, satisfying certain axioms. \textbf{For the remainder of Section \ref{sec:combtrees}, we fix combinatorial data $\mathscr{C}= (A, (G_k)_{k\in\N})$.}

Our first lemma gives some basic structural properties of these graphs. In particular, if each $G_k$ is a combinatorial tree,  then the pair $(i,j) \in A\times A$ of Definition \ref{def:combdata}\eqref{eq:combdata2b} is unique.

\begin{lem}\label{lem:adjacent}
Let $k\geq j$ and $v\neq w\in A^j$.
\begin{enumerate}
\item If $v$ and $w$ are adjacent in $G_j$, then there are words $v'$ and $w'$ in $A^{k-j}$ such that $vv'$ and $ww'$ are adjacent in $G_k$.
\item If $G_k$ is a combinatorial tree and there are words $v'$ and $w'$ in $A^{k-j}$ such that $vv'$ and $ww'$ are adjacent in $G_k$, then $v$ and $w$ are adjacent in $G_j$.
\item If $G_k$ is a combinatorial tree and $v$ and $w$ are adjacent in $G_j$, then there is a unique pair of words $(v', w')$ in $A^{k-j} \times A^{k-j}$ such that $vv'$ and $ww'$ are adjacent in $G_k$.
\end{enumerate}
\end{lem}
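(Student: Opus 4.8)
The three parts are increasingly refined versions of one another, so the plan is to establish them in order, with each building on the previous. For part (1), I would argue by induction on $k-j$. The base case $k-j=0$ is trivial, and for the inductive step it suffices to prove the statement for $k = j+1$, then iterate. So suppose $v,w\in A^j$ are adjacent in $G_j$; by Definition \ref{def:combdata}\eqref{eq:combdata2b} there is a pair $(i,i')\in A\times A$ with $\{vi, wi'\}\in E_{j+1}$, and then $v' = i$, $w' = i'$ work. Iterating this one-level statement along with a relabeling gives the general case: if $vv''$ and $ww''$ are adjacent in $G_{k-1}$ (with $v'',w''\in A^{k-1-j}$), apply the one-level statement to this adjacent pair to extend by one more letter each.

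\textbf{Part (2).} Here I would prove the contrapositive-flavored statement directly: assume $G_k$ is a combinatorial tree and $vv'$, $ww'$ are adjacent in $G_k$; I want $v,w$ adjacent in $G_j$. The key tool is Definition \ref{def:combdata}\eqref{eq:combdata2a}, which says the subgraph of $G_{m+1}$ induced by $\{ui : i\in A\}$ is connected for each $u\in A^m$; combined with part (1) this should show that, for any $u\in A^j$, the subgraph of $G_k$ induced by $A^k_u$ (all length-$k$ extensions of $u$) is connected. Granting that, consider the combinatorial arc in the tree $G_k$ from $vv'$ to $ww'$. Suppose for contradiction $v$ and $w$ are \emph{not} adjacent in $G_j$. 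Let $u_1 = v, u_2, \dots, u_p = w$ (with $p \geq 3$) be the vertices of the combinatorial arc in $G_j$ joining $v$ to $w$. The arc in $G_k$ from $vv'$ to $ww'$ must pass through extensions of these intermediate words: more precisely, using connectedness of each $A^k_{u_\ell}$ and part (1)/(2)-type reasoning about which level-$k$ vertices can be adjacent (an edge $\{a,b\}\in E_k$ forces $\{a^\uparrow, b^\uparrow\}\in E_{j}$ or $a^\uparrow = b^\uparrow$, by iterating axiom \eqref{eq:combdata2b} downward — though one must be careful this downward projection statement is exactly what part (2) is asserting, so I'd instead argue: the unique $G_k$-arc, when we record the length-$j$ prefixes of its vertices in order and delete consecutive repetitions, yields a \emph{walk} in $G_j$ from $v$ to $w$; if $v \sim w$ in the tree $G_j$ this walk must traverse the edge $\{v,w\}$, but it could also a priori do more — actually in a tree any walk from $v$ to $w$ contains the unique arc, so it's fine to have $v\sim w$; the contradiction to assuming $v\not\sim w$ is that we'd need the $G_k$-arc to leave $A^k_v \cup A^k_w$, yet I can construct a shorter arc or show the actual arc stays within, contradicting uniqueness). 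The cleanest route: if $v \not\sim w$ in $G_j$, then by part (1) there is \emph{some} edge in $G_k$ between $A^k_v$ and $A^k_w$ only if... no — rather, I'd show $A^k_v \cup A^k_w$ induces a disconnected subgraph of $G_k$ when $v\not\sim w$, yet it would have to be connected, using that $G_k$ is a tree and the arc structure. Let me restate the approach more carefully.

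\textbf{Cleaner argument for part (2).} Define a projection $\pi_j : A^k \to A^j$ by $\pi_j(u) = u(j)$. \emph{Claim:} if $G_k$ is a tree, then for any edge $\{a,b\}\in E_k$, either $\pi_j(a) = \pi_j(b)$ or $\{\pi_j(a), \pi_j(b)\} \in E_j$. This claim is proved by downward induction on $j$ from $k$: the step $j+1 \to j$ uses axiom \eqref{eq:combdata2b} in the following form — if $\{\pi_{j+1}(a), \pi_{j+1}(b)\}\in E_{j+1}$ with $\pi_{j+1}(a)\neq\pi_{j+1}(b)$, write these as $vi, wi'$; the \emph{existence} direction gives some edge downstairs, but I need it for \emph{this} pair. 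Here is where being a tree is essential: axiom \eqref{eq:combdata2b} says $\{v,w\}\in E_j$ implies \emph{some} lift exists; conversely if $\{vi, wi'\}\in E_{j+1}$, I want $\{v,w\}\in E_j$. Since $G_j$ is connected there is an arc from $v$ to $w$ in $G_j$; lift each edge of that arc via part (1) to get a walk in $G_{j+1}$ from $A^{j+1}_v$ to $A^{j+1}_w$; combine with the connectedness of each $A^{j+1}_{u}$ (axiom \eqref{eq:combdata2a}) to get a walk in $G_{j+1}$ from $vi$ to $wi'$ avoiding the edge $\{vi,wi'\}$ \emph{unless} that walk is forced through it — in a tree $G_{j+1}$, this walk-plus-the-edge $\{vi,wi'\}$ would create a cycle unless the walk already uses $\{vi,wi'\}$, which it doesn't if the original $G_j$-arc had length $\geq 2$, i.e. if $v\not\sim w$. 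Hence $v\sim w$ in $G_j$. This is exactly the claim for the one-level case; downward induction finishes it. Part (2) is the special case $a = vv'$, $b=ww'$ (and $v\neq w$ forces $\pi_j(a)\neq\pi_j(b)$).

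\textbf{Part (3).} Existence is part (1). For uniqueness: suppose $(v', w')$ and $(v'', w'')$ both satisfy $\{vv', ww'\}, \{vv'', ww''\}\in E_k$. By axiom \eqref{eq:combdata2a} and part (1), both $A^k_v$ and $A^k_w$ induce connected subgraphs of $G_k$; pick an arc $\alpha$ in $A^k_v$ from $vv'$ to $vv''$ and an arc $\beta$ in $A^k_w$ from $ww'$ to $ww''$. Then $\alpha \cup \{\{vv',ww'\}\} \cup \beta \cup \{\{vv'',ww''\}\}$ is a closed walk in $G_k$; in a tree it must be degenerate, which forces $vv' = vv''$ and $ww' = ww''$ (the two bridging edges between the disjoint vertex sets $A^k_v$ and $A^k_w$ must coincide), i.e. $v' = v''$ and $w' = w''$.

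\textbf{Main obstacle.} The crux is the downward-projection claim in part (2): that in the tree case an edge at level $k$ projects to an edge (or a collapsed vertex) at level $j$. The subtlety is that axiom \eqref{eq:combdata2b} only guarantees lifts upward, so recovering the downward direction genuinely requires the tree hypothesis and a cycle-in-a-tree contradiction, using connectedness of the fibers $A^k_u$ (axiom \eqref{eq:combdata2a}) to build competing paths. Once this projection claim is in hand, parts (2) and (3) follow quickly, and part (1) is routine induction.
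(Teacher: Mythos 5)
Your parts (1) and (3) are essentially the paper's: part (1) is the same induction on $k-j$ via Definition~\ref{def:combdata}\eqref{eq:combdata2b}, and part (3) is the same cycle-in-a-tree contradiction using the fiber arcs $\alpha \subset A^k_v$, $\beta \subset A^k_w$ together with the two bridging edges (the paper just splits it into the two sub-cases $w'=w''$ and $w'\neq w''$, but it's the same idea).

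Part (2), however, has a genuine gap. Your ``cleaner argument'' proves the projection claim by downward induction, one level at a time, and the inductive step $j+1\to j$ reads: given $\{vi,wi'\}\in E_{j+1}$ with $v\neq w$, lift a $G_j$-arc from $v$ to $w$ to a walk in $G_{j+1}$, then invoke ``in a tree $G_{j+1}$, this walk-plus-the-edge would create a cycle.'' But the hypothesis of the lemma is only that $G_k$ is a combinatorial tree; nothing is assumed about $G_{j+1},\dots,G_{k-1}$, which in general are merely connected graphs. So the level-by-level step appeals to a tree property you don't have. (In the paper's intended application all $G_m$ are trees, but the lemma as stated does not assume this, and the paper's proof honors that.) The fix is to dispense with the intermediate levels entirely and argue in one step from $j$ to $k$, which is exactly what the paper does: take the $G_j$-arc $v=u_0,u_1,\dots,u_n=w$ (so $n\geq 2$ if $v\not\sim w$); use part (1) together with the iterated consequence of axiom~\eqref{eq:combdata2a} (that each fiber $A^k_{u_i}$ induces a connected subgraph of $G_k$) to build an arc in $G_k$ from $vv'$ to $ww'$ that passes through vertices in $A^k_{u_1}$, a set disjoint from $A^k_v\cup A^k_w$; this arc is therefore distinct from the single edge $\{vv',ww'\}$, contradicting that $G_k$ is a tree. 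The essential idea in your proposal is right, but as written the downward induction imports an unstated hypothesis.
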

\begin{proof}
The first statement is an immediate consequence of \eqref{eq:combdata2b} in Definition \ref{def:combdata}, and induction on $k-j$. 

For the second, suppose that $v$ and $w$ were not adjacent in $G_j$, under these assumptions.

Let $v=u_0, u_1, \dots, u_{n-1}, u_n=w$ be a path from $v$ to $w$ in $G_j$. Note that $n\geq 2$. Then, by the first statement in the lemma and part (2) of Definition \ref{def:combdata}, there is a simple path from $vv'\in A^k_v$ to $ww'\in A^k_w$ in $G_k$ of the form
$$ \text{ elements of } A^k_{u_0}, \text{ elements of } A^k_{u_1}, \dots, \text{ elements of } A^k_{u_n}.$$
On the other hand, there is also an adjacency between  $vv'$ and $ww'$ in $G_k$. This contradicts the assumption that $G_k$ is a tree.

For the third claim, the existence of $v'$ and $w'$ follows from (1). Suppose that the uniqueness failed. We consider the following two possible cases.

Suppose first that there are two distinct $v',v'' \in A$ and $w'\in A$ such that both $vv'$ and $vv''$ are adjacent to $ww'$. Then there exists two combinatorial arcs in $G_{k}$ that join $vv'$ with $vv''$; one through the vertices of $G_{k}$ restricted on $A^{k}_v$ (by \eqref{eq:combdata2a} in Definition \ref{def:combdata}) and another is $\{\{vv',ww'\},\{ww', vv''\}\}$. This contradicts the fact that $G_{k}$ is a tree.

The other possibility is that there are two distinct $v',v'' \in A$ and two distinct $w',w''\in A$ such that $vv'$ is adjacent to $ww'$, and $vv''$ are adjacent to $ww''$. Then there exist two combinatorial arcs in $G_{k}$ that join $vv'$ with $vv''$; one through the vertices of $G_{k}$ restricted on $A^{k}_v$ (by (2a) in Definition \ref{def:combdata}) and another through the vertices of $G_{k}$ restricted on $A^{k}_w$ along with edges $\{vv',ww''\}$ and $\{vv'',ww''\}$. This again contradicts the fact that $G_{k}$ is a tree.
\end{proof}

\subsection{Combinatorial intersection and chains}

Recall the notion of combinatorial intersection $A^\N_u \wedge_\mathscr{C} A^\N_v$ defined in \eqref{eq:combintersection} in Section \ref{sec:introcomb}. There, we only defined what it means for this set to be non-empty, but here we actually give a meaning to the set itself.

\begin{definition}\label{def:combintersection}
Given $u_1,u_2 \in A^*$, define
\begin{align} 
A^{\N}_{u_1} \wedge_{\mathscr{C}} A^{\N}_{u_2} := &\{w \in A^{\N}_{u_1} : \forall n > \max\{|u_1|,|u_2|\} \text{ there exists  $u\in A^n_{u_2}$ with $\{w(n),u\}\in E_n\}$}\\
\cup &\{w \in A^{\N}_{u_2} : \forall n >\max\{|u_1|,|u_2|\} \text{ there exists  $u\in A^n_{u_1}$ with $\{w(n),u\}\in E_n\}$}.\nonumber
\end{align}
The set $A^{\N}_{u_1} \wedge_{\mathscr{C}} A^{\N}_{u_2}$ is called the \textit{combinatorial intersection} of $A^{\N}_{u_1}$ and $A^{\N}_{u_2}$.
\end{definition}

We now show that this definition agrees with that in \eqref{eq:combintersection}, and give an equivalent reformulation in the case of trees.
\begin{lem}\label{lem:intersection1}
Let $u_1,u_2 \in A^*$. The following are equivalent.
\begin{enumerate}
\item The set $A^{\N}_{u_1}\wedge_{\mathscr{C}}A^{\N}_{u_2}$ is non-empty.
\item For every $k>\max\{|u_1|,|u_2|\}$ there exists $v_1\in A^k_{u_1}$ and $v_2 \in A^k_{u_2}$ such that $\{v_1,v_2\} \in E_k$.
\end{enumerate}
If each graph $G_k$ is a combinatorial tree, then (1) and (2) are also equivalent to the following.
\begin{enumerate}
\item[(3)] There exists $k> \max\{|u_1|,|u_2|\}$ and $v_1\in A^k_{u_1}$, $v_2 \in A^k_{u_2}$ such that $\{v_1,v_2\} \in E_k$.
\end{enumerate}
\end{lem}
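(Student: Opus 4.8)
The plan is to prove the cycle of implications $(1)\Rightarrow(2)\Rightarrow(3)\Rightarrow(1)$, with $(2)\Rightarrow(3)$ being trivial and the tree hypothesis only needed for $(3)\Rightarrow(1)$.

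First I would handle $(1)\Rightarrow(2)$. Suppose $A^{\N}_{u_1}\wedge_{\mathscr{C}}A^{\N}_{u_2}$ is non-empty and pick $w$ in it. By the definition, $w$ lies (say) in $A^{\N}_{u_1}$ and for every $n>\max\{|u_1|,|u_2|\}$ there is $u\in A^n_{u_2}$ with $\{w(n),u\}\in E_n$. Since $w(n)\in A^n_{u_1}$, taking $v_1=w(n)$ and $v_2=u$ gives exactly the adjacency required in (2) for every $k=n>\max\{|u_1|,|u_2|\}$. (The symmetric case where $w\in A^{\N}_{u_2}$ is identical.) So (2) holds. The implication $(2)\Rightarrow(3)$ is immediate: (2) asserts the conclusion of (3) for every $k$, in particular for one.

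Next, $(3)\Rightarrow(1)$, using that each $G_k$ is a combinatorial tree. Given $k$ and $v_1\in A^k_{u_1}$, $v_2\in A^k_{u_2}$ with $\{v_1,v_2\}\in E_k$, I want to produce a witness $w\in A^{\N}_{u_1}$. The idea is to propagate the single adjacency $\{v_1,v_2\}$ upward through all finer levels using Lemma~\ref{lem:adjacent}(3): since $G_{k+1}$ is a tree and $v_1,v_2$ are adjacent in $G_k$, there is a \emph{unique} pair $(v_1',v_2')\in A\times A$ with $v_1v_1'$ and $v_2v_2'$ adjacent in $G_{k+1}$; set $v_1^{(k+1)}=v_1v_1'$, and repeat. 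Iterating, we obtain a coherent nested sequence of adjacent pairs $v_1^{(n)}\in A^n_{v_1}\subset A^n_{u_1}$ and $v_2^{(n)}\in A^n_{v_2}\subset A^n_{u_2}$ with $\{v_1^{(n)},v_2^{(n)}\}\in E_n$ for all $n\geq k$; the coherence (each $v_1^{(n+1)}$ extends $v_1^{(n)}$) is exactly what uniqueness in Lemma~\ref{lem:adjacent}(3) buys us, so the $v_1^{(n)}$ determine an infinite word $w\in A^{\N}_{u_1}$ (in fact $w\in A^{\N}_{v_1}$). For levels $n$ with $\max\{|u_1|,|u_2|\}<n\leq k$, Lemma~\ref{lem:adjacent}(2) applied to the adjacency $\{v_1^{(k)},v_2^{(k)}\}=\{v_1,v_2\}$ in $G_k$ gives that $w(n)=v_1(n)$ and $v_2(n)$ are adjacent in $G_n$; so for every $n>\max\{|u_1|,|u_2|\}$ there is $u\in A^n_{u_2}$ (namely $u=v_2^{(n)}$ or $v_2(n)$) adjacent to $w(n)$. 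Hence $w\in A^{\N}_{u_1}\wedge_{\mathscr{C}}A^{\N}_{u_2}$, proving (1).

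I do not expect a serious obstacle here; the one point requiring care is the upward propagation in $(3)\Rightarrow(1)$, where it is essential to invoke the \emph{uniqueness} clause of Lemma~\ref{lem:adjacent}(3) so that the choices at successive levels are forced and therefore automatically nested, yielding a well-defined infinite word. Without the tree hypothesis this coherence can fail (the propagated pairs need not be unique, and a compactness/König's-lemma argument would be needed and might not even give a single $w$ in the intersection), which is exactly why (3) is listed only under the tree assumption. A secondary small check is that $A^n_{v_i}\subseteq A^n_{u_i}$, which is immediate since $v_i\in A^k_{u_i}$ means $v_i$ begins with $u_i$.
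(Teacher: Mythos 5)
Your cycle $(1)\Rightarrow(2)\Rightarrow(3)\Rightarrow(1)$ has a structural flaw: since you use the tree hypothesis to close the cycle at $(3)\Rightarrow(1)$, you have only proved that all three conditions are equivalent \emph{when each $G_k$ is a combinatorial tree}. But the lemma asserts $(1)\Leftrightarrow(2)$ for arbitrary combinatorial data, and in particular $(2)\Rightarrow(1)$ must be established without the tree hypothesis. Your argument never does this. The paper handles it by proving $(2)\Rightarrow(1)$ directly: condition (2) already hands you an adjacency at the very first relevant level $k_0+1$ (where $k_0=\max\{|u_1|,|u_2|\}$), and from there one propagates upward purely via Definition \ref{def:combdata}\eqref{eq:combdata2b}, picking at each level some extension of the current adjacent pair. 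Only afterward, and only for trees, does one show $(3)\Rightarrow(2)$ by propagating an adjacency both up (Lemma \ref{lem:adjacent}(1)) and down (Lemma \ref{lem:adjacent}(2)).

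Your secondary worry — that without trees the upward propagation lacks coherence and ``a compactness/K\"onig's-lemma argument would be needed and might not even give a single $w$'' — is a misreading of what the construction requires. You do not need the extension at each level to be \emph{unique} (which is what Lemma \ref{lem:adjacent}(3) and the tree hypothesis give). You only need to be able to \emph{choose} one extension at each step, which Definition \ref{def:combdata}\eqref{eq:combdata2b} always provides. Because each choice extends the previous word by one letter, the chosen prefixes are nested by construction, and the resulting infinite word $w$ is well-defined with no appeal to compactness. Where the tree hypothesis is genuinely indispensable is the \emph{downward} step in $(3)\Rightarrow(2)$: projecting an adjacency at a high level $k$ down to a lower level $n$ with $k_0<n<k$ uses Lemma \ref{lem:adjacent}(2), which fails without the tree assumption. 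If you reorganize so that $(2)\Rightarrow(1)$ is proved directly (using only \eqref{eq:combdata2b}) and reserve the tree-based arguments for $(3)\Rightarrow(2)$, the proof is correct and essentially matches the paper's.
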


\begin{proof}
We start by showing the equivalence of (1) and (2). That (1) implies (2) follows immediately from the definition of $A^{\N}_{u_1}\wedge_{\mathscr{C}}A^{\N}_{u_2}$. 

To show that (2) implies (1), we will inductively construct elements of $A^{\N}_{u_1}\wedge_{\mathscr{C}}A^{\N}_{u_2}$. Let $k_0=\max\{|u_1|,|u_2|\}$ and choose $u_1i_1 \in A^{k_0+1}_{u_1},u_2 j_1 \in A^{k_0+1}_{u_2}$ such that $\{u_1i_1,u_2j_1\} \in E_{k_0+1}$. By \eqref{eq:combdata2b} in Definition \ref{def:combdata}, given that $\{u_1i_1\cdots i_{n-k}, u_2j_1\cdots j_{n-k}\} \in E_n$ for some $n\geq k+1$, there exist $i_{n-k+1},j_{n-k+1}\in A$ such that $\{u_1i_1\cdots i_{n-k+1}, u_2j_1\cdots j_{n-k+1}\} \in E_{n+1}$. Set now 
\[ w_1 = u_1i_1i_2\cdots  \quad\text{and}\quad w_2=u_2j_1j_2\cdots\]
and note that both $w_1$ and $w_2$ are in $A^{\N}_{u_1}\wedge_{\mathscr{C}}A^{\N}_{u_2}$.

Assume now that each graph $G_k$ is a combinatorial tree. Clearly (2) implies (3) so it suffices to show that (3) implies (2). Assume there is an integer $k_0\geq  \max\{|u_1|,|u_2|\}$ and words $w_1 \in A_{u_1}^{k_0}$ and $w_2\in A^{k_0}_{u_2}$ such that $\{w_1,w_2\} \in E_{k_0}$. If $k\geq k_0$, then by Lemma \ref{lem:adjacent}(1), there exist $v_1 \in A^{k}_{w_1}$ and $v_2\in A^{k}_{w_2}$ (hence $v_1 \in A^{k}_{u_1}$ and $v_2\in A^k_{u_2}$) such that $\{v_1,v_2\}\in E_k$. If $k$ is an integer with $ \max\{|u_1|,|u_2|\} \leq k \leq k_0$, then by Lemma \ref{lem:adjacent}(2), there exist $v_1 \in A^{k}_{u_1}$ and $v_2\in A^{k}_{u_2}$ such that $w_1\in A^k_{v_1}$, $w_2\in A^k_{v_2}$ and $\{v_1,v_2\}\in E_k$. Therefore, (2) holds. 
\end{proof}

The next lemma gives a description of the set $A^{\N}_{u_1}\wedge_{\mathscr{C}}A^{\N}_{u_2}$ in the case that each $G_k$ is a combinatorial tree. 

\begin{lem}\label{lem:intersection2}
Let $u_1,u_2 \in A^*$ with $|u_1|\leq |u_2|$, let $k_1=|u_1|$ and let $u'_2 = u_2(k_1)$. 
\begin{enumerate}
\item If $u_2'=u_1$ (that is, $u_2 \in A^{*}_{u_1}$), then $A^{\N}_{u_2} \subset A^{\N}_{u_1}\wedge_{\mathscr{C}}A^{\N}_{u_2}$.
\end{enumerate}
Suppose additionally that each $G_k$ is a combinatorial tree. Then:
\begin{enumerate}
\item[(2)] If $A^{\N}_{u_1}\wedge_{\mathscr{C}}A^{\N}_{u_2} \neq \emptyset$, then either $\{u_1,u_2'\} \in E_{k_1}$ or $u_1=u_2'$. 
\item[(3)] If $\{u_1,u_2'\} \in E_{k_1}$, then $A^{\N}_{u_1}\wedge_{\mathscr{C}}A^{\N}_{u_2}$ contains exactly two elements; one in $A^{\N}_{u_1}$ and one in $A^{\N}_{u_2}$. The converse is also true.
\end{enumerate}
\end{lem}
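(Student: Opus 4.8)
The plan is to analyze Lemma \ref{lem:intersection2} one part at a time, leaning on the equivalences in Lemma \ref{lem:intersection1} and the adjacency-propagation facts in Lemma \ref{lem:adjacent}.

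For part (1), assume $u_2 = u_1 v$ for some $v \in A^*$, and take any $w \in A^\N_{u_2}$. For each $n > |u_2| \geq \max\{|u_1|,|u_2|\}$, I need a word $u \in A^n_{u_1}$ adjacent to $w(n)$ in $G_n$; but $w(n)$ itself lies in $A^n_{u_2} \subseteq A^n_{u_1}$, so I cannot just take $u = w(n)$ (adjacency is between distinct vertices). Instead I use connectivity of the graph $G_n$ restricted appropriately: by Definition \ref{def:combdata}\eqref{eq:combdata2a} applied repeatedly, the subgraph of $G_n$ induced on $A^n_{u_2}$ is connected and has at least two vertices (since $A$ has at least two letters), so $w(n)$ has a neighbor $u$ inside $A^n_{u_2} \subseteq A^n_{u_1}$. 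Hence $w \in A^\N_{u_2} \subseteq A^\N_{u_1}\wedge_\mathscr{C} A^\N_{u_2}$, using the first set in Definition \ref{def:combintersection} (or rather the second, with the roles as written). This gives (1); note it does not use the tree hypothesis.

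For part (2), assume each $G_k$ is a tree and $A^\N_{u_1}\wedge_\mathscr{C} A^\N_{u_2} \neq \emptyset$. By Lemma \ref{lem:intersection1}(2) with $k = k_1$... wait, $k$ must exceed $\max\{|u_1|,|u_2|\} = |u_2|$, so instead take any admissible $k > |u_2|$ and get $v_1 \in A^k_{u_1}$, $v_2 \in A^k_{u_2}$ with $\{v_1,v_2\}\in E_k$. Since $u_2' = u_2(k_1)$ and $v_2 \in A^k_{u_2} \subseteq A^k_{u_2'}$, while $v_1 \in A^k_{u_1}$, Lemma \ref{lem:adjacent}(2) applied at level $k_1$ gives that either $u_1 = u_2'$ or $\{u_1, u_2'\} \in E_{k_1}$. (If $u_1 \neq u_2'$, the hypotheses of Lemma \ref{lem:adjacent}(2) are exactly met.) This proves (2).

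For part (3), assume each $G_k$ is a tree and $\{u_1, u_2'\} \in E_{k_1}$. First, by Lemma \ref{lem:adjacent}(1) there exist $v_1 \in A^{k_1}$-extensions making $u_1$-words adjacent to $u_2'$-words at every level, and by Lemma \ref{lem:intersection1}(2)/(3) the combinatorial intersection is non-empty. For the exact count: I claim the intersection has exactly one element in $A^\N_{u_1}$ and exactly one in $A^\N_{u_2}$. For uniqueness of the element in $A^\N_{u_1}$, suppose $w, \tilde w \in A^\N_{u_1}$ both lie in the intersection and $w(n) \neq \tilde w(n)$ for some $n$. Using Lemma \ref{lem:adjacent}(3) (uniqueness of the adjacent pair of descendants), the words in $A^n_{u_2}$ adjacent to $w(n)$ resp.\ $\tilde w(n)$ are forced, and tracing the unique arc in the tree $G_n$ between $w(n)$ and $\tilde w(n)$ — which stays inside $A^n_{u_1}$ by the tree structure plus Definition \ref{def:combdata}\eqref{eq:combdata2a} — together with the two edges out to $A^n_{u_2}$ and the unique arc back inside $A^n_{u_2}$, produces a cycle in $G_n$, contradicting that $G_n$ is a tree. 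The symmetric argument handles $A^\N_{u_2}$. Finally, each such element actually exists: run the inductive construction from the proof of Lemma \ref{lem:intersection1} starting from the edge $\{u_1, u_2'\}$ propagated via Lemma \ref{lem:adjacent}(1), and Lemma \ref{lem:adjacent}(3) shows the descent is uniquely determined, producing exactly one $w_1 \in A^\N_{u_1}$ and one $w_2 \in A^\N_{u_2}$. For the converse in (3): if the intersection is non-empty, part (2) gives $\{u_1,u_2'\}\in E_{k_1}$ or $u_1 = u_2'$; but if $u_1 = u_2'$ then $u_2 \in A^*_{u_1}$ and part (1) shows $A^\N_{u_2} \subseteq A^\N_{u_1}\wedge_\mathscr{C} A^\N_{u_2}$, which has at least two elements when $|A^\N_{u_2}| \geq 2$ — I should check this forces more than "exactly two", or handle the degenerate case, so the converse statement "exactly two elements $\Rightarrow \{u_1,u_2'\}\in E_{k_1}$" holds since the $u_1 = u_2'$ alternative gives strictly more than two (as $A^\N_{u_2}$ is infinite when $A$ is, or has $\geq 2$ elements when finite with the right length).

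The main obstacle I anticipate is part (3): carefully extracting the cycle that contradicts the tree property of $G_n$, making sure the two "internal" arcs (inside $A^n_{u_1}$ and inside $A^n_{u_2}$) together with the two crossing edges genuinely form a closed simple-enough path, and keeping track of which vertex of $A^n_{u_2}$ is forced to be adjacent to which vertex of $A^n_{u_1}$ via the uniqueness in Lemma \ref{lem:adjacent}(3). The bookkeeping in the converse direction regarding the degenerate case $u_1 = u_2'$ also needs a little care but is routine.
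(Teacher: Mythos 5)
Your treatment of (1), (2), and the converse direction in (3) is correct and essentially matches the paper's argument; for (1) you verify adjacency level-by-level using the connectivity of the induced subgraph on $A^n_{u_2}$, where the paper instead fixes one neighbor at level $k_2+1$ and propagates a coherent infinite word via Definition~\ref{def:combdata}\eqref{eq:combdata2b} --- a harmless variation. Your cycle-tracing argument for uniqueness in (3) is also correct, but it is a re-derivation of Lemma~\ref{lem:adjacent}(3): once $w(l)\neq\tilde w(l)$, each of them has a neighbor in $A^l_{u_2}\subseteq A^l_{u_2'}$, giving two distinct pairs of adjacent descendants of the edge $\{u_1,u_2'\}$ in $G_l$, which already contradicts the uniqueness statement of Lemma~\ref{lem:adjacent}(3) with no arc-tracing required.

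There is, however, a real gap in your existence argument for (3). Propagating the edge $\{u_1,u_2'\}$ downward via Lemma~\ref{lem:adjacent}(1) and (3) produces a unique adjacent pair in $A^k_{u_1}\times A^k_{u_2'}$ at each level $k>k_1$, and the resulting infinite word on the right lies in $A^{\N}_{u_2'}$; it need \emph{not} lie in $A^{\N}_{u_2}$. If the unique adjacent descendant of $u_2'$ at level $|u_2|$ is some word other than $u_2$, then $A^{\N}_{u_1}\wedge_{\mathscr{C}}A^{\N}_{u_2}$ is actually empty. For instance, with $A=\{1,2\}$, $E_1=\{\{1,2\}\}$, and $G_2$ the combinatorial arc $11-12-21-22$, take $u_1=1$ and $u_2=22$: then $u_2'=2$ and $\{u_1,u_2'\}\in E_1$, yet $A^{\N}_{1}\wedge_{\mathscr{C}}A^{\N}_{22}=\emptyset$ because the crossing between $A^k_1$ and $A^k_2$ is forced through descendants of $\{12,21\}$ and never touches $A^k_{22}$. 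So your claim that Lemma~\ref{lem:intersection1}(2)/(3) yields non-emptiness of the intersection from $\{u_1,u_2'\}\in E_{k_1}$ alone does not hold. (In fairness, the paper's own proof of (3) opens with ``let $v_1$ and $v_2$ be as in the proof of (2)'', silently re-importing the non-emptiness hypothesis of (2); statement (3) is in effect conditional on $A^{\N}_{u_1}\wedge_{\mathscr{C}}A^{\N}_{u_2}\neq\emptyset$, which is how it is invoked later, e.g.\ in Lemma~\ref{lem:partition}. With that hypothesis added, your existence step goes through because you can then start the propagation from an actual adjacency between $A^{k_2+1}_{u_1}$ and $A^{k_2+1}_{u_2}$ rather than from $\{u_1,u_2'\}$.)
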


\begin{proof}
Let $u_1,u_2,v \in A^*$ and $k_1\in\N$ be as in the statement and let $k_2=|u_2|$. 

To prove (1), assume that $u_2'=u_1$, that is, $u_2 \in A^{k_2}_{u_1}$. Let $w\in A^{\N}_{u_2}$. By Definition \ref{def:combdata}(2a), the subgraph of $G_{k_2+1}$ induced by $A^{k_2+1}_{u_1}$ is connected. Fix $v\in A^{k_2+1}_{u_1}$ adjacent to $w(k_2+1)$. Applying Definition \ref{def:combdata}(2b) we find a sequence $\{i_1,i_2\dots\} \subset A$ such that for each $n\in\N$, $vi_1\cdots i_n$ is adjacent to $w(k_2+n+1)$. Since $vi_1\cdots i_n \in A^{k_2+n+1}_{u_1}$, by definition, $w \in A^{\N}_{u_1}\wedge_{\mathscr{C}}A^{\N}_{u_2}$.

Assume now for the rest of the proof that each $G_k$ is a combinatorial tree. To prove (2), assume that $A^{\N}_{u_1}\wedge_{\mathscr{C}}A^{\N}_{u_2} \neq \emptyset$. By Lemma \ref{lem:intersection1}(2), we have that there exists $v_1\in A^{k_2+1}_{u_1}$ and $v_2 \in A^{k_2+1}_{u_2}$ such that $\{v_1,v_2\} \in E_{k_2+1}$. Applying Lemma \ref{lem:adjacent}(2), we have that either $u_1=u_2'$ or $\{u_1,u_2'\} \in E_{k_1}$. 

To prove (3), assume that $\{u_1,u_2'\} \in E_{k_1}$ and let $v_1$ and $v_2$ be as in the proof of (2). That is, $v_1 \in A^{k_2+1}_{u_1}$, $v_2\in A^{k_2+1}_{u_2}$, and $\{v_1,v_2\}\in E_{k_2+1}$. By Definition (2b) of \ref{def:combdata}, there exist $i_1,i_2,\dots \in A$ and $j_1,j_2,\dots,\in A$ such that for all $m\in\N$, $\{v_1i_1\cdots i_m, v_2j_1\cdots j_m\} \in E_{k_2+1+m}$. It follows that the words $w_1= v_1i_1i_2\cdots \in A^{\N}_{u_1}$ and $w_2= v_2j_1j_2\cdots \in A^{\N}_{u_2}$ are in $A^{\N}_{u_1}\wedge_{\mathscr{C}}A^{\N}_{u_2}$.

Suppose now that there exist two distinct $w_1',w_1 \in A^{\N}_{u_1}$ such that $w_1',w_1 \in A^{\N}_{u_1}\wedge_{\mathscr{C}}A^{\N}_{u_2}$. Let $l>k_2$ be an integer such that $w_1(l)\neq w_1'(l)$. By Definition \ref{def:combintersection}, there exist $v, v'\in A^\N_{u_2} \subseteq A^\N_{u'_2}$ such that $\{w_1(l),v\}$ and $\{w'_1(l),v'\}$ are in $E_l$. This contradicts the uniqueness statement of Lemma \ref{lem:adjacent}(3).
%Lemma \ref{lem:intersection1}(2), there exist 

Finally, for the converse of (3) simply note that if $A^{\N}_{u_1}\wedge_{\mathscr{C}}A^{\N}_{u_2}$ contains exactly two elements, then by (2), either $u_1=u_2'$, or $u_1$ is adjacent to $u_2'$. However, the former is false since in that case, by (1), $A^{\N}_{u_1}\wedge_{\mathscr{C}}A^{\N}_{u_2}$ would be an infinite set. 
\end{proof}

We now study chains, as defined in Definition \ref{def:chain} of Section \ref{sec:introcomb}. The following lemma shows that, if each $G_k$ in the combinatorial data is a combinatorial tree, that chains must respect the ``between-ness'' relation in each $G_k$.

\begin{lem}\label{lem:chain}
Suppose that each graph $G_k$ is a combinatorial tree. Let $w_1,w_2,w_3 \in A^k$, and let $w_2$ be on the unique combinatorial arc in $G_k$ that joins $w_1$ and $w_3$. If $u_1\in A^{\N}_{w_1}$ and $u_3 \in A^{\N}_{w_3}$, then for every chain $\{A^{\N}_{v_1},\dots, A^{\N}_{v_n}\}$ joining $u_1$ with $u_3$, there exists $v\in A^*$ and $i\in\{1,\dots,n\}$ such that $A^{\N}_{w_2v}\subset A^{\N}_{v_i}$.
\end{lem}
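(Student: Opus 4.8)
The statement says that any chain of word-sets joining $u_1\in A^\N_{w_1}$ to $u_3\in A^\N_{w_3}$ must, at some link, ``see'' the entire subtree hanging below $w_2$; more precisely, one of the links $A^\N_{v_i}$ contains $A^\N_{w_2 v}$ for some finite word $v$. The plan is to argue by a separation (or ``crossing'') argument: the vertex $w_2$ separates $w_1$ from $w_3$ in the combinatorial tree $G_k$, so any chain connecting the two sides must ``pass through'' $w_2$, and I will turn this topological intuition into a precise statement about the $v_i$'s using Lemmas \ref{lem:adjacent} and \ref{lem:intersection1}.

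\textbf{Key steps.} First I would fix $k$ and record the key separation fact: since $G_k$ is a combinatorial tree and $w_2$ lies on the unique arc from $w_1$ to $w_3$, removing $w_2$ from $G_k$ puts $w_1$ and $w_3$ in different components; moreover, by Lemma \ref{lem:adjacent}(2) this separation persists at every finer level $n\geq k$ — if $x\in A^n$ has $x(k)=w_1$ and $y\in A^n$ has $y(k)=w_3$, then the unique arc from $x$ to $y$ in $G_n$ must pass through some vertex $z$ with $z(k)=w_2$. Second, I would translate the chain into a ``combinatorial path'' at a sufficiently large level: given the chain $\{A^\N_{v_1},\dots,A^\N_{v_n}\}$, pick $n$ large (larger than all $|v_i|$ and than $k$, and large enough that, by Definition \ref{def:diam} or just by finiteness, the initial segments $v_i(k)$ stabilize), and use Lemma \ref{lem:intersection1}(2) to choose for each consecutive pair $v_i,v_{i+1}$ words $a_i\in A^n_{v_i}$, $b_i\in A^n_{v_{i+1}}$ with $\{a_i,b_i\}\in E_n$; concatenating the arcs inside $A^n_{v_i}$ (which are connected subgraphs by Definition \ref{def:combdata}(2a) applied iteratively) with these bridging edges produces a walk in $G_n$ from a vertex over $w_1$ to a vertex over $w_3$. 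Third, by the separation fact this walk contains a vertex $z$ with $z(k)=w_2$, and this $z$ lies in some $A^n_{v_i}$ (it is one of the ``internal'' vertices used from the $i$-th link, not a bridging artifact, since the bridging edges connect $A^n_{v_i}$ to $A^n_{v_{i+1}}$ and the endpoints of the whole walk are over $w_1\neq w_2$ and $w_3\neq w_2$). Fourth, from $z\in A^n_{v_i}$ I conclude $v_i(k) = w_2$, hence writing $v_i = w_2 v$ for the appropriate finite word $v$ (when $|v_i|\geq k$) gives $A^\N_{v_i}=A^\N_{w_2 v}\subset A^\N_{v_i}$; in the remaining case $|v_i|<k$ one has $w_2\in A^k_{v_i}$, so $A^\N_{w_2}\subset A^\N_{v_i}$ and we may take $v$ so that $w_2 v = w_2$, i.e. $v=\varepsilon$ — in either case the desired inclusion holds.

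\textbf{Main obstacle.} The delicate point is step three: ensuring the separating vertex $z$ genuinely belongs to one of the links $A^\N_{v_i}$ (not merely to the ambient graph $G_n$), and handling the possibility that a link $A^\N_{v_i}$ has $|v_i|<k$, so that $A^n_{v_i}$ straddles several level-$k$ cells. I expect to deal with this by being careful about which vertices of the walk I attribute to which link: each maximal run of the walk lying over a single $v_i$ is a path inside the connected subgraph induced by $A^n_{v_i}$, and the transitions between runs happen exactly along the edges $\{a_i,b_i\}$ produced by Lemma \ref{lem:intersection1}(2). Since $w_1\ne w_2\ne w_3$, the first and last runs lie over cells disjoint from the $w_2$-cell, so the vertex $z$ over $w_2$ must be interior to some run, and that run's link $v_i$ therefore satisfies $w_2\in A^k_{v_i}$ — equivalently $v_i(k)=w_2$ if $|v_i|\ge k$, and $v_i$ is a proper prefix of $w_2(k)$'s branch otherwise. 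Once the bookkeeping of ``which run, which link'' is pinned down, the conclusion is immediate; the rest is the routine application of the already-established lemmas on adjacency and combinatorial intersection.
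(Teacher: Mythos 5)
Your argument is correct, and it ends up in the same place as the paper's, but the route is genuinely different. The paper works directly at level $k$: it defines for each link a set $P_i\subseteq A^k$ (namely $A^k_{v_i}$ if $|v_i|<k$ and $\{v_i(k)\}$ if $|v_i|\geq k$), shows that consecutive $P_i$'s touch or are $G_k$-adjacent (using Lemmas \ref{lem:intersection1} and \ref{lem:adjacent}), and concludes that $\bigcup_i P_i$ is a connected subgraph of the tree $G_k$ containing $w_1$ and $w_3$, hence containing $w_2$. You instead go up to a large level $n$, construct an explicit walk in $G_n$ from $u_1(n)$ to $u_3(n)$ by threading together arcs inside the connected subgraphs $A^n_{v_i}$ and bridging edges from Lemma \ref{lem:intersection1}(2), invoke a separation principle to find a vertex $z$ on the walk with $z(k)=w_2$, and then locate $z$ in some $A^n_{v_i}$. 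The two proofs are dual: the paper projects the links to level $k$ first and argues connectedness there; you build connectedness at level $n$ and project down at the end. The paper's version is somewhat cleaner because it never needs to fix $n$ or construct a walk explicitly, whereas yours is more concrete and perhaps easier to visualize.

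One remark on your ``main obstacle'' paragraph: the worry that the separating vertex $z$ might not belong to any of the links $A^\N_{v_i}$ is not a real concern. By construction, every vertex of your walk lies in some $A^n_{v_i}$ --- each run is an arc inside some $A^n_{v_i}$, and the bridging endpoints $a_i$, $b_i$ lie in $A^n_{v_i}$, $A^n_{v_{i+1}}$ respectively --- so there is no ``bridging artifact'' outside the links, and the careful bookkeeping of ``which run, which link'' is unnecessary. (Also, the clause about the first and last runs lying over cells disjoint from the $w_2$-cell need not hold, e.g.\ if $|v_1|<k$ and $w_2\in A^k_{v_1}$; but in that case you are already done with $i=1$ and $v=\varepsilon$.) Apart from this misplaced worry and the minor notational collision of using $n$ for both the chain length and the level, the argument is sound.
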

\begin{proof}
We may assume that the three words $w_1, w_2, w_3$ are distinct, otherwise the lemma is trivial.

As a start, we note that $u_1$ has an initial $w_1$ substring and an initial $v_1$ substring, so either $v_1$ is an initial substring of $w_1$ or vice versa. A similar consideration applies to $u_3$, $v_n$, and $w_3$.

For each $i\in 1,\dots, n$, we define a subset $P_i \subseteq A^k = V(G_k)$ as follows: If $|v_i|<k$, then let $P_i = A^k_{v_i}$. If $|v_i|\geq k$, then let $P_i = \{v_i(k)\}$. In either case, $P_i$ induces a connected subgraph of $G_k$.

\begin{claim}
$P_1$ contains $w_1$ and $P_n$ contains $w_3$.
\end{claim}
\begin{proof} If $|v_1|<k$, then $v_1$ is an initial substring of $w_1$, and so $P_1 = A^k_{v_1} \ni w_1$. If $|v_1|\geq k$, then $w_1=v_1(k)\in P_1$.

By the same argument, $P_n$ contains $w_3$.
\end{proof}

\begin{claim}
For each $i\in \{1,\dots, n-1\}$, either $P_i \cap P_{i+1}\neq \emptyset$ or there is an edge $\{a,b\}\in E_k$ with $a\in P_i$ and $b\in P_{i+1}$. 
\end{claim}
\begin{proof}
Assume without loss of generality that $|v_i|\geq |v_{i+1}|$. Since $\{v_i\}$ is a chain, $A^\N_{v_i} \wedge_{\mathscr{C}} A^\N_{v_{i+1}}\neq \emptyset$.

Case 1: If $|v_{i+1}| \leq |v_i| < k$, then $P_i=A^k_{v_i}$ and $P_{i+1}=A^k_{v_{i+1}}$. These contain adjacent elements by Lemma \ref{lem:intersection1}(2).

Case 2: If $k \leq |v_{i+1}| \leq |v_i|$, then $P_i=\{v_i(k)\}$ and $P_{i+1}=\{v_{i+1}(k)\}$. By Lemma \ref{lem:intersection1}(3) and Lemma \ref{lem:adjacent}(2), the elements $v_i(k)$ and $v_{i+1}(k)$ are either equal or adjacent in $G_k$.

Case 3: If $|v_{i+1}| < k \leq |v_i|$,  then $P_i=\{v_i(k)\}$ and $P_{i+1}=A^k_{v_{i+1}}$. 
%We have $|v_i|=|v_{i+1}v'|$ for some $v'\in A^*$. 
If $v_i(k) \in A^{k}_{v_{i+1}}$, then clearly $P_i \subseteq P_{i+1}$. Otherwise, since $A^{\N}_{v_1} \wedge_{\mathscr{C}}A^{\N}_{v_{i+1}} \neq \emptyset$, by Lemma \ref{lem:adjacent}(2), there exist $j,l \in A$ and $v' \in A^{|v_i|-|v_{i+1}|}$ such that $v_ij$ is adjacent to $v_iv'l$, and since $v_i(k) \not\in A^{k}_{v_{i+1}}$, we have by Lemma \ref{lem:adjacent}(2) that $v_i(k)$ (which is in $P_i$) is adjacent to $(v_{i+1}v')(k)$ (which is in $P_{i+1}$). 
%Lemma \ref{lem:intersection1}(2) and Lemma \ref{lem:adjacent}(2) say that $v_i(k)$ is adjacent to $(v_{i+1}v')(k)$. Since $|v_{i+1}|<k$, the vertex $(v_{i+1}v')(k)\in A^k_{v_{i+1}} = P_{i+1}$.
This completes the proof of the claim.
\end{proof}

Thus, the union of the sets $P_1, P_2, \dots, P_n$ induces a connected subgraph of $G_k$ that contains $w_1$ and $w_3$. It therefore must contain $w_2$, so $w_2\in P_i$ for some $i$.

If $|v_i|< k$, then this means that $w_2\in P_i = A^k_{v_i}$. Thus $A^\N_{w_2}\subseteq A^\N_{v_i}$, which proves the lemma in this case.

If $|v_i|\geq k$, then  $w_2\in P_i = \{v_i(k)\}$. Thus, $w_2v=v_i$ for some word $v$, which proves the lemma in this case.
\end{proof}

\subsection{Diameter functions and metrics}\label{sec:diameter}
Recall the notion of a diameter function $\D$ on an alphabet $A$ (and the class $\mathscr{D}(A)$ of all diameter functions on $A$) from Definition \ref{def:diam}. \textbf{For the remainder of Section \ref{sec:combtrees}, we fix a diameter function $\D\in \mathscr{D}(A)$.}

Given $\mathscr{C}$ and $\D$, we defined the distance $D_{\mathscr{C},\D}$ on $A^\N$ in \eqref{eq:pseudometric} by taking an infimum over chains. We first prove that $D_{\mathscr{C},\D}$ is indeed a pseudometric as claimed.

\begin{lem}\label{lem:pseudometric}
The function $D_{\mathscr{C},\D}$ is a pseudometric on $A^\mathbb{N}$.
\end{lem}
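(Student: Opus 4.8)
The plan is to verify the three pseudometric axioms for $D_{\mathscr{C},\D}$ directly from the definition \eqref{eq:pseudometric} as an infimum of chain costs $\sum_{i=0}^N \Delta(v_i)$.

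\textbf{Non-negativity and the diagonal.} Each $\Delta(v_i) \in [0,1]$ is non-negative, so $D_{\mathscr{C},\D}(w,u) \geq 0$ always. To see $D_{\mathscr{C},\D}(w,w) = 0$, I would use property \eqref{eq:diam4} of the diameter function: for each $n$, the single-word chain $\{A^\N_{w(n)}\}$ joins $w$ with itself (since $w \in A^\N_{w(n)}$ trivially, and a one-element chain vacuously satisfies the adjacency condition in Definition \ref{def:chain}), so $D_{\mathscr{C},\D}(w,w) \leq \Delta(w(n)) \leq \max\{\Delta(v): v\in A^n\} \to 0$ as $n\to\infty$.

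\textbf{Symmetry.} This follows because combinatorial intersection $\wedge_{\mathscr{C}}$ is visibly symmetric in its two arguments (the defining condition \eqref{eq:combintersection}, and Definition \ref{def:combintersection}, treat $u_1$ and $u_2$ symmetrically), so reversing a chain joining $w$ with $u$ produces a chain joining $u$ with $w$ of the same cost.

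\textbf{Triangle inequality.} Given $w, u, v \in A^\N$ and $\e > 0$, pick a chain $\{A^\N_{v_0},\dots,A^\N_{v_N}\}$ joining $w$ with $v$ of cost at most $D_{\mathscr{C},\D}(w,v) + \e$, and a chain $\{A^\N_{v'_0},\dots,A^\N_{v'_M}\}$ joining $v$ with $u$ of cost at most $D_{\mathscr{C},\D}(v,u) + \e$. The concatenation $\{A^\N_{v_0},\dots,A^\N_{v_N},A^\N_{v'_0},\dots,A^\N_{v'_M}\}$ is again a chain joining $w$ with $u$: we have $w \in A^\N_{v_0}$, $u \in A^\N_{v'_M}$, consecutive intersections within each piece hold by hypothesis, and at the junction $v \in A^\N_{v_N} \cap A^\N_{v'_0}$. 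The one point needing a small observation is that $v \in A^\N_{v_N} \cap A^\N_{v'_0}$ forces $A^\N_{v_N} \wedge_{\mathscr{C}} A^\N_{v'_0} \neq \emptyset$: indeed $v_N$ and $v'_0$ are then both initial segments of $v$, so one contains the other, and by Lemma \ref{lem:intersection2}(1) the smaller word set's associated $A^\N$ is contained in the combinatorial intersection — or even more simply, for each $n$ one may take $w_1 = w_2 = v(n)$, which lies in both $A^n_{v_N}$ and $A^n_{v'_0}$, and a vertex is trivially "adjacent" to itself in the sense needed for \eqref{eq:combintersection} once one checks the definition permits $w_1 = w_2$; if not, one instead invokes connectivity of the induced subgraph on $A^{n}_{v_N}$ via Definition \ref{def:combdata}\eqref{eq:combdata2a}. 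The cost of the concatenated chain is at most $D_{\mathscr{C},\D}(w,v) + D_{\mathscr{C},\D}(v,u) + 2\e$, and letting $\e \to 0$ gives the triangle inequality.

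The only mild subtlety — and the step I would be most careful about — is the junction argument in the triangle inequality: one must check that sharing a common point of $A^\N$ genuinely implies combinatorial intersection. This is essentially immediate from Definition \ref{def:combintersection} (or Lemma \ref{lem:intersection2}(1)) since overlapping word sets are nested, but it is worth stating explicitly rather than leaving to the reader. Everything else is a routine unwinding of definitions.
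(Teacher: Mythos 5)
Your proof matches the paper's argument essentially step for step: the single-word chain $\{A^\N_{w(n)}\}$ plus Definition \ref{def:diam}(3) gives $D_{\mathscr{C},\D}(w,w)=0$, symmetry is immediate from reversibility of chains, and the triangle inequality comes from concatenating near-optimal chains, with Lemma \ref{lem:intersection2}(1) supplying the junction compatibility $A^\N_{v_N}\wedge_{\mathscr{C}}A^\N_{v'_0}\neq\emptyset$ from the nesting of the two prefixes of $v$. The one aside worth dropping is the ``even more simply'' self-adjacency route: the edge set of a combinatorial graph explicitly excludes loops ($E\subset\{\{v,v'\}:v\neq v'\}$), so $w_1=w_2$ cannot witness \eqref{eq:combintersection}; but you already hedged and your primary argument via Lemma \ref{lem:intersection2}(1) is exactly what the paper uses, so the proof stands.
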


\begin{proof}
First, notice that for any $w\in A^{\N}$ and any $n\in\N$, $\{A_{w(n)}^{\N}\}$ is a chain that joins $w$ with $w$. Thus,
$$D_{\mathscr{C},\D}(w,w) \leq \D(w(n)) \leq \max_{v\in A^n}\D(v),$$
which vanishes as $n\to \infty$ by property \eqref{eq:diam4} in Definition \ref{def:diam}. Hence, $D_{\mathscr{C},\D}(w,w)=0$.

The symmetry of $D_{\mathscr{C},\D}$ is trivial, as any chain joining $w$ with $u$ is also a chain joining $u$ with $w$.

For the triangle inequality, fix $\e>0$. Let $\{A_{w_1}^{\N},\dots,A_{w_n}^{\N}\}$ be a chain joining $w$ with $u$ and let $\{A_{u_1}^{\N},\dots,A_{u_m}^{\N}\}$ be a chain joining $u$ with $v$ such that
\[ \sum_{i=1}^n \D(w_i) < \frac{\e}{2} + D_{\mathscr{C},\D}(w,u) \qquad\text{and}\qquad  \sum_{j=1}^n \D(u_j) < \frac{\e}{2} + D_{\mathscr{C},\D}(u,v).\]
By Lemma \ref{lem:intersection2}(1), we have that $A^{\N}_{w_n}\wedge_{\mathscr{C}}A^{\N}_{u_1} \neq \emptyset$, and so $\{A_{w_1}^{\N},\dots,A_{w_n}^{\N},A_{u_1}^{\N},\dots,A_{u_m}^{\N}\}$ is a chain joining $w$ with $v$. Thus, $D_{\mathscr{C},\D}(w,v) \leq D_{\mathscr{C},\D}(w,u) + D_{\mathscr{C},\D}(u,v) + \e$. As $\e$ was chosen arbitrarily, the lemma follows.
\end{proof}

We now describe more precisely the associated metric space $(\mathcal{A}, d_{\mathscr{C},\D})$ associated to a given combinatorial data $\mathscr{C}$ and diameter function $\D$ on $A$, introduced briefly in Section \ref{sec:introcomb}.

To turn $D_{\mathscr{C},\D}$ into a metric, we define a relation on $A^{\N}$. In particular, we write $w\sim u$ (for convenience we drop the dependence on $\mathscr{C},\D$) if and only if $D_{\mathscr{C},\D}(w,u)=0$. Since $D_{\mathscr{C},\D}$ is a pseudometric, it follows that $\sim$ is an equivalence relation. Using this identification, we define
\[ \mathcal{A}= A^{\N}/\sim \qquad\text{and}\qquad \mathcal{A}_w= A^{\N}_w/\sim \quad\text{for each $w\in A^*$}.\]
Based on $D_{\mathscr{C},\D}$, we define a function $d_{\mathscr{C},\D}$ on $\mathcal{A}\times\mathcal{A}$ in the usual way: if $[w],[u] \in \mathcal{A}$, then set
\[ d_{\mathscr{C},\D}([w],[u]) := D_{\mathscr{C},\D}(w,u).\]

The function $d_{\mathscr{C},\D}$ is well-defined. To see why this is true, let $w,w',u, \in A^{\N}$ such that $[w]=[w']$. By Lemma \ref{lem:pseudometric} we have that $D_{\mathscr{C},\D}(w,u) \leq D_{\mathscr{C},\D}(w,w') + D_{\mathscr{C},\D}(w',u) = D_{\mathscr{C},\D}(w',u)$. Similarly, $D_{\mathscr{C},\D}(w',u) \leq D_{\mathscr{C},\D}(w,u)$ and, thus, $D_{\mathscr{C},\D}(w',u) = D_{\mathscr{C},\D}(w,u)$.

\begin{lem}\label{lem:dist}
The function $d_{\mathscr{C},\D}$ is a metric on $\mathcal{A}$ and for each $w \in A^*$, $\diam{\mathcal{A}_{w}} \leq \D(w)$.
\end{lem}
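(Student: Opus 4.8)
The plan is to verify the two metric axioms that are not already covered by Lemma~\ref{lem:pseudometric} — namely positivity (if $d_{\mathscr{C},\D}([w],[u]) = 0$ then $[w]=[u]$) — and then the diameter bound.

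\textbf{Positivity.} Since $D_{\mathscr{C},\D}$ is a pseudometric (Lemma~\ref{lem:pseudometric}) and $d_{\mathscr{C},\D}$ is well-defined on the quotient (as shown above), the function $d_{\mathscr{C},\D}$ is automatically nonnegative, symmetric, and satisfies the triangle inequality on $\mathcal A$. The only thing to check is that it separates points of $\mathcal A$. But this is immediate from the definition of the equivalence relation: $d_{\mathscr{C},\D}([w],[u]) = D_{\mathscr{C},\D}(w,u) = 0$ means exactly $w\sim u$, i.e.\ $[w]=[u]$. So $d_{\mathscr{C},\D}$ is a genuine metric with essentially no further work.

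\textbf{The diameter bound.} Fix $w\in A^*$ and let $[u],[v]\in\mathcal A_w$, so we may choose representatives $u,v\in A^\N_w$. The single-link chain $\{A^\N_w\}$ joins $u$ with $v$: indeed $u\in A^\N_w$ and $v\in A^\N_w$, and a chain of length one imposes no intersection condition. Hence by definition of the infimum in \eqref{eq:pseudometric},
\[
d_{\mathscr{C},\D}([u],[v]) = D_{\mathscr{C},\D}(u,v) \le \Delta(w).
\]
Taking the supremum over all such $[u],[v]$ gives $\diam\mathcal A_w \le \Delta(w)$.

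I do not anticipate any real obstacle here; the statement is essentially a bookkeeping consequence of the constructions already set up. The only point requiring a moment's care is confirming that a length-one chain $\{A^\N_w\}$ is legitimately a chain joining any two elements of $A^\N_w$ — this follows directly from Definition~\ref{def:chain}, in which the intersection conditions range over $i=1,\dots,n-1$ and so are vacuous when $n=1$.
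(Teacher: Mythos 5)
Your proof is correct and follows essentially the same route as the paper's: the metric axioms inherit from the pseudometric and the quotient construction, with definiteness following immediately from how $\sim$ is defined, and the diameter bound comes from the single-link chain $\{A^\N_w\}$. The one small point you singled out for care — that a length-one chain is legitimately a chain — is exactly the observation the paper implicitly relies on, so no gap.
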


\begin{proof}
We first show that $d_{\mathscr{C},\D}$ is a metric. It is clear that $d_{\mathscr{C},\D}$ is non-negative, symmetric and $d_{\mathscr{C},\D}([w],[u]) = 0$ if and only if $[w]= [u]$ in $\mathcal{A}$. The triangle inequality follows from Lemma \ref{lem:pseudometric}. 

Let $w\in A^*$ and $[u_1],[u_2]\in \mathcal{A}_w$. We may choose $u_1$ and $u_2$ in $A_w$. The set $\{A^{\N}_w\}$ is then a chain joining $u_1$ with $u_2$ and $d_{\mathscr{C},\D}([u_1], [u_2]) \leq \D(w)$. Therefore, $\diam{\mathcal{A}_{w}^{\N}} \leq \D(w)$.
\end{proof}

We use standard metric space terminology when discussing $(\mathcal{A}, d_{\mathscr{C},\D})$. In particular, if $[w]\in\mathcal{A}$ and $r>0$, we write $B([w],r)$ for the open ball centered at $[w]$ of radius $r$ in this space.

\subsection{Bounded turning spaces}\label{sec:BTtrees}

We now work towards the following proposition, which proves parts \eqref{eq:maincomb1} and \eqref{eq:maincomb2} of Theorem \ref{thm:maincombthm}.

\begin{prop}\label{prop:comb-tree}
The metric space $(\mathcal{A},d_{\mathscr{C},\D})$ is compact, path-connected, and $1$-bounded turning. Moreover, if each combinatorial graph $G_k$ is a combinatorial tree, then the metric space is a tree.
\end{prop}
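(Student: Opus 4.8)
The goal is to establish, in order: (i) compactness of $(\mathcal{A},d_{\mathscr{C},\D})$, (ii) path-connectedness, (iii) the $1$-bounded turning property, and (iv) the tree property when all $G_k$ are combinatorial trees. I would organize the argument around the basic estimate $\diam \mathcal{A}_w \leq \D(w)$ from Lemma \ref{lem:dist}, together with the observation that, by Definition \ref{def:combdata}\eqref{eq:combdata2a} and a telescoping chain argument, points whose word-prefixes agree to level $n$ are automatically close: if $w(n)=u(n)$ then $d_{\mathscr{C},\D}([w],[u]) \leq \D(w(n)) \leq \max_{v\in A^n}\D(v) \to 0$.

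\emph{Compactness.} I would first show $(\mathcal{A},d_{\mathscr{C},\D})$ is totally bounded: given $\e>0$, pick $n$ with $\max_{v\in A^n}\D(v)<\e$; by Definition \ref{def:diam}\eqref{eq:diam3} only finitely many words $v\in A^n$ have $\D(v)>0$, and letting $A_0\subseteq A$ collect the finitely many letters that ever appear with positive diameter in the first $n$ levels, every point of $\mathcal{A}$ lies within $\e$ of some $[\bar v]$ with $\bar v$ a concatenation over $A_0$ — wait, more carefully: any $[w]\in\mathcal{A}$ satisfies $d([w],[w(n)111\cdots])\le \D(w(n))$, and there are only finitely many distinct values of $[w(n)111\cdots]$ with $\D(w(n))>0$, while all words with $\D(w(n))=0$ collapse to a bounded-diameter set; hence finitely many $\e$-balls cover $\mathcal{A}$. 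Completeness follows because a Cauchy sequence in $\mathcal{A}$ can be shown, via the prefix estimate, to have representatives whose prefixes stabilize level by level (passing to a subsequence), producing a limit word; then compactness follows from completeness plus total boundedness.

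\emph{Path-connectedness and $1$-bounded turning.} These I would prove together. Given $[w],[u]\in\mathcal{A}$, set $R = d_{\mathscr{C},\D}([w],[u])$. For bounded turning with constant $1$, I must produce, for every $\e>0$, a connected set containing $[w]$ and $[u]$ of diameter $\le R+\e$ (this gives a continuum of diameter $\le R+\e$ for every $\e$, hence by compactness one of diameter $\le R$; alternatively one argues the closure of the union works directly). The natural candidate: take a near-optimal chain $\{A^\N_{v_0},\dots,A^\N_{v_N}\}$ joining $w$ to $u$ with $\sum \D(v_i) < R+\e$, and consider $\Gamma = \bigcup_{i=0}^N \mathcal{A}_{v_i}$. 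Each $\mathcal{A}_{v_i}$ is connected — indeed path-connected — by induction, because $\mathcal{A}_{v} = \bigcup_{a\in A}\mathcal{A}_{va}$ is a nested union of overlapping-or-chained connected pieces (using Definition \ref{def:combdata}\eqref{eq:combdata2a} so the children's graph is connected, hence consecutive children are chained, hence their images intersect or are connected through intermediate children), and one passes to the limit using that $\diam\mathcal{A}_{va}\to 0$; this is the standard ``connectedness of the attractor'' argument. Consecutive sets $\mathcal{A}_{v_i}$ and $\mathcal{A}_{v_{i+1}}$ intersect in $\mathcal{A}$ because $A^\N_{v_i}\wedge_{\mathscr{C}} A^\N_{v_{i+1}}\neq\emptyset$, so $\Gamma$ is connected; and $\diam \Gamma \le \sum_i \diam\mathcal{A}_{v_i} \le \sum_i \D(v_i) < R+\e$. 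Finally, $\overline{\Gamma}$ is a continuum of diameter $\le R + \e$ containing both points; taking $\e\to 0$ and using compactness (nested intersection of these continua, or a limiting argument) gives a continuum of diameter $\le R$, which is $1$-bounded turning. Path-connectedness of $\mathcal{A}$ itself follows since $\mathcal{A} = \mathcal{A}_\varepsilon$ and that set is path-connected by the same inductive argument.

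\emph{The tree property.} Assume each $G_k$ is a combinatorial tree. A compact, connected, locally connected metric space is a (metric) tree iff it contains no simple closed curve; equivalently, it is uniquely arcwise connected. I would show: any two distinct $[w],[u]$ are joined by a \emph{unique} arc. Existence follows from path-connectedness plus the fact that a path-connected, compact, locally connected space is arcwise connected (local connectedness of $\mathcal{A}$ itself needs a brief check, again from the prefix estimate — small balls contain sets of the form $\mathcal{A}_v$ which are connected). For uniqueness, suppose there were a simple closed curve $J\subseteq\mathcal{A}$. Pick three distinct points on $J$ with representatives $w^{(1)},w^{(2)},w^{(3)}$; choose $k$ large enough that the three prefixes $w^{(j)}(k)$ are distinct (possible since the points are distinct and have positive distance). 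In the combinatorial tree $G_k$ there is a unique point $c\in A^k$ lying on the arc between each pair of the $w^{(j)}(k)$'s — the "median" — and in particular $c$ separates them (removing $c$ puts each of the three prefixes in a distinct component, or at least two of them get separated from the third). By Lemma \ref{lem:chain}, \emph{any} chain joining $u^{(i)}\in A^\N_{w^{(i)}(k)}$ to $u^{(j)}\in A^\N_{w^{(j)}(k)}$ must pass through a set $A^\N_{cv}\subseteq A^\N_{v_\ell}$; hence any path in $\mathcal{A}$ between any two of the three points meets $\mathcal{A}_c$ (approximating the path by chains, or using that arcs project to chains up to refinement). Since a simple closed curve through three points gives two internally-disjoint arcs between some pair, but both must pass through the "separating" set $\mathcal{A}_c$ whose diameter $\le \D(c)$ is small — iterating this for larger and larger $k$ forces the two arcs to agree on an increasing nested sequence of small sets, contradicting that they are disjoint (or: shrinking $\mathcal{A}_c$ to a point in the limit shows any arc between two of the points passes through a common point, ruling out the disjoint arcs of a Jordan curve). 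This should be reworked into the cleaner statement: unique arc-connectedness, proved by showing that if $[w]\ne[u]$ and $\alpha,\beta$ are two arcs from $[w]$ to $[u]$, then for each $k$ both are forced through the median set $\mathcal{A}_{c_k}$ of the three relevant prefixes of a hypothetical branch point, and letting $k\to\infty$ collapses the supposed difference.

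\emph{Main obstacle.} The routine parts are compactness and the $1$-bounded-turning/connectedness estimates, which are direct consequences of $\diam\mathcal{A}_w\le\D(w)$ and the chain definition. The genuinely delicate step is the tree property: Lemma \ref{lem:chain} is stated for chains, not for topological arcs, so the real work is transferring the combinatorial betweenness it provides into a topological separation statement about $\mathcal{A}$ (e.g., that $\mathcal{A}_c$ separates $\mathcal{A}_{w_1}$ from $\mathcal{A}_{w_3}$ inside $\mathcal{A}$), and then running a limiting argument over $k\to\infty$ to conclude unique arcwise connectedness. I would expect to spend most of the proof making that transfer precise and handling the edge cases where word-prefixes nest rather than branch.
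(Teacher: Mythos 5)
Your plan covers all four claims, but there is a real gap in the compactness step, and the connectedness and tree steps remain sketchy exactly where the paper does the hard work.

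For compactness, the inference that ``by Definition \ref{def:diam}\eqref{eq:diam3} only finitely many words $v\in A^n$ have $\D(v)>0$'' is false when $A=\mathbb{N}$: that axiom only controls the children of a \emph{fixed} parent, and since $\D$ need not be monotone there can be infinitely many $w\in A^{n-1}$ with $\D(w)=0$ but $\D(wi)>0$ for some $i$, so $\{v\in A^n:\D(v)>0\}$ can be infinite. The clause ``all words with $\D(w(n))=0$ collapse to a bounded-diameter set'' does not rescue this, since those cells are single points that could a priori be infinitely many and $\epsilon$-separated. What is actually true --- and this is the content of the paper's Lemma \ref{lem:cover} --- is that whenever $\diam(\mathcal{A}_w)>0$, the cell $\mathcal{A}_w$ is already covered by $\{\mathcal{A}_{wi}:\D(wi)>0\}$. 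Iterating produces a \emph{recursively defined} finite set of words at each level whose cells cover $\mathcal{A}$, which is what total boundedness (or the paper's diagonal sequential-compactness argument) actually needs. Without this lemma the compactness claim is unproved.

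For path-connectedness, the ``inductive argument'' that each $\mathcal{A}_v$ is path-connected as a nested union of overlapping connected pieces has no base case, and the passage to the limit over a possibly infinite union is exactly the nontrivial step, not a routine one. The paper avoids this by proving $\epsilon$-chain connectedness of closed balls (Lemma \ref{lem:discreteBT}), which yields connectedness and local connectedness via a Whyburn criterion, and then path-connectedness via Hahn--Mazurkiewicz; bounded turning then comes from a Hausdorff limit of connected $\epsilon$-thickenings. I'd adopt that scaffolding rather than try to make the IFS-attractor heuristic rigorous. (Your observation that chained cells meet in $\mathcal{A}$ is correct --- the construction in the proof of Lemma \ref{lem:intersection1} gives $w_1,w_2$ with $w_1(n)$ adjacent to $w_2(n)$ at every level, forcing $[w_1]=[w_2]$ --- and the estimate $\diam\Gamma\le\sum\D(v_i)$ is fine.)

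For the tree property you correctly flag the delicate step, but leave it as a sketch. The paper's route avoids the Jordan-curve contradiction: from the combinatorial arcs in $T_n$ joining $w_1(n)$ to $w_2(n)$ it builds a decreasing sequence of compact tile-unions $K_n$, shows $K=\bigcap_n K_n$ contains a continuum joining $[w_1]$ and $[w_2]$, and --- using Lemma \ref{lem:tree1} to convert combinatorial separation of $T_n\setminus\{v_{n,i}\}$ into topological separation of $\mathcal{A}$ --- shows every path from $[w_1]$ to $[w_2]$ contains $K$, giving unique arcwise connectedness directly. That separation lemma is precisely the ``transfer'' you identify as the main obstacle; it is not optional, and if you pursue your median-plus-contradiction plan you would still need to supply it.
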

(Here we are using the shorthand ``$C$-bounded turning'' for ``bounded turning with constant $C$''.)

The separate statements of Proposition \ref{prop:comb-tree} are proven in Lemmas \ref{lem:compact}, \ref{lem:connected}, \ref{lem:bt}, and \ref{lem:tree}.

\begin{lem}\label{lem:cover}
Fix $w\in A^*$. Let
$$ I = \{i\in A: \Delta(wi)>0\}.$$
If $\diam(\mathcal{A}_w)>0$, then
$$ \mathcal{A}_w \subseteq \bigcup_{i\in I} \mathcal{A}_{wi}.$$
\end{lem}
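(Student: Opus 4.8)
The statement to prove is Lemma~\ref{lem:cover}: for fixed $w\in A^*$, writing $I = \{i\in A : \Delta(wi)>0\}$, if $\diam(\mathcal{A}_w)>0$ then $\mathcal{A}_w \subseteq \bigcup_{i\in I}\mathcal{A}_{wi}$. The plan is to show that any $[u]\in\mathcal{A}_w$ with $u\in A^\N_w$ in fact lies in $\mathcal{A}_{wi}$ for some $i\in I$, by finding a point $u'\in A^\N_{wi}$ with $D_{\mathscr{C},\D}(u,u')=0$. The key observation is that if the letter $i = u(|w|+1)$ (the letter of $u$ right after the prefix $w$) already has $\Delta(wi)>0$, then $u\in A^\N_{wi}$ already and we are done immediately. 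So the only issue is the case where $\Delta(wu(|w|+1))=0$, i.e. $u$ passes through a ``degenerate'' child of $\mathcal{A}_w$.

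\textbf{Main step.} Suppose $\Delta(wj)=0$ where $j = u(|w|+1)$. First note that since $\diam(\mathcal{A}_w)>0$, property~\eqref{eq:diam3} in Definition~\ref{def:diam} guarantees $I$ is a finite nonempty set (it is nonempty because if every $\Delta(wi)=0$ then by Lemma~\ref{lem:dist}, $\diam(\mathcal{A}_w)\le\Delta(w)$... actually one needs the slightly more careful argument that $\mathcal{A}_w = \bigcup_i \mathcal{A}_{wi}$ as sets of words and each $\mathcal{A}_{wi}$ would be a single point, and the union of finitely many points at pairwise distance $0$ would force $\diam=0$; this requires knowing the $\mathcal{A}_{wi}$ cover $\mathcal{A}_w$, which is essentially what we are proving, so care with the logic is needed — better to argue directly). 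The cleaner route: I will show that for the degenerate letter $j$, the word $wj$ has $\diam(\mathcal{A}_{wj})=0$ (immediate from Lemma~\ref{lem:dist} since $\diam(\mathcal{A}_{wj})\le\Delta(wj)=0$), so $\mathcal{A}_{wj}$ is a single point $[u]$. Now by Definition~\ref{def:combdata}\eqref{eq:combdata2a}, the subgraph of $G_{|w|+1}$ induced by $\{wi : i\in A\}$ is connected. Starting from the vertex $wj$ and walking along a path in this subgraph toward any vertex $wi$ with $i\in I$ (such a vertex exists as argued), let $wj = wi_0, wi_1, \dots, wi_m = wi$ be a simple path with $i_0=j\notin I$ and $i_m = i\in I$; let $\ell$ be the smallest index with $i_\ell\in I$, so $wi_{\ell-1}$ and $wi_\ell$ are adjacent in $G_{|w|+1}$, $\Delta(wi_{\ell-1})=0$, and $\Delta(wi_\ell)>0$.

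\textbf{Chaining.} Along this edge $\{wi_{\ell-1},wi_\ell\}\in E_{|w|+1}$, Lemma~\ref{lem:intersection2}(1) and the definition of combinatorial intersection give that $A^\N_{wi_{\ell-1}}\wedge_{\mathscr C} A^\N_{wi_\ell}\ne\emptyset$; more precisely the chain $\{A^\N_{wi_0},A^\N_{wi_1},\dots,A^\N_{wi_\ell}\}$ joins any point of $A^\N_{wj}$ to some point of $A^\N_{wi_\ell}$, and its total cost is $\sum_{t=0}^\ell \Delta(wi_t)$. Since $\Delta(wi_t)=0$ for all $t<\ell$ (each such letter is not in $I$ by minimality of $\ell$ — wait, I only know $i_0\notin I$; I actually need all intermediate letters to be degenerate, which is NOT guaranteed). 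This is the obstacle I must handle: the path from $wj$ to a non-degenerate child may pass through other non-degenerate children, inflating the cost. The fix: I do not need the cost to be zero — I need it to be arbitrarily small. Replace the one-step argument by iterating at \emph{all deeper levels}. Concretely, for each $n\ge |w|+1$ build a chain from $u$ to a point of $\mathcal{A}_{wi(n)}$ for some $i\in I$ by: descend $u$ to level $n$ getting word $u(n)$ with $\Delta(u(n))=0$ (as $u(n)\in A^\N_{wj}$-type degenerate branch, so $\Delta(u(n))\le\Delta(wj)=0$ by monotonicity... need $\Delta(wv)\le\Delta(w)$, which follows since each child diameter is $\le$ parent's: from~\eqref{eq:diam4}-type reasoning or directly $\Delta$ need not be monotone — so instead use: $\diam\mathcal{A}_{u(n)}\le\Delta(u(n))$ and also $\mathcal{A}_{u(n)}\subseteq\mathcal{A}_{wj}$ which is a point, hence harmless). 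Then use connectedness of the induced subgraph of $G_n$ on $\{u(n-1)i:i\in A\}$ to walk to a sibling; summing costs over the walk at level $n$ gives total $\le (\text{number of vertices at level }n\text{ in that sibling-group})\cdot\max_{v\in A^n}\Delta(v)$, which by~\eqref{eq:diam4} tends to $0$ — but the vertex count may be infinite if $A=\N$. Here property~\eqref{eq:diam3} saves us: only finitely many siblings have positive $\Delta$, and the connected subgraph restricted to ``the degenerate ones plus a path to a positive one'' can be taken with total cost $\to 0$.

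\textbf{Honest summary of the hard part.} The genuine difficulty is producing, for $[u]\in\mathcal{A}_w$ lying in a degenerate sub-branch, a sequence of points $[u_n]$ with $[u_n]\in\mathcal{A}_{wi_n}$, $i_n\in I$, and $d_{\mathscr C,\D}([u],[u_n])\to0$, and then using finiteness of $I$ together with compactness/closedness of the finitely many sets $\mathcal{A}_{wi}$, $i\in I$, to conclude $[u]\in\bigcup_{i\in I}\mathcal{A}_{wi}$. The point-set step at the end is clean once one knows each $\mathcal{A}_{wi}$ ($i\in I$) is closed (it is compact, being a continuous image of the closed set $A^\N_{wi}$, itself compact when $A$ is finite; when $A=\N$ one uses~\eqref{eq:diam3} to reduce to finitely many letters) and $I$ is finite, so the finite union is closed and contains the limit. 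The cost-estimation step is where one must be careful to route the chain only through children whose $\Delta$-values shrink, invoking~\eqref{eq:diam3} and~\eqref{eq:diam4} of Definition~\ref{def:diam} at every level.
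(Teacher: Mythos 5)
Your opening move is the same as the paper's: if $j=u(|w|+1)\in I$ you are done, and otherwise you walk from $wj$ to some $wi$ with $i\in I$ along a path in the (connected, by Definition~\ref{def:combdata}\eqref{eq:combdata2a}) subgraph of $G_{|w|+1}$ induced by $\{wi:i\in A\}$. But then you derail on a worry that is not actually an obstruction, and the rest of the argument never closes.

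The worry you flag --- ``the path from $wj$ to a non-degenerate child may pass through other non-degenerate children, inflating the cost'' --- evaporates if you simply \emph{truncate the path at the first $I$-vertex it hits}. Any simple path in the induced subgraph from $wj$ to a vertex in $\{wi:i\in I\}$ (which exists since $I\neq\emptyset$ and the subgraph is connected) has a first vertex $wi_\ell\in I$; truncate there to get $u_1=wj,\,u_2,\dots,u_n=wi_\ell$ with $u_1,\dots,u_{n-1}\notin\{wi:i\in I\}$, hence $\D(u_1)=\cdots=\D(u_{n-1})=0$. This choice is exactly what the paper makes, and it removes the need for the elaborate ``iterate at all deeper levels'' scheme, which you sketch but never actually carry out (you never estimate the cost of a walk at level $n$, and that estimate would require bounding the length of the walk, which you do not do). Once you have the truncated path, the clean finish is: use Lemma~\ref{lem:adjacent}(1) to extend $u_n$ to an infinite word $u\in A^\N_{u_n}$ whose finite prefixes $u(k+m)$ remain adjacent to descendants of $u_{n-1}$; then for every $m$ the collection $\{A^\N_{u_1},\dots,A^\N_{u_{n-1}},A^\N_{u(k+m)}\}$ is a chain from $v$ to $u$ of total cost $\D(u(k+m))$, and this tends to $0$ by Definition~\ref{def:diam}\eqref{eq:diam4}. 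Hence $D_{\mathscr{C},\D}(v,u)=0$, i.e.\ $[v]=[u]\in\mathcal{A}_{wi_\ell}$ --- no compactness/closedness argument about a limiting sequence $[u_n]$ is needed, because the target word $u$ is fixed and only the depth $m$ of the last chain element varies. In short: the missing idea is to truncate the sibling path at the first non-degenerate vertex so that \emph{only the endpoint} of the chain carries nonzero cost; without that, your cost estimate does not go through and the proof is incomplete.
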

\begin{proof}
The assumption that $\diam(\mathcal{A}_w)>0$ implies that $I$ is non-empty. Let $k=|w|$.

Consider any $[v]\in\cA_w$; without loss of generality, $v(k)=w$. We will show that $[v]=[u]$ for some $u\in \cup_{i\in I} A^\N_{wi}$. If $v(k+1)\in \{wi:i\in I\}$, then we are done, so suppose it is not. Then there is a simple path
$$ u_1, u_2, \dots, u_n$$
in the combinatorial tree $G_{k+1}$ such that $u_1=v(k+1)$, $u_n=wi$ for some $i\in I$, and $u_j\notin \{wi:i\in I\}$ for $1\leq j \leq n-1$.

By Lemma \ref{lem:adjacent}, there is $u\in A^{\N}_{u_n}$ such that for each $m$, either $u(k+m) \in A_{u_{n-1}}^{k+m}$, or $u(k+m)$ is adjacent to some element of $A_{u_{n-1}}^{k+m}$. In either case,
%sequence $i_1, i_2, \dots$ in $A$ such that $u_n i_1 \dots i_m$ is adjacent to some element of $A_{u_{n-1}}^{k+m}$ for each $m$. Let 
%$$ u = u_n i_1 i_2 \dots \in A^\mathbb{N}.$$
for each $m\geq 1$ we have that $A^{\N}_{u_{n-1}}\wedge_{\mathscr{C}}A^{\N}_{u(k+m)} \neq \emptyset$. Therefore, the set
$$ \{A^\mathbb{N}_{u_1}, \dots, A^\mathbb{N}_{u_{n-1}}, A^\mathbb{N}_{u(k+m)}\}$$
is a chain that joins $v$ to $u \in \cA_{wi}$. Note that $\D(u_j)=0$ for $1\leq j \leq n-1$. Therefore,
$$ D_{\mathscr{C},\D}(v, u) \leq \Delta(u(k+m)) \leq \max\{\Delta(r): r\in A^{k+m}\} \rightarrow 0 \text{ as } m\rightarrow \infty.$$
It follows that $[v]=[u]\in \cA_{wi}$. This completes the proof.
\end{proof}
We can now prove a slightly stronger version of the first statement in Proposition \ref{prop:comb-tree}.
\begin{lem}\label{lem:compact}
For each $w\in A^*$, the metric space $(\mathcal{A}_w,d_{\mathscr{C},\D})$ is compact.
\end{lem}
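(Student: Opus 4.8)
\textbf{Proof proposal for Lemma \ref{lem:compact}.}

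The plan is to show that $(\mathcal{A}_w, d_{\mathscr{C},\D})$ is totally bounded and complete; since it is a metric space, this yields compactness. Both properties will follow by iterating the covering Lemma \ref{lem:cover} together with the diameter estimate $\diam(\mathcal{A}_v) \leq \D(v)$ from Lemma \ref{lem:dist} and property \eqref{eq:diam4} of the diameter function.

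First I would handle \emph{total boundedness}. Fix $\e > 0$. By Definition \ref{def:diam}\eqref{eq:diam4}, choose $n$ so large that $\max\{\D(v) : v \in A^n\} < \e$; we may take $n \geq |w|$. Applying Lemma \ref{lem:cover} repeatedly, starting from $w$ and descending one level at a time (discarding at each stage the children $wi$ with $\D(wi) = 0$, which have $\diam(\mathcal{A}_{wi}) = 0$ by Lemma \ref{lem:dist} hence contribute single points), we obtain
\[ \mathcal{A}_w \subseteq \bigcup_{v \in F} \mathcal{A}_v, \]
where $F \subseteq A^n_w$ is the (finite, by property \eqref{eq:diam3} of $\D$ applied inductively) set of length-$n$ descendants of $w$ with $\D(v) > 0$. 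Each $\mathcal{A}_v$ with $v \in F$ has $\diam(\mathcal{A}_v) \leq \D(v) < \e$, so picking one point from each gives a finite $\e$-net for $\mathcal{A}_w$.

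Next I would handle \emph{completeness}. Let $([v^{(j)}])_j$ be a Cauchy sequence in $\mathcal{A}_w$. Using the iterated covering as above, for each level $n \geq |w|$ there is a finite collection of sets $\mathcal{A}_v$ ($v \in A^n_w$, $\D(v)>0$) of diameter $< \max_{A^n}\D$ covering $\mathcal{A}_w$; since the sequence is Cauchy, a standard diagonal/pigeonhole argument produces a nested sequence of words $w = v_{|w|} \prec v_{|w|+1} \prec \cdots$ (each an extension of the previous, with $\D(v_n) > 0$) such that infinitely many terms of the sequence lie in each $\mathcal{A}_{v_n}$. Concatenating the $v_n$ yields an infinite word $v^\infty \in A^\N_w$, and since $\diam(\mathcal{A}_{v_n}) \leq \D(v_n) \to 0$, the subsequence of $([v^{(j)}])$ lying in the $\mathcal{A}_{v_n}$ converges to $[v^\infty]$; a Cauchy sequence with a convergent subsequence converges, so $[v^{(j)}] \to [v^\infty] \in \mathcal{A}_w$.

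I expect the main (minor) obstacle to be bookkeeping in the iterated application of Lemma \ref{lem:cover}: one must check that descending the tree of words and repeatedly covering genuinely terminates in a \emph{finite} cover at level $n$, which uses property \eqref{eq:diam3} of the diameter function (only finitely many children of each word have positive $\D$-value) together with an induction on the number of levels. One should also take care with the degenerate pieces $\mathcal{A}_v$ where $\diam(\mathcal{A}_v) = 0$ — these are either empty or singletons and cause no trouble, but they must be acknowledged so that Lemma \ref{lem:cover}'s hypothesis $\diam(\mathcal{A}_w) > 0$ is only invoked where it holds. Everything else is the routine ``totally bounded + complete $\Rightarrow$ compact'' argument.
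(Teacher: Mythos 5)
Your proposal is correct. The paper proves sequential compactness directly: given a sequence with no convergent subsequence, it repeatedly applies Lemma \ref{lem:cover} together with the finiteness from Definition \ref{def:diam}\eqref{eq:diam3} to build a nested chain of tiles $\mathcal{A}_{wi_1\cdots i_m}$ each containing a subsequence, then takes the diagonal to get a convergent subsequence, a contradiction. This is precisely the argument you give under the heading ``completeness'' (the Cauchy hypothesis is only used to upgrade a convergent subsequence to convergence of the whole sequence, which is not needed). Your separate ``total boundedness'' argument is thus redundant: for a metric space, sequential compactness alone gives compactness, so the paper skips it. The one genuine caveat you raise -- that Lemma \ref{lem:cover} requires $\diam(\mathcal{A}_v)>0$, so degenerate tiles must be dealt with separately -- is real and is handled in the paper by the observation that if some $\diam(\mathcal{A}_{wi_1\cdots i_m})=0$ then the subsequence in it is constant, hence convergent, terminating the construction. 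So the two proofs are the same in substance; yours is simply a slightly longer route to the same destination.
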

In particular, taking $w=\varepsilon$ we see that $(\mathcal{A},d_{\mathscr{C},\D})$ is compact, as required in Proposition \ref{prop:comb-tree}.
\begin{proof}
We show that $(\mathcal{A}_w,d_{\mathscr{C},\D})$ is sequentially compact. Let $([w_n])$ be a sequence in $\mathcal{A}_w$. Suppose that this sequence has no convergent subsequence. This implies that $\diam(\cA_w)>0$, otherwise $([w_n])$ would be constant.

Let 
$$ I_1 = \{i\in A: \Delta(wi)>0\}.$$
Note that $I_1$ is finite by Definition \ref{def:diam}. Thus, by Lemma \ref{lem:cover} there exists $i_1\in I_1$ and a subsequence $([w^1_n])$ of $([w_n])$ in $\mathcal{A}_{wi_1}$.

We proceed by induction to construct sets $I_m\subseteq A$, indices $i_m\in I_m$, and subsequences $([w^m_n])$ of $([w_n])$ contained in $\cA_{wi_1i_2\dots i_m}$.

Assuming that there is a subsequence $([w_n^m]) \subseteq \mathcal{A}_{wi_1\cdots i_m}$, let
$$ I_{m+1} = \{ i\in A: \Delta(wi_1\cdots i_m i)>0\},$$
which is finite as above. As above,  $\diam(\cA_{wi_1\cdots i_m})>0$, otherwise $([w_n^m])$ would be constant, hence convergent. Thus, by Lemma \ref{lem:cover}, there is $i_{m+1}\in I_{m+1}\subseteq A$ and a subsequence $([w^{m+1}_n])$ of $([w_n^m])$ in $\mathcal{A}_{wi_1\cdots i_{m+1}}$.

Set $u = wi_1i_2\cdots \in A^{\N}$ and consider the subsequence $([w^n_n])$ of $([w_n])$. Then, $d_{\mathscr{C},\D}([w_{n}^n],[u]) \leq \D(u(n)) \to 0$ as $n\to\infty$, contradicting our assumption. Thus, $(\mathcal{A}_w,d_{\mathscr{C},\D})$ is compact.
\end{proof}

We now work towards the connectedness properties. The following definition is convenient: An \textit{$\e$-path} in a metric space $(X,d)$ is a finite sequence $(x_1, \dots, x_n)$ such that $d(x_i, x_{i+1})\leq\e$ for each $i\in\{1,\dots, n-1\}$. We say that the $\e$-path \textit{joins} $a$ and $b$ if $a=x_1$ and $b=x_n$.

\begin{lem}\label{lem:discreteBT}
Let $[w_1],[w_2]\in\cA$ with $d_{\mathscr{C},\D}([w_1],[w_2])<r$, and let $\epsilon>0$. Then there is a $\epsilon$-path joining $[w_1]$ and $[w_2]$ of diameter less than $r$.
\end{lem}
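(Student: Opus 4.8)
The plan is to work directly from the definition of $d_{\mathscr{C},\D}$ as an infimum over chains, and to refine a near-optimal chain into an honest $\e$-path by inserting many intermediate points inside each word set $\mathcal{A}_{v_i}$ of the chain. First I would fix a chain $\{A^{\N}_{v_0},\dots, A^{\N}_{v_N}\}$ joining $w_1$ with $w_2$ with $\sum_{i=0}^N \D(v_i) < r$; this exists since $d_{\mathscr{C},\D}([w_1],[w_2]) < r$. The idea is that consecutive word sets $A^{\N}_{v_i}$ and $A^{\N}_{v_{i+1}}$ combinatorially intersect, so we can pick a representative point where they ``meet'', and within each $\mathcal{A}_{v_i}$ (which has diameter $\le \D(v_i)$ by Lemma \ref{lem:dist}) we can connect the incoming meeting point to the outgoing meeting point by a sufficiently fine $\e$-path, using that $\mathcal{A}_{v_i}$ is itself connected — or, more elementarily, by passing to deeper and deeper sub-word-sets.

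More concretely, the key steps are: (1) For each $i = 0,\dots, N-1$, since $A^{\N}_{v_i}\wedge_{\mathscr{C}} A^{\N}_{v_{i+1}}\neq\emptyset$, choose a point $p_i$ lying in $\mathcal{A}_{v_i}\cap\mathcal{A}_{v_{i+1}}$ — here I would use that the combinatorial intersection is realized by genuine infinite words (Definition \ref{def:combintersection} and Lemma \ref{lem:intersection1}), possibly after noting the representative of such a word lies in the closure of both $\mathcal{A}_{v_i}$ and $\mathcal{A}_{v_{i+1}}$, which suffices by approximation. Also set $p_{-1} := [w_1]\in\mathcal{A}_{v_0}$ and $p_N := [w_2]\in\mathcal{A}_{v_N}$. (2) For each $i$, build an $\e$-path inside $\mathcal{A}_{v_i}$ from $p_{i-1}$ to $p_i$; since $\diam(\mathcal{A}_{v_i})\le\D(v_i)$ and $\mathcal{A}_{v_i}$ is compact, I would produce this finite $\e$-path by a covering argument — cover $\mathcal{A}_{v_i}$ by finitely many balls of radius $\e/3$, form the graph on their centers joining two whenever the balls meet, which is connected because $\mathcal{A}_{v_i}$ is, and extract a path; all of its points lie in $\mathcal{A}_{v_i}$ so any two consecutive ones are within $2\D(v_i) \le 2r$ — but we need the total path to have diameter $< r$, not $2r$, so this needs care (see below). (3) Concatenate these $N+1$ segments; consecutive segments share the endpoint $p_i$, so the concatenation is an $\e$-path from $[w_1]$ to $[w_2]$.

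The main obstacle is the diameter bound: a crude estimate gives diameter at most $\sum 2\D(v_i)$ or so, which could be comparable to $2r$ rather than $< r$. The fix is to be more careful about where the path sits: every point of the concatenated path lies in some $\mathcal{A}_{v_i}$, and $\mathcal{A}_{v_i}$ contains $p_{i-1}$, which is connected back through $\mathcal{A}_{v_{i-1}},\dots,\mathcal{A}_{v_0}$ to $[w_1]$; so any point $x$ on the segment in $\mathcal{A}_{v_i}$ satisfies $d_{\mathscr{C},\D}([w_1],x) \le \sum_{j\le i}\D(v_j)$, using that $\{A^{\N}_{v_0},\dots,A^{\N}_{v_i}\}$ together with $\{A^{\N}_{v_i}\}$ itself (realized at the word containing $x$) forms a chain joining $[w_1]$ to $x$. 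Hence every point of the path lies within $\sum_{j=0}^N\D(v_j) < r$ of $[w_1]$, so the path has diameter less than $2r$ — and to get exactly $<r$ I would instead start by choosing the chain with $\sum\D(v_i) < r$ and observe that in fact $d_{\mathscr{C},\D}([w_1],x) < r$ for all path points $x$ and similarly $d_{\mathscr{C},\D}([w_2],x) < r$ by the symmetric argument from the other end, which already shows each point is in $B([w_1],r)$; that the \emph{diameter} is $<r$ then follows since for any two path points $x,y$ we can route a chain from $x$ to $y$ through whichever endpoint is cheaper, or more simply, re-run the estimate: $d(x,y)$ is bounded by the sum of the $\D(v_j)$ over the sub-range of the chain strictly between where $x$ and $y$ sit, plus the diameters of the two boundary word sets, which telescopes to at most $\sum_{j=0}^{N}\D(v_j) < r$. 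I would write this last telescoping estimate carefully, as it is the crux; everything else is routine.
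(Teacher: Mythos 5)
Your proposal tracks the paper's argument closely: fix a chain with total $\D$-weight below $r$, refine it into an $\e$-path, and bound the diameter by the subchain observation. Your key diameter estimate is exactly the paper's \eqref{eq:diambound}: any two points of the refined path lie in sets $\mathcal{A}_{v_i}$, $\mathcal{A}_{v_j}$ coming from the fixed chain, the intermediate subchain $\{A^\N_{v_i},\dots,A^\N_{v_j}\}$ joins them, and so their distance is at most $\sum_l \D(v_l) < r$. That telescoping argument is correct and is the crux, as you say.

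The one place your plan does not survive as written is step (2). You propose to build the $\e$-path inside each $\mathcal{A}_{v_i}$ via a ball-covering argument whose correctness hinges on $\mathcal{A}_{v_i}$ being connected. But in the paper's logical order, connectedness of the sets $\mathcal{A}_w$ (and of $\mathcal{A}$) is Lemma \ref{lem:connected}, which is proved \emph{from} Lemma \ref{lem:discreteBT}; invoking it here would be circular, and there is no independent route to connectedness of $\mathcal{A}_{v_i}$ available at this stage. The alternative you mention only in passing --- descend to a level $m$ deep enough that all words satisfy $\D \le \e/2$, and walk along a combinatorial path in $G_m$, using that each induced subgraph on $A^m_{v_i}$ is connected (Definition \ref{def:combdata}\eqref{eq:combdata2a}, applied inductively) and that consecutive chain members meet at level $m$ by Lemma \ref{lem:intersection1} --- is the correct mechanism and is what the paper actually does. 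The paper also organizes this slightly differently: rather than gluing piecewise paths at intermediate points $p_i\in\mathcal{A}_{v_i}\cap\mathcal{A}_{v_{i+1}}$, it constructs a single combinatorial path in $G_m$ running through $\bigcup_i A^m_{u_i}$ and picks the representative $u'_i 1^\infty$ for each vertex; the two bookkeeping choices are interchangeable. So: same approach and the right diameter bound, but the main mechanism you commit to in step (2) must be replaced by the combinatorial-graph version to avoid a circular appeal to connectedness.
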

\begin{proof}
Fix $[w_1]$, $[w_2]$, $r>0$, and $\e>0$ as in the statement of the lemma. Let $\{A^\mathbb{N}_{u_1},\dots,A^\mathbb{N}_{u_k}\}$ be a chain joining $w_1$ with $w_2$ such that 
\[ \sum_{i=1}^{k}\D(u_i) \leq d_{\mathscr{C},\D}([w_1],[w_2]) + \frac{r - d_{\mathscr{C},\D}([w_1],[w_2])}{2} < r.\]
Note that for any $i,j\in \{1,\dots,k\}$ and any $w_i \in A^{\N}_{u_i}$ and $w_j\in A^{\N}_{u_j}$, we may use a subset of this same chain to join them, and so obtain
\begin{equation}\label{eq:diambound}
 d_{\mathscr{C},\D}([w_i],[w_j]) <r.
\end{equation}

By property \eqref{eq:diam4} in Definition \ref{def:diam}, there exists $m\in\N$ such that $\D(u) \leq \e/2$ for all $u\in A^m$. By the properties of $G_m$ and Lemma \ref{lem:intersection1}, there exists a path 
\[ \g = \{\{u_1',u_2'\}, \dots \{u_{k-1}',u_k'\}\} \subset \bigcup_{i=1}^k A^m_{u_i}\] 
such that $w_1\in A^{\N}_{u_1'}$ and $w_2 \in A^{\N}_{u_n'}$. For each $i\in\{1,\dots,n\}$ let $v_i = u_i'1^{\infty}$ and let $v_0=w_1$ and $v_{n+1}=w_2$. 
Then for each $i=1,\dots,n-1$,
\[ d_{\mathscr{C},\D}([v_i],[v_{i+1}]) \leq \D(u_i') + \D(u_{i+1}') \leq \e/2 + \e/2 = \e\]
and similarly $d_{\mathscr{C},\D}([w_1],[v_{1}]) \leq \D(u_1') \leq \e$ and $d_{\mathscr{C},\D}([w_2],[v_{n}]) \leq \D(u_n') \leq \e$.

Thus, $([v_0], [v_1], \dots, [v_{n+1}])$ is an $\e$-path joining $[w_1]$ to $[w_2]$. Its diameter is less than $r$ by \eqref{eq:diambound}.
\end{proof}

The following lemma completes the proof of the topological properties in Proposition \ref{prop:comb-tree}.

\begin{lem}\label{lem:connected}
The metric space $(\mathcal{A},d_{\mathscr{C},\D})$  has the property that $\overline{B([w_0],r)}$ is connected for each $[w_0]\in\cA$ and $r>0$.

In particular, the space is connected, locally connected, and path-connected.
\end{lem}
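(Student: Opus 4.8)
The plan is to deduce connectedness, local connectedness, and path-connectedness of $(\mathcal{A},d_{\mathscr{C},\D})$ from the key assertion that $\overline{B([w_0],r)}$ is connected, and to prove that assertion using the $\e$-path machinery of Lemma~\ref{lem:discreteBT}. First I would reduce everything to the displayed claim: the whole space is $\overline{B([w_0],r)}$ for $[w_0]$ arbitrary and $r>\diam(\mathcal{A})$, so connectedness follows at once; since the closed balls $\overline{B([w_0],r)}$ are connected neighborhoods shrinking to $[w_0]$ (and since $\mathcal{A}$ is compact, hence the metric gives a neighborhood basis of such balls at every point), local connectedness follows; and a compact, connected, locally connected metric space is automatically path-connected (by the classical Hahn--Mazurkiewicz-type fact, or more directly: connected + locally connected + complete metric implies path-connected), which handles the ``in particular'' clause.

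For the main claim itself, fix $[w_0]$ and $r>0$, and suppose toward a contradiction that $\overline{B([w_0],r)} = E_1 \sqcup E_2$ with $E_1, E_2$ nonempty, disjoint, relatively closed, hence (by compactness from Lemma~\ref{lem:compact}) at positive distance $\e_0 = \dist(E_1,E_2)>0$ from each other. Pick $[a]\in E_1$ and $[b]\in E_2$. Since $[a]\in \overline{B([w_0],r)}$, for any $\eta>0$ there is a point $[a']\in B([w_0],r)$ with $d([a],[a'])<\eta$, and similarly $[b']\in B([w_0],r)$ near $[b]$; so it suffices to produce, for every $\e>0$, an $\e$-path inside $\overline{B([w_0],r)}$ joining a point near $[a]$ to a point near $[b]$. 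Here is the subtlety: Lemma~\ref{lem:discreteBT} gives an $\e$-path between two points of $B([w_0],r)$ whose \emph{diameter} is less than $r$, but that does not immediately place the path inside $\overline{B([w_0],r)}$ — it places it inside a ball of radius $r$ around one of its own points, not necessarily around $[w_0]$. I would fix this by choosing $[a'],[b']$ close to $[w_0]$ as well, or more cleanly: apply Lemma~\ref{lem:discreteBT} with $[w_1]=[a']$ and note that if $d([a'],[b'])<r$ is replaced by the weaker bound $d([a'],[b'])< 2r - 2d([w_0],[a'])$ won't directly work; instead, the right move is to take $s<r$ with $[a'],[b']\in B([w_0],s)$, apply the lemma at scale $s$ to get an $\e$-path of diameter $<s$ through $[a']$, hence contained in $B([a'], s) \subseteq B([w_0], 2s)$ — still too big. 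The clean resolution is that Lemma~\ref{lem:discreteBT} should be invoked so that \emph{all} its points lie in $\overline{B([w_0],r)}$: since every point $[v_i]$ of the constructed $\e$-path satisfies $d([w_1],[v_i]) < r$ by \eqref{eq:diambound} (with one endpoint $[w_1]$ playing the role of the center), choosing $[w_1]$ to itself be within $r$ of $[w_0]$ and bounding $d([w_0],[v_i]) \le d([w_0],[w_1]) + d([w_1],[v_i])$ still gives $2r$, so the truly correct approach is a direct argument: re-run the proof of Lemma~\ref{lem:discreteBT} but measure diameters from $[w_0]$.

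So concretely I would argue: given $[a],[b]\in \overline{B([w_0],r)}$, pick approximants $[a'],[b']\in B([w_0],r)$; a chain from $a'$ to $b'$ can be prepended and appended with the chains $\{A^\N_{w_0(n)}\}$ realizing that $[a'],[b']$ are within $r$ of $[w_0]$, producing a single chain from $a'$ through (a point equivalent to) $w_0$ to $b'$, and then the estimate \eqref{eq:diambound} applied with this augmented chain shows every intermediate word-set is within $r$ of both endpoints \emph{and} of $[w_0]$; running the refinement-to-level-$m$ step of Lemma~\ref{lem:discreteBT} on this chain yields an $\e$-path all of whose vertices lie in $B([w_0],r)\subseteq \overline{B([w_0],r)}$. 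Since $\e<\e_0$ can be taken, such an $\e$-path cannot jump from (a neighborhood of) $E_1$ to $E_2$ without some consecutive pair being at distance $\ge \e_0 > \e$, a contradiction. The main obstacle, as indicated, is purely bookkeeping: ensuring the $\e$-path stays in the \emph{correct} ball $\overline{B([w_0],r)}$ rather than merely having small diameter, which is handled by choosing the chain to pass through $[w_0]$ itself and applying \eqref{eq:diambound} with $[w_0]$ as a reference point; once that is arranged, standard ``$\e$-chain'' connectedness reasoning closes the argument, and the topological corollaries (local connectedness from the shrinking connected closed balls, path-connectedness from compact + connected + locally connected) follow formally.
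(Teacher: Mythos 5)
Your proposal ends up at the right argument, and the ``in particular'' part matches the paper (connectedness from $r\geq\diam(\mathcal{A})$, local connectedness from the closed balls forming a connected neighborhood basis, path-connectedness via Hahn--Mazurkiewicz/compactness). But you spend the bulk of the proof chasing a ``subtlety'' in Lemma~\ref{lem:discreteBT} that is not actually there, and you miss the one-line observation that makes it vanish.

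The resolution you never quite land on is this: apply Lemma~\ref{lem:discreteBT} with $[w_1]=[w_0]$ itself. The $\e$-path it produces has $[w_0]$ as one of its vertices (indeed $v_0=w_1$ in the construction), so ``diameter less than $r$'' automatically gives $d([w_0],[v_i])<r$ for every vertex $[v_i]$, i.e.\ the entire path lies in $B([w_0],r)$. There is no need to re-prove the lemma ``measuring diameters from $[w_0]$'': the diameter bound \emph{is} already a bound from $[w_0]$, precisely because $[w_0]$ is an endpoint of the path. Your intermediate attempt --- taking $[w_1]$ merely \emph{near} $[w_0]$ and concluding only a $2r$ bound --- creates a problem that the direct choice $[w_1]=[w_0]$ avoids entirely. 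That is exactly what the paper does: to show $\overline{B([w_0],r)}$ is connected it suffices (by the standard $\e$-chain criterion for compact spaces) to join each $[w]\in\overline{B([w_0],r)}$ to $[w_0]$ by an $\e$-path in $\overline{B([w_0],r)}$; approximate $[w]$ by $[w']\in B([w_0],r)$, apply Lemma~\ref{lem:discreteBT} to the pair $([w_0],[w'])$, and append $[w]$.

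Your final construction --- concatenating a chain from $a'$ to $w_0$ with a chain from $w_0$ to $b'$, then applying \eqref{eq:diambound} to each half --- does work, and is morally the paper's argument with the two halves glued together. So there is no genuine gap in your \emph{conclusion}, but there is a missed observation that cost you most of your reasoning: when the center is a vertex of the path, ``small diameter'' and ``inside the ball about the center'' are the same statement.
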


\begin{proof}
The second sentence follows from the first: connectedness by taking $r=1\geq\diam(\cA)$, local connectedness by, e.g., \cite[(15.1)]{Whyburn}, and path-connectedness by the Hahn-Mazurkiewicz Theorem and Lemma \ref{lem:compact}.

For the first sentence, fix $w_0 \in A^{\N}$ and $r>0$. To show that $\overline{B([w_0],r)}$ is connected, it suffices to show that for any $\epsilon>0$, each $[w]\in \overline{B([w_0],r)}$ can be joined to $[w_0]$ by an $\epsilon$-path contained in $ \overline{B([w_0],r)}$.

The point $[w]$ is less than $\epsilon$-distance away from an element $[w']$ of $B([w_0],r)$. There is an $\epsilon$-path joining $[w_0]$ to $[w']$ inside $B([w_0],r)$, by Lemma \ref{lem:discreteBT}. Since $d_{\mathscr{C},\D}([w'],[w])<\epsilon$, appending $[w]$ to this path yields an $\epsilon$-path joining $[w_0]$ to $[w]$ inside $\overline{B([w_0],r)}$.
\end{proof}

\begin{lem}\label{lem:bt}
The metric space $(\mathcal{A},d_{\mathscr{C},\D})$ is $1$-bounded turning.
\end{lem}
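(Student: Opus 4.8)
The plan is to show that any two points $[w_1], [w_2] \in \mathcal{A}$ can be joined by a continuum of diameter at most $d_{\mathscr{C},\D}([w_1],[w_2])$. By Lemma \ref{lem:connected}, the space is compact and path-connected, so it is natural to build the desired continuum as a limit of the $\e$-paths produced by Lemma \ref{lem:discreteBT}. First I would fix $r > d_{\mathscr{C},\D}([w_1],[w_2])$ and, for each $n$, invoke Lemma \ref{lem:discreteBT} with $\e = 1/n$ to obtain a $(1/n)$-path $P_n$ joining $[w_1]$ to $[w_2]$ with $\diam(P_n) < r$. Each $P_n$ is a finite set, hence a compact subset of $\mathcal{A}$; after passing to a subsequence, the sets $\overline{P_n}$ converge in the Hausdorff metric (using compactness of $\mathcal{A}$ and the Blaschke selection theorem) to a compact set $F \subseteq \overline{B([w_1],r)}$ with $\diam(F) \leq r$. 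The standard argument then shows $F$ is connected: if $F = F' \sqcup F''$ were a separation into disjoint nonempty closed sets, they would be a positive distance $\rho$ apart, but for $n$ large with $1/n < \rho$ the path $P_n$ (which is $(1/n)$-connected and Hausdorff-close to $F$) could not have points near both $F'$ and $F''$ without a consecutive pair straddling the gap — contradiction. Also $[w_1], [w_2] \in F$ since each $P_n$ contains them. Finally, letting $r \downarrow d_{\mathscr{C},\D}([w_1],[w_2])$ and taking a further Hausdorff limit (or just noting the argument works with $r$ replaced by this limiting value, using a diagonal choice of $\e$-paths), we obtain a continuum $K \ni [w_1],[w_2]$ with $\diam(K) \leq d_{\mathscr{C},\D}([w_1],[w_2])$, which is exactly $1$-bounded turning.

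An alternative, perhaps cleaner, route avoids Hausdorff limits entirely: since $\mathcal{A}$ is compact, connected, and locally connected (Lemma \ref{lem:connected}), it is a Peano continuum, and by Lemma \ref{lem:connected} every closed ball $\overline{B([w_0],\rho)}$ is connected, hence a Peano continuum as well. One can then argue directly: for $[w_1],[w_2] \in \mathcal{A}$ and any $r > d_{\mathscr{C},\D}([w_1],[w_2])$, both points lie in $\overline{B([w_1],r)}$, which is a continuum of diameter $\leq 2r$ — but this only gives bounded turning with constant $2$, not $1$, so it is not quite enough. To get the sharp constant $1$ we genuinely need that the connecting continuum sits inside a ball of radius only slightly larger than the distance \emph{and} has diameter controlled by that radius, which is precisely what Lemma \ref{lem:discreteBT} delivers for the approximating $\e$-paths ($\diam < r$, not $2r$); so the Hausdorff-limit argument of the first paragraph is the one I would actually write out, as it transfers the diameter bound $\diam(P_n) < r$ directly to the limit.

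The main obstacle I anticipate is the passage to the limit while simultaneously (i) preserving connectedness, (ii) keeping the diameter bound sharp, and (iii) handling the infimum: $d_{\mathscr{C},\D}([w_1],[w_2])$ is defined as an infimum over chains, so a priori there may be no optimal chain, and one must be careful that the limiting continuum achieves diameter exactly $\leq d_{\mathscr{C},\D}([w_1],[w_2])$ rather than $\leq d_{\mathscr{C},\D}([w_1],[w_2]) + \e$ for every $\e$. This is resolved by a diagonalization: choose, for each $n$, a $(1/n)$-path $P_n$ of diameter $< d_{\mathscr{C},\D}([w_1],[w_2]) + 1/n$ (applying Lemma \ref{lem:discreteBT} with $r = d_{\mathscr{C},\D}([w_1],[w_2]) + 1/n$), extract a Hausdorff-convergent subsequence, and note the limit $K$ then satisfies $\diam(K) \leq d_{\mathscr{C},\D}([w_1],[w_2])$. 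The connectedness of the Hausdorff limit of a sequence of $\e_n$-connected sets with $\e_n \to 0$ is a standard fact, but I would include the short separation argument above for completeness. Everything else is routine once these pieces are in place.
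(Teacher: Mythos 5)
Your proposal is correct and follows essentially the same strategy as the paper: invoke Lemma \ref{lem:discreteBT} to produce $\epsilon$-paths of controlled diameter, then pass to a Hausdorff limit. The only difference is cosmetic: the paper first thickens each $\epsilon$-path into a genuine continuum (a chain of overlapping closed balls, connected by Lemma \ref{lem:connected}) so that the limit is automatically a continuum, whereas you take the Hausdorff limit of the raw finite paths and then prove connectedness of the limit directly via the standard $\epsilon_n$-chain/separation argument — both routes are valid and of comparable length.
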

\begin{proof}
Let $[w_1], [w_2]\in \cA$, with $r=d_{\mathscr{C},\D}([w_1],[w_2])>0$. Let $\epsilon>0$. By Lemma \ref{lem:discreteBT}, there is an $\epsilon$-path $(v_0, v_1, \dots, v_n)$ joining $[w_1]$ to $[w_2]$ with diameter at most $r+\epsilon$.

Define a compact set $K_\epsilon \subseteq \cA$ by
$$ K_\epsilon = \cup_{j=0}^n \overline{B([v_j],2\epsilon)}.$$
Note that each ball in this union is connected, by Lemma \ref{lem:connected}. Since $\overline{B([v_j],2\epsilon)} \cap \overline{B([v_{j+1}],2\epsilon)}\neq \emptyset$ for each $j=0\dots n-1$, it follows that $K_\epsilon$ is also connected. Moreover,
\begin{equation}\label{eq:diamK}
 \diam(K_\epsilon) \leq r+5\epsilon.
\end{equation}
The sets $K_{1}, K_{1/2}, K_{1/3}, \dots$ are each compact, connected, and contain both $[w_1]$ and $[w_2]$. They therefore admit a subsequence that converges in the Hausdorff metric to a compact, connected set that contains $[w_1]$ and $[w_2]$. By \eqref{eq:diamK}, this set has diameter $r$. This completes the proof.
\end{proof}

\subsection{Metric Trees}
We now prove the second half of Proposition \ref{prop:comb-tree}, namely, that if each combinatorial graph in our data is in fact a combinatorial tree, then the resulting metric space is a metric tree. \textbf{Thus, for the remainder of Section \ref{sec:combtrees}, we assume that each combinatorial graph $G_k$ is a metric tree, and we rename the graphs $T_k$ to reflect this.}

\begin{lem}\label{lem:tree1}
Suppose that $w,w',w_0 \in A^k$ and $w_0$ is on the unique combinatorial arc in $T_k$ that joins $w$ with $w'$. If there exist $u\in A^{\N}_{w}$ and $u'\in  A^{\N}_{w'}$ such that $[u]=[u']$, then $[u]\in \mathcal{A}_{w_0}$
\end{lem}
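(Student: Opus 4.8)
The goal is to show that if two infinite words $u \in A^\N_w$ and $u' \in A^\N_{w'}$ represent the same point of $\mathcal{A}$, and $w_0$ lies on the combinatorial arc between $w$ and $w'$ in $T_k$, then the identified point $[u]=[u']$ actually lies in the subtile $\mathcal{A}_{w_0}$. The natural strategy is to exploit the fact that $[u]=[u']$ means $D_{\mathscr{C},\D}(u,u')=0$, so for every $\varepsilon>0$ there is a chain joining $u$ with $u'$ whose total cost is less than $\varepsilon$. Since $w_0$ is between $w$ and $w'$ in $T_k$, Lemma \ref{lem:chain} applies: every such chain $\{A^\N_{v_1},\dots,A^\N_{v_n}\}$ contains a link $A^\N_{v_i}$ with $A^\N_{w_0 v}\subseteq A^\N_{v_i}$ for some finite word $v$. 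The key observation is then that this forces $\D(v_i)$ to be small (since the chain has small total cost), which in turn forces $\diam(\mathcal{A}_{w_0 v})$, hence the distance from $[u]$ to the (nonempty, by Lemma \ref{lem:dist} if $\diam(\mathcal{A}_{w_0})>0$, or trivially otherwise) set $\mathcal{A}_{w_0 v}\subseteq \mathcal{A}_{w_0}$, to be small. Letting $\varepsilon\to 0$ and using that $\mathcal{A}_{w_0}$ is closed (it is compact by Lemma \ref{lem:compact}), we conclude $[u]\in\mathcal{A}_{w_0}$.

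\textbf{Key steps in order.} First, I would dispose of trivial cases: if $w_0=w$ or $w_0=w'$ the statement is immediate since $[u]\in\mathcal{A}_w$ and $[u']\in\mathcal{A}_{w'}$ are given, so assume $w,w',w_0$ distinct. Second, fix $\varepsilon>0$ and choose a chain $\{A^\N_{v_1},\dots,A^\N_{v_n}\}$ joining $u$ with $u'$ with $\sum_j \D(v_j)<\varepsilon$. Third, apply Lemma \ref{lem:chain} to obtain an index $i$ and a word $v$ with $A^\N_{w_0 v}\subseteq A^\N_{v_i}$; in particular $\mathcal{A}_{w_0 v}\subseteq \mathcal{A}_{v_i}$. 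Fourth, note $\D(v_i)\le \sum_j\D(v_j)<\varepsilon$, so by Lemma \ref{lem:dist}, $\diam(\mathcal{A}_{v_i})\le \D(v_i)<\varepsilon$. Fifth, pick any point $[p]\in\mathcal{A}_{w_0 v}\subseteq \mathcal{A}_{w_0}$ (nonempty since $A^\N_{w_0 v}\ne\emptyset$); since $[u]\in\mathcal{A}_{v_i}$ as well (because $u\in A^\N_{v_1}$ and one can use the chain from $u$ to $v_i$... more carefully: the endpoints of the chain are in $A^\N_{v_1}$ and $A^\N_{v_n}$, and by the same sub-chain argument as in \eqref{eq:diambound} of Lemma \ref{lem:discreteBT}, $d_{\mathscr{C},\D}([u],[q])<\varepsilon$ for any $[q]$ in any link), we get $d_{\mathscr{C},\D}([u],[p])<\varepsilon$ (or a fixed multiple of $\varepsilon$). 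Thus $[u]$ is within $O(\varepsilon)$ of $\mathcal{A}_{w_0}$. Sixth, since $\mathcal{A}_{w_0}$ is closed and $\varepsilon$ was arbitrary, $[u]\in\mathcal{A}_{w_0}$.

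\textbf{Main obstacle.} The delicate point is the fifth step: making precise the bound $d_{\mathscr{C},\D}([u],[p])=O(\varepsilon)$, where $[p]$ is a point of the link $\mathcal{A}_{v_i}$ guaranteed by Lemma \ref{lem:chain}. The issue is that $\mathcal{A}_{w_0 v}$ is only a sub-tile of $\mathcal{A}_{v_i}$, so one needs both that $[u]$ is close to $\mathcal{A}_{v_i}$ (which follows by truncating the chain from $u$ to the $i$-th link and summing the relevant $\D(v_j)$'s, bounded by the total $<\varepsilon$) and that $\mathcal{A}_{v_i}$ itself has small diameter $<\varepsilon$ (Lemma \ref{lem:dist}), so that any point of $\mathcal{A}_{w_0 v}\subseteq\mathcal{A}_{v_i}$ is within $2\varepsilon$ of $[u]$. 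One must be slightly careful that $A^\N_{w_0v}$ is genuinely contained in (not merely combinatorially intersecting) $A^\N_{v_i}$ — this is exactly what Lemma \ref{lem:chain} provides, in both cases $|v_i|<k$ and $|v_i|\ge k$ — so that $\mathcal{A}_{w_0v}\subseteq\mathcal{A}_{v_i}$ as subsets of $\mathcal{A}$. Once these containments are in hand, the estimate is routine, and the conclusion follows from compactness of $\mathcal{A}_{w_0}$.
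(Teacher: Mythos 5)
Your proposal is correct and follows essentially the same route as the paper's proof: use that $[u]=[u']$ to produce chains of arbitrarily small total cost, invoke Lemma \ref{lem:chain} to find a link $A^\N_{v_i}$ containing some $A^\N_{w_0 v}$, truncate the chain at that link to get a point of $\mathcal{A}_{w_0}$ within $\varepsilon$ of $[u]$, and conclude by compactness of $\mathcal{A}_{w_0}$. The only difference is cosmetic: the paper bounds $D_{\mathscr{C},\D}(u,v)$ directly by the cost of the truncated sub-chain, whereas you additionally invoke $\diam(\mathcal{A}_{v_i})\le\D(v_i)<\varepsilon$, which is harmless but redundant given your sub-chain estimate already gives $d_{\mathscr{C},\D}([u],[p])<\varepsilon$ for any $[p]\in\mathcal{A}_{v_i}$.
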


\begin{proof}
Let $w,w',w_0$ be as in the statement of the lemma. We claim that for any $\e>0$ sufficiently small, there exists $v \in A^{\N}_{w_0}$ such that $D_{\mathscr{C},\D}(u,v) < \e$. Assuming this claim, by Lemma \ref{lem:compact}, it follows that there exists $u_0 \in \mathcal{A}_{w_0}$ such that $D_{\mathscr{C},\D}(u,u_0)=0$ and we obtain that $[u]\in \mathcal{A}_{w_0}$.

To prove the claim, fix $\e>0$. Since $D_{\mathscr{C},\D}(u,u') = 0$, there exists a chain $\{A^{\N}_{w_1},\dots,A^{\N}_{w_m}\}$ that joins $u$ with $u'$ such that
$ \sum_{l=1}^m\D(w_l) < \e.$
By Lemma \ref{lem:chain}, there exist $l_0 \in \{1,\dots,m\}$ and $v \in A^{\N}_{w_0}\cap A^{\N}_{w_{l_0}}$. In particular, $\{A^{\N}_{w_1},\dots,A^{\N}_{w_{l_0}}\}$ is a chain joining $u$ with $v$. It follows that 
$$ D_{\mathscr{C},\D}(u,v) \leq \sum_{l=1}^{l_0}\D(w_l) \leq  \sum_{l=1}^m\D(w_l) < \e.$$
%and so $D_{\mathscr{C},\D}(u,v) < \e$. 
As $\epsilon>0$ was arbitrary, this proves the initial claim and hence the lemma.
\end{proof}

\begin{lem}\label{lem:tree}
The metric space $(\mathcal{A},d_{\mathscr{C},\D})$ is a metric tree. 
\end{lem}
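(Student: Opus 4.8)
The plan is to verify the two defining properties of a metric tree for $(\mathcal{A}, d_{\mathscr{C},\D})$: it is a compact, connected, locally connected metric space (already established in Lemma \ref{lem:connected} and Lemma \ref{lem:compact}), and between any two distinct points there is a \emph{unique} arc. Since $(\mathcal{A}, d_{\mathscr{C},\D})$ is a Peano continuum (compact, connected, locally connected), it is arcwise connected, so existence of an arc is automatic; the entire content is \emph{uniqueness}. A standard topological fact is that a Peano continuum is a metric tree if and only if it contains no simple closed curve (Jordan curve). So the plan is to assume, for contradiction, that $\mathcal{A}$ contains a simple closed curve $J$, and derive a contradiction using the combinatorial-tree structure of the graphs $T_k$, with Lemma \ref{lem:tree1} as the main tool.

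Here is the order of steps I would carry out. First, recall (or cite, e.g., from \cite{Whyburn}) that a Peano continuum with no simple closed curve is a (metric) tree — equivalently a dendrite — and that by Lemma \ref{lem:connected} and Lemma \ref{lem:compact} our space is a Peano continuum. Second, suppose $J \subseteq \mathcal{A}$ is a simple closed curve. Pick two distinct points $p = [u], q = [u']$ on $J$; they are joined within $J$ by two arcs $\alpha_1, \alpha_2$ meeting only at $p,q$. Third, choose representatives and a level $k$ large enough that $\diam(\mathcal{A}_w) < \tfrac13 d(p,q)$ for all $w \in A^k$ (possible by Lemma \ref{lem:dist} and property \eqref{eq:diam4}); then $p \in \mathcal{A}_w$, $q \in \mathcal{A}_{w'}$ for distinct cylinders with $w,w' \in A^k$, and in fact $\mathcal{A}_w, \mathcal{A}_{w'}$ are disjoint (since points in a common cylinder are too close). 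Let $w_0$ be any vertex strictly between $w$ and $w'$ on the unique combinatorial arc in $T_k$ joining them — such a $w_0$ exists once we also guarantee $w$ and $w'$ are non-adjacent in $T_k$, which we can arrange by taking $k$ large: if $w, w'$ were adjacent, we could pass to children via Lemma \ref{lem:adjacent} and the fact that each $\mathcal{A}_{wi}$ is nontrivial only for finitely many $i$, forcing the combinatorial distance to grow — alternatively, one shows directly that the "separating cylinder" argument below works even in the adjacent case by looking one level deeper. Fourth, and this is the crux: both subarcs $\alpha_1$ and $\alpha_2$ of $J$ run from a point of $\mathcal{A}_w$ to a point of $\mathcal{A}_{w'}$. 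I claim each $\alpha_j$ must pass through $\mathcal{A}_{w_0}$. Indeed, an arc in $\mathcal{A}$ from $p$ to $q$ gives, after approximation by $\e$-paths and chains (as in Lemma \ref{lem:discreteBT} and Lemma \ref{lem:chain}), a chain joining a word in $A^\N_w$ to a word in $A^\N_{w'}$; by Lemma \ref{lem:chain} (with $w_1 = w$, $w_2 = w_0$, $w_3 = w'$) this chain contains some $A^\N_{v_i} \supseteq A^\N_{w_0 v}$, and then an argument as in Lemma \ref{lem:tree1} shows the arc meets $\mathcal{A}_{w_0}$. Thus both $\alpha_1$ and $\alpha_2$ meet the set $\mathcal{A}_{w_0}$, which is disjoint from $\{p, q\}$; since $\mathcal{A}_{w_0}$ is connected with small diameter, one deduces that $\alpha_1$ and $\alpha_2$ intersect at a point other than $p$ or $q$, contradicting that $J = \alpha_1 \cup \alpha_2$ is a simple closed curve.

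The main obstacle I anticipate is making the fourth step fully rigorous: translating "an arc $\alpha$ in the metric space $\mathcal{A}$ from $p$ to $q$" into the combinatorial language of chains so that Lemma \ref{lem:chain} applies, and then concluding that $\alpha$ genuinely \emph{passes through} the closed set $\mathcal{A}_{w_0}$ rather than merely coming near it. The cleanest route is probably: parametrize $\alpha: [0,1] \to \mathcal{A}$, let $t_0 = \sup\{t : \alpha(t) \notin \mathcal{A}_{w} \text{ is separated from } \mathcal{A}_{w'} \text{ in the appropriate sense}\}$ — or more simply, use that $\mathcal{A} \setminus \mathcal{A}_{w_0}$ decomposes, by the combinatorial-tree structure of the $T_k$'s and Lemma \ref{lem:chain}, into relatively open pieces, one containing $p$ and another containing $q$, with no chain (hence no connected set, hence no arc) joining the two pieces without meeting $\mathcal{A}_{w_0}$. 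Establishing that separation statement — essentially that $\mathcal{A}_{w_0}$ is a "cut set" separating $\mathcal{A}_w$ from $\mathcal{A}_{w'}$ — is really the heart of the matter and is exactly where the hypothesis that every $G_k = T_k$ is a combinatorial tree gets used; everything else is assembling already-proven lemmas. A secondary minor obstacle is the bookkeeping to ensure $w_0$ can be chosen strictly between $w$ and $w'$, handled by choosing $k$ large enough as indicated.
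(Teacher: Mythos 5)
Your general strategy — reduce to the no-simple-closed-curve criterion for dendrites and then use a separation argument based on the combinatorial-tree structure and Lemma \ref{lem:tree1} — is exactly in the spirit of the paper's proof. The preliminary bookkeeping you worry about (ensuring $w$ and $w'$ can be taken non-adjacent, and that $\mathcal{A}_{w_0}$ can be chosen disjoint from $\{p,q\}$) can indeed be handled, since if $w_1(n)$ and $w_2(n)$ were adjacent or equal at every level $n$ then $d_{\mathscr{C},\D}([w_1],[w_2])$ would be $0$.

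However, there is a genuine gap at the final inference. Knowing that each of $\alpha_1$ and $\alpha_2$ meets the cut cylinder $\mathcal{A}_{w_0}$ does \emph{not} imply that $\alpha_1$ and $\alpha_2$ intersect at a point other than $p$ or $q$. The set $\mathcal{A}_{w_0}$ is in general a non-degenerate continuum, not a single point; the two arcs could pass through $\mathcal{A}_{w_0}$ at completely different places (indeed, since $p$ and $q$ lie in different components of $\mathcal{A} \setminus \mathcal{A}_{w_0}$, the simple closed curve must cross $\mathcal{A}_{w_0}$ at least twice, and there is no reason those crossings should coincide). Neither ``$\mathcal{A}_{w_0}$ is connected'' nor ``$\mathcal{A}_{w_0}$ has small diameter'' forces a shared point. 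The paper avoids this by not working with a single cut cylinder: it defines, for given $[w_1],[w_2]$, a set $K = \bigcap_n \bigcup_i \mathcal{A}_{v_{n,i}}$ where the union runs over all cylinders along the combinatorial arc in $T_n$ from $w_1(n)$ to $w_2(n)$, and then proves that $K$ both contains a continuum joining $[w_1]$ to $[w_2]$ (Claim \ref{claim:Kcontinuum}) and is contained in \emph{every} path from $[w_1]$ to $[w_2]$. Since all paths between the two points contain this one fixed continuum $K$, they must share points besides the endpoints, which is what actually yields the no-simple-closed-curve conclusion. To repair your argument you would need to show the two arcs share a point — for instance by iterating your cut-set construction over all levels to pin down a common limit point, or by constructing the mandatory set $K$ directly as the paper does.
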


\begin{proof}
First of all, since $(\mathcal{A},d_{\mathscr{C},\D})$ is Hausdorff and path-connected, it is also arcwise connected, see e.g. \cite[Section 31]{Willard}. Let $[w_1], [w_2]$ be two distinct arbitrary points in $\cA$. We will show that there is a point of $\cA\setminus\{[w_1],[w_2]\}$ (in fact, a whole continuum) that every path $\g$ from $[w_1]$ to $[w_2]$ must contain. This clearly implies that there can be no simple closed path containing $[w_1]$ and $[w_2]$, and therefore that $\cA$ is a metric tree. (See \cite[Theorem 1.1]{Charatonik} for various characterizations of metric trees, called dendrites there, from which we are using characterization (20).)

For each $n\in\N$ let 
\[ \{ v_{n,1}, \dots, v_{n,m(n)} \} \subseteq A^n, \]
be all the vertices of $T_n$ lying on the unique combinatorial arc that joins $w_1(n)$ with $w_2(n)$, ordered so that $v_{n,1}=w_{1}(n)$, $v_{n,m(n)}=w_2(n)$, and $\{v_{n,i},v_{n,i+1}\} \in E_n$ for all $i=1,\dots,m(n)-1$. 

Note that, for each $n\in\N$ and $i\in\{1, \dots, m(n+1)\}$, the word $v_{n+1, i}(n)$ lies on the combinatorial arc from $w_1(n)$ to $w_2(n)$, i.e., is equal to $v_{n,j}$ for some $j\in\{1, \dots, m(n)\}$. Indeed, if not, then the combinatorial arc $\{ v_{n,1}, \dots, v_{n,m(n)} \} $ avoids $v_{n+1, i}(n)$, and so by Definition \ref{def:combdata}, properties \eqref{eq:combdata2a} and \eqref{eq:combdata2b}, we can form an arc from $w_1(n+1)$ to $w_2(n+1)$ that avoids $v_{n+1,i}$, contradicting the uniqueness of this arc in $T_{n+1}$. 

Conversely, if $n\in\N$ and $i\in\{1, \dots, m(n)\}$, then some $v_{n+1,j}$ has $v_{n+1,j}(n)=v_{n,i}$. If not, then using Definition \ref{def:combdata}, properties \eqref{eq:combdata2a} and \eqref{eq:combdata2b}, we could construct a separate combinatorial arc joining $w_1(n+1)$ and $w_2(n+1)$ that does contain some child of $v_{n,i}$, violating the tree conditoin.

The upshot of the previous two paragraphs is that each $\mathcal{A}_{v_{n+1,i}}$ is contained in some $\mathcal{A}_{v_{n,j}}$, and each $\mathcal{A}_{v_{n,i}}$ contains some $\mathcal{A}_{v_{n+1,j}}$

In particular, for each $n\in\N$,
\[ \bigcup_{i=1}^{m(n+1)}\mathcal{A}_{v_{n+1,i}} \subseteq \bigcup_{i=1}^{m(n)}\mathcal{A}_{v_{n,i}}.\]
Let
$$ K_n:= \bigcup_{i=1}^{m(n)}\mathcal{A}_{v_{n,i}}\subseteq \mathcal{A},\text{ and }$$
\[ K := \bigcap_{n=1}^{\infty} K_n \subseteq \cA \]
Note that the above sets are all compact by Lemma \ref{lem:compact}.

\begin{claim}
We have that $[w_1],[w_2] \in K$.
\end{claim} 
\begin{proof}
We have that $w_1=v_{n,1}$ for each $n$, so $w_1\in A^\N_{v_n,1}$ for each $n$. Hence $[w_1]\in \mathcal{A}_{v_{n,1}}\subseteq K_n$ for each $n$, and $[w_1]$ is therefore in $K$. Similarly, $[w_2]\in K$.

%Given $\e>0$, there exists $n\in\N$ such that $\sup_{w\in A^n}\D(w)<\e/2$. Then,
%\[ d_{\mathscr{C},\D}([w_1], K_n) \leq D_{\mathscr{C},\D}(w_1,v_{n,1}1^{\infty}) \leq \D(w_1(n)) + \D(v_{n,1}) <\e.\]
%Therefore, $[w_1]\in K$. Similarly, $[w_2]\in K$.
\end{proof}

\begin{claim}\label{claim:Kcontinuum}
The set $K$ contains a continuum that joins $[w_1]$ with $[w_2]$.
\end{claim}
\begin{proof}
For any $\d>0$, there exists $n\in \N$ such that $\sup_{w\in A^n}\D(w)<\d/2$. We first claim that, for any $i =1,\dots, m(n)$ there exists a point $[v_i] \in \mathcal{A}_{v_{n,i}} \cap K$. Indeed, by the discussion at the beginning of the proof of this lemma, there is a sequence
$$ \mathcal{A}_{v_{n,i}} \supseteq \mathcal{A}_{v_{n+1,i_1}} \supseteq \mathcal{A}_{v_{n+2,i_2}} \supseteq \dots$$
By compactness of $\mathcal{A}$ and the definition of $K$, there is an element of $K$ in the intersection of these.

It is then immediate that  $([w_1], [v_1], [v_2],\dots,[v_{m(n)}], [w_2])$ is a $\d$-path in $K$ joining $[w_1]$ with $[w_2]$. As the choice of $\delta>0$ was arbitrary, it follows from this that $[w_1]$ and $[w_2]$ must lie in the same connected component of $K$ (see \cite[(9.2), p. 15]{Whyburn}), which must also be closed as $K$ is compact.
% As $\delta$ was arbitrary, this shows that the connected component of $K$ containing $[w_1]$ also contains $[w_2]$. 
\end{proof}

\begin{claim}
The set $K$ is contained in every path $\g$ from $[w_1]$ to $[w_2]$ in $\cA$. 
\end{claim}
\begin{proof}
Fix such a path $\g$ and let $\e>0$ and $[v_0] \in K$. Choose $n\in \N$ such that $\sup_{w\in A^n}\D(w)<\e$. Let $i \in \{1,\dots,m(n)\}$ such that $[v_0] \in \mathcal{A}_{v_{n,i}}$. Let $\{T_{n,j} =(V_j, E_j)\}_j$ enumerate the components of $T_n \setminus\{v_{n,i}\}$. For each $j$, let $X_j = \bigcup_{w\in V_j}\mathcal{A}_w$. These are compact sets: each can be rewritten as $X_j = \bigcup_{w\in V_j, \D(w)=0}\mathcal{A}_w$, and this is a finite union of compact sets by Definition \ref{def:diam}(2) and Lemma \ref{lem:compact}.

Moreover, the union of these sets contains $\mathcal{A} \setminus \mathcal{A}_{v_{n,i}}$. Finally, the sets $\{X_j\}$ also have the property that $X_j \cap X_{j'} \subseteq \cA_{v_{n,i}}$ whenever $j\neq j'$. Indeed, if $[v]\in X_j \cap X_{j'}$, then $[v]=[u]=[u']$, where $u(n)\in T_{n,j}$ and $u(n)\in T_{n,j'}$. The unique combinatorial arc from $u(n)$ to $u(n')$ in $T_n$ contains $v_{n,i}$, so by Lemma \ref{lem:tree1} we have that $[v]=[u]\in \mathcal{A}_{v_{n,i}}$.

If neither of $[w_1]$ or $[w_2]$ is contained in $\mathcal{A}_{v_{n,i}}$, then $w_1(n)$ and $w_2(n)$ are contained in different subgraphs $T_{n,j}$ and hence $[w_1],[w_2]$ are contained in different sets $X_j$. In either case, the path $\g$ must intersect $\mathcal{A}_{v_{n,i}}$. Thus,
\[ d_{\mathscr{C},\D}(\g,[v_0]) \leq \D(w_{n,i}) < \e.\]
Since $\epsilon>0$ was arbitrary, we have $[v_0]\in \g$.
\end{proof}

Thus, every path in $\cA$ from $[w_1]$ to $[w_2]$ contains $K$, which contains a fixed continuum joining $[w_1]$ and $[w_2]$. In particular, any two such paths must intersect somewhere other than their endpoints. This shows that $\cA$ is a metric tree.
\end{proof}

\begin{rem}\label{rem:arc}
Given $w_1,w_2 \in A^{\N}$, let $K\subset \mathcal{A}$ be as in the proof of Lemma \ref{lem:tree}. We showed above that $K$ contains a continuum that joins $[w_1]$ with $[w_2]$ and, conversely, that every path in $\mathcal{A}$ that joins $[w_1]$ with $[w_2]$ contains $K$. Therefore, $K$ is the unique arc that joins $[w_1]$ with $[w_2]$ in $\mathcal{A}$.
\end{rem}

Together, Lemmas \ref{lem:connected}, \ref{lem:bt}, and \ref{lem:tree} prove Proposition \ref{prop:comb-tree}.

\section{Doubling metric trees}\label{sec:doubling}
Recall that a metric space is $C$-doubling if there exists a constant $C\geq 1$ such that for any $x\in X$ and $r>0$, the ball $B(x,r)$ can be covered by at most $C$ balls of radius $r/2$. Our goal here is to give some sufficient conditions for our combinatorial construction to yield a doubling metric tree.

\textbf{For the remainder of Section \ref{sec:doubling}, we assume that $A$ is an alphabet and $\mathscr{C} = (A,(T_k)_{k\in\N})$ is combinatorial data as in Definition \ref{def:combdata}, with the additional assumption that each graph $T_k$ is a combinatorial tree.}

%Given $u\in A^*$ we define the \emph{combinatorial boundary} of $A^{\N}_u$ by
%\[ \partial_{\mathscr{C}}A^{\N}_u := A^\N_u \cap \bigcup_{v\in A^{|u|}\setminus\{u\}} \left( A^{\N}_v \wedge_{\mathscr{C}}A^{\N}_u \right).\]
%In other words, $w\in \partial_{\mathscr{C}}A^{\N}_u$ if and only if $w\in A^{\N}_u$ and for every $n>|u|$, there exists $u' \in A^n \setminus A^n_u$ with $\{w(n),u'\}\in E_{n}$.  

\begin{prop}\label{prop:doubling}
Fix $N,n_0\in \N$, $c>1$, and $\d_1, \d_2\in (0,1)$.  There exists $C>1$, depending only on these constants, with the following property. Assume that:
\begin{enumerate}[label=(P\arabic*), ref=(P\arabic*)]
\item \label{P1} $\card{A} \leq N$.
\item \label{P2} $\text{Val}(T_k)\leq n_0$ for all $k\in\N$.
\item \label{P3} For all $w \in A^*$ and $i\in A$, $\d_1\D(w) \leq \D(wi) \leq \d_2\D(w)$.
\item \label{P4} Suppose that for some $k\in\N$ and some distinct $u,u_1,u_2\in A^n$ we have $A^{\N}_{u} \wedge_{\mathscr{C}}A^{\N}_{u_i} \neq \emptyset$ for $i=1,2$. If $w_i \in A^{\N}_{u} \wedge_{\mathscr{C}}A^{\N}_{u_i}$ for $i=1,2$, then $d_{\mathscr{C},\D}([w_1],[w_2]) \geq c^{-1}\D(u).$
%For all $u\in A^*$ and all distinct $w_1,w_2 \in \partial_{\mathscr{C}}A^{\N}_u$ we have $d_{\mathscr{C},\D}([w_1],[w_2]) \geq c^{-1}\D(u).$
\end{enumerate}
Then $(\mathcal{A},d_{\mathscr{C},\D})$ is $C$-doubling.
\end{prop}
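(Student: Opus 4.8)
### Proof Proposal for Proposition \ref{prop:doubling}

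\textbf{Overall strategy.} The plan is to exploit the tree structure together with the four hypotheses to show that, at every scale, a ball $B([w_0],r)$ is covered by a \emph{bounded} number of sets $\mathcal{A}_w$ with $\Delta(w)$ comparable to $r$, and that each such $\mathcal{A}_w$ is itself contained in a ball of radius $r/2$ (after iterating the construction $O(1)$ more levels). Concretely, I would fix $[w_0]$ and $r$, and for each $[v] \in \mathcal{A}$ choose the ``stopping word'' $w(v)$: the shortest prefix $w$ of some representative of $[v]$ with $\Delta(w) \le r$; by \ref{P3} this word satisfies $\delta_1 r < \Delta(w(v)) \le r$, so all the relevant tiles have comparable diameter. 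The collection of distinct such stopping words $w$ that meet $B([w_0],r)$ — call it $\mathcal{W}$ — gives a cover $B([w_0],r) \subseteq \bigcup_{w\in\mathcal{W}} \mathcal{A}_w$. The two things to prove are: (a) $\card \mathcal{W}$ is bounded by a constant depending only on $N, n_0, c, \delta_1, \delta_2$; and (b) each $\mathcal{A}_w$ can itself be covered by $O(1)$ balls of radius $r/2$. Combining (a) and (b) gives the doubling constant.

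\textbf{Step 1: bounding the number of stopping tiles (the cardinality of $\mathcal{W}$).} This is where the tree hypotheses \ref{P2} and \ref{P4} do the work. Fix a level $n$ large enough that all words in $\mathcal{W}$ have length $< n$ (exists since $\mathcal{W}$ is finite: $A$ is finite by \ref{P1} and stopping words have bounded length by \ref{P3} and $\Delta(\varepsilon)=1$, so there are only finitely many possible stopping words — wait, one must be careful, $\mathcal{W}$ could in principle be large; the point of the argument is precisely to bound it). For each $w \in \mathcal{W}$ pick a representative infinite word and look at its level-$n$ truncation inside $T_n$; this gives a set $S \subseteq A^n$ of ``representatives.'' Since $B([w_0],r)$ is connected up to $\varepsilon$-chains (Lemma \ref{lem:discreteBT}) and has diameter $< 2r$, the subgraph of $T_n$ spanned by $S$ together with the connecting chains is a subtree of $T_n$ of diameter (in the metric $d_{\mathscr{C},\D}$, via the tiles) at most $\approx 2r$. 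Now the key estimate: by \ref{P4}, whenever two tiles $\mathcal{A}_{u_1}, \mathcal{A}_{u_2}$ both combinatorially intersect a common third tile $\mathcal{A}_u$ with $\Delta(u) \gtrsim r$, the intersection points are $\gtrsim c^{-1} r$ apart. Using \ref{P2}, each vertex of the spanning subtree has at most $n_0$ neighbors, and each ``branch direction'' at a tile carries a definite amount of $d_{\mathscr{C},\D}$-length before the next branch tile of comparable size; since the whole configuration has diameter $\lesssim r$, a volume/packing count along the tree (number of tiles of size $\gtrsim \delta_1 r$ that fit inside a region of diameter $\lesssim r$, given $n_0$-bounded valence and $c^{-1}r$-separation at branch points) bounds $\card \mathcal{W}$ by a constant $C_1 = C_1(N, n_0, c, \delta_1, \delta_2)$. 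This packing argument is essentially the one-dimensional (tree) analogue of the standard Euclidean doubling count.

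\textbf{Step 2: shrinking each tile to radius $r/2$.} Fix $w \in \mathcal{W}$; we have $\delta_1 r < \Delta(w) \le r$, and $\diam \mathcal{A}_w \le \Delta(w) \le r$ by Lemma \ref{lem:dist}, which is not yet $\le r/2$. Iterate: pass to children $\mathcal{A}_{wv}$ with $v \in A^j$ for $j$ chosen so that $\delta_2^j \le 1/2$, i.e. $j = j(\delta_2)$ a fixed constant; then $\diam \mathcal{A}_{wv} \le \Delta(wv) \le \delta_2^j \Delta(w) \le r/2$ by \ref{P3} and Lemma \ref{lem:dist}. By \ref{P1}, the number of such descendant tiles is at most $N^j$, a constant. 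By Lemma \ref{lem:cover} (applied $j$ times, discarding the zero-diameter children) these descendant tiles cover $\mathcal{A}_w$, so $\mathcal{A}_w$ is covered by at most $N^{j(\delta_2)}$ sets each of diameter $\le r/2$, hence by that many balls of radius $r/2$.

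\textbf{Conclusion and the main obstacle.} Combining, $B([w_0],r)$ is covered by at most $C_1 \cdot N^{j(\delta_2)} =: C$ balls of radius $r/2$, with $C$ depending only on $N, n_0, c, \delta_1, \delta_2$, proving $C$-doubling. I expect the genuine difficulty to be Step 1 — making the packing count along the combinatorial tree fully rigorous. The subtle points are: (i) passing from the abstract tiles to a well-defined finite subtree of some $T_n$ and controlling how stopping words at different lengths sit inside it (using that a child tile of a stopping tile has diameter $\le \delta_2 r$, so one never ``overshoots'' by more than a bounded factor); (ii) correctly invoking \ref{P4} — it gives separation only for tiles sharing a \emph{common} combinatorial-intersection parent tile, so one must organize the count so that every pair of stopping tiles that are ``close'' in the tree is handled through a suitable common ancestor/branch tile of comparable size, which is exactly where hypothesis \ref{P4} together with the bounded-turning property (Lemma \ref{lem:bt}) and bounded valence \ref{P2} must be combined; and (iii) checking that $\mathcal{W}$ is finite \emph{a priori} (so that a level $n$ as above exists) — this follows because every stopping word has length at most $\log_{\delta_2}(\delta_1 r) =: L$, hence lies in $\bigcup_{k \le L} A^k$, a finite set by \ref{P1}. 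Everything else is bookkeeping with the already-established lemmas on chains, combinatorial intersection, and the metric $d_{\mathscr{C},\D}$.
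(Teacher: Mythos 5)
Your overall plan — stop at words of $\Delta$-size comparable to $r$, cover the ball by the corresponding tiles, then iterate \eqref{eq:combdata1}-many more levels to shrink each tile below $r/2$ — is the same as the paper's, and your Step~2 is essentially Claim~\ref{claim:doubling3} there. The finiteness of the set of candidate stopping words and the role of \ref{P1}, \ref{P3} also match. But your Step~1, which you yourself flag as the crux, is where the proposal has a genuine gap, and the paper's argument is structurally different from a packing count.

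The paper does not estimate ``how many tiles of size $\sim r$ can fit'' at all. Instead it proves an \emph{exclusion} statement (Claim~\ref{claim:doubling2}): with the stopping scale chosen to be $c\delta_1^{-1}r$ (not $r$, as you propose — this factor is crucial), the only tiles in $A^*(c\delta_1^{-1}r)$ that can come within distance $r$ of $[w]$ are $u_0$ itself and its at most $n_0$ combinatorial neighbors (Lemma~\ref{lem:partition}, using only \ref{P2}). The proof that all other tiles $u$ are at distance $\geq r$ pins down a \emph{single} separating vertex $v$ on the combinatorial path $P_n$ from $w(n)$ to $w'(n)$ in $T_n$, lying outside both $A^n_{u_0}$ and $A^n_u$; existence of such a $v$ is forced by the tree structure via Lemma~\ref{lem:chain} together with the fact that $A^\N_{u_0}\wedge_\mathscr{C} A^\N_u=\emptyset$. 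One then places two points of the unique metric arc $\gamma$ from $[w]$ to $[w']$ inside $\mathcal{A}_v$ (using Remark~\ref{rem:arc}), and \ref{P4} applied at $v$ plus $1$-bounded turning gives $d([w],[w'])=\diam\gamma\geq c^{-1}\Delta(v)\geq r$, where the last inequality is exactly what the stopping scale $c\delta_1^{-1}r$ and \ref{P3} buy. No summing along the tree or volume count is needed.

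By contrast, your proposed packing argument is not made rigorous and is actually harder to carry out than you indicate. The idea that ``each branch direction carries a definite amount of $d_{\mathscr{C},\D}$-length'' is not automatic: \ref{P4} only gives a diameter lower bound at a tile with two distinct combinatorial neighbors, and in a non-geodesic bounded-turning tree, distances along an arc do not add up, so one cannot naively convert pointwise separation at branch tiles into a bound on how many fit inside a set of diameter $\lesssim 2r$. Moreover, bounding a $c^{-1}r$-separated set inside a ball of radius $2r$ in $\mathcal{A}$ is essentially the doubling property itself, so phrased that way the argument risks circularity (one would have to redo it as a purely combinatorial count in $T_n$). The missing ingredient is precisely Claim~\ref{claim:doubling2}: a single separating tile suffices, found via Lemma~\ref{lem:chain} and Remark~\ref{rem:arc}, making a packing estimate unnecessary.
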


\begin{rem}
Items \ref{P1}, \ref{P2}, and \ref{P3} of Proposition \ref{prop:doubling} are rather innocuous, while \ref{P4} requires some more thought. Essentially, \ref{P4} prevents the space from ``collapsing'' too many far away points close together, which may violate doubling. In Lemma \ref{lem:suffdoubling}, we provide a more easily checkable condition that implies \ref{P4}, and in Example \ref{ex:nondoubling} we show that \ref{P4} is necessary in Proposition \ref{prop:doubling}.

Note also that if $w_i,w_i' \in A^{\N}_{u} \wedge_{\mathscr{C}}A^{\N}_{u_i}$, then $d_{\mathscr{C},\D}([w_i],[w_i']) =0$. Therefore, in \ref{P4}, we may assume that $w_i \in (A^{\N}_{u} \wedge_{\mathscr{C}}A^{\N}_{u_i})\cap A^{\N}_{u}$.
\end{rem}

Recall the definition of a parent word $u^{\uparrow}$. For the proof of Proposition \ref{prop:doubling}, we make the following definition. Given $r>0$ define 
\[ A^*(r) := \left\{w\in A^* : \D(w)< r\text{ and }\D(w^{\uparrow}) \geq r\right\}. \]

\begin{rem}\label{rem:partition}
The set $A^*(r)$ induces a partition on $A^{\N}$. Namely, $A^{\N} = \bigcup_{u\in A^{*}(r)}A_u^{\N}$ and for distinct $w,u \in A^{*}(r)$ we have $A^{\N}_w \cap A^{\N}_u = \emptyset$.
\end{rem}

\begin{lem}\label{lem:partition}
Let $A$ and $\mathscr{C}$ satisfy \ref{P2}. Then, for each $r>0$ and for each $w\in A^*(r)$, there exist at most $n_0$ words $u \in A^*(r) \setminus \{w\}$ such that $A^{\N}_w\wedge_{\mathscr{C}}A^{\N}_u \neq \emptyset$.
\end{lem}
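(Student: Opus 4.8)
The plan is to fix $r>0$ and $w\in A^*(r)$, and to produce an injection from the set
\[ S_w := \{u\in A^*(r)\setminus\{w\} : A^\N_w\wedge_{\mathscr{C}}A^\N_u\neq\emptyset\} \]
into the edge set at a single vertex of some combinatorial tree $T_k$, whose size is bounded by $n_0$ by assumption \ref{P2}. The natural level to work at is $k = |w|$. First I would observe that the words in $A^*(r)$ need not all have the same length, so a given $u\in S_w$ may be longer or shorter than $w$; I would handle this by passing, for each such $u$, to the ancestor $u(k)\in A^k$ (if $|u|\geq k$) or noting $w$ itself has an ancestor of length $|u|$ (if $|u|<k$), and reducing everything to adjacency in $T_k$ via Lemma \ref{lem:adjacent}.

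The key steps, in order: (i) Show that for each $u\in S_w$, the level-$k$ ancestor $u(k)$ is either equal to $w$ or adjacent to $w$ in $T_k$. This uses Lemma \ref{lem:intersection1} (combinatorial intersection gives adjacent words at some large level $m$) together with Lemma \ref{lem:adjacent}(2) to push that adjacency down to level $k$. (ii) Rule out the case $u(k)=w$ when $u\neq w$: if $u(k)=w$, then since $w\in A^*(r)$ we have $\D(w)<r$, and $u$ being a descendant of $w$ (or an ancestor, in the $|u|<k$ case --- but then $|u|<|w|$ and $u(k)=w$ is impossible, so in fact $|u|\geq k$ here) forces $\D(u^\uparrow) \le \D(w) < r$ unless $u=w$; this contradicts $u\in A^*(r)$ (which demands $\D(u^\uparrow)\geq r$), so $u(k)=w$ forces $u=w$, excluded. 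Hence $u(k)$ is genuinely adjacent to $w$ in $T_k$, i.e. $\{w,u(k)\}\in E_k$. (iii) Show the map $u\mapsto u(k)$ is injective on $S_w$: this is the crucial point and follows from Remark \ref{rem:partition} --- the sets $\{A^\N_u : u\in A^*(r)\}$ are pairwise disjoint and cover $A^\N$, so distinct $u,u'\in A^*(r)$ have $A^\N_u\cap A^\N_{u'}=\emptyset$; since $A^\N_u\subseteq A^\N_{u(k)}$ and likewise for $u'$, if $u(k)=u'(k)$ then $A^\N_u,A^\N_{u'}$ are two disjoint subsets of the same $A^\N_{u(k)}$ --- wait, that alone is not a contradiction, so instead I would argue: the words $u$ of $A^*(r)$ with a fixed level-$k$ ancestor $v$ are exactly the elements of $A^*(r)$ lying in $A^*_v$; if two such $u\neq u'$ both had $u(k)=u'(k)=v$, that is fine in general, but we additionally need them both in $S_w$, i.e. both combinatorially intersecting $A^\N_w$. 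Here I would invoke Lemma \ref{lem:intersection2}(3): since $\{w,v\}\in E_k$, the combinatorial intersection $A^\N_w\wedge_{\mathscr{C}}A^\N_v$ has exactly two elements, one in each of $A^\N_w,A^\N_v$; and any $u\in S_w$ with $u(k)=v$ would give $A^\N_w\wedge_{\mathscr{C}}A^\N_u\subseteq A^\N_w\wedge_{\mathscr{C}}A^\N_v$ pinning down which descendant-branch of $v$ is used, forcing the level-$(k+1)$ ancestor of $u$ to be determined, and inductively all ancestors, so $u$ is determined. Thus $u\mapsto u(k)$ is injective on $S_w$ and lands in $\{v\in A^k : \{w,v\}\in E_k\}$, which has cardinality $\text{Val}(T_k,w)\leq \text{Val}(T_k)\leq n_0$.

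I expect step (iii), the injectivity, to be the main obstacle: the subtlety is that many words in $A^*(r)$ can share a level-$k$ ancestor $v$ adjacent to $w$, and one must use the tree structure (via the uniqueness in Lemma \ref{lem:adjacent}(3) and the two-element description in Lemma \ref{lem:intersection2}(3)) to see that at most one of them can combinatorially intersect $A^\N_w$. Concretely, if $u,u'\in A^*(r)$ both have level-$k$ ancestor $v$ with $\{w,v\}\in E_k$ and both combinatorially intersect $A^\N_w$, then at every level $m>k$ the unique edge of $E_m$ between a descendant of $w$ and a descendant of $v$ (unique by iterating Lemma \ref{lem:adjacent}(3) from the edge $\{w,v\}$) forces $u(m)$ and $u'(m)$ to coincide with the $v$-side endpoint of that edge for all $m$, hence $u=u'$; combined with the case analysis reducing short words to this situation, this yields the bound. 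The remaining details --- the $|u|<k$ edge case and the elementary manipulations with $\D$ and $A^*(r)$ --- are routine given \ref{P2}, \ref{P3}, Remark \ref{rem:partition}, and the lemmas of Section \ref{sec:combtrees}.
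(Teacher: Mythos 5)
There is a genuine gap in the handling of the case $|u| < |w|$. You define your injection as $u\mapsto u(k)$ with $k=|w|$, and in step (iii) you claim this map ``lands in $\{v\in A^k : \{w,v\}\in E_k\}$.'' But under the paper's convention $u(k)=u$ whenever $|u|<k$, so for short words $u\in A^*(r)\cap S_w$ the image $u(k)$ is a word of length $|u|<k$, not an element of $A^k$ at all, and certainly not a neighbor of $w$ in $T_k$. You notice the case in your step (ii) but only use it to observe that $u(k)=w$ is impossible there; you never say where such $u$ actually get sent, and your cardinality bound via $\text{Val}(T_k,w)\le n_0$ therefore does not cover them. (It is easy to produce $u\in A^*(r)\cap S_w$ with $|u|<|w|$ when the diameter function varies from branch to branch, so this is not a vacuous case.)

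The paper's proof uses a different map on the short words: for $|u_i|<k_0:=|w|$, it sends $u_i$ to the unique word $u_i'\in A^{k_0}_{u_i}$ (a \emph{descendant} of $u_i$, not an ancestor) with $\{w,u_i'\}\in E_{k_0}$; existence and uniqueness of this $u_i'$ come from Lemmas \ref{lem:intersection1}, \ref{lem:intersection2}, and \ref{lem:adjacent}. With this fix, the combined map (ancestor for long $u_i$, selected descendant for short $u_i$) genuinely lands among the $\le n_0$ neighbors of $w$ in $T_{k_0}$, and the paper then verifies injectivity by a three-way case analysis (both long, both short, one of each). Your injectivity argument for two long words is essentially the same tree-uniqueness argument the paper runs; the missing ingredient is the replacement of $u(k)$ by the descendant $u_i'$ when $|u|<|w|$, together with the mixed case showing a short $u_i$ and a long $u_j$ cannot map to the same level-$k_0$ word.
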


\begin{proof}
Let $r>0$ and $w\in A^*(r)$. To prove the claim, let $u_1,\dots,u_n$ be words in $A^*(r) \setminus \{w\}$ such that $A^{\N}_w\wedge_{\mathscr{C}}A^{\N}_{u_i} \neq \emptyset$ for each $i$.

Let $k_0 = |w|$. If $|u_i|< k_0$, then by Lemmas \ref{lem:intersection1} and \ref{lem:intersection2}, there exists a unique $u_i' \in A^{k_0}_{u_i}$ such that $\{w,u_i'\}\in E_{k_0}$. If $|u_i| \geq k_0$, then let $u_i' = u_i(k_0)$ and by Lemma \ref{lem:adjacent}, we have that $\{w,u_i'\} \in E_{k_0}$. We claim that if $i\neq j$, then $u_i'\neq u_j'$. Assuming the claim, by \ref{P2} we have that $n \leq n_0$ and so the proof is complete once we establish this claim. To do so, we fix distinct $i,j \in \{1,\dots,n\}$ and consider three possible cases.

\emph{Case 1.} Suppose that $|u_i|\geq k_0$ and $|u_j|\geq k_0$. For a contradiction, assume that $u_i' = u_j' = u'$. By Remark \ref{rem:partition} we have that $u' \neq w$. Therefore, by Lemma \ref{lem:intersection2}, $\{u',w\} \in E_{k_0}$. Let $k = \max\{|u_i|,|u_j|\}$.  By Lemma \ref{lem:adjacent}, there exist unique $w'' \in A^k$ and unique $u''\in A^k_{u'}$ such that $\{w'',u''\}\in E_k$. By Remark \ref{rem:partition}, either $u'' \not\in A_{u_i}^k$ or $u'' \not\in A_{u_j}^k$. Assuming the former (without loss of generality), by Lemma \ref{lem:intersection1}, we have $A^{\N}_{w}\wedge_{\mathscr{C}}A^{\N}_{u_i}= \emptyset$ which is a contradiction.

\emph{Case 2.} Suppose that $|u_i|\leq k_0$ and $|u_j|\leq k_0$. For a contradiction, assume that $u_i' = u_j' = u'$. Then $A^\N_{u_i}\cap A^\N_{u_j} \neq \emptyset$, which contradicts Remark \ref{rem:partition}.

\emph{Case 3.} Suppose that $|u_i|\leq k_0$ and $|u_j|\geq k_0$. By Remark \ref{rem:partition}, $u_i' \neq w$. Now apply the arguments of Case 1 to the triple $u_i'$, $w$, and $u_j$.
\end{proof}

\begin{proof}[{Proof of Proposition \ref{prop:doubling}}]
Let $[w]\in \mathcal{A}$ and $r>0$. To prove the proposition, it suffices to prove that the doubling property holds for the ball $B([w],r)$ if $r<c^{-1}\diam{\mathcal{A}}$. Let $u_0$ be the unique element of $A^*(c\delta_1^{-1}r)$ such that $w\in A^\N_{u_0}$.

\begin{claim}\label{claim:doubling1}
There exist at most $n_0$ words $u \in A^*(c\d_1^{-1}r) \setminus \{u_0\}$ such that $A^{\N}_{u_0}\wedge_{\mathscr{C}}A^{\N}_u \neq \emptyset$, and each such word $u$ satisfies 
$$ c\delta_1^{-1}r > \Delta(u)  \geq cr.$$
\end{claim}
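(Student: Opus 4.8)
The plan is to read off both halves of the claim from earlier results, with essentially no new argument. The bound on the number of neighbors is an instance of Lemma \ref{lem:partition}, and the two-sided estimate on $\Delta(u)$ is forced by the definition of the sets $A^*(\cdot)$ together with property \ref{P3}.

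For the first assertion, I would apply Lemma \ref{lem:partition} with $c\delta_1^{-1}r$ in the role of the radius ``$r$'' there and with the word $u_0$ in the role of ``$w$''. This is legitimate: $u_0 \in A^*(c\delta_1^{-1}r)$ by its very definition (see Remark \ref{rem:partition}), and the standing hypotheses of Proposition \ref{prop:doubling} include \ref{P2}, which is precisely the hypothesis that Lemma \ref{lem:partition} requires of $A$ and $\mathscr{C}$. The lemma then yields at most $n_0$ words $u \in A^*(c\delta_1^{-1}r) \setminus \{u_0\}$ with $A^{\N}_{u_0} \wedge_{\mathscr{C}} A^{\N}_u \neq \emptyset$, which is exactly what is asserted.

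For the second assertion, observe that membership of $u$ in $A^*(c\delta_1^{-1}r)$ means exactly $\Delta(u) < c\delta_1^{-1}r$ and $\Delta(u^{\uparrow}) \geq c\delta_1^{-1}r$; the first inequality is the desired upper bound, and applying the left-hand inequality of \ref{P3} to the parent--child pair $(u^{\uparrow},u)$ gives $\Delta(u) \geq \delta_1 \Delta(u^{\uparrow}) \geq \delta_1 \cdot c\delta_1^{-1}r = cr$, the desired lower bound. (If $c\delta_1^{-1}r > 1$, then $A^*(c\delta_1^{-1}r) = \{\varepsilon\}$, so $u_0 = \varepsilon$ and there is nothing to prove.) I do not anticipate a genuine obstacle here: the claim is purely a bookkeeping consequence of Lemma \ref{lem:partition} and \ref{P3}, and the only points needing a moment's care are that the hypotheses of Lemma \ref{lem:partition} are in force (they are, by \ref{P2}) and that $u_0$ really lies in $A^*(c\delta_1^{-1}r)$ (it does, by construction).
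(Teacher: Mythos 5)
Your proof is correct and matches the paper's own argument essentially line for line: Lemma \ref{lem:partition} gives the cardinality bound, and the two-sided estimate on $\Delta(u)$ falls out of the definition of $A^*(c\delta_1^{-1}r)$ combined with the lower bound in \ref{P3}. Your extra remark handling the degenerate case $c\delta_1^{-1}r > 1$ is a reasonable precaution (the paper leaves it implicit, having already restricted to $r < c^{-1}\diam\mathcal{A}$), but it does not change the substance.
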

\begin{proof}[Proof of Claim \ref{claim:doubling1}]
By Lemma \ref{lem:partition}, there exist at most $n_0$ such words $u \in A^*(c\d_1^{-1}r) \setminus \{u_0\}$. Moreover, by \ref{P3}, for each $u \in A^*(c\d_1^{-1}r)$,
\[ c\d_1^{-1}r > \D(u) \geq \d_1\D(u^{\uparrow}) \geq cr.\qedhere\]
\end{proof}

\begin{claim}\label{claim:doubling2}
If $u\in A^*(c\d_1^{-1}r)$ and $A^{\N}_{u_0}\wedge_{\mathscr{C}}A^{\N}_u = \emptyset$, then for any $w'\in A^{\N}_u$ we have $d_{\mathscr{C},\D}([w],[w']) \geq r$.
\end{claim}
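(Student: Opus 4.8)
The plan is to argue by contrapositive: supposing $d_{\mathscr{C},\D}([w],[w']) < r$ for some $w' \in A^\N_u$, I will produce a chain realizing (almost) this distance and extract from it a combinatorial intersection between $A^\N_{u_0}$ and $A^\N_u$, contradicting the hypothesis that $A^\N_{u_0} \wedge_{\mathscr{C}} A^\N_u = \emptyset$. First I would fix $\e>0$ small (say $\e < r - d_{\mathscr{C},\D}([w],[w'])$) and choose a chain $\{A^\N_{v_1},\dots,A^\N_{v_m}\}$ joining $w$ with $w'$ with $\sum_i \D(v_i) < r$. The key observation is that, since $w \in A^\N_{u_0} \cap A^\N_{v_1}$ and $w' \in A^\N_u \cap A^\N_{v_m}$, and since $u_0, u \in A^*(c\d_1^{-1}r)$ partition $A^\N$ into pieces of diameter at least $cr > r$ (by Claim \ref{claim:doubling1} and property \ref{P3}, using $\D(u_0^\uparrow), \D(u^\uparrow) \geq c\d_1^{-1}r$ so $\D(u_0), \D(u) \geq cr$), no single $v_i$ can contain both $A^\N_{u_0}$ and $A^\N_u$, nor can any $v_i$ have $A^\N_{v_i} \supseteq A^\N_{u_0}$ with $\D(v_i) < r$; indeed $\D(v_i) < r < cr \leq \D(u_0)$, so $v_i$ cannot be an ancestor of $u_0$, forcing $A^\N_{v_i} \subseteq A^\N_{u_0}$ whenever $A^\N_{v_i}$ meets $A^\N_{u_0}$. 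The same holds for $u$.

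Next I would walk along the chain: $A^\N_{v_1} \subseteq A^\N_{u_0}$ and $A^\N_{v_m} \subseteq A^\N_u$, so there is a first index $i$ with $A^\N_{v_i} \subseteq A^\N_{u_0}$ but $A^\N_{v_{i+1}} \not\subseteq A^\N_{u_0}$. Since consecutive links of the chain combinatorially intersect, $A^\N_{v_i} \wedge_{\mathscr{C}} A^\N_{v_{i+1}} \neq \emptyset$, and I want to conclude $A^\N_{u_0} \wedge_{\mathscr{C}} A^\N_u \neq \emptyset$. The cleanest route is probably to show the chain must actually pass through a link contained in $A^\N_u$ while the previous link is contained in $A^\N_{u_0}$ — this requires knowing that the pieces $\{A^\N_{u'} : u' \in A^*(c\d_1^{-1}r)\}$ behave like a partition compatible with the chain structure, and that once the chain leaves $A^\N_{u_0}$ it cannot ``skip over'' the region between $u_0$ and $u$. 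Here I would use Lemma \ref{lem:chain}: $[w]$ and $[w']$ lie in $A^\N_{u_0}$ and $A^\N_u$ respectively, and $u_0, u$ are either adjacent in $T_{|u_0|\wedge|u|}$ or one lies above a neighbor of the other; combined with the between-ness constraint, any chain joining them must include a link sitting on the combinatorial arc from $u_0$ to $u$, and in fact the transition across that arc forces the combinatorial intersection we want.

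I expect the main obstacle to be handling the case where $u_0$ and $u$ have different lengths, so that ``adjacency'' is replaced by the more delicate descendant-of-a-neighbor relation from Lemma \ref{lem:intersection2}, and making precise the assertion that a chain of pieces each of diameter $< r$ cannot connect $A^\N_{u_0}$ to $A^\N_u$ without at some link combinatorially intersecting across the gap. The right tool is Lemma \ref{lem:chain} applied at level $n = \max\{|u_0|,|u|\}$ (or a level just above it): the words $u_0(n)$ and $u(n)$ are joined by a unique combinatorial arc in $T_n$, every link $A^\N_{v_i}$ of the chain restricts to a connected subset of $T_n$ (a single vertex or a whole subtree $A^n_{v_i}$), and the union of these restrictions is connected and contains $u_0(n)$ and $u(n)$, hence contains the entire arc — in particular, for consecutive links straddling $u_0$ and its neighbor on that arc, Lemma \ref{lem:intersection1} yields the required adjacency at every sufficiently large level, i.e. $A^\N_{u_0} \wedge_{\mathscr{C}} A^\N_u \neq \emptyset$, the desired contradiction. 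I would then let $\e \to 0$ if needed, though the combinatorial conclusion is actually $\e$-independent, so a single small $\e$ suffices.
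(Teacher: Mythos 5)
Your approach has a genuine gap. You aim to deduce a contradiction purely from the combinatorial structure of a cheap chain, arguing that any chain of total $\D$-length less than $r$ joining $w$ to $w'$ forces $A^\N_{u_0}\wedge_{\mathscr{C}}A^\N_u \neq \emptyset$. But you never invoke property \ref{P4}, and \ref{P4} is essential here: without it the claim is false. The failure mode is exactly the one you flagged as the main obstacle --- the chain ``skipping over'' a piece between $u_0$ and $u$ --- and the tools you propose do not rule it out. Concretely, at the first index $i$ where the chain exits $A^\N_{u_0}$, the next link $A^\N_{v_{i+1}}$ is contained in $A^\N_{u''}$ for some $u''\in A^*(c\delta_1^{-1}r)$ with $u''\neq u_0$; nothing forces $u''=u$. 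Lemma \ref{lem:chain} (together with the analysis of the $P_i$'s in its proof) tells you that the chain's footprint in $T_n$ is connected and contains the arc from $w(n)$ to $w'(n)$, but this only yields $A^\N_{u_0}\wedge_{\mathscr{C}}A^\N_{u''}\neq\emptyset$ for some intermediate $u''$, not $A^\N_{u_0}\wedge_{\mathscr{C}}A^\N_u\neq\emptyset$. To convert ``the chain traverses $\mathcal{A}_{u''}$'' into a cost lower bound you must know that crossing $\mathcal{A}_{u''}$ from one boundary point to another costs at least $c^{-1}\D(u'')\geq r$, which is precisely \ref{P4}; if some $\mathcal{A}_{u''}$ has collapsed (as happens in Example \ref{ex:nondoubling}, where whole tiles have zero diameter), a chain of arbitrarily small $\D$-length can cross it and your conclusion fails.

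The paper's proof is set up to apply \ref{P4} directly. It works with the arc $\gamma$ from $[w]$ to $[w']$ in $\mathcal{A}$, using $1$-bounded turning to get $d_{\mathscr{C},\D}([w],[w'])=\diam(\gamma)$. It then locates a vertex $v$ of the combinatorial path $P_n$ lying outside $A^n_{u_0}\cup A^n_u$ (which exists exactly because $A^\N_{u_0}\wedge_{\mathscr{C}}A^\N_u=\emptyset$), passes if necessary to the ancestor $v'\in A^*(c\delta_1^{-1}r)$ of $v$, and produces two points of $\gamma$ separated by \ref{P4}, giving $\diam(\gamma)\geq c^{-1}\D(v')\geq r$. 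If you wish to run a chain-based variant, you would still need to show that any sufficiently cheap chain joining $w$ and $w'$ contains a sub-chain joining two boundary points of $\mathcal{A}_{v'}$ and then invoke \ref{P4} to bound its length; at that point you are effectively reproducing the paper's argument in chain language.
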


\begin{proof}[Proof of Claim \ref{claim:doubling2}]
Let $\gamma \subset \mathcal{A}$ be the unique arc with endpoints $[w]$ and $[w']$. For each $k$, let $P_k$ be the simple path in $T_k$ from $w(k)$ to $w'(k)$.

Let $n=\max\{|u|,|u_0|\}$. Then $P_n$ must contain a vertex $v\in A^n\setminus (A^n_{u_0} \cup A^n_{u})$, otherwise $A^{\N}_{u_0}\wedge_{\mathscr{C}}A^{\N}_u \neq \emptyset$. Consider the following two possible cases.

\emph{Case 1.} Suppose that $v\in A^*(c\d_1^{-1}r)$ or $v$ has a descendent in $A^*(c\d_1^{-1}r)$. Then $v$ is adjacent to two distinct vertices $v_1$ and $v_2$ of $P_n$. For $i=1,2$, let $w_i\in A^\mathbb{N}_v$ be such that $w_i(k)\in P_k$ and is adjacent to an element of $A^k_{v_i}$ for each $k\geq n$. By Remark \ref{rem:arc}, both $[w_1]$ and $[w_2]$ are in $\gamma$. Therefore, by the 1-bounded turning property of $\mathcal{A}$, by \ref{P3}, and by \ref{P4},
\[ d_{\mathscr{C},\D}([w],[w']) = \diam{\gamma} \geq d_{\mathscr{C},\D}([w_1],[w_2]) \geq c^{-1}\D(v) \geq r.\]

\emph{Case 2.}
Suppose that $v$ is contained in $A^*_{v'}$ for some $v' \in A^*(c\d_1^{-1}r)$. Let $m =|v'|$. First, note that $P_m$ must contain $v'$; if not, then out of $P_m$ we could construct a combinatorial arc in $T_n$ that does not contain $v$ which implies that there are two distinct combinatorial arcs in $T_n$ with the same endpoints. The latter however contradicts the fact that $T_n$ is a tree. Second, by Remark \ref{rem:partition}, we have that $A^{\N}_{v'} \cap A^{\N}_{u_0} = \emptyset$. Since $A^{\N}_{u_0}\subset A^{\N}_{u_0(m)} = A^{N}_{w(m)}$, it follows that $A^{\N}_{v'} \cap A^{\N}_{w(m)} = \emptyset$. Similarly, $A^{\N}_{v'} \cap A^{\N}_{w(m)} = \emptyset$. Therefore, $v'$ is adjacent to two distinct vertices of $P_m$. Now working as in Case 1, we obtain that $d_{\mathscr{C},\D}([w],[w']) \geq c^{-1}\D(v') \geq r$.
\end{proof}

\begin{claim}\label{claim:doubling3}
Let $u \in A^*(c\d_1^{-1}r)$ and let $k$ be the smallest positive integer such that
\[ k\geq \frac{\log((2c)^{-1}\d_1)}{\log(\d_2)}.\]
Then 
$$\diam(\mathcal{A}_v) < r/2$$
for each $v\in A^{|u|+k}_u$.
\end{claim}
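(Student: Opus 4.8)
The plan is to control $\diam(\mathcal{A}_v)$ for $v\in A^{|u|+k}_u$ by iterating the contraction estimate \ref{P3}. First I would recall from Lemma \ref{lem:dist} that $\diam(\mathcal{A}_v) \leq \D(v)$ for every word $v\in A^*$, so it suffices to show $\D(v) < r/2$ for each $v\in A^{|u|+k}_u$. Next, since $u\in A^*(c\d_1^{-1}r)$, the defining property of $A^*(c\d_1^{-1}r)$ gives $\D(u) < c\d_1^{-1}r$.

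Now I would apply \ref{P3} repeatedly: each time we append a letter, the diameter function is multiplied by at most $\d_2$. Hence for any $v\in A^{|u|+k}_u$ we have $\D(v) \leq \d_2^k\, \D(u) < \d_2^k\, c\d_1^{-1} r$. It therefore suffices to check that $\d_2^k\, c\d_1^{-1} \leq 1/2$, i.e., $\d_2^k \leq (2c)^{-1}\d_1$. Taking logarithms (and using $\log\d_2 < 0$ since $\d_2\in(0,1)$, which flips the inequality), this is equivalent to $k \geq \frac{\log((2c)^{-1}\d_1)}{\log(\d_2)}$, which holds by the choice of $k$. This completes the proof of the claim.

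There is really no main obstacle here; the only points requiring a small amount of care are the direction of the inequality when dividing by $\log\d_2<0$, and confirming that $(2c)^{-1}\d_1 \leq 1$ so that the right-hand side of the defining inequality for $k$ is nonnegative (this holds since $c>1$ and $\d_1<1$, so the bound is consistent and $k$ is a well-defined positive integer). If one wanted strict inequality $\diam(\mathcal{A}_v)<r/2$ rather than $\leq$, note that $\D(u) < c\d_1^{-1}r$ is already strict, so the strictness propagates through the chain of inequalities.

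Concretely, the proof I would write is: By Lemma \ref{lem:dist}, $\diam(\mathcal{A}_v)\leq \D(v)$, so it suffices to bound $\D(v)$. Since $u\in A^*(c\d_1^{-1}r)$, we have $\D(u)<c\d_1^{-1}r$. Applying \ref{P3} $k$ times, for any $v\in A^{|u|+k}_u$,
\[ \D(v) \leq \d_2^k \D(u) < \d_2^k c\d_1^{-1} r. \]
By the choice of $k$ and since $\log \d_2 < 0$, we have $k\log\d_2 \leq \log((2c)^{-1}\d_1)$, hence $\d_2^k \leq (2c)^{-1}\d_1$, and therefore $\D(v) < \tfrac{1}{2}r$. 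Thus $\diam(\mathcal{A}_v) < r/2$, as claimed.
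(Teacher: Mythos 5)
Your proof is correct and takes essentially the same approach as the paper: both bound $\diam(\mathcal{A}_v)\le\D(v)$, iterate the upper bound from \ref{P3} to get $\D(v)\le\d_2^k\D(u)<\d_2^kc\d_1^{-1}r$, and then use the choice of $k$ to conclude $\d_2^kc\d_1^{-1}\le 1/2$. You simply spell out a few of the intermediate steps (the logarithm manipulation and the sign of $\log\d_2$) that the paper leaves implicit.
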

\begin{proof}[Proof of Claim \ref{claim:doubling3}]
By the upper bound in \ref{P3} we have that for every $v \in A^{|u|+k}_u$,
\[ \diam(\mathcal{A}_v) \leq \D(v) \leq \d_2^k\D(u) < \d_2^k\d_1^{-1}cr \leq r/2.\qedhere\]
\end{proof}

Let $\{u_1,\dots,u_p\}$ be all the words $u\in A^*(c\d_1^{-1}r)\setminus\{u_0\}$ such that $A^{\N}_{u_0}\wedge_{\mathscr{C}}A^{\N}_u \neq \emptyset.$ By Claim \ref{claim:doubling2},
$$ B([w],r) \subseteq \bigcup_{i=0}^p\mathcal{A}_{u_i}.$$
Claim \ref{claim:doubling1} implies that $p\leq n_0$. Claim \ref{claim:doubling3} implies that each of the sets $\mathcal{A}_{u_i}$ in this union can be covered by at most $N^k$ sets of diameter $< r/2$, hence $N^k$ balls of radius $r/2$. This completes the proof.
\end{proof}

We now give some sufficient conditions for \ref{P4} which are easier to verify.

For the next lemma we use the following notation. Consider combinatorial data $\mathscr{C} = (A,(T_k)_{k\in\N})$ as fixed at the beginning of this section. For each $k\in\N$ and $w\in A^k$, let $\partial_{\mathscr{C}} A^{k+1}_w$ be all words $u \in  A^{k+1}_w$ for which there exists $u' \in A^{k+1}\setminus  A^{k+1}_w$ with $\{u,u'\}\in E_{k+1}$.

\begin{lem}\label{lem:suffdoubling}
Let $\mathscr{C} = (A,(T_k)_{k\in\N})$ be combinatorial data as fixed at the beginning of this section, and let $\D \in \mathscr{D}(A)$. Assume that the following conditions hold for each $k\geq 0$. 
\begin{enumerate}
\item Suppose that $w,u,u' \in A^k$ are distinct with $\{w,u\}, \{w,u'\}\in E_k$. If $wi,wj,ul,u'l' \in A^{k+1}$ with $\{wi,ul\}, \{wj,u'l'\}\in E_{k+1}$, then $i\neq j$.
\item For any $w\in A^k$, and any distinct $u,u' \in \partial_{\mathscr{C}}A^{k+1}_w$, the arc $\{\{u,u_1\},\dots,\{u_n,u'\}\}$ joining $u$ with $u'$ in $T_{k+1}$ satisfies
\[ \D(u) + \D(u_1) + \cdots + \D(u_n) + \D(u') \geq \D(w).\]
\end{enumerate}
Then \ref{P4} of Proposition \ref{prop:doubling} holds with $c=1$.

In particular, $\diam(\mathcal{A}_u)=\Delta(u)$ for each $u\in A^*$ with at least two neighbors in $T_{|u|}$.
\end{lem}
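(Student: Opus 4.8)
\textbf{Proof plan for Lemma \ref{lem:suffdoubling}.}

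The plan is to verify \ref{P4} directly from hypotheses (1) and (2), and then deduce the ``in particular'' statement. First I would set up the situation of \ref{P4}: fix $k\in\N$ and distinct words $u,u_1,u_2\in A^k$ with $A^\N_u\wedge_\mathscr{C} A^\N_{u_i}\neq\emptyset$ for $i=1,2$, and pick $w_i\in A^\N_u\wedge_\mathscr{C} A^\N_{u_i}$; by the remark following Proposition \ref{prop:doubling} I may assume $w_i\in A^\N_u$. By Lemma \ref{lem:intersection2}(3) each $w_i$ is the unique element of $(A^\N_u\wedge_\mathscr{C} A^\N_{u_i})\cap A^\N_u$, and $\{u,u_i'\}\in E_k$ where $u_i'=u_i(k)$ (reducing to the case $|u_i|\ge k$ is harmless since combinatorial intersection only depends on behavior above level $k$; alternatively one uses Lemma \ref{lem:intersection1}(2) to get adjacent descendants at level $k$). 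The key structural fact I want is that $w_1$ and $w_2$ are ``separated at every level below $k$ by passing through $u$'': concretely, that $w_1(m)$ and $w_2(m)$ lie in distinct children-subtrees of $u(m)$ — no wait, they agree down to level $k$ — so the right statement is that for every $m>k$, the combinatorial arc in $T_m$ from $w_1(m)$ to $w_2(m)$ passes through $\partial_\mathscr{C} A^m_{w_1(m-1)}$ on both sides, and more precisely through two distinct elements of $\partial_\mathscr{C} A^{m}_{u}$ lying over $w_1(k)=w_2(k)=u$... I need to be careful. Since $w_1,w_2\in A^\N_u$, let $m^*$ be the first level where $w_1(m^*)\ne w_2(m^*)$; at that level both lie in $A^{m^*}_{w_1(m^*-1)}$. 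The point is that for each $m\ge m^*$, the arc in $T_m$ from $w_1(m)$ to $w_2(m)$ starts in $\partial_\mathscr{C}$ of the child-set containing $w_1(m)$ (because $w_1(m)$ has a neighbor adjacent to something in $A^m_{u_1(m-1)}$, which lies outside, since $u_1\ne u$ below — using hypothesis (1) to propagate adjacency out of the subtree at each level), and symmetrically for $w_2$.

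The heart of the argument is then a telescoping/length estimate. I would show by downward induction, or by directly summing hypothesis (2) over a chain of levels, that for any chain $\{A^\N_{v_1},\dots,A^\N_{v_N}\}$ joining $w_1$ with $w_2$, the cost $\sum_j \D(v_j)$ is at least $\D(u)$. Fix such a chain; choose $m$ larger than $\max_j|v_j|$ and also larger than $m^*$. By Lemma \ref{lem:chain} applied at level $m$ (with the middle vertex taken along the arc from $w_1(m)$ to $w_2(m)$), the sets $P_j$ (defined as in that lemma's proof) cover the entire combinatorial arc in $T_m$ from $w_1(m)$ to $w_2(m)$, and since each $v_j$ has $|v_j|<m$, we have $P_j=A^m_{v_j}$ and hence each arc-vertex $x$ it covers satisfies $\D(v_j)\ge \D(x)$ is \emph{not} quite right — rather $\D(v_j)\ge\diam(\mathcal{A}_{v_j})\ge$ the relevant piece. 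Let me instead argue: the arc from $w_1(m)$ to $w_2(m)$ in $T_m$ decomposes, level by level from $k$ up to $m$, into the arc joining the two distinct boundary words of $A^{m}_{\cdots}$ over $u$; repeatedly applying hypothesis (2) (once at level $k+1$ inside $A^{k+1}_u$, then at level $k+2$ inside whatever children-sets the arc passes through that contain a sub-arc between two boundary words, etc.) gives $\sum_{x\text{ on the arc at level }m}\D(x)\ge \D(u)$. Then, since $\{P_j\}=\{A^m_{v_j}\}$ covers this arc, each arc-vertex $x$ lies in some $A^m_{v_j}$, so $\D(x)\le\D(v_j)$, wait I want the reverse — I want to bound $\sum_j\D(v_j)$ below. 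Grouping arc-vertices by which $v_j$ contains them: each $v_j$ with $|v_j|$-level word $v_j$ satisfies $\D(v_j)=\D(v_j)\ge\D(x)$ for $x\in A^m_{v_j}$ only if $\D$ is decreasing, which it is by \ref{P3}-type monotonicity built into $\mathscr{D}(A)$ — actually monotonicity $\D(wi)\le\D(w)$ isn't assumed in general $\mathscr{D}(A)$! So here I must use that the chain cost should be compared more cleverly: sum hypothesis (2) at the \emph{coarsest} level, $k+1$, only. That is, the arc from $w_1(m)$ to $w_2(m)$ at level $m$ restricts (project to level $k+1$) to a walk in $T_{k+1}$ from $w_1(k+1)\in\partial_\mathscr{C} A^{k+1}_u$ to $w_2(k+1)\in\partial_\mathscr{C} A^{k+1}_u$ lying inside $A^{k+1}_u$; along it, consecutive distinct projected vertices are adjacent in $T_{k+1}$ by Lemma \ref{lem:adjacent}(2), so it contains the unique $T_{k+1}$-arc between $w_1(k+1)$ and $w_2(k+1)$; hypothesis (2) gives the sum of $\D$ over that $T_{k+1}$-arc is $\ge\D(u)$; and each vertex of that $T_{k+1}$-arc has a descendant on the level-$m$ arc, hence in some $P_j=A^m_{v_j}$ with $v_j$ a descendant — so $v_j(k+1)$ equals that arc vertex and $\D(v_j(k+1))$ appears. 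To finish I need $\D(v_j)\ge\D(v_j(k+1))$, i.e. monotonicity again.

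So the \textbf{main obstacle} I anticipate is handling the lack of built-in monotonicity of $\D$ in the class $\mathscr{D}(A)$: one cannot freely compare $\D(v_j)$ with $\D$ of its ancestors. I expect the resolution is that we are always free to refine: given any chain, each $A^\N_{v_j}$ with $|v_j|<k+1$ can be replaced, using \ref{eq:combdata2a}/\ref{eq:combdata2b} and Lemma \ref{lem:intersection1}, by a finite sub-chain of level-$(k+1)$ word-sets lying inside $A^{k+1}_{v_j}$ whose $\D$-values — no, that could increase cost if $\D$ increased. The cleaner route, and the one I would actually pursue: do the estimate at level $\max\{|v_j|\}$ but note that the relevant inequality chains \emph{up}: apply hypothesis (2) not at level $k+1$ but at level $m=\max_j|v_j|+1$ or higher, inside the appropriate child-sets at \emph{that} level, telescoping hypothesis (2) from level $m$ down to level $k+1$ (each application replaces $\D$(parent) by a sum of $\D$(children on a sub-arc), which only makes the bound involve finer words), so that in the end the bound $\sum(\text{level-}m\ \D\text{-values on the arc})\ge\D(u)$ holds, and now every level-$m$ arc vertex lies in exactly one $A^m_{v_j}=P_j$ and equals... no, $P_j=A^m_{v_j}$ is a whole child-block, not a single vertex, so multiple arc vertices can lie in one $P_j$, and I'd be summing $\D$ of arc-vertices (small, level $m$) but wanting to conclude about $\D(v_j)$ (level $|v_j|<m$, possibly much larger). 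This genuinely requires $\D(v_j)\ge$ (sum of $\D$ of its level-$m$ descendants lying on the arc). That \emph{is} true: iterating hypothesis (2) shows the sub-arc of $T_m$ inside $A^m_{v_j}$ (between its two $\partial_\mathscr{C}$-entry points) has total $\D\ge\D(v_j)$. Putting it together: $\sum_j\D(v_j)\le\sum_j(\D\text{ of arc-vertices in }A^m_{v_j})=\sum(\text{all arc-vertices' }\D)$ — wrong direction again; I want $\sum_j\D(v_j)\ge$ something. The correct bookkeeping: $\D(v_j)\ge$ ($\D$-sum of arc vertices in $P_j$ \emph{that form the entry-to-exit sub-arc}) by iterated (2); summing over $j$ and using that the $P_j$'s cover the whole $T_m$-arc from $w_1(m)$ to $w_2(m)$ while that whole arc has $\D$-sum $\ge\D(u)$ (by iterated (2) from the top, or just by one application at level $k+1$ combined with the descendant estimate) — modulo overlaps, which are controlled since the arc is simple — yields $\sum_j\D(v_j)\ge\D(u)$. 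Hence $d_{\mathscr{C},\D}([w_1],[w_2])\ge\D(u)$, i.e. \ref{P4} with $c=1$. Finally, for the ``in particular'' claim: given $u\in A^*$ with $\ge 2$ neighbors in $T_{|u|}$, take two neighbors $u_1,u_2$; combinatorial intersection holds, pick $w_1\in A^\N_u\wedge A^\N_{u_1}$ and $w_2\in A^\N_u\wedge A^\N_{u_2}$; the just-proved bound gives $d_{\mathscr{C},\D}([w_1],[w_2])\ge\D(u)$, and since both points lie in $\mathcal{A}_u$, Lemma \ref{lem:dist} gives $\diam(\mathcal{A}_u)\le\D(u)$, forcing equality. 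I would double-check hypothesis (1) is what guarantees the ``two distinct vertices on the arc'' / non-collapsing needed when propagating adjacency out of a subtree across consecutive levels (it ensures the two outward edges from a vertex land in distinct children, so the separating structure persists).
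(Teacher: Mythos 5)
Your proposal circles the right idea --- iterating hypothesis (2) across scales to get a telescoping lower bound on the $\D$-cost of an arbitrary chain --- and your instinct that lack of monotonicity of $\D$ within the full class $\mathscr{D}(A)$ is a genuine issue is perceptive. But the exposition never settles into a proof; the repeated false starts and unresolved ``wait, no'' passages mean that even the final sketch is not actually pinned down. The paper organizes the telescoping differently: rather than projecting everything to a single deep level $m$ and comparing there, it first performs reductions so that the chain elements $A^\N_{v_i}$ all lie inside $A^\N_w$, are pairwise disjoint, and cover the relevant arc, and then runs a downward induction on the \emph{depth} of the chain. At the deepest level it uses hypothesis (2) (together with hypothesis (1) to ensure the relevant parent has at least two boundary children on the arc) to replace the deepest siblings by their common parent $v_{i_0}^\uparrow$ without increasing $\D$-length, decreasing depth by one; iterating lands on the single-element chain $\{A^\N_w\}$. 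Your ``level-$m$'' reorganization would be a genuinely different packaging of the same telescope, but you would need to carry it through carefully.

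There is also a concrete gap in your sketch. The key inequality you want for each chain element --- $\D(v_j)\geq$ ($\D$-sum over the ``entry-to-exit sub-arc'' of $P_m$ inside $A^m_{v_j}$), justified by ``iterated (2)'' --- only makes sense when the arc from $w_1(m)$ to $w_2(m)$ actually enters and exits $A^m_{v_j}$. If some $v_j$ is an ancestor of $u$ (equivalently $A^\N_{v_j}\supseteq A^\N_u\ni w_1,w_2$), then $A^m_{v_j}$ contains the entire arc, there is no entry/exit pair, hypothesis (2) gives you nothing about $\D(v_j)$ versus $\D(u)$, and this is precisely where the non-monotonicity you flagged would bite: the one-element chain $\{A^\N_{v_j}\}$ would then witness $d_{\mathscr{C},\D}([w_1],[w_2])\leq\D(v_j)$, which is not bounded below by $\D(u)$ without monotonicity. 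You never close this case. (Note that the paper's ``first reduction'' --- discarding chain elements with $A^\N_w\subset A^\N_{v_i}$ because $\{A^\N_w\}$ has ``smaller $\D$-length'' --- quietly uses $\D(w)\leq\D(v_i)$ as well; in all of the paper's actual applications $\D$ satisfies \ref{P3} and is hence monotone, so the issue does not arise there, but a self-contained proof under the stated hypotheses should address it explicitly, for instance by first reducing to monotone $\D$ or by adding monotonicity to the hypotheses.)
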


For the proof of the lemma, given a chain $\mathcal{C} = \{A^{\N}_{u_1},\dots,A^{\N}_{u_n}\}$ joining two words in $A^\N$, we define the \emph{depth} of $\mathcal{C}$ to be the number $\text{Depth}(\mathcal{C}) := \max\{|u_1|,\dots,|u_n|\}$ and the \emph{$\D$-length of $\mathcal{C}$} to be
\[ \ell(\mathcal{C}) := \sum_{i=1}^n\D(u_i).\]

\begin{proof}
Fix $k\in\N$ and $w,u_1,u_2 \in A^k$ be distinct points such that $\{w,u_1\}$ and $\{w,u_2\}$ are in $E_k$. Let $w_1, w_2 \in A^{\N}_w$, $w_1'\in A^{\N}_{u_1}$, and $w_2' \in A^{\N}_{u_2}$ such that for any $n\geq k$ and any $i\in \{1,2\}$, $w_i(n)$ is adjacent to $w_i'(n)$. We will show that $d_{\mathscr{C},\D}([w_1],[w_2]) = \D(w)$.

On the one hand, $\{A^{\N}_w\}$ is a chain joining $w_1$ with $w_2$, so $d_{\mathscr{C},\D}([w_1],[w_2]) \leq \D(w)$. For the opposite inequality, fix $\mathcal{C} = \{A^{\N}_{v_1},\dots,A^{\N}_{v_n}\}$ to be a chain in $A^{\N}$ joining $w_1$ with $w_2$. We start by doing four reductions.

First, if $A^{\N}_w \subset A^{\N}_{v_i}$ for some $i$, then we can replace $\mathcal{C}$ with $\mathcal{C}' = \{A^{\N}_w\}$ which has smaller $\D$-length. Therefore we may assume that for all $i$, either $A^{\N}_w\cap A^{\N}_{v_i} = \emptyset$, or $A^{\N}_{v_i} \subset A^{\N}_w$.

Second, dropping some of the sets in the chain, if necessary, we may assume that $A^{\N}_{v_i}\subset A^{\N}_w$ for all $i$.

Third, if $A^{\N}_{v_i} \subset A^{\N}_{v_j}$, then we can drop $A^{\N}_{v_i}$.

Fourth, let $P_l$ be the combinatorial arc in $T_{l}$ that joins

$w'_1(l)$ with $w'_2(l)$. We first claim that $P_l$ contains $w_1(l),w_2(l)$. By Definition \ref{eq:combdata2a}, the subgraph of $T_l$ induced by the vertex set $A_{w}^l$ is connected so there exists a combinatorial arc $P'$ with vertices in $A_w^l$ that has endpoints $w_1(l),w_2(l)$. Adding the two points $w_1'(l),w_2'(l)$ along with edges $\{w_1(l),w_1'(l)\}, \{w_2(l),w_2'(l)\}$, we obtain a combinatorial arc in $T_l$ that has endpoints $w_1'(l),w_2'(l)$ and contains $w_1(l),w_2(l)$. By uniqueness of this arc, it must be $P_l$ and the proof of the claim is complete. Now, it follows from Lemma \ref{lem:chain} that for any $l\geq \text{Depth}(\mathcal{C})$,
\[ \bigcup_{v\in P_l}A^{\N}_v \subset A^{\N}_{v_1}\cup \cdots\cup A^{\N}_{v_n}.\]

\begin{claim}
The collection
$$ \mathcal{C}' = \{A^{\N}_{v_i} : A^{\N}_{v_i} \cap \bigcup_{v\in P_l}A^{\N}_v \neq \emptyset\}$$
forms a chain joining $w_1$ and $w_2$. 
\end{claim}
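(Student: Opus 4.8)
The plan is to avoid checking that $\mathcal{C}'$, in the order it inherits from $\mathcal{C}$, is a chain---consecutive members of $\mathcal{C}'$ in that order need \emph{not} combinatorially intersect, since $\mathcal{C}$ may make a detour off $P_l$ into a branch of $T_l$ hanging from a single vertex of $P_l$---and instead to reorder $\mathcal{C}'$ along $P_l$ itself. Fix $l\geq\text{Depth}(\mathcal{C})$, so $|v_i|\leq l$ for every $i$, and parametrize $P_l$, the combinatorial arc in $T_l$ joining $w_1(l)$ with $w_2(l)$, as $x_0,x_1,\dots,x_N$ with $x_0=w_1(l)$, $x_N=w_2(l)$, and $\{x_j,x_{j+1}\}\in E_l$ throughout. (By the second reduction every $A^{\N}_{v_i}$ lies in $A^{\N}_w$, so the collection $\mathcal{C}'$ is the same whether one uses this arc or the longer arc from $w_1'(l)$ to $w_2'(l)$.) For each $i$ set $J_i:=A^l_{v_i}\cap\{x_0,\dots,x_N\}$. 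Since $|v_i|\leq l$, for a vertex $x_j$ of $P_l$ one has $A^{\N}_{v_i}\cap A^{\N}_{x_j}\neq\emptyset$ if and only if $x_j\in A^l_{v_i}$, so $A^{\N}_{v_i}\in\mathcal{C}'$ exactly when $J_i\neq\emptyset$.

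I would then record three facts. \textbf{(a)} Each nonempty $J_i$ is a subarc of $P_l$: the vertex set $A^l_{v_i}$ induces a connected subgraph of $T_l$ (induction on $l-|v_i|$ via Definition~\ref{def:combdata}\eqref{eq:combdata2a}--\eqref{eq:combdata2b}, as in the proof of Lemma~\ref{lem:chain}), so $J_i$ is an intersection of two connected vertex sets of the tree $T_l$, hence connected, and being contained in the path $P_l$ it is an interval along it. \textbf{(b)} The $J_i$ are pairwise disjoint: by the third reduction the sets $A^{\N}_{v_i}$ are pairwise distinct and none contains another, and for words of length $\leq l$ the sets $A^l_{v_i}$, $A^l_{v_j}$ are either nested (when one of $v_i,v_j$ is an initial segment of the other) or disjoint, so the $A^l_{v_i}$, hence the $J_i$, are pairwise disjoint. \textbf{(c)} $\bigcup_i J_i=\{x_0,\dots,x_N\}$: for each vertex $x_j$ of $P_l$ we have $A^{\N}_{x_j}\subseteq\bigcup_{v\in P_l}A^{\N}_v\subseteq\bigcup_i A^{\N}_{v_i}$ (the last inclusion is the one displayed just before the Claim), and since every $A^{\N}_{v_i}$ either contains $A^{\N}_{x_j}$ or is disjoint from it, some $A^{\N}_{v_i}$ contains $A^{\N}_{x_j}$, i.e.\ $x_j\in J_i$.

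Combining (a)--(c), the nonempty $J_i$ form a partition of $\{x_0,\dots,x_N\}$ into subarcs of $P_l$, which therefore occur as consecutive blocks; list them in that order as $J_{i_1},\dots,J_{i_m}$, so that $x_0\in J_{i_1}$, $x_N\in J_{i_m}$, the last vertex of $J_{i_t}$ is $E_l$-adjacent to the first vertex of $J_{i_{t+1}}$, and $\{A^{\N}_{v_{i_1}},\dots,A^{\N}_{v_{i_m}}\}=\mathcal{C}'$. I would finish by checking that $(A^{\N}_{v_{i_1}},\dots,A^{\N}_{v_{i_m}})$ is a chain joining $w_1$ with $w_2$: since $w_1(l)=x_0\in J_{i_1}\subseteq A^l_{v_{i_1}}$, the word $v_{i_1}$ is an initial segment of $w_1$, so $w_1\in A^{\N}_{v_{i_1}}$, and symmetrically $w_2\in A^{\N}_{v_{i_m}}$; and for each $t$ the $E_l$-edge joining the last vertex $y$ of $J_{i_t}$ to the first vertex $y'$ of $J_{i_{t+1}}$ satisfies $y\in A^l_{v_{i_t}}$ and $y'\in A^l_{v_{i_{t+1}}}$, so lifting it to an $E_{l+1}$-edge between $A^{l+1}_{v_{i_t}}$ and $A^{l+1}_{v_{i_{t+1}}}$ by Lemma~\ref{lem:adjacent}(1), Lemma~\ref{lem:intersection1} yields $A^{\N}_{v_{i_t}}\wedge_{\mathscr{C}}A^{\N}_{v_{i_{t+1}}}\neq\emptyset$. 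This proves the Claim.

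I expect the only real obstacle to be recognizing that the $\mathcal{C}$-order on $\mathcal{C}'$ is the wrong order and that one should reorder along $P_l$; once that is clear, facts (a)--(c)---which exhibit $\{J_i\}$ as a partition of the path $P_l$ into intervals---make the reordering well defined and manifestly a chain, and the remaining verifications are routine bookkeeping with prefixes of words and the tree structure of $T_l$.
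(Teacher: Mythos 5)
Your proof is correct and takes essentially the same route as the paper's: both discard the $\mathcal{C}$-inherited order on $\mathcal{C}'$ and reorder it along the arc $P_l$, then verify combinatorial intersections between consecutive blocks. Your facts (a)--(c) about the traces $J_i = A^l_{v_i}\cap P_l$ make explicit what the paper merely asserts (the existence of a monotone re-indexing $\{m_1,\dots,m_s\}$ satisfying its properties (1)--(3)), and your step of lifting the $E_l$-edge to $E_{l+1}$ via Lemma~\ref{lem:adjacent}(1) before invoking Lemma~\ref{lem:intersection1}(3) correctly handles the strict inequality $k>\max\{|u_1|,|u_2|\}$ required there, a point the paper leaves implicit.
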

\begin{proof}
First, there exists $v\in P_l$ such that $w_1\in A^{\N}_v$ which implies that there exists $v_i$ such that $w_i \in A^{\N}_v \subset A^{\N}_{v_i}$. Therefore, $w_1$ is contained in some element of $\mathcal{C}'$ and similarly for $w_2$. 

Enumerate the arc $P_l = \{v_0', \dots, v_{p+1}'\}$ so that $v_0'=w_1'(l)$, $v_1'=w_1(l)$, $v_p' = w_2(l)$, $v_{p+1}'=w_2'(l)$, and for any $j$, $v_j'$ is adjacent to $v_{j+1}'$. Now, there exists a set $\{m_1, \dots, m_s\} \subset \{1,\dots,n\}$ such that 
\begin{enumerate}
\item $A^{\N}_{v_1'}\subset A^{\N}_{v_{m_1}}$, and $A^{\N}_{v_p'}\subset A^{\N}_{v_{m_s}}$,
\item for all $v_i'$, there exists $v_{m_j}$ such that $A^{\N}_{v_i'}\subset A^{\N}_{v_{m_j}}$,
\item if $A^{\N}_{v_i'}\subset A^{\N}_{v_{m_j}}$ and $A^{\N}_{v_{i+1}'}\subset A^{\N}_{v_{m_s}}$, then $m_j \leq m_s$.
\end{enumerate}

Now it is easy to see that $A^{\N}_{v_{m_i}}\wedge_{\mathscr{C}} A^{\N}_{v_{m_{i+1}}} \neq \emptyset$ so the set $\mathcal{C}' = \{A^{\N}_{v_{m_i}}: i=1,\dots,s\}$ forms a chain joining $w_1$ and $w_2$.
\end{proof}

The fourth reduction says, in other words, that we may drop all sets $A^\mathbb{N}_{v_i}$ from the chain such that $A^{\N}_{v_i} \cap \bigcup_{v\in P_l}A^{\N}_v = \emptyset$.

The four reductions imply that we may assume that for all $i$,
\begin{enumerate}
\item[(i)] $A^{\N}_{v_i}\subset A^{\N}_w$;
\item[(ii)] if $j\neq i$, then $A^{\N}_{v_i}\cap A^{\N}_{v_j} = \emptyset$;
\item[(iii)] for all $l\geq \text{Depth}(\mathcal{C})$, there exists $v\in P_l$ such that $A^{\N}_v \subset A^{\N}_{v_i}$; and
\item[(iv)] for all $l\geq \text{Depth}(\mathcal{C})$,  $\bigcup_{v\in P_l}A^{\N}_v \subset A^{\N}_{v_1}\cup \cdots\cup A^{\N}_{v_n}$.
\end{enumerate}

Let $k_0 = \text{Depth}(\mathcal{C})$ and $i_0\in \{1,\dots,n\}$ such that $|v_{i_0}|=k_0$. If $k_0 = |w|$, then $\mathcal{C}=\{A^{\N}_w\}$ and the $\D$-length of $\mathcal{C}$ is equal to $\D(w)$.

Assume now that $k_0 > |w|$. Then $v_{i_0}^{\uparrow}$ is contained in $P_{k_0-1}$. Moreover, $v_{i_0}^\uparrow$ has valency 2 in $P_{k_0-1}$, because the endpoints of $P_{k_0-1}$ are in $A^{k_0-1}_{u_i}$ and not in $A^{k_0-1}_w$. 

By (iii) and assumption (1) of the lemma, $A^{k_0}_{v_{i_0}^{\uparrow}} \cap P_{k_0}$ has at least two elements. By (ii), (iv) and the assumption that $|v_{i_0}| = \text{Depth}(\mathcal{C})$, each element of $A^{k_0}_{v_{i_0}^{\uparrow}} \cap P_{k_0}$ must be in $\{v_1, \dots, v_n\}$. Enumerate them as $\{v_{j_1}, v_{j_2}, \dots, v_{j_p}\}$. Since $v_{i_0}^\uparrow$ has valency 2 in $P_{k_0-1}$, the elements of $\{v_{j_1}, v_{j_2}, \dots, v_{j_p}\}$ contain the vertices of a simple path joining two distinct points of $\partial_{\mathscr{C}}A^{k_0}_{v_{i_0}^{\uparrow}}$.

But then, by assumption (2) of the lemma, 
\[ \D(v_{j_1}) + \cdots + \D(v_{j_p}) \geq \D(v_{i_0}^{\uparrow})\]
and we can replace $\mathcal{C}$ with the chain
\[ \mathcal{C} \cup \{ A^{\N}_{v_{i_0}^{\uparrow}}\} \setminus \{ A^{\N}_{v_{i}} : v_i \in A_{v_{i_0}^{\uparrow}}^{k_0} \}\]
which has at most the $\D$-length of $\mathcal{C}$.

Working in similar fashion, we can show that if $\text{Depth}(\mathcal{C}) > |w|$, then there exists a chain $\mathcal{C}'$ joining $w_1$ with $w_2$ such that $\text{Depth}(\mathcal{C}') = \text{Depth}(\mathcal{C}) - 1$ and has at most the $\D$-length of $\mathcal{C}$. Applying a backwards induction on the depth of $\mathcal{C}$, we obtain that 
\[ \ell(\mathcal{C}) \geq \ell(\{A^\mathbb{N}_w\}) = \D(w).\]
Therefore, $d_{\mathscr{C},\D}([w_1],[w_2]) \geq \D(w)$.

For the final statement in the lemma, any $u\in A^k$ with two distinct neighbors must have at least two distinct words in its combinatorial boundary, and so
$$ \diam(\mathcal{A}_u) \geq \D(w)$$
by the first part of the lemma. The reverse inequality follows from Lemma \ref{lem:dist}.
\end{proof}

For examples of combinatorial data and diameter functions satisfying the assumptions of Proposition \ref{prop:doubling} and Lemma \ref{lem:suffdoubling}, see Section \ref{sec:examples}.

\section{Characterization of quasiconformal trees}\label{sec:characterization}

We now claim that our combinatorial constructions above describe all quasiconformal trees up to bi-Lipschitz equivalence. The following result proves part \eqref{eq:maincomb3} of Theorem \ref{thm:maincombthm}, while providing additional details, and is the goal of this section.

\begin{thm}\label{thm:main}
Let $(X,d)$ be an $N$-doubling, $C$-bounded turning tree. Then for any $M\in\mathbb{N}$ sufficiently large, $K_1>0$ sufficiently small, and $K_2\in [\frac{1}{2},1)$, there exist:
\begin{enumerate}
\item an alphabet $A=\{1,\dots,M\}$,
\item combinatorial data $\mathscr{C}=(A,(T_k)_{k\in\N})$ with each $T_k$ a combinatorial tree,
\item a diameter function $\D \in \mathscr{D}(A,K_1, K_2)$
\end{enumerate} 
such that  $(\mathcal{A},d_{\mathscr{C},\D})$ is bi-Lipschitz equivalent to $X$.

The sufficient condition on $M$ depends only on $N$ and $C$. The sufficient condition on $K_1$ depends only on $M$, $N$, and $C$. The bi-Lipschitz constant depends only on $N$, $C$, $K_2/K_1$, and $\diam(X)$.

Moreover, $(\mathscr{C},\Delta)$ satisfies the conditions of Proposition \ref{prop:doubling}.
\end{thm}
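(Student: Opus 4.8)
The plan is to build, by a top-down recursion, \emph{both} an $M$-adic decomposition $\{X_w : w\in A^*\}$ of $X$ into closed subtrees \emph{and} a compatible diameter function $\D$, then to read the combinatorial data off the decomposition and verify that the natural coding map $\mathcal{A}\to X$ is bi-Lipschitz. After rescaling one may assume $\diam(X)=1$ (this is the source of the $\diam(X)$-dependence of the constants). At a stage where $X_w$ and $\D(w)$ are already defined, with $\diam(X_w)\asymp\D(w)$ by a uniform constant, one chooses a ratio $\delta(w)\in\{K_1,K_2\}$ and cuts $X_w$ into exactly $M$ subtrees $X_{w1},\dots,X_{wM}$ — any two meeting in at most a single point, of valency $2$ in $X$ — each of true diameter $\asymp\delta(w)\D(w)$, and one sets $\D(wi):=\delta(w)\D(w)$. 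The basic subdivision tool is the decomposition of a quasiconformal tree into subtrees of a prescribed scale due to Bonk and Meyer \cite{BM}; the doubling constant bounds the required branching $M=M(N,C)$; the freedom to take $K_2$ anywhere in $[\tfrac12,1)$ and $K_1$ arbitrarily small is what lets one always realize such a cut (one ``pads'' up to exactly $M$ children by further subdivision). Aiming the true child diameters directly at the declared value $\delta(w)\D(w)$ keeps $\diam(X_w)\asymp\D(w)$ with a constant, depending only on $N$, $C$, $K_2$, $K_1$, that does not deteriorate with the generation. A third property is maintained throughout, which I denote $(\star)$: any two points of $X_w\cap\overline{X\setminus X_w}$ lie at distance $\gtrsim\diam(X_w)$ — this is where the $C$-bounded turning hypothesis enters, and it is arranged while choosing the cuts. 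This simultaneous construction is the technical heart of the proof.

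Given the decomposition, define $T_k=(A^k,E_k)$ by declaring $\{w,u\}\in E_k$ whenever $w\neq u$ and $X_w\cap X_u\neq\emptyset$. Because $X$ is a tree and distinct tiles of generation $k$ overlap in at most one point, each $T_k$ is a combinatorial tree, and axioms \eqref{eq:combdata2a}, \eqref{eq:combdata2b} of Definition \ref{def:combdata} hold since the children of $X_w$ cover the connected set $X_w$ and any common point of $X_w$ and $X_u$ lies in a child of each. By construction $\D\in\mathscr{D}(A,K_1,K_2)$.

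Next, define $\Phi:\mathcal{A}\to X$ by sending $[w]$ to the unique point of $\bigcap_k X_{w(k)}$ (a singleton, since $\diam(X_{w(k)})\to0$); well-definedness on $\sim$-classes, injectivity, and surjectivity follow from the nesting and covering properties together with the two estimates below. For $d(\Phi[w],\Phi[u])\lesssim d_{\mathscr{C},\D}([w],[u])$, take a near-optimal chain $\{A^\N_{v_i}\}$: the tiles $X_{v_i}$ form a connected chain in $X$ joining $\Phi[w]$ and $\Phi[u]$, whence $d(\Phi[w],\Phi[u])\lesssim\sum_i\diam(X_{v_i})\asymp\sum_i\D(v_i)$. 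For the reverse inequality, put $\rho=d(\Phi[w],\Phi[u])$, use $C$-bounded turning to descend to the generation $k$ at which tiles have diameter $\asymp\rho$, and observe that $X_{w(k)}$ and $X_{u(k)}$ are then either non-adjacent — forcing any chain to traverse a tile of diameter $\gtrsim\rho$ — or adjacent, in which case $(\star)$ forces $\Phi[w]$ and $\Phi[u]$ to be $\gtrsim\rho$ apart in $d_{\mathscr{C},\D}$ as well. This last estimate is precisely of the kind extracted in Claim \ref{claim:doubling2}.

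Finally, I would verify that $(\mathscr{C},\D)$ satisfies the hypotheses of Proposition \ref{prop:doubling}: \ref{P1} holds with $\card A=M$; \ref{P2} holds because doubling together with bounded turning allow only boundedly many tiles of a given generation to meet a fixed one; \ref{P3} is immediate from the recursion defining $\D$; and \ref{P4} is exactly the separation property $(\star)$, translated through the identification of $A^\N_u\wedge_{\mathscr{C}}A^\N_{u_i}\neq\emptyset$ with $X_u\cap X_{u_i}\neq\emptyset$ furnished by Lemma \ref{lem:intersection1} — in practice it is cleaner to check the two more concrete conditions of Lemma \ref{lem:suffdoubling}, which yield \ref{P4} with $c=1$ and pin down $\diam(\mathcal{A}_u)$ as well. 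The main obstacle, by a wide margin, is the first step: producing a single recursive construction that \emph{simultaneously} delivers exactly $M$ children per tile, children of geometrically controlled diameter with non-deteriorating constants, the one-point valency-$2$ overlap structure, and the separation property $(\star)$; everything afterward is bookkeeping together with a re-run of the estimates behind Proposition \ref{prop:doubling}.
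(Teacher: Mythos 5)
Your overall roadmap — Bonk--Meyer tiles, pad to exactly $M$ children with one-point valency-$2$ overlaps and the separation property $(\star)$, read off $\mathscr{C}$ by tile adjacency, send infinite words to nested intersections, and check \ref{P1}--\ref{P4} (via Lemmas~\ref{lem:tiletouch} and \ref{lem:suffdoubling} / Lemma \ref{lem:tiledoubling}) — is essentially the paper's route (Proposition~\ref{prop:BM}, Lemma~\ref{lem:tiles}, Section~\ref{subsec:charcombdata}, Proposition~\ref{prop:bilipschitz}). However, the step you flag as the heart of the matter is described in a way that conceals a real gap.

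You propose, at each word $w$, to first pick $\delta(w)\in\{K_1,K_2\}$ and then ``cut $X_w$ into exactly $M$ subtrees, each of true diameter $\asymp\delta(w)\D(w)$.'' That would give every child of $X_w$ a diameter of the \emph{same} order. But the padding procedure you invoke (and which Lemma~\ref{lem:tiles} implements) does not produce children of comparable size: when one pads to exactly $M$ pieces by subdividing some Bonk--Meyer tiles more deeply, the resulting children of a single $X_w$ range over the full interval $[K_1\diam X_w,\ K_2\diam X_w]$, and $K_2/K_1$ is large. One cannot, in general, cut a subtree of a quasiconformal tree into exactly $M$ pieces of uniformly comparable diameter while preserving the one-point valency-$2$ overlap structure — the geometry of $X_w$ at the scale you are aiming for may simply not admit such a decomposition. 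So ``aiming the true child diameters directly at the declared value $\delta(w)\D(w)$'' is a stronger claim than what the subdivision actually yields, and you give no argument that it is achievable.

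Moreover, even with the looser reading, the assertion that the comparability $\D(w)\asymp\diam(X_w)$ ``does not deteriorate with the generation'' is exactly the delicate point, and your proposal omits the mechanism that makes it true. The paper's rule is a genuine feedback loop: $\D(wi)=K_2\D(w)$ if $\D(w)\leq\diam(X_w)$, and $\D(wi)=K_1\D(w)$ if $\D(w)>\diam(X_w)$. The point is that you cannot choose $\delta(w)$ freely before cutting; you must choose it \emph{after} observing how $\D(w)$ compares to $\diam(X_w)$, so that $\D$ is pulled back toward $\diam X$ whenever it drifts. With a free or greedy choice of $\delta(w)$, the ratio $\D(w)/\diam(X_w)$ can grow or shrink by a factor of $K_2/K_1$ each generation and the bi-Lipschitz constant blows up. In short: decouple the two constructions as the paper does — build the tiles first, with children of variable diameter, then define $\D$ by the case rule — and the induction in the lemma immediately after the definition of $\D$ gives $(K_2/K_1)^{-1}\D(w)\le\diam(X_w)\le(K_2/K_1)\D(w)$ with a constant that truly does not degrade. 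Everything else in your outline (combinatorial tree verification via Lemma~\ref{lem:tiletree}, the map $\Phi$, both bi-Lipschitz estimates, and the doubling check) then goes through as you describe. A minor omission: the reduction to $1$-bounded turning via \cite[Lemma 2.5]{BM}, which simplifies all the constants.
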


We first make some small reductions. If $X$ is a single point, then Theorem \ref{thm:main} is easy. For example, one may take $M=2$, $\D\in\mathscr{D}\left(A,\frac{1}{3}, \frac{1}{3}\right)$, and each $T_k$ a combinatorial arc. Thus, we may assume that $\diam(X)>0$ and so, by rescaling, that $\diam(X)=1$. We may also assume that the bounded turning constant $C$ is equal to $1$, by replacing the metric $d$ on $X$ with a bi-Lipschitz equivalent $1$-bounded turning metric. (See \cite[Lemma 2.5]{BM}.) \textbf{All these assumptions are in force for the remainder of Section \ref{sec:characterization}. Thus, we fix an $N$-doubling, $1$-bounded turning metric tree $X$ of diameter $1$.}

\subsection{Subdividing into a uniform number of pieces}
To prove Theorem \ref{thm:main}, we use a construction of Bonk-Meyer \cite{BM} to decompose the tree $X$ into suitable pieces. We then modify this construction to decompose $X$ into an equal number of pieces at each scale. We first summarize the results we need from \cite[Section 5]{BM}.

\begin{prop}[Bonk-Meyer \cite{BM}]\label{prop:BM}
Let $\delta>0$ sufficiently small, depending on $N$. Then there is a constant $M(N,\delta)\in\mathbb{N}$, and for each $n\in\mathbb{N}$ there exists a $\delta^n$-separated set $V_n\subseteq X$ satisfying
$$ V_1 \subseteq V_2 \subseteq \dots $$
with the following properties.

Write $\mathcal{T}_n$ for the collection of closures of components of $X\setminus V_n$. Then:
\begin{enumerate}
\item Each $T\in\mathcal{T}_n$ is a connected subset (hence subtree) of $X$ with $\emptyset\neq T \cap\overline{X\setminus T} \subseteq V_n$.
\item Distinct elements $T,T'\in\mathcal{T}_n$ have at most one point in common, and such a common point is an element of $V_n$.
\item Each element of $V_n$ is in exactly two elements of $\mathcal{T}_n$.
\item Each element of $\mathcal{T}_{n+1}$ ($n\geq 1$) is in exactly one element of $\mathcal{T}_{n}$, and each element of $\mathcal{T}_{n}$ is the union of all elements of $\mathcal{T}_{n+1}$ inside it.
\item We have $\delta^n \leq \diam(T) \leq K\delta^n$ for each $T\in\mathcal{T}_n$, where $K$ is a constant depending only on $N$.
\item Each element of $\mathcal{T}_{n}$ contains at least two and at most $M(N,\delta)$ elements of $\mathcal{T}_{n+1}$.
\item Each element of $\mathcal{T}_{n}$ intersects at most $M(N,\delta)$ other elements of $\mathcal{T}_n$.
\end{enumerate}
\end{prop}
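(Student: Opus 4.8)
The statement is, up to a harmless refinement and minor reorganization, the content of Bonk--Meyer's adapted decomposition of a quasiconformal tree \cite[Section 5]{BM}, so the plan is to recall that construction and then read off the seven listed properties. First I would fix $\delta\in(0,1)$ small, depending on $N$ (its precise admissible range emerging from the packing estimates below), and build the sets $V_n$ inductively: let $V_1$ be a maximal $\delta$-separated subset of $X$ all of whose points have valency exactly $2$ in $X$, and given $V_n$ let $V_{n+1}\supseteq V_n$ be a maximal $\delta^{n+1}$-separated set of valency-$2$ points containing $V_n$. Such sets exist because the valency-$2$ points are dense in any nondegenerate metric tree (the branch points of a dendrite form a countable set, and every ball contains a nondegenerate arc). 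By construction this yields the nesting $V_1\subseteq V_2\subseteq\cdots$ and the $\delta^n$-separation, while maximality gives the density fact that every point of $X$ lies within $\delta^n$ of $V_n$, which I will use repeatedly.

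With $\mathcal{T}_n$ defined as the collection of closures of components of $X\setminus V_n$, properties (1)--(4) are then elementary tree topology. Since $X$ is a tree and $V_n$ is finite, each component $U$ of $X\setminus V_n$ is a relatively open subtree whose boundary in $X$ is a nonempty finite subset of $V_n$, and passing to closures gives (1). If two distinct tiles shared two points, the arc joining those two points would lie in both, producing a cycle; and any shared point is a boundary point of each tile, hence in $V_n$ --- this is (2). For (3), a point $v\in V_n$ has valency $2$ in $X$, so $X\setminus\{v\}$ has exactly two components $C_1, C_2$, and since $v$ meets each $C_i$ along a single ``edge'' there is exactly one tile on each side touching $v$, hence exactly two in total. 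Finally, $V_n\subseteq V_{n+1}$ forces every component of $X\setminus V_{n+1}$ to sit inside a component of $X\setminus V_n$, and conversely each component of $X\setminus V_n$ is the union of the finitely many components of $X\setminus V_{n+1}$ it contains together with the points of $V_{n+1}\setminus V_n$ lying in it; this is (4).

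The substance is in the metric estimates (5)--(7), where doubling and bounded turning enter. The lower bound $\diam(T)\geq\delta^n$ holds because (once $\card V_n\geq 2$) each tile $T$ has at least two boundary points, and these lie in the $\delta^n$-separated set $V_n$, hence are at distance at least $\delta^n$. For the upper bound $\diam(T)\leq K(N)\delta^n$ I would argue by contradiction: if $T$ were much larger than $\delta^n$, then since $V_n\cap T$ consists only of the finitely many boundary points of $T$ while every point of $T$ is within $\delta^n$ of $V_n$, one extracts a long chain of points of $T$, spaced roughly $\delta^n$ apart, each within $\delta^n$ of a \emph{distinct} boundary point of $T$; using $1$-bounded turning to confine these to a ball of controlled radius and then invoking doubling bounds their number, hence bounds $\diam(T)$. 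The same packing principle gives (6) and (7): a ball of radius comparable to $\delta^n$ contains at most $M(N,\delta)$ points of the $\delta^{n+1}$-separated set $V_{n+1}$, which bounds both the number of children of a tile and the number of its neighbors, while the ``at least two children'' part of (6) follows from (3) at level $n+1$, since a tile of diameter $\geq\delta^n>\delta^{n+1}$ must contain a point of $V_{n+1}$ in its interior.

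I expect the main obstacle to be precisely the upper diameter bound in (5): turning ``every point of the tile is $\delta^n$-close to $V_n$'' into a genuine diameter estimate is where one truly needs bounded turning together with the doubling/packing count, rather than a soft topological argument; everything else is either tree topology or a consequence of that same counting lemma. Consequently, in the paper it is reasonable to cite \cite[Section 5]{BM} for these estimates and simply note that the valency-$2$ refinement and the nesting $V_n\subseteq V_{n+1}$ can be built into (or are already present in) that construction.
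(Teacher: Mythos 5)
Your overall plan (recall the Bonk--Meyer construction, read off properties (1)--(5), and deduce (6)--(7) from doubling plus the diameter bounds) is the same as the paper's, which for (1)--(5) simply cites \cite[Lemma 5.1, Equation (5.3)]{BM} and for (6)--(7) invokes doubling as in \cite[Lemma 5.7]{BM}. However, the concrete construction you propose --- $V_n$ a maximal $\delta^n$-separated set of valency-$2$ points, nested over $n$ --- is \emph{not} obviously equivalent to Bonk--Meyer's, and your argument for the lower diameter bound in (5) is genuinely wrong. You assert that ``each tile $T$ has at least two boundary points,'' but this is false for \emph{pendant} tiles, namely those tiles containing leaves of $X$, which have a single boundary point $T\cap\overline{X\setminus T}=\{v\}$. (For instance if $X=[0,1]$ and $V_n=\{v_1<\dots<v_k\}$ is a maximal $\delta^n$-net, the tile $[0,v_1]$ has one boundary point and diameter $v_1$, which can be arbitrarily small relative to $\delta^n$.) Since pendant tiles always exist when $X$ has leaves and you impose only maximality of the net, the bound $\diam(T)\geq\delta^n$ simply fails for your $V_n$; obtaining it is precisely where Bonk--Meyer's more delicate inductive construction is needed, and it cannot be replaced by taking arbitrary maximal nets.

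There is a second, softer problem with your upper-bound argument: the step ``using $1$-bounded turning to confine these to a ball of controlled radius and then invoking doubling bounds their number'' is circular, because the $\gtrsim D/\delta^n$ distinct boundary points you extract live in a ball of radius comparable to $D=\diam(T)$, which is the quantity you are trying to bound; doubling then gives at most $\asymp(D/\delta^n)^s$ such points, which is compatible with the lower count $\gtrsim D/\delta^n$ for all $D$ whenever $s\geq 1$. Some additional structural input about subtrees (controlling the number of ``exits'' $\partial T$ independently of $\diam T$, or a different choice of $V_n$) is needed to close this. Both issues then propagate: your derivations of the ``at least two children'' half of (6) and the neighbor bound in (7) rely on the two-sided estimate in (5). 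Given all this, the sentence in your last paragraph acknowledging that one should ultimately cite \cite[Section 5]{BM} is doing more work than you let on; what would actually need to be checked against \cite{BM} is not just that the valency-$2$ refinement and nesting ``can be built in,'' but that their construction of $V_n$ produces the two-sided diameter estimate (5) in the first place, which a bare maximal-net construction does not.
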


\begin{proof}
The first four items appear explicitly in \cite[Lemma 5.1]{BM}. The fifth appears in \cite[Equation (5.3)]{BM}. The existence of the upper bound $M(N,\delta)$ in (6) and (7) is an immediate consequence of (1)-(5) and the doubling property, as in \cite[Lemma 5.7]{BM}. The lower bound of two in (6) follows from (4) and (5) if $\delta<1/K$.
\end{proof}

Bonk and Meyer refer to the elements of $\mathcal{T}_n$ as ``$n$-tiles'', but we will reserve the word ``tiles'' for the modifications we construct below. Before that, we observe that adjacency graphs induced by these sets form combinatorial trees.

\begin{lem}\label{lem:tiletree}
Let $X$ be a metric tree. Let $\mathcal{S}$ be a finite collection of compact, connected subsets of $X$ such that $ \cup_{S\in\mathcal{S}} S = X$ and no point of $X$ is in more than two different sets of $\mathcal{S}$. 

Then the graph $G$ such that
$$ V(G) = \{ S\in\mathcal{S}\},$$
$$ E(G) = \{ \{S,S'\}\subseteq V(G) : S\neq S' \text{ and }  S \cap S' \neq \emptyset\}$$
is a combinatorial tree.
\end{lem}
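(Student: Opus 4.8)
The statement is that the adjacency graph $G$ of a finite family $\mathcal{S}$ of compact connected subsets covering a metric tree $X$, with no point in three distinct sets, is a combinatorial tree. The plan is to verify the two defining properties: connectedness of $G$, and uniqueness of combinatorial arcs (equivalently, absence of embedded cycles).

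\emph{Connectedness of $G$.} First I would show $G$ is connected. Given $S, S' \in \mathcal{S}$, pick points $x \in S$, $x' \in S'$ and consider the unique arc $\alpha$ in $X$ joining $x$ to $x'$. Since $\alpha$ is covered by the compact connected sets of $\mathcal{S}$ and $\alpha$ is itself a (compact, connected) arc, a standard chaining argument produces a sequence $S = S_0, S_1, \dots, S_m = S'$ with $S_{i}\cap S_{i+1}\cap\alpha\neq\emptyset$ for consecutive terms; concretely, one parametrizes $\alpha: [0,1]\to X$, lets $t_1 = \sup\{t : \alpha([0,t])\subseteq S_0\}$, notes $\alpha(t_1)$ lies in some other set $S_1$ (it cannot be only in $S_0$ unless $t_1 = 1$, by compactness/connectedness of $\alpha$ together with the fact that $\alpha(t_1)$ is a limit of points outside $S_0$ or is $\alpha(1)$), and repeats. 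Finiteness of $\mathcal{S}$ guarantees the process terminates. This gives a path in $G$ from $S$ to $S'$.

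\emph{No cycles.} The heart of the matter — and the step I expect to be the main obstacle — is showing $G$ has no embedded cycle, i.e. that the arc joining two vertices is unique. Suppose for contradiction $S_0, S_1, \dots, S_{m-1}, S_m = S_0$ is a simple cycle in $G$ (so $m\geq 3$ and the $S_i$ are distinct for $0\le i\le m-1$, consecutive ones intersecting). For each edge pick a point $p_i \in S_{i-1}\cap S_i$; the no-triple-point hypothesis ensures these $p_i$ are distinct (a point in $S_{i-1}\cap S_i$ and also in $S_{j-1}\cap S_j$ for a non-adjacent edge would lie in at least three sets; adjacent edges share a vertex, and one checks the geometry forces three sets there too when the cycle is simple). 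Now inside each $S_i$, which is connected and compact in the tree $X$, there is a sub-arc (the arc in $X$, which lies in any connected subset containing its endpoints — this uses that $X$ is a tree, so connected subsets are arc-connected and arcwise convex) joining $p_i$ to $p_{i+1}$. Concatenating these sub-arcs around the cycle produces a closed loop in $X$ passing through the distinct points $p_1, \dots, p_m$. Since $X$ is a tree it contains no simple closed curve, so this concatenation must backtrack: two of the sub-arcs must overlap on a nondegenerate sub-arc, hence two non-consecutive $S_i$'s share more than a point — but then one argues, again via arcwise convexity in the tree and the no-triple-point condition, that this forces some point into three sets or collapses the cycle's simplicity, a contradiction. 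Making this "backtracking forces a bad overlap" step precise is the delicate part: I would phrase it by choosing the $p_i$ to be the "first exit points" as in the connectedness argument so that the sub-arcs are essentially disjoint except at shared endpoints, and then the concatenation is a genuine embedded loop, contradicting that $X$ is a tree directly.

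\emph{Alternative cleaner route for the cycle step.} A tidier approach I would likely prefer: induct on $m$, or argue that a minimal cycle $S_0,\dots,S_{m-1}$ must have $m = 2$ (impossible, as edges are unordered pairs of distinct vertices) by the following contraction: in the loop obtained by concatenating arcs $\beta_i \subseteq S_i$ from $p_i$ to $p_{i+1}$, since $X$ is a tree the loop is null-homotopic through itself in a degenerate way, meaning there exist $i \ne j$ with $\beta_i$ and $\beta_j$ sharing a nondegenerate arc; choosing the $p_i$ as successive first-exit points as above makes the $\beta_i$ have pairwise disjoint interiors, so no such overlap exists unless the loop is constant, forcing all $p_i$ equal — contradicting their distinctness (which came from no-triple-points, since $m\geq 3$). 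Either way, the conclusion is that $G$ has no simple cycle, so combined with connectedness, $G$ is a combinatorial tree. I would keep the write-up short, citing that in a metric tree every connected subset contains the arc between any two of its points and that a metric tree contains no simple closed curve as the two structural facts doing the work.
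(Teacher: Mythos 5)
Your plan is sound in outline and uses the right structural facts (arcwise convexity of connected subsets of a dendrite, absence of simple closed curves, the no-triple-point hypothesis). However, the crucial no-cycle step is not correctly completed, and the proposed fix does not work.

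The ``cleaner route'' is based on a false premise. You claim that choosing the $p_i$ as successive first-exit points ``makes the $\beta_i$ have pairwise disjoint interiors.'' This cannot be arranged: in a tree, going around a cycle \emph{forces} the arcs $\beta_i$ to overlap on nondegenerate subarcs, no matter how the $p_i$ are chosen. (Think of three sets covering a tripod: the three connecting arcs all run through the center.) The overlap is the \emph{source} of the contradiction, not an obstruction to be engineered away. Similarly, in the first route, ``hence two non-consecutive $S_i$'s share more than a point'' does not follow — the overlapping $\beta_i$'s may well be consecutive — and the final sentence (``one argues\dots this forces some point into three sets or collapses simplicity'') leaves precisely the hard step open, as you yourself flag.

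Here is a correct way to finish along your lines, which shows the idea does work. Suppose $S_0,\dots,S_{m-1},S_m=S_0$ is a simple cycle ($m\geq 3$), with $p_i\in S_{i-1}\cap S_i$ distinct and $\beta_i=[p_i,p_{i+1}]\subseteq S_i$ as you set up. Since $\beta_1\cup\cdots\cup\beta_{m-1}$ is connected and contains both $p_0$ and $p_1$, arcwise convexity gives $\beta_0=[p_0,p_1]\subseteq\bigcup_{i=1}^{m-1}\beta_i$. Set $A_i=\beta_0\cap\beta_i$ for $1\leq i\leq m-1$; these are closed and cover $\beta_0$. If $x\in A_i\cap A_j$ for $i\neq j$, then $x\in S_0\cap S_i\cap S_j$, three distinct sets (the cycle is simple), violating the hypothesis — so the $A_i$ are pairwise disjoint. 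A nondegenerate arc cannot be partitioned into two or more disjoint nonempty closed sets, so exactly one $A_k$ is nonempty; but $p_0\in A_{m-1}$ and $p_1\in A_1$, forcing $m-1=1$, i.e.\ $m=2$, a contradiction.

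For comparison, the paper avoids running all the way around the cycle. It uses the characterization that a finite connected graph is a tree iff removing any edge disconnects it: if deleting $\{S,S'\}$ leaves a path $S=S_0,S_1,\dots,S_n=S'$, one takes $p\in S\cap S_1$, $q\in S'\cap S_{n-1}$, and a point $x\in S\cap S'$ lying on the arc $[p,q]\subseteq S\cup S'$. No-triple-point forces $x\notin S_i$ for $1\leq i\leq n-1$, so $\bigcup_{i=1}^{n-1}S_i$ is a connected subset of $X\setminus\{x\}$ containing both $p$ and $q$, contradicting that $x$ separates them. This is shorter because it only needs one well-chosen cut point of $X$ rather than an analysis of all the $\beta_i$, but both arguments ultimately exploit the same dendrite cut-point structure and the triple-point hypothesis.
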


\begin{proof}
The connectedness of $G$ follows easily from the facts that $X$ is connected, all $S\in\mathcal{S}$ are compact, and $\cup_{S\in\mathcal{S}} S = X$.

To see that $G$ is a combinatorial tree, we will use the following simple equivalent characterization of combinatorial trees: A connected finite graph is a combinatorial tree if and only if the removal of any edge disconnects it.

Thus, suppose that the removal of an edge $\{S,S'\}$ from $G$ left it connected. Let $S=S_0, S_1, \dots, S_n=S'$ be the ordered vertices along a simple path from $S$ to $S'$ in $G$ avoiding this edge; note that $n\geq 2$. Let $x\in S\cap S'$, $p\in S\cap S_1$, and $q\in S'\cap S_{n-1}$. The points $x$, $p$, and $q$ are distinct, by the assumption that no point is in more than two elements of $\mathcal{S}$. Similarly, $x$ is disjoint from $S_{i}$ for each $1\leq i \leq n-1$.

There is an arc from $p$ to $q$ in $S \cup S'$, which must pass through $x$. Since $X$ is a metric tree, $p$ and $q$ must be in distinct connected components of $X\setminus \{x\}$. On the other hand, $\cup_{i=1}^{n-1} S_i$ is a connected subset of $X\setminus \{x\}$ containing both and we reach a contradiction.
\end{proof}

We now modify the construction of Proposition \ref{prop:BM} so that each tile has an equal number of children. This requires us to give up some control on the diameters of the tiles. However, it is crucial to retain the property that the boundary points of a given tile are ``well-separated'', in the sense that the distance between two distinct boundary points of a tile is always comparable to the diameter of the tile. This is property (6) of Lemma \ref{lem:tiles} below.

Fix $\delta$ sufficiently small, depending on $N$, so that Proposition \ref{prop:BM} holds, and so that in addition $K\delta< 1/2$, where $K$ is the constant from Proposition \ref{prop:BM}(5). Thus, we have constants $K=K(N)$ and $M(N,\delta)$ from Proposition \ref{prop:BM}, items (5) and (6).

\begin{lem}\label{lem:tiles}
Let $M\geq M(N,\delta)$, $K_1\in (0,K^{-1}\delta^{\log_2(M)+1}]$, and $K_2\in [\frac{1}{2},1)$. Let $A=\{1,\dots,M\}$.

Then there is a collection of closed subsets $X_w\subset X$, for all $w\in A^*$, satisfying the following properties.
\begin{enumerate} 
\item For each $w\in A^*$, $X_w$ is a connected subset (hence subtree) of $X$, and $X_{\varepsilon} =X$.
\item For each $w\in A^*$ and $i\in A$, $X_{wi}\subseteq X_w$. Moreover, $X_w = \bigcup_{i\in A}X_{wi}$.
\item For each $w\in A^*$ and $i\in A$,
\[ K_1\diam{X_{w}} \leq \diam{X_{wi}} \leq K_2\diam{X_w}.\]
\item For each $w\in A^{*}\setminus\{\varepsilon\}$ and every $x\in X_{w}\cap \overline{X\setminus X_w}$, we have that $x$ is a leaf of $X_w$ and contained in $X_{w'}$ for exactly one $w'\in A^{|w|}\setminus \{w\}$. 
\item For every distinct $w,w'\in A^*$ with $|w|=|w'|$ we have that $X_{w}\cap X_{w'}$ is either a point or empty.
\item There exists $K_3 \in (0,1)$ such that for all $w\in A^*$ and for all distinct $x,y \in X_{w}\cap \overline{X\setminus X_w}$, we have
\[ d(x,y) \geq K_3 \diam{X_w}.\]
\end{enumerate}
\end{lem}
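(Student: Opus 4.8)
The plan is to build the sets $X_w$ by induction on the word length $|w|$, using the Bonk--Meyer decomposition of Proposition \ref{prop:BM} as the underlying engine but grouping and subdividing its tiles so that exactly $M$ children are produced at each step. Concretely, suppose $X_w$ has been constructed for all words of length $\leq k$ and we want to produce its $M$ children. Since $X_w$ is itself a subtree of $X$ with the separation control coming from the $\mathcal{T}_n$-decomposition, one first chooses the Bonk--Meyer scale $n=n(w)$ so that $\delta^n$ is comparable to $\diam(X_w)$; this guarantees, via Proposition \ref{prop:BM}(5)--(6), that $X_w$ meets a bounded number (between $2$ and $M(N,\delta)\leq M$) of $n$-tiles, each of diameter comparable to $\diam(X_w)$. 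If $X_w$ meets exactly $M$ such sub-tiles, declare those to be $X_{w1},\dots,X_{wM}$. If it meets fewer than $M$, we must split some of them further (descending to a finer Bonk--Meyer level inside one sub-tile, which is legitimate since each such sub-tile is itself a subtree satisfying the same kind of decomposition) until we reach exactly $M$ pieces; if splitting a piece produces too many, we regroup adjacent finer tiles into connected unions. The key point of this bookkeeping is that we only ever descend a bounded number of Bonk--Meyer levels — that bound is controlled by $\log_2 M$ — so the diameters of the children stay comparable to $\diam(X_w)$, which is exactly where the hypothesis $K_1 \leq K^{-1}\delta^{\log_2(M)+1}$ enters to give the lower bound in (3), and $K\delta<1/2\leq K_2$ gives the upper bound.

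With this construction in hand, the properties are checked as follows. Property (1) (connectedness of each $X_w$, and $X_\varepsilon = X$) is immediate since at every stage we take connected unions of connected Bonk--Meyer tiles, and $X$ is connected. Property (2) ($X_{wi}\subseteq X_w$ and $X_w = \bigcup_i X_{wi}$) holds by construction, since the children of $X_w$ are obtained by partitioning the sub-tiles of $X_w$. Property (3) is the diameter estimate just discussed: the upper bound follows because each child is contained in an $n$-tile with $\diam \leq K\delta^n \leq K\delta\cdot\delta^{n-1}$, and $\diam(X_w)\geq \delta^{n-1}$ roughly, while the lower bound follows because each child contains at least one Bonk--Meyer tile at a level at most $\log_2 M$ finer, hence of diameter at least $\delta^{n+\lceil\log_2 M\rceil}\geq \delta^{\log_2 M +1}\delta^{n-1}\geq K_1\diam(X_w)$. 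Properties (4) and (5) — that shared points are single leaves of the child, lying in exactly one other word of the same length — follow from Proposition \ref{prop:BM}(2)--(3): the Bonk--Meyer tiles pairwise meet in at most one point of $V_n$, which is an element lying in exactly two tiles, and taking connected unions of tiles preserves this (a point shared by two of our connected unions must be a $V_n$-point, hence has valency $2$ among the tiles, hence lies in exactly one union on each side). The fact that such a point is a \emph{leaf} of $X_w$ uses that $X$ is a tree and $X_w$ is a subtree: a point of $X_w\cap\overline{X\setminus X_w}$ is a point where a branch of $X$ exits $X_w$, and if it had valency $\geq 2$ within $X_w$ one would produce a loop in $X$.

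The main obstacle — and property (6), the well-separation of boundary points — is the crux of the lemma and the reason one cannot simply use the raw Bonk--Meyer tiles. The issue is that a Bonk--Meyer $n$-tile can have many boundary points, some of which may be very close together, so when we form $X_w$ as a connected union of several such tiles its boundary points could in principle cluster. The plan here is to exploit the $1$-bounded turning of $X$ together with Proposition \ref{prop:BM}(1): a boundary point of $X_w$ lies in $V_{n}$ for the level $n=n(w)$ at which $X_w$ was assembled (boundary points are created precisely as separating vertices of that level), and $V_n$ is $\delta^n$-separated; since $\diam(X_w)\leq K\delta^{n'}$ for some $n'$ close to $n$ with $n - n'$ bounded by the depth of descent $\lceil\log_2 M\rceil$, we get $d(x,y)\geq \delta^n \geq \delta^{n-n'}\cdot\delta^{n'}\geq K^{-1}\delta^{\lceil\log_2 M\rceil +1}\diam(X_w)$ for distinct boundary points $x,y$, which yields property (6) with $K_3 = K^{-1}\delta^{\lceil\log_2 M\rceil+1}$ (a constant depending only on $N$ and $M$). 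The delicate point to get right is to verify that \emph{all} boundary points of $X_w$ really do live in a single $V_n$ (rather than being split across several levels due to the regrouping/subdivision), which forces us to be careful in the construction to always cut along vertices of one fixed Bonk--Meyer level per word, descending uniformly rather than adaptively within a single $X_w$. Once that bookkeeping is pinned down, (6) follows from the separation of $V_n$ and the diameter bounds, completing the proof.
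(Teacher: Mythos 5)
Your plan follows the same route as the paper: relabel the Bonk--Meyer tiles by words, inductively split and regroup them so that each word gets exactly $M$ children while descending a bounded (at most $\log_2 M + 1$) number of BM levels, and check the six properties by invoking Proposition~\ref{prop:BM}. You also correctly single out property~(6) as the crux and the right invariant to make it work: each $X_w$ should be cut along vertices of a single BM level.

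There are two places where the plan glosses over something that actually needs an argument. First, the step ``regroup adjacent finer tiles into connected unions'' is precisely where you must preserve the invariant that each $X_w$ remains a union of \emph{sibling} BM tiles (the paper's inductive property~(7)). The paper achieves this by enumerating the relevant tiles in a leaf-peeling order (Lemma~\ref{lem:tiletree} shows the adjacency graph of the tiles is a combinatorial tree, so one can always remove a leaf and stay connected), splitting the first $m-1$ fully, splitting the $m$-th partially (taking exactly $p$ of its children, again peeled in connectivity order), and letting the residual piece $\overline{T_{u_m}\setminus(T_{u_m i_1}\cup\cdots\cup T_{u_m i_p})}$ be the last child; that residual is automatically a connected union of siblings of $T_{u_m}$, and the counting inequalities pin down $n$, $m$, and $p$ so that exactly $M$ children emerge. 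Without specifying the enumeration and the count, one does not know the regrouping exists, stays connected, or preserves the invariant, and then property~(6) is not really established. Second, your reason that a boundary point $x\in X_w\cap\overline{X\setminus X_w}$ is a leaf of $X_w$ is incorrect: if $x$ had valency $\geq 2$ in $X_w$ and a branch of $X$ left $X_w$ at $x$, that would just make $x$ a valency-$\geq 3$ branch point of $X$, which is perfectly allowed in a tree and produces no loop. The actual reason, used implicitly in the paper, is that the Bonk--Meyer vertices are cut points of $X$ lying in exactly two $n$-tiles (Proposition~\ref{prop:BM}(2)--(3)); since $X_w$ is connected and contains only one of those two tiles, $X_w\setminus\{x\}$ lies entirely on one side of $x$, i.e., $x$ is a leaf of $X_w$. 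Finally, your $K_3 = K^{-1}\delta^{\lceil\log_2 M\rceil+1}$ is valid but needlessly small: once the sibling invariant is in place, all boundary points of $X_w$ lie in the $\delta^{l+1}$-separated set $V_{l+1}$ while $\diam(X_w)\leq K\delta^l$, giving $K_3=\delta/K$ directly.
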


\begin{proof}
Fix $\delta, M,  K, K_1, K_2$ as above, and let $A=\{1,\dots,M\}$. We prove the lemma for $K_3 = \delta/K$.

We start the proof by noting that it suffices to prove the lemma with (5) replaced by the property:
\begin{enumerate} 
\item[(5')] For every $w\in A^*$ and distinct $i,j \in A$ we have that $X_{wi}\cap X_{wj}$ is either a point or empty.
\end{enumerate}
Indeed, assume that the lemma holds with (5) replaced by (5'). Given distinct $w,w'\in A^*$ with $|w|=|w'|$, there exists maximal (in word-length) $w_0 \in A^*$ such that $w,w'\in A_{w_0}^{|w|}$. There also exist distinct $i,j\in A$ such that $w \in A^{|w|}_{w_0i}$ and $w \in A^{|w|}_{w_0j}$. By (2) and (5'), we have that $X_w \cap X_{w'} \subset X_{w_0j} \cap X_{w_0j}$ which is either a point or empty.

We relabel the collections $\mathcal{T}_n$ constructed in Proposition \ref{prop:BM}. Set $T_{\varepsilon}=X$. We write $\mathcal{T}_1 = \{T_1,\dots,T_{m_{\varepsilon}}\}$. Assume now that for some $n\in\N$ and some $w\in \N^n$ we have defined $T_w$ to be an element of $\mathcal{T}_n$. Then we write $\{T_{w1},\dots, T_{wm_w}\}$ to be the elements of $\mathcal{T}_{n+1}$ contained in $T_w$. By Proposition \ref{prop:BM}(6), we have $2 \leq m_w \leq M$. Therefore, for every $T_w$ defined, we have $w\in A^*$. We set $\mathcal{W}$ to be the set of all words $w$ in $A^*$ for which $T_w$ has been defined. Given integer $n\geq 0$ and $w\in A^*$ we denote $\mathcal{W}^n = \mathcal{W}\cap A^n$, $\mathcal{W}_w = \mathcal{W} \cap A^*_w$, and $\mathcal{W}^n_w = \mathcal{W}\cap A^n_w$.

We now define the family $\{X_w\}_{w\in A^*}$ in an inductive manner.

\medskip

\textsc{Step 0.} Set $X_{\varepsilon} = T_{\varepsilon} = X$.

\medskip

\textsc{Inductive hypothesis.} Suppose that for some integer $k\geq 0$ we have defined closed sets $\{X_w\}_{w\in A^k}$ such that the properties of the lemma up to level $k$ hold, with $K_3=\delta/K$. That is, we assume that the following conditions hold.
\begin{enumerate}
\item For each $l\leq k$ and $w\in A^l$, $X_w$ is a connected subset of $X$.
\item For each $l\leq k-1$, $w\in A^{l}$, and $i\in A$, we have $X_{wi}\subseteq X_w$. Moreover, $X_w = \bigcup_{i\in A}X_{wi}$.
\item For each $l\leq k-1$, $w\in A^{l}$, and $i\in A$, we have
\[ K_1\diam{X_{w}} \leq \diam{X_{wi}} \leq K_2\diam{X_w}.\]
\item For each $l\leq k$, $w\in A^{l}\setminus\{\varepsilon\}$ and every $x\in X_{w}\cap \overline{X\setminus X_w}$, we have that $x$ is a leaf of $X_w$ and contained in $X_{w'}$ for exactly one $w'\in A^{l}\setminus \{w\}$. 
\item For each $l\leq k-1$, $w\in A^l$ and distinct $i,j \in A$ we have that $X_{wi}\cap X_{wj}$ is either a point or empty.
\item For each $l\leq k$, $w\in A^l$, and distinct $x,y \in X_{w}\cap \overline{X\setminus X_w}$, we have
\[ d(x,y) \geq (\delta/K) \diam{X_w}.\]
\end{enumerate}
In addition, we make the following inductive assumption:
\begin{enumerate}
\item[(7)] For each $w\in A^k$ there exists $u\in\mathcal{W}$ and distinct $ui_1,\dots, ui_q \in \mathcal{W}_u^{|u|+1}$ such that $X_w = \bigcup_{j=1}^q T_{ui_j}$.
%\item [(8)] If $\ell \leq k$, $w\in A^l$, and $i\in A$, then the assignments $w\mapsto u$ and $w\mapsto u'$ given by (7) have the property that $u'$ is a descendent of $u$, i.e., $u'(|u|)=u$. 
\end{enumerate}
Note that (7) holds when $k=0$.

\medskip

\textsc{Inductive step.} We now describe the construction of the sets $\{X_w\}_{w\in A^{k+1}}$.  Fix a word $w\in A^k$. By assumption (7), $X_w = T_{ui_1}\cup\cdots \cup T_{u i_q}$. For simplicity, we assume that $i_j = j$ for all $j$. By Proposition \ref{prop:BM}(6), $q \leq M$.

\emph{Case 1: $q=M$.} In this case we set $X_{wj}=T_{uj}$ for $j=1,\dots,M$.

\emph{Case 2: $q < M$.} Let $n$ be the smallest integer such that 
\begin{equation}\label{eq:tilesum}
\sum_{j=1}^q\card(\mathcal{W}^{n+|u|}_{uj}) \geq M.
\end{equation}
By Proposition \ref{prop:BM}(6), $2 \leq n \leq \log_2{M}+1$.

\emph{Case 2.1 : the sum in (\ref{eq:tilesum}) is equal to $M$.} In this case we set
\[ \{X_{wi} : i\in A\} := \left\{T_{v} : v \in \bigcup_{j=1}^q\mathcal{W}^{n+|u|}_{uj}\right\}.\]

\emph{Case 2.2: the sum in (\ref{eq:tilesum}) is strictly greater than $M$.} Enumerate the elements of $\bigcup_{j=1}^q\mathcal{W}^{n-1+|u|}_{uj} =\{u_1,\dots,u_r\}$ so that for each $i\in\{1,\dots,r\}$ the set
\[ T_{u_i}\cap \overline{X_w \setminus (T_{u_1}\cup\cdots \cup T_{u_{i}})}\]
contains only one point. In other words, the sets $X_w\setminus T_{u_1}$, $(X_w\setminus T_{u_1})\setminus T_{u_2}$, etc. are connected. That this is possible follows from Lemma \ref{lem:tiletree} and the fact that every finite combinatorial tree has a leaf.

By minimality of $n$, we have that $r<M$. Now let $m$ be the smallest integer in $\{1,\dots,r\}$ such that 
\begin{equation}\label{eq:tilesum2} 
\sum_{i=1}^m \card(\mathcal{W}_{u_i}^{n+|u|}) + (r-m) \geq M.
\end{equation}
Note that if $m=r$, then \eqref{eq:tilesum2} holds by \eqref{eq:tilesum}, so such a minimal $m$ exists.

\emph{Case 2.2.1: the sum in (\ref{eq:tilesum2}) is equal to $M$}. Then, by the assumption of Case 2.2, we have $m<r$ and we set
\[ \{X_{wi} : i\in A\} := \left\{T_{v} : v \in \bigcup_{j=1}^m\mathcal{W}^{n+|u|}_{u_i} \cup\{u_{m+1},\dots, u_r\}\right\}.\] 

\emph{Case 2.2.2: the sum in (\ref{eq:tilesum2}) is strictly greater than $M$.} As before, enumerate the elements of $\mathcal{W}_{u_m}^{n+|u|} = \{u_mi_1, \dots, u_mi_l\}$ so that for each $j\in\{1,\dots,l\}$ the set
\[ T_{u_mi_j}\cap \overline{T_{u_m} \setminus (T_{u_mi_1}\cup\cdots \cup T_{u_{m}i_j})}\]
contains only one point. 

By the minimality of $m$ (and the fact that $r<M$) we have
$$\sum_{i=1}^{m-1} \card(\mathcal{W}_{u_i}^{n+|u|}) + (r-(m-1)) \leq M-1$$
and so
\begin{equation}\label{eq:p}
\sum_{i=1}^{m-1} \card(\mathcal{W}_{u_i}^{n+|u|}) + (r-m) \leq M-2.
\end{equation}
Let 
$$p= M-1 - (r-m) - \sum_{i=1}^{m-1} \card(\mathcal{W}_{u_i}^{n+|u|}).$$
Note that $p\geq 1$ by \eqref{eq:p}. Moreover, $p\leq l-1=\card{\mathcal{W}_{u_m}^{n+|u|}}-1$, otherwise
$$ \sum_{i=1}^{m} \card(\mathcal{W}_{u_i}^{n+|u|}) + (r-m) \leq M-1,$$
contradicting \eqref{eq:tilesum2}.

Define now
\[ \mathcal{U} := \bigcup_{i=1}^{m-1}\mathcal{W}_{u_i}^{n+|u|} \cup \{u_mi_1,\dots, u_mi_p\} \cup \{u_{m+1},\dots,u_r\}. \]
%\[ \mathcal{U} := \bigcup_{i=1}^{m-1}\mathcal{W}_{u_i}^{n+|u|} \cup \{u_mi_1,\dots, w_mi_p\} \cup \{w_{m+1},\dots,w_r\}. \]
Note that $\card(\mathcal{U}) = M-1$ by choice of $p$. Set 
\[ \{X_{wi} : i\in A\} := \{T_v : v\in\mathcal{U}\} \cup \{\overline{T_{u_m} \setminus (T_{u_mi_1}\cup\cdots \cup T_{u_{m}i_p})}\}.\]

To complete the inductive step and the proof of Lemma \ref{lem:tiles}, it remains to check that the inductive properties (1)--(7) above are satisfied up to level $k+1$.

Property (1) holds: If $w\in A^k$, each $X_{wi}$ is either equal to some $T_v$ constructed in Proposition \ref{prop:BM}, and hence connected by Proposition \ref{prop:BM}(1), or (as is possible in Case 2.2.2) is a connected union of finitely many such $T_v$.

It also straightforward to check that property (7) holds. In cases 1, 2.1, and 2.2.1 of the construction, each $X_{wi}$ for $wi\in A^{k+1}$ is exactly equal to some set $T_u$ as constructed in Proposition \ref{prop:BM}, and therefore is a finite union of sets $T_{uj}$. In case 2.2.2, there is also the possibility that $X_{wi}$ is of the form $\overline{T_{u_m} \setminus (T_{u_mi_1} \cup \dots \cup T_{u_mi_p})}$, where $u_m\in\mathcal{W}$ and $i_k\in A$. In that case, $X_{wi}$ is also equal to a finite union of children of $T_{u_m}$, namely $\{T_{u_m k}: k\neq i_1, \dots, i_p\}$.

%Property (8) also holds. Let $w\in A^k$ and $i\in A$. If $u\in\mathcal{W}$ was assigned to $w\in A*$ by (7), then in each case the element of $\mathcal{W}$ that (7) assigns to $wi$ is a descendent of $u$.

To see that property (2) holds, set $w\in A^k$. In the construction of $\{X_{wi}:i\in A\}$, we write $X_w$ as a finite union $T_{u1} \dots T_{uq}$, where these sets come from Proposition \ref{prop:BM}. In each case, the sets $X_{wi}$ are constructed to be subsets of these $T_{uj}$ and exhaust each of them.

For property (4), set $wi\in A^{k+1}$ and $x\in X_{wi} \cap \overline{X\setminus X_{wi}}$. The construction of $X_{wi}$ and Proposition \ref{prop:BM}(2,3) ensures that $x$ is contained in at most one other $X_{wj}$ ($j\neq i$) and is a leaf of $X_{wi}$ in this case. 

If $x\in X_{wi} \cap X_{w'i'}$ for some $w\neq w'\in A^k$, then by induction $x$ is a leaf of $X_w$ and hence of $X_{wi}$. Moreover, in this case $x$ cannot be contained in any other $X_{w''}$ by induction, or in any other element $X_{wj}$ ($j\neq i$), since a leaf of $X_w$ can only be in one of the non-trivial connected subsets $X_{wj}$.

To see that property (5) holds, consider $w\in A^k$ and the set $X_{wi}\cap X_{wj}$ (for $i\neq j$). By (1), this intersection is either empty, a point, or a non-trivial continuum. By construction, each of the two sets $X_{wi} \cap X_{wj}$ is a finite union of distinct elements of some $\mathcal{T}_n$ constructed in Proposition \ref{prop:BM}, and so the intersection cannot be a continuum by Proposition \ref{prop:BM}(2).

%First, by design, properties (2) and (7) are immediately satisfied. Property (1) follows from Proposition \ref{prop:BM}(1) and the design of the enumerations $\{u_1,\dots,u_r\}$ and $\{u_mi_1, \dots, u_mi_l\}$. Properties (4), (5) for level $k+1$ follows from properties (4), (5) for level $k$, Proposition \ref{prop:BM}(3), and the design of the two enumerations $\{u_1,\dots,u_r\}$ and $\{u_mi_1, \dots, u_mi_l\}$.

For property (3), fix $w\in A^k$ and $i\in A$. By (7), there exists $u\in \mathcal{W}^l$ and $uj \in \mathcal{W}^{l+1}$ such that $T_{uj} \subset X_w \subset T_u$. By the design above, there exists $v \in \mathcal{W}^{l+n}_u$ and $vj' \in \mathcal{W}^{l+n+1}_u$ such that $T_{vj'} \subset X_{wi} \subset T_{v}$ and $2 \leq n \leq \log_2{M}+1$. Therefore, applying Proposition \ref{prop:BM}(5),
\[ K_1\leq K^{-1}\d^{\log_2{M}+1} \leq \frac{\diam{X_{wi}}}{\diam{X_w}} \leq K\delta \leq K_2.\]

Finally, for property (6), fix $w\in A^{k+1}$ and distinct $x,y \in X_w \cap \overline{X \setminus X_w}$. By (7), we know that $X_w = T_{u i_1}\cup \cdots \cup T_{u i_n}$ for some $u\in \mathcal{W}^l$ and $u i_1, \dots, u i_n \in \mathcal{W}^{l+1}$. By Proposition \ref{prop:BM}(1), $x,y$ have distance at least $\delta^{l+1}$ so
\[ \dist(x,y) \geq \d^{l+1} \geq (\d/K) \diam{T_u} \geq (\d/K)\diam{X_w}. \qedhere\]
\end{proof}

We call the sets $X_w$ constructed in Lemma \ref{lem:tiles} ``tiles''. We observe that these new tiles also maintain the property that they can only touch a controlled number of tiles of the same scale:

\begin{lem}\label{lem:tiledoubling}
There is a constant $n_0$, depending only on the doubling constant of $X$ and the constants from Lemma \ref{lem:tiles}, such that if $w\in A^*$, then
$$ \card\{ v\in A^{|w|}: v\neq w, X_v \cap X_w \neq \emptyset\} \leq n_0.$$ 
\end{lem}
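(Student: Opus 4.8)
The plan is to reduce the claim to a statement about the Bonk--Meyer tiles $T_v$, for which the analogous bound already appears in Proposition~\ref{prop:BM}(7). The key point is that each tile $X_w$ is sandwiched between two Bonk--Meyer tiles: by property (7) from the proof of Lemma~\ref{lem:tiles}, there exist $u\in\mathcal{W}$ and distinct children $ui_1,\dots,ui_q$ of $u$ in $\mathcal{W}$ such that $X_w=\bigcup_{j=1}^q T_{ui_j}$; in particular $T_{ui_1}\subseteq X_w\subseteq T_u$, and moreover, by the diameter comparisons in Proposition~\ref{prop:BM}(5) and Lemma~\ref{lem:tiles}(3), the ``depth'' $|u|$ is within a bounded additive constant (depending only on $M$, hence only on $N$) of the scale $|w|$, in the sense that $\diam(X_w)$ is comparable to $\delta^{|u|}$ with comparison constants depending only on $N$ and the constants of Lemma~\ref{lem:tiles}.

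The main step is then the following: if $X_w\cap X_v\neq\emptyset$ for distinct $w,v\in A^{|w|}$, pick the corresponding Bonk--Meyer tiles $T_{u(w)}$ and $T_{u(v)}$ with $X_w\subseteq T_{u(w)}$ and $X_v\subseteq T_{u(v)}$. Then $T_{u(w)}\cap T_{u(v)}\neq\emptyset$, and both have diameter comparable (with uniform constants) to $\diam(X_w)$, hence comparable to each other; in particular $|u(w)|$ and $|u(v)|$ differ by at most a bounded constant $c_0=c_0(N)$. First I would handle the case of a common depth: for a fixed $u=u(w)$, the tiles $T_{u'}$ at the same Bonk--Meyer level that meet $T_u$ number at most $M(N,\delta)$ by Proposition~\ref{prop:BM}(7), and each such $T_{u'}$ contains boundedly many tiles $X_v$ at level $|w|$ (again because the number of $X$-tiles inside a single $T_{u'}$ down to a fixed relative depth is bounded by $M$ to a bounded power, using Lemma~\ref{lem:tiles}(2),(3)). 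To pass from ``same Bonk--Meyer level'' to ``within $c_0$ levels,'' I would iterate Proposition~\ref{prop:BM}(4): a $T_{u(v)}$ meeting $T_{u(w)}$ whose level differs by at most $c_0$ is contained in, or contains, a bounded number of tiles within $c_0$ levels of $u(w)$ that meet $T_{u(w)}$, and the count of these is controlled by $M(N,\delta)^{c_0}$ together with Proposition~\ref{prop:BM}(6),(7). Putting these bounds together gives a single constant $n_0$ depending only on $N$ and the constants of Lemma~\ref{lem:tiles}.

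Concretely, the argument proceeds as: (i) record, via property (7) in the proof of Lemma~\ref{lem:tiles} together with Proposition~\ref{prop:BM}(5) and Lemma~\ref{lem:tiles}(3), that each $X_w$ lies in a Bonk--Meyer tile $T_{u(w)}$ with $|u(w)|$ determined by $|w|$ up to a bounded error $c_0=c_0(N,M,\delta)$; (ii) observe that $X_w\cap X_v\neq\emptyset$ forces $T_{u(w)}\cap T_{u(v)}\neq\emptyset$ with $\big||u(w)|-|u(v)|\big|\leq 2c_0$; (iii) count: the number of Bonk--Meyer tiles within $2c_0$ levels of $u(w)$ that meet $T_{u(w)}$ is at most $M(N,\delta)^{O(c_0)}$ by Proposition~\ref{prop:BM}(4),(6),(7), and each such tile contains at most $M^{O(c_0)}$ tiles $X_v$ at level $|w|$ by Lemma~\ref{lem:tiles}(2); (iv) multiply to obtain $n_0$. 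I expect step (i) --- pinning down that $|u(w)|=|w|+O(1)$ with the implied constant depending only on the stated data --- to be the most delicate point, since the passage from $X$-tiles to Bonk--Meyer tiles in the proof of Lemma~\ref{lem:tiles} can insert up to $\log_2 M+1$ extra Bonk--Meyer generations per $X$-generation, so one must be careful that this ratio is bounded (which it is, by Lemma~\ref{lem:tiles}(3) forcing $\diam(X_{wi})\ge K_1\diam(X_w)$ and Proposition~\ref{prop:BM}(5) relating diameters to $\delta$-powers). Everything else is a routine doubling-type count.
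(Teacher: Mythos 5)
The proposal has a genuine gap, and it is exactly at the step you flagged as ``the most delicate point.'' You claim that for each $w\in A^*$ the Bonk--Meyer depth $|u(w)|$ (where $X_w\subseteq T_{u(w)}$ and $\diam(X_w)\approx\delta^{|u(w)|}$) satisfies $|u(w)|=|w|+O(1)$, and deduce that if $X_w\cap X_v\neq\emptyset$ with $|w|=|v|$ then $\bigl||u(w)|-|u(v)|\bigr|=O(1)$ so that $T_{u(w)}$ and $T_{u(v)}$ have comparable diameters. This is not what the construction in Lemma~\ref{lem:tiles} gives. Each passage $w\to wi$ increases the Bonk--Meyer depth by some integer between $1$ and roughly $\log_2 M+1$, and that increment \emph{varies from tile to tile}; the only uniform control is multiplicative, namely that $\diam(X_{wi})/\diam(X_w)$ lies in a fixed interval $[K^{-1}\delta^{\log_2 M+1},K\delta]$. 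Over $k$ generations this lets $|u(w)|$ range anywhere between about $|w|$ and $|w|(\log_2 M+1)$, and more to the point, two adjacent tiles $X_w,X_v$ at the same $A^*$-level $k$ whose common ancestor is at level $m\ll k$ could in principle have $\diam(X_w)/\diam(X_v)$ as large as $\bigl(K^2\delta^{-\log_2 M}\bigr)^{k-m}$. Your argument for ``the ratio is bounded'' conflates a bounded \emph{ratio of generation counts} with a bounded \emph{difference}; the former is true, the latter is not, and it is the latter that your counting steps (ii)--(iv) need. Without $\bigl||u(w)|-|u(v)|\bigr|\le 2c_0$, the appeal to Proposition~\ref{prop:BM}(4),(6),(7) does not yield a uniform bound.

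The paper avoids this issue entirely and the argument is much shorter: one never compares $\diam(X_v)$ to $\diam(X_w)$. Instead, for each $v$ in the set to be counted, Lemma~\ref{lem:tiles}(5) gives a single intersection point $x_v\in X_w\cap X_v$; Lemma~\ref{lem:tiles}(4) shows the map $v\mapsto x_v$ is injective; and Lemma~\ref{lem:tiles}(6) shows the points $x_v$ are pairwise $K_3\diam(X_w)$-separated. All of these points lie in $X_w$, a set of diameter $\diam(X_w)$, so the doubling property of $X$ bounds their number by a constant depending only on $K_3$ and the doubling constant. Everything is measured relative to $X_w$ alone, so no cross-tile diameter comparison is needed. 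I would recommend replacing the Bonk--Meyer bookkeeping with this direct separated-points argument; it is both simpler and, unlike the proposed route, actually closes.
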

\begin{proof}
Let
$$ W = \{ v\in A^{|w|}: v\neq w, X_v \cap X_w \neq \emptyset\}.$$
For each $v\in W$, Lemma \ref{lem:tiles}(5) implies that $X_w\cap X_v$ is a single point, which we call $x_{v}\in X_w \cap X_v$. Moreover, if $v,v'\in W$ and $v\neq v'$, then $x_v,x_{v'} \in X_w \cap \overline{X \setminus X_w}$. By property (4) of Lemma \ref{lem:tiles} we have that $x_v \neq x_{v'}$, and by property (6) we have that 
$$ d(x_v, x_{v'}) \geq K_3 \diam(X_w).$$
Since all the points $\{x_v: v\in W\}$ are contained in $X_w$, the doubling property of $X$ completes the proof. 
\end{proof}

\subsection{Definition of combinatorial data}\label{subsec:charcombdata}

Fix $\delta$ as above Lemma \ref{lem:tiles} and apply Lemma \ref{lem:tiles} with fixed parameters $M\in\N$ and $K_1, K_2\in (0,1)$ as in the statement of that lemma. Let $A=\{1, \dots, M\}$. We define combinatorial data $\mathscr{C}=(A,(T_k)_{k\in\N})$ by setting $T_k=(A^k, E_k)$, where two words $v,w$ of $A^k$ are adjacent if and only if $X_v \cap X_w \neq \emptyset$.

\begin{lem}
$\mathscr{C}$ satisfies the conditions of Definition \ref{def:combdata}, and each graph $T_k$ is a combinatorial tree.
\end{lem}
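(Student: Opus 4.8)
The plan is to verify the two axioms of Definition \ref{def:combdata} for the graphs $T_k$, and separately that each $T_k$ is a combinatorial tree. First I would check axiom \eqref{eq:combdata2a}: given $w\in A^k$, I need the subgraph of $T_{k+1}$ induced by $\{wi:i\in A\}$ to be connected. But this subgraph is precisely the adjacency graph of the collection $\{X_{wi}:i\in A\}$, whose union is $X_w$ by Lemma \ref{lem:tiles}(2), a connected set; since each $X_{wi}$ is compact and connected and no point of $X$ lies in more than two of the $X_{wi}$'s (Lemma \ref{lem:tiles}(4),(5)), Lemma \ref{lem:tiletree} applies to $\mathcal{S}=\{X_{wi}:i\in A\}$ viewed inside the tree $X_w$, giving that this induced subgraph is in fact a combinatorial tree — in particular connected. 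So \eqref{eq:combdata2a} holds.

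Next I would check axiom \eqref{eq:combdata2b}: if $\{w,u\}\in E_k$, i.e. $X_w\cap X_u\neq\emptyset$, then I must produce $i,j\in A$ with $\{wi,uj\}\in E_{k+1}$. Pick $x\in X_w\cap X_u$. By Lemma \ref{lem:tiles}(2), $X_w=\bigcup_i X_{wi}$ and $X_u=\bigcup_j X_{uj}$, so $x\in X_{wi}$ for some $i$ and $x\in X_{uj}$ for some $j$; since $w\neq u$ we have $wi\neq uj$, and $x\in X_{wi}\cap X_{uj}$ shows $\{wi,uj\}\in E_{k+1}$. This is immediate. I should also note connectedness of each $T_k$ itself, which again follows from Lemma \ref{lem:tiletree} applied to $\mathcal{S}=\{X_w:w\in A^k\}$ inside $X$: by Lemma \ref{lem:tiles}(2) (iterated) the union of the $X_w$ over $w\in A^k$ is $X$, and by Lemma \ref{lem:tiles}(4),(5) no point lies in more than two of these sets, so $T_k$ is a combinatorial tree — which simultaneously disposes of the last required claim.

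The main point requiring a little care is confirming that the hypotheses of Lemma \ref{lem:tiletree} are genuinely met at level $k$: one needs $\{X_w:w\in A^k\}$ to be a \emph{finite} collection of compact connected subsets of $X$ covering $X$, with no point in more than two of them. Finiteness holds since $A$ is finite; compactness and connectedness are Lemma \ref{lem:tiles}(1) (the tiles are closed subsets of the compact $X$, hence compact, and connected); the covering property is Lemma \ref{lem:tiles}(2) applied inductively from $X_\varepsilon=X$; and the ``at most two'' condition is exactly Lemma \ref{lem:tiles}(4) together with (5) (two tiles at the same level meet in at most a point, and such a point is a leaf of each, hence cannot lie in a third tile at that level). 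The same verification, restricted to the subtree $X_w$ and the collection $\{X_{wi}:i\in A\}$, handles axiom \eqref{eq:combdata2a}. I do not anticipate a real obstacle here; the entire lemma is essentially a bookkeeping consequence of Lemma \ref{lem:tiles} and Lemma \ref{lem:tiletree}, and the proof should be short.
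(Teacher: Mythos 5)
Your proof is correct and follows essentially the same route as the paper's: apply Lemma \ref{lem:tiletree} to the level-$k$ tile collection to get that $T_k$ is a combinatorial tree (and in particular connected), apply it again with $X$ replaced by the subtree $X_w$ to get axiom \eqref{eq:combdata2a}, and use Lemma \ref{lem:tiles}(2) to produce the required child adjacency for axiom \eqref{eq:combdata2b}. The only thing you omit is the (trivial) verification of axiom \eqref{eq:combdata1} on the alphabet, which the paper notes in one line; otherwise your elaboration on why the hypotheses of Lemma \ref{lem:tiletree} hold is a sound expansion of what the paper takes for granted.
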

\begin{proof}
Property \eqref{eq:combdata1} of Definition \ref{def:combdata} is immediate. That $T_k$ is a (connected) combinatorial tree follows from Lemma \ref{lem:tiletree}.

Property \eqref{eq:combdata2a} of Definition \ref{def:combdata} holds similarly, taking $X=X_w$, which is connected, and again using Lemma \ref{lem:tiletree}.

For Property \eqref{eq:combdata2b}, consider $\{w,u\}\in E_k$. Then there is a point $x\in X_w\cap X_u$. By Lemma \ref{lem:tiles}(2), there are words $wi$ and $uj$ such that $x\in X_{wi} \cap X_{uj}$, and therefore $\{wi, uj\}\in E_{k+1}$.
\end{proof}

One basic consequence of this construction of combinatorial data is the following.
\begin{lem}\label{lem:tiletouch}
If $w,u\in A^*$ and $A^\N_w \wedge_\mathscr{C} A^N_u \neq \emptyset$, then $X_w \cap X_u \neq \emptyset$.
\end{lem}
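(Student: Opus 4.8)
The plan is to use the combinatorial data $\mathscr{C}$ built from the tiles together with the definition of combinatorial intersection. Suppose $A^\N_w \wedge_\mathscr{C} A^\N_u \neq \emptyset$. By Lemma \ref{lem:intersection1}, for every integer $k > \max\{|w|,|u|\}$ there are words $v_1 \in A^k_w$ and $v_2 \in A^k_u$ with $\{v_1,v_2\}\in E_k$. By the definition of the edge set $E_k$ in Section \ref{subsec:charcombdata}, this means $X_{v_1} \cap X_{v_2} \neq \emptyset$; pick a point $x_k$ in this intersection. Since $X_{v_1}\subseteq X_w$ and $X_{v_2}\subseteq X_u$ by Lemma \ref{lem:tiles}(2), we have $x_k \in X_w \cap X_u$ for each such $k$.

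Now I would extract a limit. The sets $X_w$ and $X_u$ are closed (indeed compact, being closed subsets of the compact tree $X$), so $X_w \cap X_u$ is compact. The sequence $(x_k)_{k}$ lies in this compact set, hence has a subsequential limit $x \in X_w \cap X_u$. In particular $X_w \cap X_u \neq \emptyset$, which is the claim.

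The argument is essentially immediate once one unwinds the definitions, so there is no real obstacle here; the only point requiring a word of care is the passage from "adjacent at every large level $k$" to an actual common point, which uses compactness of $X$ rather than any quantitative tile estimate. (One could also avoid taking a limit entirely: fixing a single $k > \max\{|w|,|u|\}$ already yields $v_1\in A^k_w$, $v_2\in A^k_u$ with $X_{v_1}\cap X_{v_2}\neq\emptyset$, and $X_{v_1}\subseteq X_w$, $X_{v_2}\subseteq X_u$ give $X_w\cap X_u\neq\emptyset$ at once; I would present this cleaner version.) Note the lemma is one-directional: the converse need not hold, since $X_w\cap X_u\neq\emptyset$ only guarantees adjacency of $w,u$ at level $\max\{|w|,|u|\}$ after passing to the appropriate ancestors, which is weaker than combinatorial intersection.
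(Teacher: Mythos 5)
Your proposal is correct and, in its ``cleaner version,'' is essentially identical to the paper's proof: apply Lemma \ref{lem:intersection1} to get adjacent words $v_1\in A^k_w$ and $v_2\in A^k_u$ at some level $k$, note that adjacency means $X_{v_1}\cap X_{v_2}\neq\emptyset$ by the definition of $E_k$, and conclude via the nesting $X_{v_1}\subseteq X_w$, $X_{v_2}\subseteq X_u$ from Lemma \ref{lem:tiles}(2). The initial compactness/subsequence argument is unnecessary, as you yourself observe, since a single level already produces a common point.
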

\begin{proof}
Let $w,u\in A^*$ with $A^\N_w \wedge_\mathscr{C} A^N_u \neq \emptyset$. By Lemma \ref{lem:intersection1}, there are then $k\in\mathbb{N}$, $w'\in A^{k}_w$, and  $u'\in A^{k}_u$ with $\{w',u'\}\in E_k$.   It follows from the definition of $\mathscr{C}$ that $X_{w'} \cap X_{u'}\neq \emptyset$, $X_{w'}\subseteq X_w$, and $X_{u'}\subseteq X_u$. This proves the lemma.
\end{proof}

\subsection{Definition of diameter function}
We continue to use the quasiconformal tree $X$ fixed at the start of Section \ref{sec:characterization}, and the constants $M,K_1, K_2$ and combinatorial data $\mathscr{C}=(A, (T_k)_{k\in\N})$ fixed at the start of Section \ref{subsec:charcombdata}.

We now define a diameter function $\D\in \mathscr{D}(A,K_1,K_2)$ with the following two rules.
\begin{itemize}
\item $\D(\varepsilon) =1$.
\item Suppose that for some $w\in A^*$ we have defined $\D(w)$.
\begin{enumerate}
\item If $\D(w) \leq \diam{X_w}$, then we define $\D(wi)= K_2\D(w)$ for all $i\in A$.
\item If $\D(w) > \diam{X_w}$, then we define $\D(wi)=K_1 \D(w)$ for all $i\in A$. 
\end{enumerate}
\end{itemize}
This satisfies Definition \ref{def:diam}, with property (3) following from the fact that $K_1<K_2<1$.

We now show that $\Delta(w)$ is always comparable to $\diam(X_w)$. This argument is very similar to the proof of Theorem A in \cite[\textsection4.1]{HM}.

\begin{lem}
For all $w\in A^*$, 
\begin{equation}\label{eq:comparable}
(K_2/K_1)^{-1} \D(w) \leq \diam(X_w) \leq (K_2/K_1)\D(w).
\end{equation}
\end{lem}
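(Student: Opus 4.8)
The plan is to prove the equivalent statement that the ratio $\rho(w) := \D(w)/\diam(X_w)$ satisfies $K_1/K_2 \le \rho(w) \le K_2/K_1$ for every $w\in A^*$, by induction on $|w|$; clearing denominators, this is exactly \eqref{eq:comparable}. First I would note that every tile $X_w$ has positive diameter, so $\rho(w)$ is well defined: indeed $\diam(X_\varepsilon)=\diam(X)=1$, and the lower bound $\diam(X_{wi})\ge K_1\diam(X_w)$ from Lemma \ref{lem:tiles}(3) propagates positivity to all descendants.

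For the base case $w=\varepsilon$ we have $\D(\varepsilon)=1=\diam(X_\varepsilon)$, so $\rho(\varepsilon)=1\in[K_1/K_2,K_2/K_1]$ since $0<K_1\le K_2$. For the inductive step, assume $\rho(w)\in[K_1/K_2,K_2/K_1]$ and fix $i\in A$; by Lemma \ref{lem:tiles}(3), $K_1\diam(X_w)\le\diam(X_{wi})\le K_2\diam(X_w)$. Now split into the two branches of the definition of $\D$. If $\D(w)\le\diam(X_w)$, i.e. $\rho(w)\le1$, then $\D(wi)=K_2\D(w)$, so
\[ \rho(wi)=\frac{K_2\D(w)}{\diam(X_{wi})}\in\left[\rho(w),\ \tfrac{K_2}{K_1}\rho(w)\right], \]
whose left endpoint is $\ge K_1/K_2$ by hypothesis and whose right endpoint is $\le(K_2/K_1)\cdot 1=K_2/K_1$ because $\rho(w)\le1$. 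Symmetrically, if $\D(w)>\diam(X_w)$, i.e. $\rho(w)>1$, then $\D(wi)=K_1\D(w)$, so
\[ \rho(wi)=\frac{K_1\D(w)}{\diam(X_{wi})}\in\left[\tfrac{K_1}{K_2}\rho(w),\ \rho(w)\right], \]
whose left endpoint is $\ge(K_1/K_2)\cdot 1=K_1/K_2$ because $\rho(w)>1$ and whose right endpoint is $\le K_2/K_1$ by hypothesis. In either case $\rho(wi)\in[K_1/K_2,K_2/K_1]$, closing the induction.

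The entire content lies in choosing the right inductive hypothesis rather than in any calculation, so I do not anticipate a genuine obstacle. The point is that the two-branch rule defining $\D$ is a feedback mechanism: when $\rho(w)$ rises above $1$ the next generation rescales $\D$ by the smaller factor $K_1$ (pulling $\rho$ back down), and when $\rho(w)$ drops below $1$ it rescales by the larger factor $K_2$ (pulling $\rho$ back up). Consequently the sharp window $[K_1/K_2,K_2/K_1]$ is preserved at every level, whereas a cruder a priori estimate would degrade with $|w|$; stating the hypothesis as this exact window is what makes the case check above go through. This is the same argument as in \cite[\S4.1]{HM}, as already remarked before the statement.
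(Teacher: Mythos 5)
Your proof is correct and is essentially the paper's proof, reformulated in terms of the ratio $\rho(w)=\D(w)/\diam(X_w)$: the induction on $|w|$ and the case split on $\D(w)\le\diam(X_w)$ versus $\D(w)>\diam(X_w)$ are identical, and the ratio language just repackages the two inequalities the paper verifies directly. (The positivity observation, needed to define $\rho$, is a small point the paper leaves implicit but does hold.)
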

\begin{proof}
By Lemma \ref{lem:tiles}(3) we have for all $w\in A^*$
$$ K_1\diam(X_w) \leq \diam(X_{wi}) \leq K_2\diam(X_w).$$

Note that \eqref{eq:comparable} holds for $w=\varepsilon$, since $\Delta(\varepsilon)=\diam(X_\varepsilon)=1$. Assume by induction that we have a word $w$ such that \eqref{eq:comparable} holds. Consider any $i\in A$. There are two possibilities.

\emph{Case 1}: $\D(w) \leq \diam(X_w)$. In this case, we have
$$ \D(wi) = K_2\D(w) \leq K_2\diam(X_w) \leq (K_2/K_1)\diam(X_{wi})$$
and
$$ \diam(X_{wi}) \leq K_2\diam(X_w) \leq K_2(K_2/K_1)\D(w) =(K_2/K_1)\D(wi),$$
which together prove \eqref{eq:comparable} for the word $wi$ in case 1.

\emph{Case 2}: $\D(w) > \diam(X_w)$. In this case, we have
$$ \D(wi) = K_1\D(w) \leq K_1(K_2/K_1)\diam(X_w) \leq (K_2/K_1)\diam(X_{wi})$$
and
$$ \diam(X_{wi}) \leq K_2\diam(X_w) < K_2\D(w) = (K_2/K_1)\D(wi),$$
which together prove \eqref{eq:comparable} for the word $wi$ in case 2.
\end{proof}

As in Section \ref{sec:diameter}, let $\sim$ be the equivalence relation on $A^{\N}$ induced by the diameter function $\D$ and let $\mathcal{A}=A^{\N}/\sim$ and $\mathcal{A}_w = A^{\N}_w/\sim$.

\subsection{Proof of Theorem \ref{thm:main}}\label{sec:thmproof}

A consequence of Lemma \ref{lem:tiles}(2) is that, for each $x\in X$, there exists an infinite word $w_x\in A^{\mathbb{N}}$ such that $x \in X_{w(n)}$ for all $n\in\N$. We therefore define a map $f\colon X \rightarrow \mathcal{A}$ by $f(x) = [w_x]$. 

\begin{lem}\label{lem:welldef}
The map $f: X \to \mathcal{A}$ defined above is well-defined and surjective. 
\end{lem}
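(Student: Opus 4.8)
The plan is to verify the two claims separately, using the tile structure from Lemma \ref{lem:tiles} together with the definitions of $\mathscr{C}$ and $\D$.

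First I would show that $f$ is well-defined, i.e., that the value $[w_x]$ does not depend on the choice of the word $w_x$. Suppose $w, w' \in A^{\N}$ both satisfy $x \in X_{w(n)} \cap X_{w'(n)}$ for every $n$. The goal is to show $D_{\mathscr{C},\D}(w,w')=0$, for which it suffices to exhibit, for each $n$, a short chain joining $w$ to $w'$. Since $x\in X_{w(n)}\cap X_{w'(n)}$, the words $w(n)$ and $w'(n)$ are adjacent in $T_n$ (or equal), so by Lemma \ref{lem:intersection1} we have $A^{\N}_{w(n)} \wedge_{\mathscr{C}} A^{\N}_{w'(n)} \neq \emptyset$, and therefore $\{A^{\N}_{w(n)}, A^{\N}_{w'(n)}\}$ is a chain joining $w$ with $w'$. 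Hence
\[ D_{\mathscr{C},\D}(w,w') \leq \D(w(n)) + \D(w'(n)) \leq 2\max_{v\in A^n}\D(v), \]
which tends to $0$ as $n\to\infty$ by property \eqref{eq:diam4} of Definition \ref{def:diam} (or by \eqref{eq:comparable}, since diameters of tiles shrink). Thus $[w_x]=[w_{x}']$ and $f$ is well-defined.

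For surjectivity, I would fix $[w]\in\mathcal{A}$ and produce a point $x\in X$ with $f(x)=[w]$. The natural candidate is any $x$ in the intersection $\bigcap_{n}X_{w(n)}$; this intersection is nonempty because $(X_{w(n)})_n$ is a decreasing (by Lemma \ref{lem:tiles}(2)) sequence of nonempty compact sets, so it contains at least one point $x$. By construction $x\in X_{w(n)}$ for all $n$, so $w$ is one admissible choice of word $w_x$, and by the well-definedness just proved $f(x)=[w_x]=[w]$. This establishes surjectivity.

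The main obstacle here is really the well-definedness part: one must be careful that two different infinite words tracking the same point $x$ can genuinely differ (this happens precisely when $x$ is a boundary point shared by two tiles at every level), and the argument must show their $D_{\mathscr{C},\D}$-distance is zero rather than merely that they are "close." The key mechanism is that adjacency of $X_{w(n)}$ and $X_{w'(n)}$ in $X$ forces adjacency in $T_n$ by the very definition of $\mathscr{C}$ in Section \ref{subsec:charcombdata}, which makes the two-element chain $\{A^{\N}_{w(n)},A^{\N}_{w'(n)}\}$ available; everything else is the shrinking of $\max_v\D(v)$. Surjectivity, by contrast, is a routine compactness argument.
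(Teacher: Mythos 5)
Your proposal is correct and follows the same approach as the paper: well-definedness is proved via the observation that $x\in X_{w(n)}\cap X_{w'(n)}$ forces $w(n)$ and $w'(n)$ to be adjacent (or equal) in $T_n$, yielding a two-element chain whose $\D$-length tends to zero, and surjectivity follows by taking any point in the nested compact intersection $\bigcap_n X_{w(n)}$. Your explicit mention of the ``or equal'' case and the appeal to Lemma \ref{lem:intersection1} are harmless refinements of what the paper does implicitly.
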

\begin{proof}
Suppose that there exist two words $w,u \in A^\N$ such that for all $n\in\N$, $x\in X_{w(n)}\cap X_{u(n)}$. Then, by the construction of the combinatorial data $\mathscr{C}$, for each $n\in\N$ we have $\{w(n),u(n)\} \in E_n$.  (Recall that $E_n$ is the set of edges of $T_n$.) Thus, for each $n\in\N$, the set $\{A^{\N}_{w(n)}, A^{\N}_{u(n)}\}$ is a chain that joins $w$ with $u$, and so $d_{\mathscr{C},\D}([w],[u]) \leq \D(w(n)) + \D(u(n)) \rightarrow 0$ as $n\rightarrow\infty$. We therefore have that $d_{\mathscr{C},\D}([w],[u]) = 0$, which implies that $[w]=[u]$. This shows that $f$ is well-defined.

To show that $f$ is surjective, consider an arbitary $[u]\in\mathcal{A}$. We have nested compact tiles
$$ X_{u(1)} \supseteq X_{u(2)} \supseteq X_{u(3)} \dots$$
in $X$. Let $x\in \cap_{n\in\mathbb{N}} X_{u(n)}$. If $f(x)=w\in\mathcal{A}$, then by definition of $f$ we have
$$ x\in X_{w(n)} \cap X_{u(n)} \text{ for all } n\in\mathbb{N}.$$
As before, $u(n)$ and $w(n)$ are adjacent in $T_n$ for each $n$, and hence again
$$ d_{\mathscr{C},\Delta}([u],[w]) \leq \Delta(u(n))+\Delta(w(n))\rightarrow 0.$$
Thus, $[u]=[w]=f(x)$ and $f$ is surjective.
\end{proof}

The proof of Theorem \ref{thm:main} concludes with the next two results.

\begin{prop}\label{prop:bilipschitz}
The map $f: (X,d) \to (\mathcal{A},d_{\mathscr{C},\D})$ is bi-Lipschitz, with constant depending only on $K_1$, $K_2$, and $K_3$. 
\end{prop}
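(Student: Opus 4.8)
The plan is to establish the two bi-Lipschitz inequalities separately. Throughout I would use three facts: (i) since $X$ is $1$-bounded turning, $\diam([a,b])=d(a,b)$ for the arc $[a,b]$ joining any $a,b\in X$; (ii) by \eqref{eq:comparable}, $\diam(X_w)$ is comparable to $\D(w)$ with constant $K_2/K_1$, and every $X_w$ is a nondegenerate continuum since $\diam(X_w)\ge K_1^{|w|}>0$ by Lemma~\ref{lem:tiles}(3); and (iii) for $w,u\in A^*$, $A^\N_w\wedge_{\mathscr{C}}A^\N_u\neq\emptyset$ if and only if $X_w\cap X_u\neq\emptyset$ --- the forward direction is Lemma~\ref{lem:tiletouch}, and the reverse follows by pushing a common point of $X_w,X_u$ down to a common level via Lemma~\ref{lem:tiles}(2) and then invoking Lemma~\ref{lem:intersection1}(3). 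For $x\in X$, fix $w_x\in A^\N$ with $x\in X_{w_x(k)}$ for all $k$, so that $f(x)=[w_x]$.

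The lower bound is the easy direction. Given any chain $\{A^\N_{u_1},\dots,A^\N_{u_s}\}$ joining $w_x$ to $w_y$, fact (iii) makes consecutive tiles $X_{u_i},X_{u_{i+1}}$ intersect, so $\bigcup_i X_{u_i}$ is connected with diameter at most $\sum_i\diam(X_{u_i})$; it contains $x\in X_{u_1}$ and $y\in X_{u_s}$, whence $d(x,y)\le\sum_i\diam(X_{u_i})\le (K_2/K_1)\sum_i\D(u_i)$. Taking the infimum over chains gives $d_{\mathscr{C},\D}(f(x),f(y))\ge (K_1/K_2)\,d(x,y)$.

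For the upper bound, assume $x\neq y$, set $r=d(x,y)\le 1$, and let $\gamma=[x,y]$ with $\diam(\gamma)=r$; I want a chain joining $w_x,w_y$ of $\D$-length comparable to $r$. Choose $n$ large enough that $\D(w)<r$ for all $w\in A^n$ (possible since $\D(w)\le K_2^{|w|}$ and $r\le 1=\D(\varepsilon)$), and let $v_1=w_x(n),\dots,v_m=w_y(n)$ be the vertices of the unique combinatorial arc in $T_n$ joining $w_x(n)$ and $w_y(n)$. Since this is a simple path in a tree, $X_{v_i}\cap X_{v_j}\neq\emptyset$ forces $|i-j|\le1$; as $\bigcup_i X_{v_i}$ is connected and contains $x,y$ it contains $\gamma$, and a standard ``simple chain of continua'' argument then shows that the subarcs $\gamma\cap X_{v_i}$ are nonempty, cover $\gamma$, meet only consecutively, and hence occur in order along $\gamma$. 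In particular the unique common point $z_i$ of $X_{v_i}$ and $X_{v_{i+1}}$ lies on $\gamma$, and $z_1,\dots,z_{m-1}$ are distinct and ordered along $\gamma$ (two coinciding would place a point in three distinct level-$n$ tiles, contradicting Lemma~\ref{lem:tiles}(4)). Now replace each $v_i$ by its ancestor $c(v_i)\in A^*(r)$ (which exists since $\D(v_i)<r\le\D(\varepsilon)$); because the subgraph of $T_n$ induced by $A^n_w$ is connected for each $w$, the distinct values $c_1,\dots,c_{N_0}$ occur in contiguous runs, and using (iii) on the adjacent transition vertices between consecutive runs, $\{A^\N_{c_1},\dots,A^\N_{c_{N_0}}\}$ is a chain joining $w_x$ and $w_y$. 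Since $\D(c_j)<r$ for each $j$, its $\D$-length is at most $N_0 r$, so everything reduces to bounding $N_0$.

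This bound on $N_0$ is the crux of the proof and the one place where the well-separation property Lemma~\ref{lem:tiles}(6) is indispensable. For $1\le j\le N_0-1$ let $\zeta_j$ be the transition point $z_i$ between the $c_j$-run and the $c_{j+1}$-run; then $\zeta_j\in X_{c_j}\cap X_{c_{j+1}}\cap\gamma$, the $\zeta_j$ are distinct, and they are ordered along $\gamma$. Because distinct $A^*(r)$-tiles meet in at most one point and the tiles are nondegenerate, $\zeta_j$ and $\zeta_{j+1}$ are two distinct boundary points of $X_{c_{j+1}}$, so Lemma~\ref{lem:tiles}(6), \eqref{eq:comparable}, and $\D(c_{j+1})\ge K_1\D(c_{j+1}^{\uparrow})\ge K_1 r$ give $d(\zeta_j,\zeta_{j+1})\ge K_3\,\diam(X_{c_{j+1}})\ge (K_3K_1^2/K_2)\,r=:\eta$. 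Since the $\zeta_j$ lie on $\gamma$ in order and $X$ is $1$-bounded turning, $\zeta_{j+1}\in[\zeta_j,\zeta_{j'}]$ for $j<j'$, so $d(\zeta_j,\zeta_{j'})\ge d(\zeta_j,\zeta_{j+1})\ge\eta$; thus $\{\zeta_1,\dots,\zeta_{N_0-1}\}$ is an $\eta$-separated subset of $\gamma\subseteq\overline{B}(x,r)$, and since $\eta/r$ is a fixed constant, the doubling property of $X$ bounds $N_0$ by a constant $N_2$ depending only on the doubling constant and $K_1,K_2,K_3$. Hence $d_{\mathscr{C},\D}(f(x),f(y))\le N_0 r\le N_2\,d(x,y)$, which together with the lower bound proves that $f$ is bi-Lipschitz. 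The main obstacle is precisely this control on the length of the approximating chain: without the separation of boundary points, the tiles tracing out $\gamma$ could overlap or nest in a way making the natural chain arbitrarily long relative to $d(x,y)$.
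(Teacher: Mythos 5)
Your proof is correct, but for the upper (Lipschitz) inequality it takes a genuinely different route from the paper's. The lower bound is handled the same way in both.

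For the upper bound, the paper does \emph{not} descend to a uniform fine scale and count tiles. Instead it locates the maximal-length word $w_0$ with $x,y\in X_{w_0}$, and then does a two-case analysis according to whether the children $X_{w_0i}\ni x$ and $X_{w_0j}\ni y$ intersect; in each case one exhibits a chain consisting of one or two word sets, and the bounded-turning property together with Lemma~\ref{lem:tiles}(6) is used to show $d(x,y)$ is at least a definite fraction of $\D(w_0)$ (or of $\D(w_0w)+\D(w_0u)$). This yields the explicit constant $\frac{2K_2}{K_1^2K_3}$. Your approach instead descends to a level $n$ where every $\D$-value is below $r=d(x,y)$, follows the $T_n$-geodesic from $w_x(n)$ to $w_y(n)$, lifts it to $A^*(r)$-tiles, and then bounds the number of distinct $A^*(r)$-tiles by the doubling property of $X$, using Lemma~\ref{lem:tiles}(6) to make the transition points $\zeta_j$ an $\eta r$-separated set in $\overline{B}(x,r)$. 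Both routes hinge on the boundary-separation property and $1$-bounded turning, and both are valid; the paper's produces a chain of at most two sets rather than a doubling-bounded number, which is why it is more self-contained (it never needs $A^*(r)$ from Section~\ref{sec:doubling} nor a simple-chain-of-continua argument) and why its final constant avoids explicit dependence on the doubling constant $N$.

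Two small caveats. First, your constant $N_2$ depends on $N$ in addition to $K_1,K_2,K_3$, so as stated you have proven a slightly weaker version of the proposition (this is harmless for Theorem~\ref{thm:main}, whose constant is allowed to depend on $N$, but it does not match the proposition's claim literally). Second, the step ``the subarcs $\gamma\cap X_{v_i}$ are nonempty, cover $\gamma$, meet only consecutively, and hence occur in order along $\gamma$'' is the most delicate part of your argument and deserves more detail: one should note that each $\gamma\cap X_{v_i}$ is connected (because $X_{v_i}$ is a subtree, so it is arc-convex), that $z_i$ separates $\bigcup_{k\le i}X_{v_k}$ from $\bigcup_{k>i}X_{v_k}$, and then deduce the ordering of the parameters $t_i$ with $\gamma(t_i)=z_i$ from this separation. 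The paper sidesteps this entirely by the $w_0$-based case analysis.
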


\begin{proof}
Fix $x,y \in X$. 

We first claim that $d_{\mathscr{C},\D}(f(x),f(y)) \geq \frac{K_1}{K_2} d(x,y)$. Suppose that $f(x)=[w]$ and $f(y) = [u]$. Let $\{ A^{\N}_{w_1},\dots, A^{\N}_{w_m}\}$ be a chain joining $w$ with $u$. Since $w\in A^\N_{w_1}$, we have $w_1=w(|w_1|)$ and therefore $x\in X_{w_1}$; similarly, $y\in X_{w_m}$.

We also have $X_{w_i}\cap X_{w_{i+1}} \neq \emptyset$ for each $i\in\{1,\dots,m-1\}$, by Lemma \ref{lem:tiletouch}.

Therefore, using the triangle inequality and \eqref{eq:comparable}, we have
\begin{equation}\label{eq:diameters}
\sum_{i=1}^m \D(w_i) \geq \frac{K_1}{K_2}\sum_{i=1}^m \diam{X_{w_i}} \geq \frac{K_1}{K_2}d(x,y).
\end{equation}
Taking the infimum over all possible chains, we obtain $d_{\mathscr{C},\D}(f(x),f(y)) \geq \frac{K_1}{K_2}d(x,y)$, as desired.

We now claim that 
\begin{equation}\label{eq:lipschitz}
d_{\mathscr{C},\D}(f(x),f(y)) \lesssim d(x,y),
\end{equation}
with implied constant depending only on $K_1, K_2, K_3$.

Let $w_0$ be a word in $\mathcal{W}$ of maximal length such that $x,y \in X_{w_0}$. Then, there exists distinct $i,j \in A$ such that $w_0i,w_0j \in \mathcal{W}$, $x\in X_{w_0i}$ and $y\in X_{w_0j}$. Set $k=|w_0|$. We consider the following two possible cases.

Suppose first that $X_{w_0i}\cap X_{w_0j} = \emptyset$. Let $\gamma$ be the unique arc in $X$ with endpoints $x,y$. Note that $\gamma\subseteq X_{w_0}$ as $X_{w_0}$ is connected. Assuming $X_{w_0i}\cap X_{w_0j} = \emptyset$, it follows that $\gamma \setminus (X_{w_0i}\cup X_{w_0j})$ is a non-empty relatively open subset of $\gamma$. There must therefore exist some $l \in A\setminus\{i,j\}$ such that $\gamma \cap \partial X_{w_0l}$ contains two distinct points $v,v'$ of $\partial X_{w_0l}$.

By the $1$-bounded turning property of $X$ and Lemma \ref{lem:tiles}(6),
\[ d(x,y) \geq \diam{\g} \geq d(v,v') \geq K_3\diam(X_{w_0l}).\]
On the other hand, $f(x),f(y) \in \mathcal{A}_{w_0}$ and so, by Lemma \ref{lem:dist} and \eqref{eq:comparable}, we have:
\[ d_{\mathscr{C},\D}(f(x),f(y)) \leq \diam{\mathcal{A}_{w_0}} \leq \D(w_0) \leq \frac{K_2}{K_1}\diam(X_{w_0}).\]
Therefore, using Lemma \ref{lem:tiles}(3),
$$ d(x,y) \geq K_3\diam(X_{w_0l}) \geq K_3 K_1 \diam(X_{w_0}) \geq \frac{K_1^2 K_3}{K_2} d_{\mathscr{C},\D}(f(x),f(y)).$$
This completes the proof of \eqref{eq:lipschitz} in the case where $X_{w_0i}\cap X_{w_0j} = \emptyset$.

Suppose now that  $X_{w_0i}\cap X_{w_0j} \neq \emptyset$. Find words $w,u \in A^*$ of maximal lengths such that $w_0w, w_0u \in \mathcal{W}^*$, $x\in X_{w_0w}$, $y\in X_{w_0u}$ and $X_{w_0w}\cap X_{w_0u} \neq \emptyset$. Then there exist $w_0wi, w_0uj \in A^*$ such that $X_{w_0wi}\cap X_{w_0u} = \emptyset$,  $X_{w_0uj}\cap X_{w_0w} = \emptyset$, $x\in X_{w_0wi}$ and $y\in X_{w_0uj}$.

Let $z$ be the unique point of $X_{w_0w}\cap X_{w_0u}$ and again set $\gamma$ to be the unique arc from $x$ to $y$ in $X$, which must pass through $z$. Choose $k\in A$ such that $z\in X_{w_0wk}$. Note that $k\neq i$ by the maximality of $w$, and that $z\in \partial X_{w_0 w k}$. The sub-arc of $\gamma$ from $x$ to $z$ must also contain a point $v\in \partial X_{w_0 w k}$ distinct from $z$, by Lemma \ref{lem:tiles}(4).

Hence, again by $1$-bounded turning and Lemma \ref{lem:tiles}(6), we have
$$ d(x,z) \geq d(v,z) \geq K_3 \diam(X_{w_0 w k}).$$
Similarly, 
$$d(y,z) \geq K_3 \diam(X_{w_0 u l}),$$
for some $l\in A$.

By the $1$-bounded turning property and Lemma \ref{lem:tiles}(3),
$$ d(x,y) \geq \frac{1}{2}(d(x,z)+d(y,z)) \geq \frac{1}{2}K_3(\diam(X_{w_0 w k}) + \diam(X_{w_0 u l})) \geq \frac{1}{2}K_3K_1(\diam(X_{w_0 w}) + \diam(X_{w_0 u})).$$

On the other hand, $f(x) \in \mathcal{A}_{w_0w}$, $f(y) \in \mathcal{A}_{w_0u}$ and $\{A^{\N}_{w_0w}, A^{\N}_{w_0u}\}$ is a chain joining $f(x)$ and $f(y)$. Therefore, by Lemma \ref{lem:dist} and by \eqref{eq:comparable}
\[ d_{\mathscr{C},\D}(f(x),f(y)) \leq \diam{\mathcal{A}_{w_0w}} + \diam{\mathcal{A}_{w_0w}} \leq \D(w_0w) + \D(w_0u) \leq \frac{K_2}{K_1}(\diam(X_{w_0 w}) + \diam(X_{w_0 u})).\]

Therefore, 
$$ d_{\mathscr{C},\D}(f(x),f(y)) \leq \frac{2K_2}{K_1^2 K_3} d(x,y)$$%\frac{2}{K_2 K_3} d(x,y).$$
This completes the proof of \eqref{eq:lipschitz} and hence of the proposition.
\end{proof}

Finally, to prove the ``moreover'' piece of Theorem \ref{thm:main}, we now show:
\begin{lem}
The combinatorial data $\mathscr{C}$ and diameter function $\Delta$ defined above satisfy the conditions of Proposition \ref{prop:doubling} for some choice of $N,n_0,c,\delta_1,\delta_2$
\end{lem}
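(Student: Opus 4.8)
The plan is to verify the four conditions \ref{P1}--\ref{P4} of Proposition \ref{prop:doubling} one at a time, choosing the parameters $N, n_0, c, \delta_1, \delta_2$ along the way. For \ref{P1}, take $N = \card A = M$, which is finite by construction. For \ref{P3}, take $\delta_1 = K_1$ and $\delta_2 = K_2$; the rule defining $\D$ sets $\D(wi)$ equal to either $K_1\D(w)$ or $K_2\D(w)$, both of which lie in $[K_1\D(w), K_2\D(w)]$, so \ref{P3} holds immediately (in fact with equality in one of the two bounds). For \ref{P2}, set $n_0$ to be the constant from Lemma \ref{lem:tiledoubling}: the valency of a word $w$ in $T_k$ equals $\card\{v\in A^{|w|} : v\neq w,\ X_v\cap X_w\neq\emptyset\}$ by the definition of the edge set $E_k$, and this is at most $n_0$ by Lemma \ref{lem:tiledoubling}.

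The main work, and the main obstacle, is condition \ref{P4}. Here I would argue as follows. Suppose $u, u_1, u_2\in A^n$ are distinct with $A^\N_u \wedge_\mathscr{C} A^\N_{u_i}\neq\emptyset$ for $i=1,2$, and let $w_i\in A^\N_u\wedge_\mathscr{C}A^\N_{u_i}$; by the remark following Proposition \ref{prop:doubling} we may take $w_i\in A^\N_u$. The key geometric input is Lemma \ref{lem:tiletouch}: since $A^\N_u\wedge_\mathscr{C}A^\N_{u_i}\neq\emptyset$, we have $X_u\cap X_{u_i}\neq\emptyset$, so by Lemma \ref{lem:tiles}(5) there is a single point $x_i\in X_u\cap X_{u_i}$, which is a leaf of $X_u$ lying in $X_u\cap\overline{X\setminus X_u}$ by Lemma \ref{lem:tiles}(4). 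Since $u_1\neq u_2$, the points $x_1$ and $x_2$ are distinct (again by Lemma \ref{lem:tiles}(4), a leaf of $X_u$ belongs to only one tile $X_{u'}$ with $u'\in A^n\setminus\{u\}$), so Lemma \ref{lem:tiles}(6) gives $d(x_1,x_2)\geq K_3\diam(X_u)$. Now I would relate $d_{\mathscr{C},\D}([w_1],[w_2])$ to $d(f^{-1}(\cdot),f^{-1}(\cdot))$ using the bi-Lipschitz map $f$ of Proposition \ref{prop:bilipschitz}. The subtlety is that $w_1, w_2$ need not be the words $w_{x_i}$ assigned to $x_i$ by $f$; however, since $w_i\in A^\N_u\wedge_\mathscr{C}A^\N_{u_i}$, the prefixes $w_i(k)$ are adjacent in $T_k$ to elements of $A^k_{u_i}$ for all large $k$, which forces $X_{w_i(k)}$ to contain (or cluster at) the boundary point $x_i$; more precisely, the intersection $\bigcap_k X_{w_i(k)}$ is a single point of $X_u\cap X_{u_i}$, hence equals $x_i$, so in fact $f(x_i)=[w_i]$ up to the $\sim$-identification. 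Granting this, $d_{\mathscr{C},\D}([w_1],[w_2]) = d_{\mathscr{C},\D}(f(x_1),f(x_2)) \geq \frac{K_1}{K_2}d(x_1,x_2)\geq \frac{K_1 K_3}{K_2}\diam(X_u)\geq \frac{K_1 K_3}{K_2}\cdot\frac{K_1}{K_2}\D(u)$, using \eqref{eq:comparable}. Thus \ref{P4} holds with $c = (K_2/K_1)^2/K_3$.

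The step I expect to require the most care is the identification $[w_i] = f(x_i)$, i.e., showing that an element of the combinatorial intersection $A^\N_u\wedge_\mathscr{C}A^\N_{u_i}$ that lies in $A^\N_u$ is $\sim$-equivalent to the word $f$ assigns to the unique point $x_i\in X_u\cap X_{u_i}$. This follows by the same argument as in Lemma \ref{lem:welldef}: if $w_i\in A^\N_u\wedge_\mathscr{C}A^\N_{u_i}\cap A^\N_u$, then for each large $k$ there is $v_k\in A^k_{u_i}$ with $\{w_i(k),v_k\}\in E_k$, so $X_{w_i(k)}\cap X_{v_k}\neq\emptyset$ with $X_{v_k}\subseteq X_{u_i}$; since the tiles $X_{w_i(k)}$ are nested compact sets with diameters tending to $0$, their intersection is a single point $z_i$, and $z_i\in X_u$; moreover $z_i$ is within distance $\diam X_{w_i(k)}\to 0$ of $X_{u_i}$, forcing $z_i\in X_{u_i}$, hence $z_i = x_i$ by uniqueness. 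Then $f(x_i) = f(z_i) = [w_i]$ directly from the definition of $f$. Once this is in place, collecting the parameters gives the lemma with $N = M$, $n_0$ from Lemma \ref{lem:tiledoubling}, $\delta_1 = K_1$, $\delta_2 = K_2$, and $c = (K_2/K_1)^2/K_3$, completing the ``moreover'' part of Theorem \ref{thm:main}.
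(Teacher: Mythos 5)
Your proposal is correct and follows essentially the same route as the paper: verify \ref{P1}--\ref{P3} directly, then handle \ref{P4} by identifying $[w_i]$ with $f(x_i)$, where $x_i$ is the unique tile-intersection point of $X_u$ with $X_{u_i}$, and applying the lower Lipschitz bound of Proposition \ref{prop:bilipschitz} together with Lemma \ref{lem:tiles}(6). The one small place where you deviate is the argument for $x_1\neq x_2$: you apply Lemma \ref{lem:tiles}(4) directly at level $|u|$ (a point in $\partial X_u$ can lie in only one $X_{u'}$ with $u'\neq u$ of the same generation, so it cannot lie in both $X_{u_1}$ and $X_{u_2}$), whereas the paper descends to a finer level $n$ where $w_1(n)\neq w_2(n)$ and derives a contradiction from a point lying in three distinct tiles of generation $n$; your version is a bit more economical and equally valid.
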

\begin{proof}
Property \ref{P1} of Proposition \ref{prop:doubling} follows from our choice of a finite alphabet $A=\{1,\dots,M\}$. Property \ref{P2} follows from Lemma \ref{lem:tiledoubling} and the definition of the combinatorial trees $T_k$ in our combinatorial data. Property \ref{P3} is immediate from our construction of $\Delta$, with $\delta_1=K_1$ and $\delta_2=K_2$.

It remains to verify Property \ref{P4} of Proposition \ref{prop:doubling}. Consider $k\in\N$ and distinct $u,u_1,u_2 \in A^*$ such that $\{u,u_1\}$ and $\{u,u_2\}$ are in $E_{n}$. Let also $w_1,w_2 \in A^\N_u$, $v_1 \in A^\N_{u_1}$, and $v_2 \in A^\N_{u_2}$ such that for all $n\geq k$ and $i\in\{1,2\}$, $\{w_i(n), v_i(n)\}\in E_n$.

For each $i\in\{1,2\}$, let $x_i\in X$ denote the unique point such that
$$ x_i \in \bigcap_{n=0}^\infty X_{w_i(n)}.$$
By definition, we have $f(x_i) = w_i$. Notice that $x_1$ and $x_2$ are both in $X_u$ as $w_i\in A^\N_u$.

We first claim that, for $i\in\{1,2\}$,
\begin{equation}\label{eq:boundarypoint}
x_i \in X_{u_i} \cap X_u \subseteq \partial X_u.
\end{equation}
It follows from the definition of $\mathscr{C}$ that
$$ \emptyset \neq X_{w_i(n)} \cap X_{v_i(n)} \subseteq X_{w_i(n)} \cap X_{u_i}$$
for all $n>k$. Hence,
$$ \dist(x_i, X_{u_i}) \leq \diam(X_{w_i(n)}) \rightarrow 0 \quad \text{ as } n\rightarrow\infty,$$
and so $x_i \in X_u \cap X_{u_i} \subseteq \partial X_u$. 

We next claim that $x_1\neq x_2$. Suppose to the contrary that $x_1=x_2=x$, and choose $n>k$ such that $w_1(n)\neq w_2(n)$. Then $X_{w_1(n)}$ and $X_{w_2(n)}$ are distinct subsets of $X_u$ with $x\in X_{w_1(n)} \cap X_{w_2(n)}$. In addition, we showed in \eqref{eq:boundarypoint} that $x\in X_{u_1}$. It follows that there is an element $v\in A^n_{u_1}$ with $x\in X_{v}$. The word $v$, beginning as it does with $u_1\neq u$, is distinct from both $w_1(n)$ and $w_2(n)$, and so the three words $v$, $w_1(n)$, and $w_2(n)$ are distinct and of the same length $n$. Moreover, $x\in X_{w_1(n)} \cap X_{w_2(n)} \cap X_v$. However, this contradicts Lemma \ref{lem:tiles}(4).

Thus, $x_1$ and $x_2$ are distinct elements of $\partial X_u$. By Lemma \ref{lem:tiles}(6) and \eqref{eq:comparable}, 
$$ d(x_1,x_2) \geq K_3 \diam(X_u) \geq (K_3K_1/K_2) \Delta(u).$$
By Proposition \ref{prop:bilipschitz}, $f$ is bi-Lipschitz with constant depending only on $K_1, K_2, K_3$. Therefore
$$ d_{\mathscr{C},\Delta}([w_1],[w_2]) = d_{\mathscr{C},\Delta}(f(x_1),f(x_2)) \geq c \Delta(u),$$
for some $c$ depending only on $K_1, K_2, K_3$. This completes the proof.
\end{proof}

\section{Examples and simple cases of quasiconformal trees}\label{sec:examples}
In this section, we discuss some examples and simple special cases of quasiconformal trees based on our construction.

\subsection{Quasiarcs}\label{sec:ex-quasiarcs}
Here we discuss combinatorial data and diameter functions that give rise to quasiarcs. We start with a corollary in which the conditions of Proposition \ref{prop:doubling} can be verified, using Lemma \ref{lem:suffdoubling}.

\begin{lem}\label{lem:qarcs}
Let $\mathscr{C} = (A,(T_k)_{k\in\N})$ be combinatorial data such that $\card{A} = N \geq 2$ and each $T_k = (A^k,E_k)$ is a combinatorial arc. 
Let $\D \in \mathscr{D}(A)$ satisfy property \ref{P3} of Proposition \ref{prop:doubling} and assume that, for all $k\geq 0$ and $w\in A^k$,
\begin{equation}\label{eq:longchain} 
\sum_{wi \in A_w^{k+1}}\D(wi) \geq \D(w).
\end{equation}
Then, $(\mathcal{A},d_{\mathscr{C},\D})$ is a doubling bounded turning arc.
\end{lem}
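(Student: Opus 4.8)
The plan is to derive Lemma~\ref{lem:qarcs} by combining the general results already established in the paper. First, recall that $(\mathcal{A},d_{\mathscr{C},\D})$ is always compact, connected, and $1$-bounded turning by Proposition~\ref{prop:comb-tree}(1). Since each $T_k$ is a combinatorial arc — in particular a combinatorial tree — Proposition~\ref{prop:comb-tree}(2) tells us that $(\mathcal{A},d_{\mathscr{C},\D})$ is a metric tree. Moreover, a combinatorial arc has valency at most $2$ at every vertex, so $\operatorname{Val}(T_k)\le 2$ for all $k$; since a metric tree with no branch points is an arc (every point separates it into at most two pieces), it will follow, once we know $\mathcal{A}$ is doubling, that $\mathcal{A}$ is in fact a doubling bounded turning arc. (One should note here that $\mathcal{A}$ is nondegenerate: property~\ref{P3} forces $\D(w)>0$ for all $w$, and \eqref{eq:longchain} together with bounded turning prevents the whole space from collapsing to a point — more precisely Lemma~\ref{lem:suffdoubling} gives $\diam(\mathcal{A}_u)=\D(u)>0$ for suitable $u$.)

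The heart of the argument is to verify that the hypotheses of Proposition~\ref{prop:doubling} are satisfied. Property~\ref{P1} holds with $N=\card A$. Property~\ref{P2} holds with $n_0=2$, since each $T_k$ is an arc. Property~\ref{P3} is assumed directly. The remaining property~\ref{P4} is the one that requires thought, and here the plan is to invoke Lemma~\ref{lem:suffdoubling}, which reduces \ref{P4} (with $c=1$) to two combinatorial-geometric conditions at each level $k$. For condition~(1) of Lemma~\ref{lem:suffdoubling}: if $w,u,u'\in A^k$ are distinct with $\{w,u\},\{w,u'\}\in E_k$, then $w$ has valency $2$ in the arc $T_k$, so $u$ and $u'$ are its two distinct neighbors; by Lemma~\ref{lem:adjacent}(3) the adjacency between $A^{k+1}_w$ and $A^{k+1}_u$ is realized by a unique pair, say $\{wi,ul\}$, and likewise the one between $A^{k+1}_w$ and $A^{k+1}_{u'}$ by a unique pair $\{wj,u'l'\}$. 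If $i=j$, then $wi=wj$ would be adjacent in the arc $T_{k+1}$ to both $ul$ (inside $A^{k+1}_u$) and $u'l'$ (inside $A^{k+1}_{u'}$); but $wi$ is also adjacent to at least one vertex inside $A^{k+1}_w$ unless $A^{k+1}_w$ is a single edge's worth — in any case, tracking valencies in the arc $T_{k+1}$, together with the fact that $A^{k+1}_w$ is connected (Definition~\ref{def:combdata}(2a)) and the portions over $u$ and $u'$ lie on opposite sides of it, forces $wi$ to have valency $\ge 3$ in $T_{k+1}$, contradicting that $T_{k+1}$ is an arc. Hence $i\ne j$.

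For condition~(2) of Lemma~\ref{lem:suffdoubling}: fix $w\in A^k$. The set $A^{k+1}_w$ induces a connected subarc of $T_{k+1}$ (Definition~\ref{def:combdata}(2a)); its combinatorial boundary $\partial_{\mathscr{C}}A^{k+1}_w$ consists of at most two vertices (the endpoints of that subarc that connect outward), and if $w$ has two neighbors in $T_k$ then exactly two. When $\partial_{\mathscr{C}}A^{k+1}_w$ has fewer than two elements there is nothing to check. When it has two distinct elements $u,u'$, the unique arc in $T_{k+1}$ joining them must run through \emph{all} of $A^{k+1}_w$ — since $A^{k+1}_w$ is a subarc of the arc $T_{k+1}$ with $u,u'$ as its extreme points relative to the outside — so the left-hand side of the displayed inequality in Lemma~\ref{lem:suffdoubling}(2) is exactly $\sum_{wi\in A^{k+1}_w}\D(wi)$, which is $\ge \D(w)$ by hypothesis~\eqref{eq:longchain}. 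Thus both conditions of Lemma~\ref{lem:suffdoubling} hold, so \ref{P4} holds with $c=1$, and Proposition~\ref{prop:doubling} applies to give that $(\mathcal{A},d_{\mathscr{C},\D})$ is doubling. Combined with the first paragraph, $(\mathcal{A},d_{\mathscr{C},\D})$ is a doubling bounded turning arc.

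I expect the main obstacle to be the careful bookkeeping in verifying condition~(1) of Lemma~\ref{lem:suffdoubling} — specifically, pinning down precisely why $i=j$ would create a vertex of valency $\ge 3$ in the arc $T_{k+1}$. This is a purely combinatorial argument about subarcs of an arc, using connectedness of each $A^{k+1}_w$ and the uniqueness statement of Lemma~\ref{lem:adjacent}(3), but it needs to be written out with some attention to the possibility that $A^{k+1}_w$ is itself a single vertex or that $w$ has only one neighbor; these degenerate cases should be handled first to isolate the main case.
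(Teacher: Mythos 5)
Your overall route is the same as the paper's: verify properties \ref{P1}--\ref{P3} of Proposition~\ref{prop:doubling} directly, reduce \ref{P4} to the two conditions of Lemma~\ref{lem:suffdoubling}, and then appeal to Proposition~\ref{prop:comb-tree} for the topology. Your verification of Lemma~\ref{lem:suffdoubling}(1) is in fact more explicit than the paper's one-line justification ``since $\card A\geq 2$'' --- the valency-$\geq 3$ argument you sketch (a neighbor in $A^{k+1}_u$, a neighbor in $A^{k+1}_{u'}$, and a neighbor inside $A^{k+1}_w$ because that subgraph is connected with at least two vertices) is exactly the right reason and is sound; your verification of condition (2) is likewise correct. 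So the doubling half of the proof is fine.

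The gap is in the final step. You assert that since $\mathrm{Val}(T_k)\leq 2$ for all $k$ and $\mathcal{A}$ is a metric tree, ``it will follow'' that $\mathcal{A}$ has no branch points and hence is an arc, but you never actually derive the absence of branch points from the combinatorial hypothesis. This implication is true but not immediate: a branch point of $\mathcal{A}$ is a point $[p]$ such that $\mathcal{A}\setminus\{[p]\}$ has at least three components, and $[p]$ may be the class of several distinct infinite words, so a branch point does not obviously force some $T_n$ to have a vertex of valency $\geq 3$. The paper sidesteps this entirely by observing that each arc $T_n$ has exactly two leaves, that (using Definition~\ref{def:combdata}(2a),(2b) and Lemma~\ref{lem:adjacent}) these leaves form two coherent infinite words $w_1,w_2\in A^{\N}$, and that the set $K$ from the proof of Lemma~\ref{lem:tree} --- which Remark~\ref{rem:arc} identifies as the unique arc in $\mathcal{A}$ joining $[w_1]$ and $[w_2]$ --- equals $\mathcal{A}$, because the combinatorial arc in $T_n$ from $w_1(n)$ to $w_2(n)$ is all of $T_n$ and hence $K_n=\mathcal{A}$ for every $n$. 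You should either supply an argument ruling out branch points or, more in keeping with the tools already in hand, run the $K=\mathcal{A}$ argument. (Also, the clause ``once we know $\mathcal{A}$ is doubling'' is a red herring: whether $\mathcal{A}$ is a topological arc is independent of the doubling property and should be settled on purely topological/combinatorial grounds.)
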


\begin{proof}
First, since $\card{A}= M$,  \ref{P1} of Proposition \ref{prop:doubling} is immediately satisfied, and since each $T_k$ is a combinatorial arc, $\text{Val}(T_k)=2$ and condition \ref{P2} of Proposition \ref{prop:doubling} is also satisfied. Since $\card{A}\geq 2$, assumption (1) of Lemma \ref{lem:suffdoubling} is satisfied and by (\ref{eq:longchain}), assumption (2) of Lemma \ref{lem:suffdoubling} is satisfied. Hence, by Lemma \ref{lem:suffdoubling} and Proposition \ref{prop:comb-tree}, $(\mathcal{A},d_{\mathscr{C},\D})$ is doubling and bounded turning.

It remains to show that $(\mathcal{A},d_{\mathscr{C},\D})$ is an arc. By design, there exists exactly two words $w_1,w_2 \in A^{\N}$ such that for all $n\in\N$, the valency of $w_i(n)$ in $T_n$ is 1. Recalling the definition of $K$ from the proof of Lemma \ref{lem:tree}, we note that $K = \mathcal{A}$. Therefore,  $(\mathcal{A},d_{\mathscr{C},\D})$ is an arc.
\end{proof}

\begin{ex}\label{ex:quasiarc}
Let $M \in \{2,3,\dots\}$ and $A=\{1,\dots,M\}$. Let $\mathscr{C}_M=(A,(G_k)_{k\in\N})$ where for each $k\in\N$ the graph $G_k$ is a simple path with the following two rules:
\begin{enumerate}
\item For each $w\in A^*$ and $i\in\{1,\dots,M-1\}$ we have that $wi$ is adjacent to $wi'$, where $i'=i+1$.
\item If $wiv,wjv'\in A^*$ with $i<j$, $|v|=|v'|$ and $wiv$ is adjacent to $wjv'$, then $wivM$ is adjacent to $wjv'1$.
\end{enumerate}
In other words, each word in $A^k$ is simply adjacent to the following word in lexicographic order in $G_k$.

Let $\delta\in (M^{-1},1]$ and $\D \in \mathscr{D}(A,M^{-1},\d)$. We write $\mathcal{A} = A^{\N}/\sim$ and for each $w\in A^*$, $\mathcal{A}_w = A^{\N}_w/\sim$.

The following lemma summarizes some properties of this construction. 

\begin{lem}\label{lem:arcproperties}
\begin{enumerate}
\item Suppose $v,v'\in A^k$ with $v$ coming earlier than $v'$ in lexicographic order. Then $\mathcal{A}_{v} \cap \mathcal{A}_{v'}\neq \emptyset$ if and only if $v$ and $v'$ are adjacent in $G_k$.
\item In case (1), $[vM^\infty]=[v'1^\infty]$ is the unique element of $\mathcal{A}_{v} \cap \mathcal{A}_{v'}\neq \emptyset$.
\item For each $v\in A^*$, the set $\mathcal{A}_v$ is a topological arc with $M^{-1}\Delta(v)\leq\diam{\mathcal{A}_v}\leq \Delta(v)$.
\end{enumerate}
\end{lem}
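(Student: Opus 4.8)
The plan is to verify the three assertions in order, leaning on the general machinery already established for combinatorial trees (since each $G_k$ here is in particular a combinatorial tree) together with the explicit lexicographic adjacency structure of $\mathscr{C}_M$. For part (1), the ``only if'' direction is the substantive one. Suppose $v$ comes earlier than $v'$ in lexicographic order and $v,v'$ are \emph{not} adjacent in $G_k$. Since $G_k$ is a path in lexicographic order, there is some word $u\in A^k$ strictly between $v$ and $v'$ in lexicographic order, and $u$ lies on the unique combinatorial arc in $G_k$ joining $v$ to $v'$. By Lemma \ref{lem:chain} (applied with $w_1=v$, $w_2=u$, $w_3=v'$), every chain joining a point of $A^\N_v$ to a point of $A^\N_{v'}$ must pass through some $A^\N_{uv''}$; but then by Lemma \ref{lem:tree1}, any two equivalent words, one beginning with $v$ and one with $v'$, would have to lie in $\mathcal{A}_u$. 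Using property \ref{P3} of Proposition \ref{prop:doubling}, $\diam(\mathcal{A}_{u})$ is bounded below by a positive multiple of $\D(u)>0$ via Lemma \ref{lem:suffdoubling} --- actually more simply, I would argue directly that a point of $\mathcal{A}_v\cap\mathcal{A}_{v'}$ would be both a point of $\mathcal{A}_u$ and disjoint from $\mathcal{A}_u$ unless $\mathcal{A}_v\cap\mathcal{A}_{v'}\subseteq\mathcal{A}_u$, and then iterate: $u$ is adjacent to at most its two lexicographic neighbors, so $\mathcal{A}_v\cap\mathcal{A}_{v'}\subseteq \mathcal{A}_u\cap\mathcal{A}_v$, forcing $v$ to be adjacent to $u$, and similarly $v'$ adjacent to $u$; repeating pushes the intersection strictly inward until it becomes empty. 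For the ``if'' direction, when $v,v'$ are adjacent in $G_k$, rule (2) of the construction gives that $vM$ is adjacent to $v'1$, and inductively $vM^j$ is adjacent to $v'1^j$ for all $j$, so $A^\N_v\wedge_{\mathscr{C}}A^\N_{v'}\neq\emptyset$ and the two words $vM^\infty$, $v'1^\infty$ are identified (their $D_{\mathscr{C},\D}$-distance is at most $\D(vM^j)+\D(v'1^j)\to 0$), giving a common point.

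For part (2): the computation in the previous paragraph already exhibits $[vM^\infty]=[v'1^\infty]$ as an element of $\mathcal{A}_v\cap\mathcal{A}_{v'}$. Uniqueness follows from Lemma \ref{lem:intersection2}(3): since $\{v,v'\}\in E_k$, the combinatorial intersection $A^\N_v\wedge_{\mathscr{C}}A^\N_{v'}$ contains exactly two elements, one in $A^\N_v$ and one in $A^\N_{v'}$, and these must be $vM^\infty$ and $v'1^\infty$ by the explicit form of the adjacencies in rule (2) (any word in $A^\N_v$ that stays adjacent to $A^\N_{v'}$ at every level must have all subsequent letters equal to $M$, by rule (2) and the fact that in a path each vertex has a unique neighbor on the $v'$ side). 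Then $\mathcal{A}_v\cap\mathcal{A}_{v'}$, being a subset of (the image of) $A^\N_v\wedge_{\mathscr{C}}A^\N_{v'}$ together with the identification, is a single point.

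For part (3): the diameter bounds are already contained in Lemma \ref{lem:dist} (giving $\diam(\mathcal{A}_v)\leq\D(v)$) and the final sentence of Lemma \ref{lem:suffdoubling} or a direct bounded-turning argument for the lower bound $M^{-1}\D(v)\leq\diam(\mathcal{A}_v)$, using that $v$ has at least two neighbors in $T_{|v|}$ whenever $M\geq 2$ (so $\partial_{\mathscr{C}}A^{|v|+1}_v$ has at least two elements and \ref{eq:longchain} holds). The fact that $\mathcal{A}_v$ is a topological arc is the analogue, localized to the subtree $\mathcal{A}_v$, of Lemma \ref{lem:qarcs}: the combinatorial data restricted to $A^*_v$ (reindexed) again consists of combinatorial arcs satisfying rules (1)--(2), and $\mathcal{A}_v$ is exactly the quotient space built from that restricted data, which is an arc by Lemma \ref{lem:qarcs}. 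Alternatively, invoke Lemma \ref{lem:tree} together with the observation that $\mathcal{A}_v$ has exactly two leaves (the words $vM^\infty\cdots$ and $v1^\infty\cdots$ obtained by always taking the extreme child), and a metric tree with exactly two leaves is an arc.

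The main obstacle is part (1), specifically making the ``non-adjacent words have empty intersection'' argument airtight: one must be careful that the separating word $u$ and the betweenness relation in $G_k$ genuinely forces $\mathcal{A}_v\cap\mathcal{A}_{v'}\subseteq\mathcal{A}_u$ and then drives an induction to the empty set, rather than stalling. Once that monotonicity-of-intersection argument is set up cleanly via Lemmas \ref{lem:chain} and \ref{lem:tree1}, the rest is bookkeeping with the explicit lexicographic adjacencies.
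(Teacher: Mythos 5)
Your overall plan is close to the paper's, and parts (2) and (3) would go through with minor tightening, but there is a genuine gap in the ``only if'' direction of (1), and you yourself flag it as the sticking point.

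In (1) you correctly use Lemmas \ref{lem:chain} and \ref{lem:tree1} to conclude that any point of $\mathcal{A}_v\cap\mathcal{A}_{v'}$ must lie in $\mathcal{A}_u$ for a word $u$ strictly between $v$ and $v'$, and you correctly recall that Lemma \ref{lem:suffdoubling} gives $\diam(\mathcal{A}_u)=\Delta(u)>0$. But these two facts alone do not give a contradiction --- a set of positive diameter certainly can contain a single point. Your fallback, the ``iterate'' argument, is circular as written: you say that $\mathcal{A}_v\cap\mathcal{A}_{v'}\subseteq\mathcal{A}_u\cap\mathcal{A}_v$ \emph{forces $v$ to be adjacent to $u$}, but the implication ``nonempty intersection of same-level cylinders $\Rightarrow$ adjacency'' is exactly the statement of (1) that you are trying to prove, so you cannot invoke it at the same level $k$. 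The move that is missing (and is the heart of the paper's proof) is to push the containment to a deeper level: choose $n$ large enough that $\Delta(t)<\tfrac12\Delta(u)$ for every $t\in A^n$. Because $u$ is lexicographically strictly between $v$ and $v'$, \emph{every} $t\in A^n_u$ lies lexicographically between $(vw)(n)$ and $(v'w')(n)$, hence on the combinatorial arc in $T_n$ joining them, and Lemma \ref{lem:tree1} puts the common point in $\mathcal{A}_t$ for \emph{every} such $t$. Since all the $\mathcal{A}_t$, $t\in A^n_u$, then share a common point and their union is $\mathcal{A}_u$, one gets $\diam(\mathcal{A}_u)\leq 2\max_{t\in A^n_u}\diam(\mathcal{A}_t)<\Delta(u)$, which contradicts $\diam(\mathcal{A}_u)=\Delta(u)$ from Lemma \ref{lem:suffdoubling}. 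That is the estimate that drives the argument home; without it, part (1) is not proved.

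Two smaller remarks. In (2) your claim that $\mathcal{A}_v\cap\mathcal{A}_{v'}$ lies in the quotient-image of $A^\N_v\wedge_\mathscr{C}A^\N_{v'}$ is true but not free; it needs an appeal to part (1) applied at each level $n\geq k$ to conclude $(vw)(n)$ is adjacent to $(v'w')(n)$, which is essentially the contradiction argument the paper uses directly (pick $n$ with $n$th letter of $w$ not equal to $M$, so $vw(n)$ and $v'1^n$ are not adjacent yet both contain $[vw]$). In (3) your alternative via restricting the combinatorial data to $A^*_v$ and reapplying Lemma \ref{lem:qarcs} is a perfectly sound route, whereas the paper goes through Remark \ref{rem:arc} to identify $\mathcal{A}_v$ directly as the arc $K$ from $[v1^\infty]$ to $[vM^\infty]$; these are equivalent in spirit, and your diameter bounds match the paper's.
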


\begin{proof}
We begin with (1). Suppose $v,v'\in A^k$, with $v$ preceding $v'$ in lexicographic order, and $\mathcal{A}_{v} \cap \mathcal{A}_{v'}\neq \emptyset$. This means that there are infinite words $w,w'$ with $[vw]=[v'w']$. Suppose $v$ and $v'$ were not adjacent; let $u$ be a word on the simple path $T_k$ between them (and hence lexicographically between $v$ and $v'$). Let $n\in\mathbb{N}$ be such that $\Delta(t)<\frac{1}{2}\Delta(u)$ for all $t\in A^n$.

Because $u$ is lexicographically between $v$ and $v'$, each $t\in A^n_u$ is lexicographically between $(vw)(n)$ and $(v'w')(n)$, and hence is on the unique simple path between $(vw)(n)$ and $(v'w')(n)$ in $T_n$. %Each $t\in A^n_u$ is on the unique simple path between $(vw)(n)$ and $(v'w')(n)$ in $T_{n}$.
By Lemma \ref{lem:tree1}, $[vw]$ and $[v'w']$ are both in $\mathcal{A}_{t}$ for each $t\in A^n_u$. In particular, all $\mathcal{A}_t$ for $t\in A^n_u$ share a common point. Therefore, by Lemma \ref{lem:dist} and our choice of $n$ above,
\begin{equation}\label{eq:arcequation}
 \diam(\mathcal{A}_u) \leq 2\max\{\diam(\mathcal{A}_{t}) : t\in A^n_u\} < \Delta(u).
\end{equation}

On the other hand, our combinatorial data $\mathscr{C}_M$ satisfies the assumptions of Lemma \ref{lem:suffdoubling}. Indeed, Lemma \ref{lem:suffdoubling}(1) holds because the graphs $G_k$ in $\mathcal{C}_M$ simply consist of arcs in lexicographical order, and Lemma \ref{lem:suffdoubling}(2) holds because any pair $u,u'$ of distinct vertices in some $\partial_{\mathcal{C}}A^{k+1}_w$ are separated by at least $M$ other vertices, each with diameter function giving weight $\geq M^{-1}\Delta(w)$. 

Therefore, by Lemma \ref{lem:suffdoubling}, $\diam(\mathcal{A}_u)=\Delta(u)$, which contradicts \eqref{eq:arcequation}.

This proves the ``forward direction'' of (1). For the other direction, it is immediate from the construction of $\mathscr{C}_M$ that if $v$ and $v'$ are adjacent in $T_k$, with $v$ lexicographically preceding, then for each $n\in\mathbb{N}$
$$d_{\mathscr{C},\Delta}(vM^\infty,v'1^\infty) \leq \Delta(vM^n) + \Delta(v'1^n) \rightarrow 0 \text{ as } n\rightarrow \infty, $$
and so $[vM^\infty]=[v'1^\infty] \in \mathcal{A}_v \cap \mathcal{A}_{v'}$.

For (2), suppose there was a point $p$ other than  $[vM^\infty]=[v'1^\infty]$ in $\mathcal{A}_v \cap \mathcal{A}_{v'}$. Then there would be an infinite word $w\in A^\mathbb{N}$, $w\neq M^\infty$, such that $p=[vw]$. Choose $n$ such that the $n$th letter of $w$ is not $M$. Then $vw(n)$ and $v'1^n$ are not adjacent in $T_{k+n}$, but $[vw]\in\mathcal{A}_{vw(n)} \cap \mathcal{A}_{v'1^n}$. This contradicts (1).

For fact (3), it is an immediate consequence of Remark \ref{rem:arc} that each $\mathcal{A}_v$ is a topological arc. The diameter of $\mathcal{A}_v$ is at most $\D(v)$ by Lemma \ref{lem:dist}. If $v$ has at least two neighbors in $T_{|v|}$, then $\diam(T_{|v|})=\Delta(v)$ by Lemma \ref{lem:suffdoubling}. Otherwise, $vi$ has at least two neighbors in $T_{|v|+1}$ for some $i\in A$, and so Lemma \ref{lem:suffdoubling} says that $\diam(\mathcal{A}_{vi}) = \Delta(vi)$. Therefore,
$$ \diam{\mathcal{A}_v} \geq \diam{\mathcal{A}_{vi}} = \Delta(vi) \geq M^{-1} \Delta(v).\qedhere$$
\end{proof}

\begin{prop}\label{prop:qarcs}
The space $(\mathcal{A},d_{\mathscr{C},\D})$ is a quasiarc.
\end{prop}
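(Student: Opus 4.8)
The statement is an immediate consequence of Lemma \ref{lem:qarcs}, so the plan is simply to check that the combinatorial data $\mathscr{C}_M$ and the diameter function $\D \in \mathscr{D}(A, M^{-1}, \delta)$ of Example \ref{ex:quasiarc} satisfy its hypotheses. First I would confirm that $\mathscr{C}_M$ is legitimate combinatorial data in the sense of Definition \ref{def:combdata} with each $G_k$ a combinatorial arc. Property \eqref{eq:combdata1} holds since $A=\{1,\dots,M\}$ with $M\geq 2$, and each $G_k$ is by design the simple path visiting the words of $A^k$ in lexicographic order, hence a connected combinatorial arc. For \eqref{eq:combdata2a}, the children $w1,w2,\dots,wM$ of a word $w\in A^k$ occupy a consecutive block in the lexicographic order on $A^{k+1}$ by rule (1), so they induce a sub-path of $G_{k+1}$, which is connected. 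For \eqref{eq:combdata2b}, if $w$ and $u$ are adjacent in $G_k$, say $w$ immediately precedes $u$, then rule (2) makes $wM$ immediately precede $u1$ in $G_{k+1}$, giving the required pair $(M,1)$.

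Next I would verify the two conditions on $\D$ required by Lemma \ref{lem:qarcs}. Since $\D\in\mathscr{D}(A,M^{-1},\delta)$, for every $w\in A^*$ with $\D(w)>0$ and every $i\in A$ the ratio $\D(wi)/\D(w)$ lies in $\{M^{-1},\delta\}\subseteq[M^{-1},\delta]$ (and $\D(wi)=0$ whenever $\D(w)=0$), so property \ref{P3} of Proposition \ref{prop:doubling} holds with $\delta_1=M^{-1}$ and $\delta_2=\delta$; in particular $\D$ is a genuine diameter function. For the chain-length bound \eqref{eq:longchain}, note that by definition of $\mathscr{D}(A,M^{-1},\delta)$ all $M$ children of $w$ carry the common value $\D(wi)=r\,\D(w)$ for a single $r\in\{M^{-1},\delta\}$, so
$$\sum_{wi\in A^{k+1}_w}\D(wi)=M r\,\D(w)\geq M\cdot M^{-1}\,\D(w)=\D(w).$$

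With these checks in place, Lemma \ref{lem:qarcs} applies directly and shows that $(\mathcal{A},d_{\mathscr{C},\D})$ is a doubling, bounded turning topological arc, i.e., a quasiarc (equivalently, by the theorem of Tukia and V\"ais\"al\"a \cite{TuVa}, a metric space quasisymmetric to $[0,1]$). I do not expect any real obstacle here: the only points that require a moment's care are the degenerate ratio case $\D(w)=0$ and the borderline value $\delta=1$, where one should remember that membership of $\D$ in $\mathscr{D}(A,M^{-1},\delta)$ already forces $\D\in\mathscr{D}(A)$, so that condition \eqref{eq:diam4} — and hence all hypotheses of Lemma \ref{lem:qarcs} — is automatically in force. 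Alternatively, one could bypass Lemma \ref{lem:qarcs} and argue directly: Lemma \ref{lem:arcproperties}(3) applied with $v=\varepsilon$ gives that $\mathcal{A}$ is a topological arc, Proposition \ref{prop:comb-tree} gives $1$-bounded turning, and Proposition \ref{prop:doubling} (whose hypothesis \ref{P3} is checked above, with \ref{P1}, \ref{P2}, \ref{P4} coming from $\card A=M$, $\mathrm{Val}(T_k)=2$, and Lemma \ref{lem:suffdoubling}) gives doubling.
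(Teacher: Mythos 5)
Your proof is correct and takes essentially the paper's own route: the paper likewise verifies property \ref{P3} and the chain-length bound \eqref{eq:longchain} and then invokes Lemma \ref{lem:qarcs}, though it invokes Lemma \ref{lem:arcproperties} and Proposition \ref{prop:comb-tree} separately for the arc and bounded-turning parts where you get all three conclusions directly from Lemma \ref{lem:qarcs}. Your write-up is, if anything, more careful: the paper's display states $\sum_{i\in A}\D(wi)\geq\sum_{i=1}^M\frac{1}{M}=1$, which should read $\geq\D(w)$ as in your version. One small caveat on your parenthetical about the borderline case: noting that $\D\in\mathscr{D}(A)$ forces \eqref{eq:diam4} does not actually save the $\delta=1$ endpoint, because Proposition \ref{prop:doubling} and hence Lemma \ref{lem:qarcs} require $\delta_2\in(0,1)$ in \ref{P3}; but this is a gap the paper shares (Example \ref{ex:quasiarc}'s ``$\delta\in(M^{-1},1]$'' is almost certainly meant to be the open interval, as in Lemma \ref{lem:HM2}), so it is not a defect of your argument relative to the paper's.
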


\begin{proof}
By Lemma \ref{lem:arcproperties} we know that $(\mathcal{A},d_{\mathscr{C},\D})$ is a topological arc, and by Theorem \ref{prop:comb-tree} we know that $(\mathcal{A},d_{\mathscr{C},\D})$ is bounded turning. Moreover, property \ref{P3} of Proposition \ref{prop:doubling} is satisfied and for any $w\in A^*$ and $i\in A$
\[ \sum_{i\in A}\D(wi) \geq \sum_{i=1}^M \frac1{M} = 1\]
and (\ref{eq:longchain}) holds. Therefore, by Lemma \ref{lem:qarcs}, $(\mathcal{A},d_{\mathscr{C},\D})$ is doubling.
\end{proof}

We note that a more refined statement holds; see Lemma \ref{lem:HM2}. Furthermore, the converse of Proposition \ref{prop:qarcs} is also true: every quasiarc is bi-Lipschitz equivalent to $(\mathcal{A},d_{\mathscr{C},\D})$ for some $\d \in [M^{-1},1)$ and some $\D \in \mathscr{D}(A,M^{-1},\d)$; see Proposition \ref{lem:HM}.
\end{ex}

\subsection{The Vicsek tree and variations}
Here we discuss a concrete example of a self-similar quasiconformal tree, the Vicsek tree, and how it can be viewed through our construction.

\begin{ex}\label{ex:vicsek} The \emph{Vicsek tree} $\mathbb{V}$ is defined as the attractor of the iterated function system $\{\phi_1,\dots,\phi_5\}$ on $\mathbb{C}$ with
\[ \phi_1(z) = \tfrac13(z-2+2i), \quad \phi_2(z) = \tfrac13(z+2+2i), \quad \phi_3(z) = \tfrac13(z+2-2i), \quad \phi_4(z) = \tfrac13 z, \quad \phi_5(z) = \tfrac13(z-2-2i).\]

Let $A = \{1,\dots,5\}$. For $k\in\N$ we define trees $T_k = (A^k, E_k)$ as follows. Firstly, $E_1 = \{\{i,4\} : i=1,2,3,5\}$. Inductively, assume that for some $k\in \N$ we have defined $T_k=(A^k, E_k)$ such that 
\begin{itemize}
\item If $w\in A^{k-1}$ and $i\in \{1,2,3,5\}$, then $wi$ and $w4$ are adjacent.
\item If If $w,u\in A^{k-1}$, $i,j\in A$ with $i\leq j$, and $wi$ is adjacent to $uj$, then either $(i,j) = (1,3)$, or $(i,j) = (1,4)$, or $(i,j) = (2,4)$, or $(i,j) = (2,5)$, or $(i,j) = (3,4)$, or $(i,j) = (4,5)$.
\end{itemize}
For the definition of $T_{k+1}$ fix $w,u \in A^{k-1}$.
\begin{enumerate}
\item If $i\in A$, then $wii_1$ is adjacent to $wi i_2$ with $i_1, i_2 \in A$ if and only if $i_1\in\{1,2,3,5\}$ and $i_2=4$.
\item If $w1$ is adjacent to $u3$, then $w11$ is adjacent to $u33$. 
\item If $w1$ is adjacent to $u4$, then $w13$ is adjacent to $u41$.
\item If $w2$ is adjacent to $u4$, then $w25$ is adjacent to $u42$.
\item If $w2$ is adjacent to $u5$, then $w22$ is adjacent to $u55$.
\item If $w3$ is adjacent to $u4$, then $w31$ is adjacent to $u43$. 
\item If $w4$ is adjacent to $u5$, then $w45$ is adjacent to $u52$.     
\end{enumerate}

Figure \ref{fig:T4} shows an illustration of $\mathbb{V}$ as well as the first two combinatorial trees $T_1$ and $T_2$\footnote{This picture of $\mathbb{V}$ was generated using the IFS Construction Kit (version April 11, 2019) created by Larry Riddle. This is available at \url{http://larryriddle.agnesscott.org/ifskit/download.htm}.}.
\begin{figure}[h]\label{fig:T4}
    \centering
    \begin{minipage}{0.3\textwidth}
        \centering
        \includegraphics[width=\textwidth]{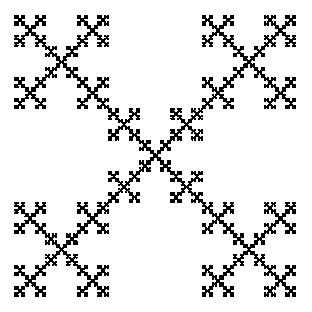} % first figure itself
        %\caption{first figure}
    \end{minipage}\hfill
    \begin{minipage}{0.5\textwidth}
        \centering
        \includegraphics[width=\textwidth]{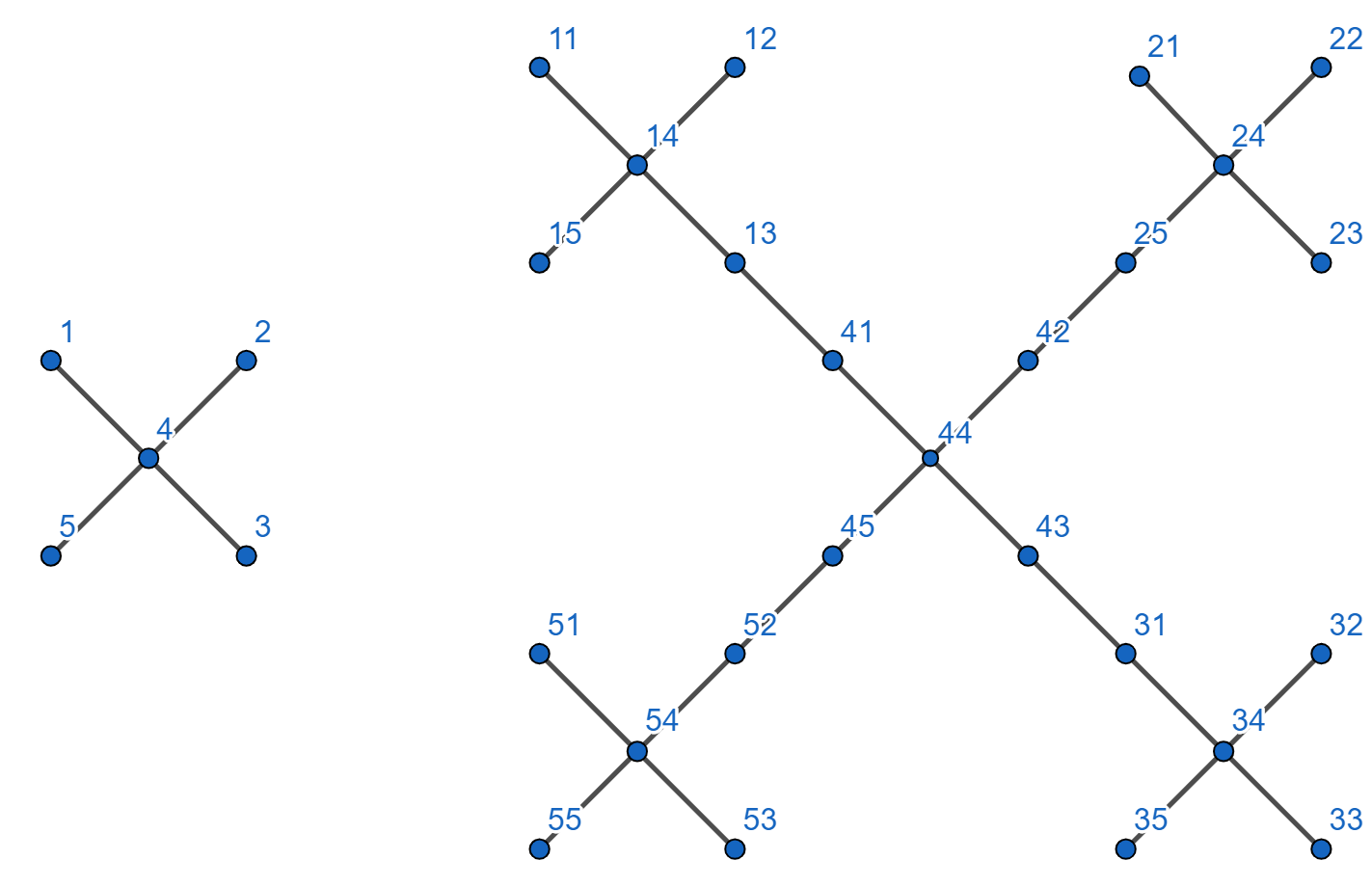} % second figure itself
       % \caption{second figure}
    \end{minipage}
\caption{On the left we have $\mathbb{V}$ while on the right we have the trees $T_1$ and $T_2$ of $\mathscr{C}$.}
\end{figure}

Define a diameter function $\D:A^* \to [0,1]$ by simply setting $\D(w) = 3^{-|w|}$. Clearly $\D \in \mathscr{D}(A,1/3,1/3)$. 

\begin{claim}\label{claim:T4}
The space $(\mathcal{A},d_{\mathscr{C},\D})$ is bi-Lipschitz equivalent to $\mathbb{V}$.
\end{claim}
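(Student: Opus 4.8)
The plan is to exhibit the standard self-similar tiling of $\mathbb{V}$ by the images of the maps $\phi_w := \phi_{w_1}\circ\cdots\circ\phi_{w_k}$ for $w\in A^*$, and to verify that the combinatorial data $\mathscr{C}=(A,(T_k)_{k\in\N})$ defined in Example \ref{ex:vicsek} is precisely the adjacency data of this tiling. Set $\mathbb{V}_w := \phi_w(\mathbb{V})$; then $\mathbb{V}_\varepsilon = \mathbb{V}$, each $\mathbb{V}_w$ is a similar copy of $\mathbb{V}$ with $\diam(\mathbb{V}_w) = 3^{-|w|}\diam(\mathbb{V})$, we have $\mathbb{V}_w = \bigcup_{i\in A}\mathbb{V}_{wi}$, and two children $\mathbb{V}_{wi}$, $\mathbb{V}_{wj}$ meet in at most a single point (the central piece $\mathbb{V}_{w4}$ touches each of the four outer pieces in one point, and the outer pieces are pairwise disjoint). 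First I would check, by induction on $k$ exactly as the inductive description of $E_k$ in Example \ref{ex:vicsek} is phrased, that $\{v,w\}\in E_k$ if and only if $\mathbb{V}_v\cap\mathbb{V}_w\neq\emptyset$; the seven cases (2)--(7) in the inductive step of Example \ref{ex:vicsek} are exactly the seven ways a contact point of two level-$k$ pieces propagates to a contact point of two level-$(k+1)$ pieces, which one reads off from the explicit affine maps $\phi_1,\dots,\phi_5$. This simultaneously shows $\mathscr{C}$ satisfies Definition \ref{def:combdata} and that each $T_k$ is a combinatorial tree (via Lemma \ref{lem:tiletree}, since no point of $\mathbb{V}$ lies in three distinct level-$k$ pieces — a consequence of the open set condition for this IFS).

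Next I would verify that the tiling $\{\mathbb{V}_w\}$ satisfies the hypotheses of Lemma \ref{lem:tiles}, or rather that it directly gives what we need: properties (1), (2), (5) are immediate from self-similarity and the open set condition; property (3) holds with $K_1=K_2=1/3$ since $\diam(\mathbb{V}_{wi}) = \tfrac13\diam(\mathbb{V}_w)$ exactly; property (4) holds because a boundary point of $\mathbb{V}_w$ lying in a sibling piece is one of the finitely many ``junction'' vertices and is a leaf of each piece containing it (here one uses that $\mathbb{V}$ is a dendrite whose only cut points internal to a piece are these junctions); and property (6) — the key quantitative point — holds because the finitely many contact points of $\mathbb{V}_w$ with $\overline{\mathbb{V}\setminus\mathbb{V}_w}$ are, up to the similarity $\phi_w$, a fixed finite configuration of distinct points in $\mathbb{V}$, hence their pairwise distances are bounded below by a fixed multiple $K_3$ of $\diam(\mathbb{V}_w)$. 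With these in hand, I would then define the diameter function $\D(w)=3^{-|w|}$, note $\D\in\mathscr{D}(A,1/3,1/3)$, and observe that $\D(w)$ is comparable (in fact with constant $1/\diam(\mathbb{V})$, or exactly equal after rescaling so $\diam(\mathbb{V})=1$) to $\diam(\mathbb{V}_w)$.

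Finally I would apply the map $f\colon \mathbb{V}\to\mathcal{A}$ defined exactly as in Section \ref{sec:thmproof}: each $x\in\mathbb{V}$ lies in a nested sequence $\mathbb{V}_{w(1)}\supseteq\mathbb{V}_{w(2)}\supseteq\cdots$, giving a word $w_x\in A^\N$, and $f(x)=[w_x]$. The proofs of Lemma \ref{lem:welldef} and Proposition \ref{prop:bilipschitz} go through verbatim with $X=\mathbb{V}$, $X_w=\mathbb{V}_w$, $K_1=K_2=1/3$, and the above $K_3$: well-definedness and surjectivity use only compactness and that adjacent pieces have a common point (Lemma \ref{lem:tiletouch}, which here is just the definition of $E_k$), the lower bound $d_{\mathscr{C},\D}(f(x),f(y))\gtrsim d(x,y)$ uses the triangle inequality along a chain together with $\diam(\mathbb{V}_w)\asymp\D(w)$, and the upper bound uses $1$-bounded turning of $\mathbb{V}$ (which holds since $\mathbb{V}$ is a quasiconformal tree, being a known self-similar dendrite satisfying the open set condition) together with property (6). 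Hence $f$ is bi-Lipschitz and $(\mathcal{A},d_{\mathscr{C},\D})$ is bi-Lipschitz equivalent to $\mathbb{V}$.

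\medskip

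I expect the main obstacle to be the bookkeeping in verifying that the inductively defined edge sets $E_k$ of Example \ref{ex:vicsek} genuinely match the geometric adjacency of the pieces $\mathbb{V}_w$ — in particular confirming that the seven rules (2)--(7) capture every propagating contact and introduce no spurious ones, and that no new contacts appear ``from nowhere'' at a finer scale. This is a finite but somewhat delicate case-check dictated entirely by the coordinates of $\phi_1,\dots,\phi_5$; everything after that is a direct transcription of the machinery already built in Sections \ref{sec:characterization} and will not require new ideas.
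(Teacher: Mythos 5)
Your proposal follows essentially the same route as the paper's proof: define the self-similar tiles $X_w = \phi_w(\mathbb{V})$, check that they satisfy the conclusions of Lemma \ref{lem:tiles} and that adjacency in $T_k$ matches geometric contact, and then run the identification map of Section \ref{sec:thmproof} with $X = \mathbb{V}$. The paper is terser (it simply asserts the tiles satisfy Lemma \ref{lem:tiles} and leaves the details to the reader), but the structure and all the key lemmas invoked are the same.
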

\begin{proof}
The proof essentially follows that of Theorem \ref{thm:main}. For each $w=i_1\cdots i_n\in A^*$, let $X_w = \phi_{i_1}\circ\cdots\circ\phi_{i_n}(\mathbb{V})$. The collection of sets $\{X_w : w\in A^*\}$ satisfies the conclusions of Lemma \ref{lem:tiles}. Moreover, given $k\in\N$ and distinct $w,u \in A^k$, we have that $X_w\cap X_u \neq \emptyset$ if and only if $w$ is adjacent to $u$ in $T_k$. Define now $F:\mathcal{A} \to \mathbb{V}$ by
such that if $w=i_1i_2\cdots \in A^{\N}$, then 
\[ F([w]) := \bigcap_{n=1}^{\infty}X_{i_1\cdots i_n}, \qquad \text{for $w=i_1i_2\cdots \in A^{\N}$}.\]
The rest of the proof is as in \textsection\ref{sec:thmproof}, and we leave the details to the reader.
\end{proof}
\end{ex}

It follows immediately from Claim \ref{claim:T4} that $(\mathcal{A},d_{\mathscr{C},\D})$ is doubling, since $\mathbb{V}$ is. One could also see this by noting that conditions \ref{P1}, \ref{P2}, and \ref{P3} from Proposition \ref{prop:doubling} are clearly satisfied by this combinatorial data. To show that \ref{P4} also holds, we verify Lemma \ref{lem:suffdoubling}. Item (1) of Lemma \ref{lem:suffdoubling} is easy to check. For item (2), take any $w\in A^k$, and any $u,u' \in \partial_{\mathscr{C}}A^{k+1}_w$. The combinatorial arc that joins $u$ with $u'$ in $T_{k+1}$ contains three vertices, $\{u, w4, u'\}$, and so the total $\Delta$-length of this combinatorial arc
\[ \D(u) + \D(w4) + \D(u') = \tfrac13\D(w) + \tfrac13\D(w) + \tfrac13\D(w) = \D(w).\] 
Therefore, Lemma \ref{lem:suffdoubling} holds in this example, and therefore so does assumption \ref{P4} of Proposition \ref{prop:doubling}. Thus, all the conditions of Proposition \ref{prop:doubling} are satisfied and $(\mathcal{A},d_{\mathscr{C},\D})$ can be seen to be doubling by this proposition.

One may obtain new self-similar quasiconformal trees by keeping the same combinatorial data as the Vicsek tree but altering the diameter function $\Delta$. We describe two examples.

\begin{ex}
Keep the same combinatorial data $\mathscr{C}=\{T_k\}$ for $\mathbb{V}$ defined above, but now use the diameter function $\Delta_2(w) = 2^{-|w|}$ rather than $\Delta(w)=3^{-|w|}$ as before. Then the associated quotient space $(\mathcal{A}', d_{\mathscr{C},\Delta_2})$ is a ``snowflake'' of the previous example, in the following sense: It is bi-Lipschitz equivalent to the space $(\mathbb{V},|\cdot|^p)$, where $p = \frac{\log(2)}{\log(3)}$. The proof parallels that of Claim \ref{claim:T4}, the only difference being that the tiles $X_w$ of $\mathbb{V}$ under the snowflaked Euclidean metric $|\cdot|^p$ have diameters $(3^{-|w|})^p = 2^{-|w|} = \D_2(w)$.

\end{ex}

\begin{ex}
We again keep the combinatorial data $\mathscr{C}=\{T_k\}$ of the Vicsek tree, but modify the diameter function once more. Define a diameter function $\Delta_3$ by setting $\Delta_3(\varepsilon)=1$ and inductively setting
\[ \Delta_3(wi)= \begin{cases} 
      \frac{1}{2}\Delta_3(w) & \text{if } i\in\{2,4,5\} \\
      \frac{1}{4}\Delta_3(w) & \text{if } i\in \{1,3\}.
   \end{cases}
\]
In this case, the space $(\mathcal{A}'', d_{\mathscr{C},\Delta_3})$ is a quasiconformal tree which contains both geodesic segments (e.g., the path from $[1^\infty]$ to $[3^\infty]$) as well as non-geodesic ``snowflake'' segments (e.g., the path from $[2^\infty]$ to $[5^\infty]$.)

\end{ex}

\begin{rem}
A similar example to the Vicsek tree appears in \cite{Bonk-Tran, BM2} in the form of the \emph{continuum self-similar tree} (\emph{CSST}). The CSST is a quasiconformal tree, and hence by  Theorem \ref{thm:maincombthm} is bi-Lipschitz to one of our combinatorial models. However, it is not obvious to us that there is a simple concrete or dynamical way to form ``tiles'' in the CSST that satisfy all the assumptions in Lemma \ref{lem:tiles}, as we did for the Vicsek tree.
\end{rem}

\subsection{A non-doubling tree}

Below we give an example which illustrates the importance of condition \ref{P4} for the conclusions of Proposition \ref{prop:doubling}. Thus, we will construct combinatorial data $\mathscr{C}$ in which all graphs $G_k$ are trees, satisfying all the conditions of Proposition \ref{prop:doubling} except \ref{P4}, and for which the resulting metric tree is not doubling.

\begin{ex}\label{ex:nondoubling}
Let $\mathscr{C}$ be the combinatorial data of Example \ref{ex:vicsek}. For each $n\in \N$ let $w_n = 2\cdots 2 = 2^n \in A^n$, and let $u_{n,1},\dots,u_{n,N_n}$ denote those elements of $A^n$ such that $w_n1u_{n,i}$ has valence 1 in $T_{2n+1}$. 

Define $\D:A^* \to [0,1]$ with the following rules.
\begin{enumerate}
\item If $w$ is a word of the form $w_n1vu$, where $v \in A^n \setminus \{u_{n,1},\dots,u_{n,N_n}\}$ and $u\in A^*$, then let $\D(w_n1vu) = 4^{-|u|}\D(w_n1v) = 3^{-2n-1} 4^{-|u|}$.
\item For all other words $w\in A^*$, let $\Delta(w) = 3^{-|w|}$.
\end{enumerate}

We see that for each $n\in\N$, the following hold:
\begin{itemize}
\item If $v\in \{u_{n,1},\dots,u_{n,N_n}\}$, then $\mathcal{A}_{w_n1v}$ is bi-Lipschitz homeomorphic to $\mathbb{V}$ scaled by a factor of $3^{-2n-1}$.
\item If $v \in A^n \setminus \{u_{n,1},\dots,u_{n,N_n}\}$, then $\diam{\mathcal{A}_{w_n1v}} = 0$. (Indeed, two elements of $\mathcal{A}_{w_n1v}$ can be joined by a chain of $3^k$ steps at level $k$, each with the $\Delta$ value being $3^{-2n-1} 4^{-k}$ for arbitrary $k\in\mathbb{N}$, which forces the distance to be zero.) 
\end{itemize}
Therefore, for each $n\in\N$, the point $[w_n13^{\infty}]\in \mathcal{A}$ has at least $N_n$ branches, each of diameter at least $3^{-2n-1}$. Since $N_n \to \infty$ as $n\to \infty$, it follows that $(\mathcal{A},d_{\mathscr{C},\D})$ is not doubling.

Note also that $A$, $\mathscr{C}$, and $\D$ satisfy properties \ref{P1}, \ref{P2}, \ref{P3}, but not \ref{P4}. Indeed, the fact that $\diam{\mathcal{A}_{w_n1v}} = 0$ for certain words, as in the second bullet above, already violates \ref{P4}.
\end{ex}

\section{Combinatorial descriptions of more general spaces with ``good tilings''}\label{sec:gencombdata}
In this section, we axiomatize a notion of a ``good tiling'' of a compact space, and show that every compact space with such a tiling (not necessarily a tree) can be built from our combinatorial data.

Let $X$ be a compact space for which there is a finite alphabet $A$, constants $r\in (0,1)$, $C>1$, and a collection of nonempty closed, connected subsets $\{X_w : w\in A^*\}$ with the following properties.
\begin{enumerate}
\item $X_{\varepsilon} = X$.
\item For all $w\in A^*$ and all $i\in A$, $X_{wi} \subset X_w$. Moreover, $\bigcup_{i\in A}X_{wi} = X_w$.
\item For all $w\in A^*$, $C^{-1} r^{|w|} \leq \diam{X_w} \leq C r^{|w|}$.
\item If for $k\in\N$ and $w,u \in A^k$ we have $X_{w}\cap X_{u} =\emptyset$, then $d(X_{w},X_{u}) \geq C^{-1}r^{k}$.
\end{enumerate}
Tilings of metric spaces with very similar properties have certainly been considered by other authors, e.g., \cite{BM, Kigami2}. The goal here is simply to write down some simple conditions that can be interpreted in our framework.

For each $k\in\N$ define a graph $G_k = (A^k,E_k)$ with the rule that for words $w,u\in A^k$, $w$ is adjacent to $u$ if and only if $X_w \cap X_u \neq \emptyset$. It is easy to see that the collection $\mathscr{C} = (A,(G_k)_{k\in\N})$ is combinatorial data in the sense of Definition \ref{def:combdata}. Define also $\D:A^* \to [0,1]$ with $\D(w) = r^{|w|}$. Clearly, $\D \in \mathscr{D}(A,r,r)$.

\begin{prop}\label{prop:goodtiles}
The space $(X,d)$ is bi-Lipschitz homeomorphic to $(\mathcal{A},d_{\mathscr{C},\D})$.
\end{prop}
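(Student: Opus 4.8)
The plan is to mimic the proof of Theorem \ref{thm:main}: define the canonical ``address map'' $f\colon X\to\mathcal{A}$ and show it is a bi-Lipschitz bijection. The one structural change is that, since the graphs $G_k$ need not be combinatorial trees, we cannot invoke bounded turning for the upper Lipschitz bound; instead the separation hypothesis (4) plays that role, which is precisely why (4) is part of the definition of a good tiling.

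First I would define $f$. By iterating property (2), for every $x\in X$ there is an infinite word $w_x\in A^{\mathbb{N}}$ with $x\in X_{w_x(n)}$ for all $n$; set $f(x)=[w_x]$. Exactly as in Lemma \ref{lem:welldef}, $f$ is well-defined: if $x\in X_{w(n)}\cap X_{u(n)}$ for all $n$, then $\{w(n),u(n)\}\in E_n$ for all $n$ by definition of $\mathscr{C}$, so $\{A^{\mathbb{N}}_{w(n)},A^{\mathbb{N}}_{u(n)}\}$ is a chain joining $w$ with $u$ and $d_{\mathscr{C},\D}([w],[u])\le \D(w(n))+\D(u(n))=2r^{n}\to 0$. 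Surjectivity is identical: given $[u]\in\mathcal{A}$, the nested nonempty compact sets $X_{u(n)}$ have a common point $x$, and then $f(x)=[w]$ with $x\in X_{w(n)}\cap X_{u(n)}$ for all $n$, which forces $[u]=[w]=f(x)$ as above.

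Next, the lower bound $d_{\mathscr{C},\D}(f(x),f(y))\ge C^{-1}d(x,y)$. The key observation is the analogue of Lemma \ref{lem:tiletouch}: if $A^{\mathbb{N}}_v\wedge_{\mathscr{C}}A^{\mathbb{N}}_{v'}\neq\emptyset$ then $X_v\cap X_{v'}\neq\emptyset$ --- its proof uses only the implication (1)$\Rightarrow$(2) of Lemma \ref{lem:intersection1} together with Lemma \ref{lem:adjacent}(1), and so requires no tree structure. Given $f(x)=[w]$, $f(y)=[u]$ and any chain $\{A^{\mathbb{N}}_{v_1},\dots,A^{\mathbb{N}}_{v_m}\}$ joining $w$ with $u$, we then have $x\in X_{v_1}$, $y\in X_{v_m}$, and $X_{v_i}\cap X_{v_{i+1}}\neq\emptyset$ for all $i$. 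Choosing a point in each consecutive intersection and applying the triangle inequality along them gives $d(x,y)\le\sum_{i}\diam X_{v_i}\le C\sum_i r^{|v_i|}=C\sum_i\D(v_i)$. Taking the infimum over all chains yields $d_{\mathscr{C},\D}(f(x),f(y))\ge C^{-1}d(x,y)$; in particular $f$ is injective.

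Finally, the upper bound, which is the one step needing care. Fix $x\neq y$. If $d(x,y)\ge C^{-1}$, then $d_{\mathscr{C},\D}(f(x),f(y))\le\diam\mathcal{A}\le\D(\varepsilon)=1\le C\,d(x,y)$. Otherwise let $k\ge 0$ be the largest integer with $d(x,y)<C^{-1}r^{k}$, and write $f(x)=[w]$, $f(y)=[u]$. If $X_{w(k)}\cap X_{u(k)}=\emptyset$ then by property (4) we would have $d(X_{w(k)},X_{u(k)})\ge C^{-1}r^{k}>d(x,y)$, contradicting $x\in X_{w(k)}$, $y\in X_{u(k)}$; hence $X_{w(k)}\cap X_{u(k)}\neq\emptyset$. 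Then either $w(k)=u(k)$, so $\{A^{\mathbb{N}}_{w(k)}\}$ is a chain joining $w$ with $u$ of $\D$-length $r^{k}$, or $\{w(k),u(k)\}\in E_k$, so by Lemma \ref{lem:adjacent}(1) and Lemma \ref{lem:intersection1} we get $A^{\mathbb{N}}_{w(k)}\wedge_{\mathscr{C}}A^{\mathbb{N}}_{u(k)}\neq\emptyset$ and $\{A^{\mathbb{N}}_{w(k)},A^{\mathbb{N}}_{u(k)}\}$ is a chain of $\D$-length $2r^{k}$. Either way $d_{\mathscr{C},\D}(f(x),f(y))\le 2r^{k}$, and maximality of $k$ gives $C^{-1}r^{k+1}\le d(x,y)$, i.e.\ $r^{k}\le Cr^{-1}d(x,y)$, so $d_{\mathscr{C},\D}(f(x),f(y))\le 2Cr^{-1}d(x,y)$. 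Combining the three estimates, $f$ is an $L$-bi-Lipschitz bijection with $L=2Cr^{-1}$, hence a bi-Lipschitz homeomorphism.
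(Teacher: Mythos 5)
Your proof is correct and follows essentially the same strategy as the paper's: the paper defines the canonical map in the direction $F\colon\mathcal{A}\to X$ and picks the critical scale $n$ as the last level at which the tiles $X_{w(n)}$ and $X_{u(n)}$ still intersect, whereas you define $f\colon X\to\mathcal{A}$ and pick the critical scale $k$ directly from the size of $d(x,y)$; both arguments invoke hypothesis (4) at exactly the same point to obtain the same constant $2C/r$.
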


Before the proof, we re-emphasize two points about Proposition \ref{prop:goodtiles}. For one, even if $X$ is a metric tree, Proposition \ref{prop:goodtiles} does not force the combinatorial data $\mathscr{C}$ to consist of combinatorial trees. The second point is that in general, it is not obvious to us which spaces admit good tilings in the sense of this section. Thus, Proposition \ref{prop:goodtiles} is not in itself a generalization of Theorem \ref{thm:maincombthm}, and proceeds along different lines. The tiles we constructed for quasiconformal trees in Lemma \ref{lem:tiles} do not satisfy the conditions of this section, as they may in principle fail conditions (3) or (4) of this section.

However, Proposition \ref{prop:goodtiles} does yield descriptions of some natural examples, as we show following the proof.

\begin{proof}[Proof of Proposition \ref{prop:goodtiles}]
Since $r<1$, property (3) in conjuction with the compactness of sets $\{X_w:w\in A^*\}$ gives that for any $w\in A^{\N}$, the set $\bigcap_{n\in\N}X_{w(n)}$ contains exactly one point which we denote by $x_w$.

Let $w,u \in A^{\N}$ such that $x_w \neq x_u$. Then, there exists $n\in\N$ such that $X_{w(n)}\cap X_{u(n)} = \emptyset$. Let $A^{\N}_{w_1},\dots, A^{\N}_{w_n}$ be a chain joining $w$ with $u$. Then, $x_w\in X_{w_1}$, $x_u \in X_{w_n}$ and $X_{w_i}\cap X_{w_{i+1}} \neq \emptyset$ for all $i\in\{1,\dots,n-1\}$. By the triangle inequality,
\[ d(x_w,x_u) \leq \sum_{i=1}^n\diam{X_{w_i}} \leq C\sum_{i=1}^n\D(w_i).\]
Taking the infimum over all such chains, we obtain that  $d(x_w,x_u) \leq Cd_{\mathscr{C},\D}([w],[u])$. Therefore, if $[w]=[u]$, then $x_w = x_u$. 

We can now define $F: \mathcal{A} \to X$ with $F([w]) = x_w$. By the preceding paragraph, $F$ is well defined and $C$-Lipschitz.

To see why $F$ is bi-Lipschitz, fix $w,u\in A^{\N}$. 

If $d(x_w,x_u)=0$, that is $x_w = x_u$, then for all $n\in\N$ $X_{w(n)}\cap X_{u(n)} \neq \emptyset$. Therefore, $w(n)$ is adjacent or equal to $u(n)$ for all $n$ and it follows that $d_{\mathscr{C},\D}([w],[u]) =0$.

If $x_w\neq x_u$, then there exists $n\in\N$ such that $X_{w(n)}\cap X_{u(n)} \neq \emptyset$ and $X_{w(n+1)}\cap X_{u(n+1)} = \emptyset$ . It follows that $w(n)$ is adjacent to $u(n)$ in $G_n$ and $\{A^{\N}_{w(n)}, A^{\N}_{u(n)}\}$ is a chain joining $w$ with $u$. Therefore,
\begin{align*} d(F([w]),F([u])) \geq \dist(X_{w(n+1)},X_{u(n+1)}) \geq C^{-1}r^{n+1} = (2C/r)^{-1} 2r^n &= (2C/r)^{-1}(\D(w(n)) + \D(u(n)))\\ 
&\geq (2C/r)^{-1} d_{\mathscr{C},\D}([w],[u]). \qedhere
\end{align*}
\end{proof}

\begin{ex}\label{ex:gasket}
Proposition \ref{prop:goodtiles} applies to many metric spaces which are attractors for certain iterated function systems, like the square, the Sierpi\'nski gasket, and the Sierpi\'nski carpet. See the figures below for possible graphs  $G_1$ and $G_2$ for the gasket, square, and carpet.

\begin{figure}[h]
    \centering
    \begin{minipage}{0.4\textwidth}
        \centering
        \includegraphics[width=\textwidth]{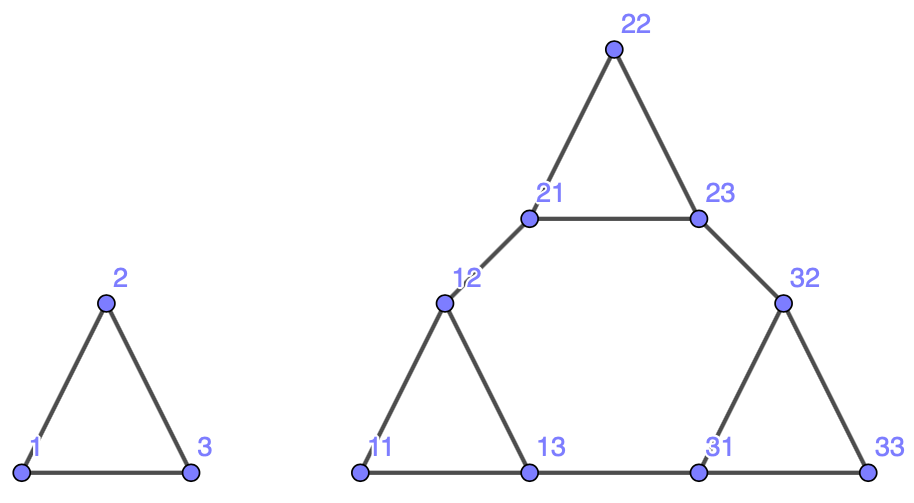} % first figure itself
        %\caption{first figure}
    \end{minipage}\hfill
    \begin{minipage}{0.4\textwidth}
        \centering
        \includegraphics[width=\textwidth]{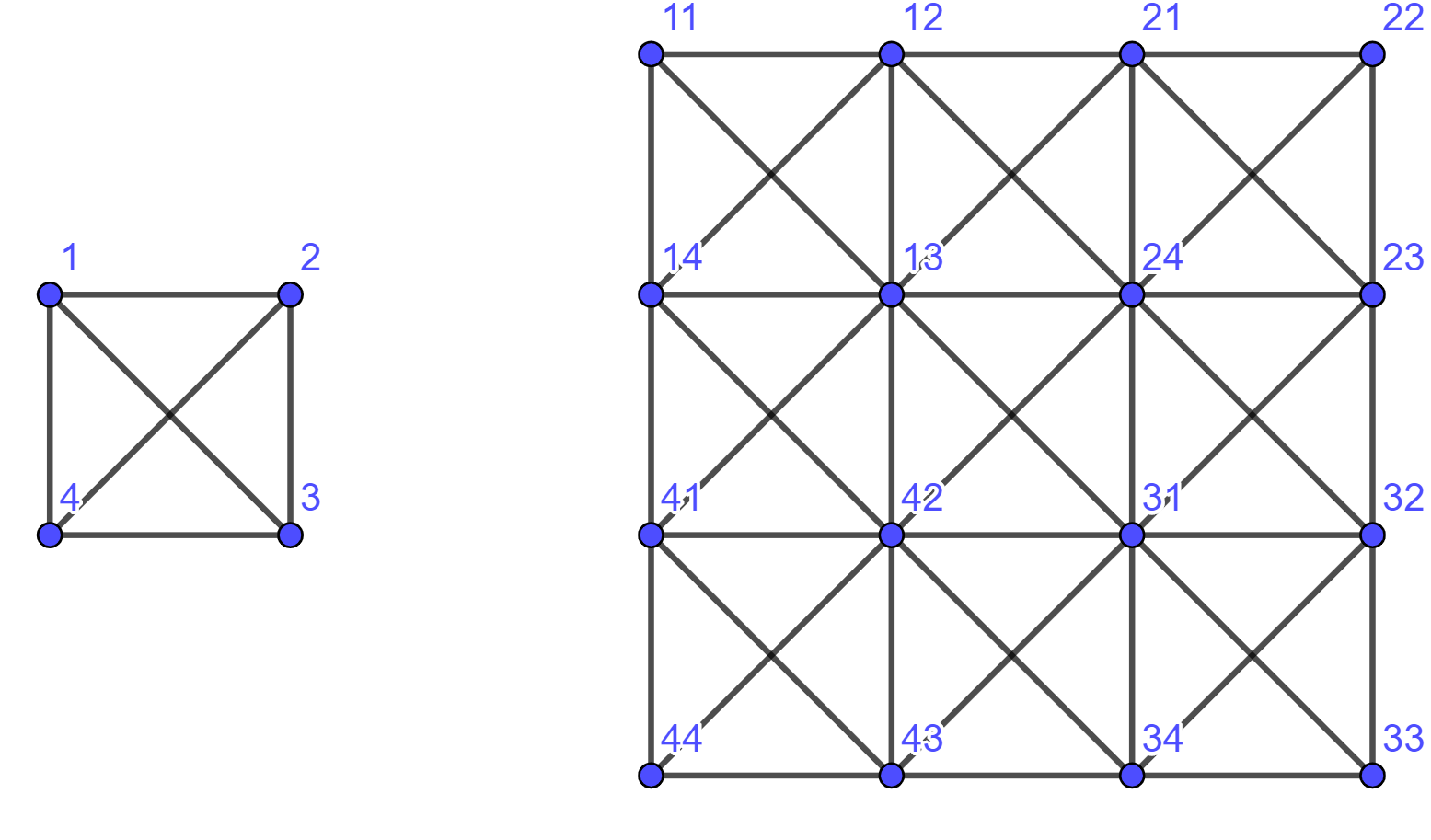} % second figure itself
       % \caption{second figure}
    \end{minipage}
\caption{On the left we have the graphs $G_1,G_2$ for the Sierpi\'nski gasket while on the right we have the graphs $G_1,G_2$ for the square.}
\end{figure}

\begin{figure}[h]
\includegraphics[width=0.8\textwidth]{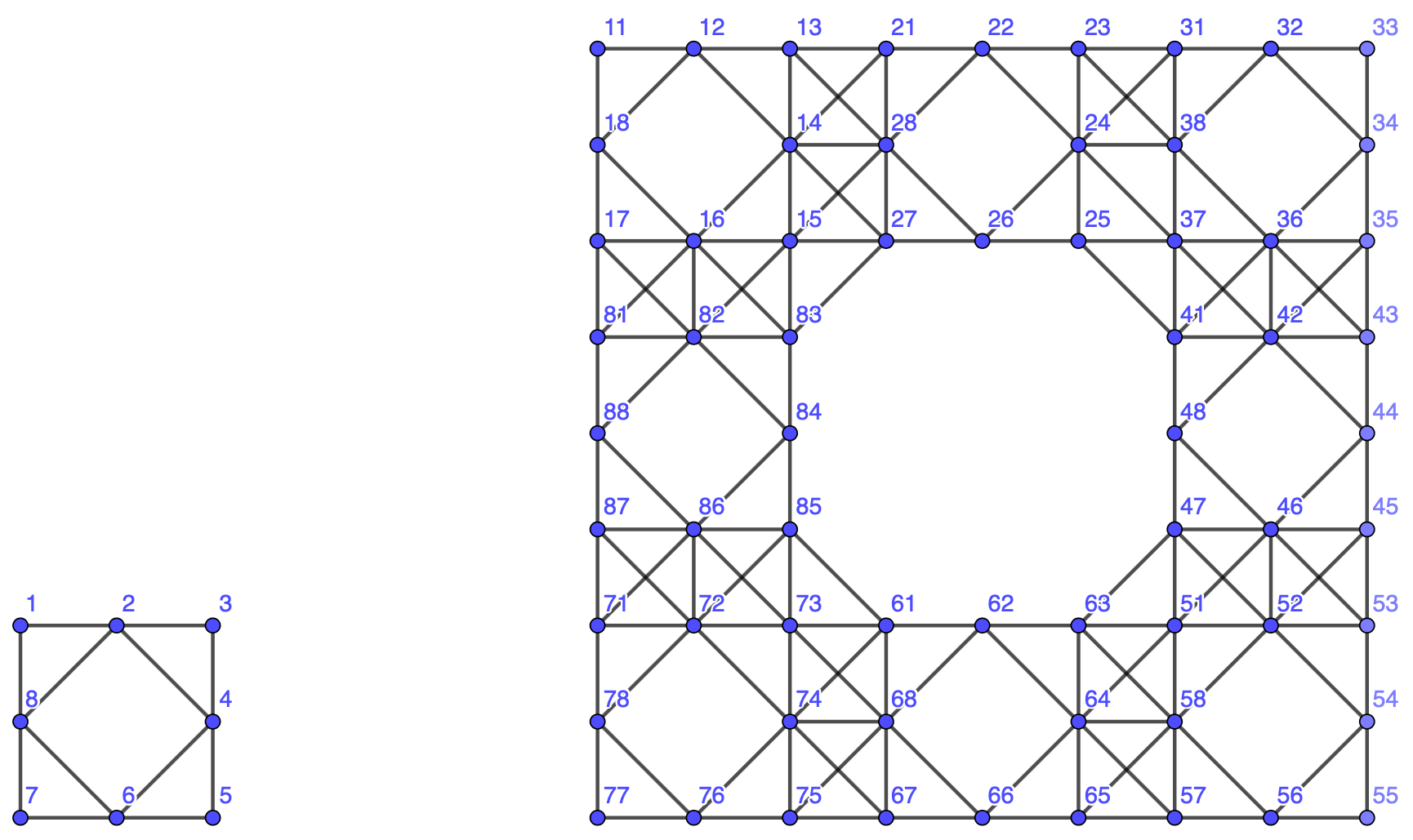}
\caption{Possible graphs $G_1$ and $G_2$ for the standard Sierpi\'nski carpet.}
\end{figure}
\end{ex}

\section{Bi-Lipschitz embedabbility of quasiconformal trees}\label{sec:embed}
This section is devoted to the proof of the following quantitative version of Theorem \ref{thm:mainembed}.

\begin{thm}\label{thm:mainembed2}
Let $X$ be a $C$-doubling, $c$-bounded turning tree. Assume that $\LL(X)$ admits an $L$-bi-Lipschitz embedding into some $\R^M$. Then $X$ admits an $L'$-bi-lipschitz embedding into some $\R^N$. Here $N$ and $L'$ depend only on $C$, $c$, $M$ and $L'$.
\end{thm}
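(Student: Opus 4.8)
The plan is to combine the bi-Lipschitz embedding of quasi-arcs (Proposition \ref{prop:qcircles}/\ref{prop:introqcircles}) with a decomposition of the tree into a single ``spanning'' piece carrying the leaves plus countably many quasi-arc ``hairs'', and then to glue the embeddings together using the welding theorem of Lang--Plaut \cite{LP} and the embedding characterization of Seo \cite{Seo}. First I would set up the structure of $X$: since $X$ is a doubling bounded-turning tree, I can run the tile construction of Lemma \ref{lem:tiles} (or directly the Bonk--Meyer decomposition of Proposition \ref{prop:BM}) to get, for each scale, a controlled family of subtrees with well-separated boundary points. The key structural fact I want to extract is that $X$ can be written as a ``tree of quasi-arcs'': there is a decomposition in which each branch point has bounded valence and each tile is either close to $\LL(X)$ or is (part of) a quasi-arc joining two branch points, with all diameters and mutual distances controlled by the doubling and bounded-turning constants.

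Next I would handle the two building blocks. For the quasi-arc pieces, Proposition \ref{prop:qcircles} gives an $L_0$-bi-Lipschitz embedding into some $\R^{n_0}$ with $n_0, L_0$ depending only on $c$ (and the doubling constant); crucially the target dimension is uniform, so all the hairs embed into a common $\R^{n_0}$. For the leaf set, the hypothesis gives an $L$-bi-Lipschitz embedding of $\LL(X)$ into $\R^M$; I then need to extend this to an embedding of the minimal subtree $\widehat{T}$ spanned by $\overline{\LL(X)}$ (or an appropriate sub-tree capturing all the branching), again with controlled constants. This extension step is where Seo's characterization \cite{Seo} enters: being bi-Lipschitz embeddable into Euclidean space is characterized by a quantitative condition that passes from a dense/large subset to the spanned tree, because the connecting arcs have diameters comparable to the ``gaps'' they fill — this comparability is exactly the bounded-turning property together with the tile estimates from Lemma \ref{lem:tiles}(6). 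So the leaf embedding upgrades to an embedding of $\widehat T$ into some $\R^{M'}$ with $M', L''$ depending only on $C, c, M, L$.

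Finally I would glue. Writing $X = \widehat T \cup (\bigcup_j Q_j)$ where the $Q_j$ are the quasi-arc hairs, each $Q_j$ meets $\widehat T$ in a single point $p_j$, and the $Q_j$ have diameters summing in a doubling-controlled way at each scale. I embed $\widehat T$ into $\R^{M'}$ and the hairs into a complementary $\R^{n_0}$, rescale each hair's embedding so its image has the right diameter, translate it to be attached at the image of $p_j$, and apply the Lang--Plaut bi-Lipschitz welding lemma \cite{LP} to conclude that the union is bi-Lipschitz embedded in $\R^{M'} \times \R^{n_0} = \R^N$, with $N$ and the final constant $L'$ depending only on $C, c, M, L$. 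The converse direction of Theorem \ref{thm:mainembed} is trivial since $\LL(X) \subseteq X$, so only this direction needs proof.

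The main obstacle I anticipate is the second step: controlling the passage from an embedding of $\LL(X)$ (which need not be closed, and may be dense in $X$) to an embedding of the spanned subtree with constants depending only on the stated data. One must be careful that the ``filling arcs'' between leaf points are genuinely short relative to the distance between those points — this is where bounded turning is essential — and that Seo's quantitative condition (or an analogous Assouad-type/snowflake argument) is verified uniformly; handling the non-closedness of $\LL(X)$ by passing to the closure and checking that $\overline{\LL(X)}$ still embeds with controlled constant is a subtlety that needs care. After that, the welding step is comparatively routine given \cite{LP}.
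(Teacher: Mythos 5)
Your proposal assembles the right ingredients (Seo's characterization \cite{Seo}, the Lang--Plaut welding theorem \cite{LP}, and the quasi-arc embedding of Proposition~\ref{prop:qcircles}), and the heuristic ``tree of quasi-arcs'' picture is on the right track, but the central architectural step is flawed. In any compact metric tree, every point lies on an arc joining two leaves (extend any arc through the point to a maximal one; its endpoints must be leaves). Consequently the minimal subtree $\widehat{T}$ spanned by $\overline{\LL(X)}$ is all of $X$, so there is no decomposition $X = \widehat T \cup \bigcup_j Q_j$ with nontrivial ``hairs'' $Q_j$ to weld on. Your step 2, if executed, already \emph{is} the theorem, and steps 3--4 are vacuous.

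The piece you are missing is the precise content of condition (3) of Seo's theorem and how it is verified. The paper takes $Y = \overline{\LL(X)}$ directly and needs to show that the Christ--Whitney cubes of $X\setminus Y$ admit bi-Lipschitz embeddings into a fixed $\R^{M_2}$ with uniform constants. The key structural lemma (Lemma~\ref{lem:Whitneycube}) is that any ball $B(x,r)$ with $r < \beta\,\dist(x,\LL(X))$ is covered by a bounded number of arcs emanating from $x$: the ``exit'' points of branches through the annulus $B(x,r/\beta)\setminus B(x,r)$ are $\eta r$-separated by the bounded-turning property, so the doubling property caps their number. Each such arc is a quasi-arc, embeds by Proposition~\ref{prop:qcircles}, and the bounded collection is welded by Lang--Plaut. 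This is the heart of the proof and is not present in your write-up; your worry about ``filling arcs between leaf points'' and passing from $\LL(X)$ to its closure is, in contrast, a non-issue (a bi-Lipschitz map extends to the closure by uniform continuity). Once you replace the hair decomposition with a clean application of Seo's theorem to $Y = \overline{\LL(X)}$ and supply the covering lemma above, the argument closes.
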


The proof of Theorem \ref{thm:mainembed2} consists of two steps. In Section \ref{sec:quasiarcs} we prove the special case of embedabbility of quasi-arcs, i.e., quasiconformal trees in which the set of leaves consists of exactly two points. This is done in Proposition \ref{prop:qcircles} below, which is a stronger version of Proposition \ref{prop:introqcircles} from the introduction.

Then, in Section \ref{sec:Seo-method}, we employ a bi-Lipschitz welding theorem of Lang and Plaut \cite{LP} and a characterization of metric spaces admitting bi-Lipschitz embedding into Euclidean spaces by Seo \cite{Seo} to complete the proof of  Theorem \ref{thm:mainembed2}.

\subsection{Bi-Lipschitz embeddability of quasi-arcs}\label{sec:quasiarcs}

The main result of this subsection is the following special case of Theorem \ref{thm:mainembed} where the leaf set $\mathcal{L}(X)$ consists of only two points. In particular, this gives a detailed, sharp version of Proposition \ref{prop:introqcircles}.

We first introduce a piece of terminology: A metric space $X$ is \textit{$(C,s)$-homogeneous}, for some $C,s\geq 0$, if every subset of diameter $d$ can be covered by at most $C\epsilon^{-s}$ sets of diameter at most $\epsilon d$. In particular, every doubling metric space is $(C,s)$-homogeneous for some $C$ and $s$ depending on the doubling constant \cite[Section 10.13]{Heinonen}.

\begin{prop}\label{prop:qcircles}
Given $s\geq 1$, $C>0$ and $c\geq 1$, there exists $L=L(c,C,s)>1$ with the following property. If $\G = ([0,1],d)$ is $c$-bounded turning and $(C,s)$-homogeneous, then it is $L$-bi-Lipschitz embeddable in $\R^{\lfloor s \rfloor + 1}$.
\end{prop}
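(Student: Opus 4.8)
The plan is to follow the scheme of \cite{RV}: realize $\Gamma$ as a uniform limit of polygonal maps into $\R^{n}$ with $n=\lfloor s\rfloor+1$, each resolving one more dyadic-type scale of $\Gamma$, keeping enough transverse room that the iteration both converges and stays bi-Lipschitz.

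\emph{Reductions and a multiscale net.} First I would reduce to $d$ being $1$-bounded turning by replacing $d$ with a comparable metric as in \cite[Lemma 2.5]{BM}; this costs only a factor depending on $c$. Fix a ratio $\rho=\rho(C,s)\in(0,1)$, to be chosen small. For each $k\ge 0$, marching along $[0,1]$ and cutting whenever the $d$-diameter of the current piece first reaches $\rho^{k}$ produces a partition $\mathcal{P}_k$ of $[0,1]$ into finitely many consecutive closed subintervals with $\rho^{k+1}\le\diam_d I\le\rho^{k}$ for all $I\in\mathcal{P}_k$, and with $\mathcal{P}_{k+1}$ refining $\mathcal{P}_k$. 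Bounded turning and $(C,s)$-homogeneity give, with constants depending only on $c,C,s$: (i) if $p\ne q$ are endpoints of a common element of $\mathcal{P}_k$ then $d(p,q)\asymp\rho^{k}$, and more generally $d(p,q)\asymp\rho^{k}$ whenever $k$ is the largest index for which $p,q$ lie in one element of $\mathcal{P}_k$ or in two adjacent ones; and (ii) each $I\in\mathcal{P}_k$ has at most $N_0=N_0(C,s)$ children in $\mathcal{P}_{k+1}$.

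\emph{Inductive polygonal maps.} I would construct maps $f_k\colon[0,1]\to\R^{n}$, affine on each $I\in\mathcal{P}_k$, so that: (a) $\|f_k(p)-f_k(q)\|\asymp d(p,q)$ for all $p,q$ that are endpoints of elements of $\mathcal{P}_k$, uniformly in $k$; (b) $\|f_{k+1}(x)-f_k(x)\|\le\epsilon_0\rho^{k}$ for all $x\in[0,1]$, with $\epsilon_0=\epsilon_0(C,s)$ small; (c) the finer-scale resolution of each $I\in\mathcal{P}_k$ is confined to a thin tube about the polygonal skeleton $f_k(I)$, and the tubes about non-adjacent elements of $\mathcal{P}_k$ stay pairwise $\gtrsim\rho^{k}$ apart at every later stage. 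Here $f_0$ maps $[0,1]$ affinely onto a segment of length $d(0,1)$. To pass from $f_k$ to $f_{k+1}$ one works on each $I=[a,b]\in\mathcal{P}_k$ separately: the segment $[f_k(a),f_k(b)]$ is replaced by a polygonal arc through images of the at most $N_0$ new $\mathcal{P}_{k+1}$-vertices of $I$, with consecutive gaps prescribed to match the $d$-diameters of the children of $I$, staying within $\epsilon_0\rho^{k}$ of $[f_k(a),f_k(b)]$. The $n-1=\lfloor s\rfloor$ directions transverse to $[f_k(a),f_k(b)]$ give both the room to realize the gap lengths and the room to spread the replacements so that (c) is inherited at scale $k+1$.

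\emph{The geometric engine and convergence.} The heart is a placement lemma in the spirit of \cite{RV}, and it rests on the inequality $n=\lfloor s\rfloor+1>s$. Inside any ball of radius $\asymp\rho^{j}$ there are at most $\asymp\rho^{-(k-j)s}$ elements of $\mathcal{P}_k$ by $(C,s)$-homogeneity, whereas such a ball in $\R^{n}$ contains $\asymp\rho^{-(k-j)n}$ points that are pairwise $\rho^{k}$-separated; since $n>s$, the surplus $\rho^{(k-j)(n-s)}$ decays geometrically, so one can place the new vertices along a fine transverse grid, scale by scale and respecting the linear order on $[0,1]$, so that the congestion contributed at scale $k$ to any fixed region is summable. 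This is exactly what makes (c) hold uniformly. Granting this, (b) yields uniform convergence $f_k\to f$ with $\|f-f_k\|_\infty\le\epsilon_0\rho^{k}/(1-\rho)$, and $f$ is the desired embedding. For the bi-Lipschitz estimate, fix $p\ne q$, let $k$ be the largest index with $p,q$ in one element of $\mathcal{P}_k$ or two adjacent ones, so $d(p,q)\asymp\rho^{k}$; the upper bound follows from $\|f(p)-f(q)\|\le\|f(p)-f_k(p)\|+\|f_k(p)-f_k(q)\|+\|f_k(q)-f(q)\|\lesssim\rho^{k}$ using (a) and (b), while the lower bound follows either from (a) at level $k+1$ (when $p,q$ lie in adjacent elements of $\mathcal{P}_{k+1}$) or from the tube separation (c) (otherwise), after absorbing the error $O(\epsilon_0\rho^{k})$. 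All constants depend only on $c,C,s$, so $L=L(c,C,s)$.

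\emph{Main obstacle.} The hard part will be the placement lemma: one must route every polygonal replacement so that, simultaneously and uniformly over all scales, the edge lengths are correct up to bounded factors, each replacement stays in a thin tube (so the iteration converges), and non-adjacent pieces at every scale remain quantitatively separated — even though the number of pieces grows without bound and $s$ may be large. The careful bookkeeping of transverse congestion across infinitely many scales, and the fact that dimension exactly $n=\lfloor s\rfloor+1$ (hence $n>s$, leaving genuine transverse room) is what keeps the relevant geometric series convergent, is where the argument really lives; this is the content imported and adapted from \cite{RV}.
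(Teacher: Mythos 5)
Your proposal follows the same overall route as the paper: both implement a Romney--Vellis \cite{RV}-style multi-scale polygonal construction in $\R^{\lfloor s\rfloor+1}$, with cubic thickenings/tubes enforcing separation, and both identify $\lfloor s\rfloor+1>s$ as the source of transverse room. The one substantive organizational difference is that the paper first factors through the Herron--Meyer combinatorial model (Proposition \ref{lem:HM}): it shows $\G$ is bi-Lipschitz to a space $(\mathcal{A},d_{\mathscr{C}_M,\D})$ with $\D\in\mathscr{D}(A,M^{-1},\d)$, so that at every level each cell has exactly $M$ children and the diameter ratio takes one of only two values, $M^{-1}$ or $\d$. This rigidity is what lets the paper apply \cite{RV} verbatim: the embedding step (Lemma \ref{lem:snowembed}) uses exactly two fixed replacement arcs, $\mathcal{J}_0(\tau)$ for ratio $M^{-1}$ and $\mathcal{J}(\tau)$ for ratio $\d$, each with $M$ sub-pieces, and the separation properties (J1)--(J4) are imported from \cite{RV} as a black box. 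Your plan instead partitions $[0,1]$ directly by a stopping-time construction, producing cells with variable child counts (at most $N_0(C,s)$) and diameter ratios ranging over a whole interval $[\rho^2,\rho]$. This means your placement lemma must work for a continuum of diameter profiles, not just the two fixed ones; the heuristic volume count $\rho^{-(k-j)s}$ versus $\rho^{-(k-j)n}$ shows there is room, but it does not by itself produce the order-respecting, tube-confined routing with uniform separation, and the \cite{RV} constructions are not stated in this generality. So while your outline identifies the right mechanism, the step you flag as the main obstacle is genuinely harder in your formulation than in the paper's, precisely because you skip the reduction that rigidifies the combinatorics. If you first pass to the Herron--Meyer model as the paper does, the placement lemma collapses to the two explicit cases \cite{RV} already treats, and the proof closes.
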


Proposition \ref{prop:qcircles} generalizes Theorem C in \cite{HM}, where it was assumed that $s<2$. We remark that the dimension $\lfloor s \rfloor + 1$ in Proposition \ref{prop:qcircles} is sharp when $s>1$, in the sense that there exists a $1$-bounded turning, $(C,s)$-homogeneous metric $d$ on $[0,1]$ (namely the snowflaked Euclidean metric $|\cdot|^{1/s}$) such that $([0,1],d)$ can not be bi-Lipschitz embedded in $\R^{\lfloor s \rfloor}$.

For the proof of Proposition \ref{prop:qcircles}, we may assume that $\diam{\G} = 1$. The proof uses a construction of Herron and Meyer \cite{HM} and a bi-Lipschitz embedding method of Romney-Vellis \cite{RV} (see also \cite{BH} and \cite{Wu}). 

Let $M \in \{2,3,\dots\}$, $A=\{1,\dots,M\}$, and $\mathscr{C}_M=(A,(G_k)_{k\in\N})$ be as in Example \ref{ex:quasiarc}.

\begin{lem}[{\cite[Lemma 3.1]{HM}}]\label{lem:HM2}
If $d\in (M^{-1},1)$ and $\D \in \mathscr{D}(A,M^{-1},\d)$, then the space $(\mathcal{A},d_{\mathscr{C}_M,\D})$ is $s$-homogeneous with $s=\log(M)/\log(1/\d)$.
\end{lem}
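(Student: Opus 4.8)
The plan is to establish that $(\mathcal{A},d_{\mathscr{C}_M,\D})$ is $(C,s)$-homogeneous (in the sense used for Proposition \ref{prop:qcircles}) with exponent $s=\log M/\log(1/\d)$ and a constant $C$ depending only on $M$ and $\d$; note $\d\in(M^{-1},1)$ forces $s>1$. I would use freely the structural facts from Example \ref{ex:quasiarc}: $\mathcal{A}$ is a topological arc, each $\mathcal{A}_v$ is a subarc with $M^{-1}\D(v)\le\diam\mathcal{A}_v\le\D(v)$, consecutive pieces (in lexicographic order) of a fixed level meet in exactly one point while non-consecutive ones are disjoint, and $\mathcal{A}$ is doubling with constant $N_0=N_0(M,\d)$ by Proposition \ref{prop:qarcs}. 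I also use that $\mathcal{A}$ is $1$-bounded turning (Proposition \ref{prop:comb-tree}), so that for $x,y\in\mathcal{A}$ the subarc $[x,y]$ satisfies $\diam[x,y]=d_{\mathscr{C}_M,\D}(x,y)$. \textbf{Reduction.} Given $S\subseteq\mathcal{A}$ with $\diam S=D>0$ and $\epsilon\in(0,1)$ (the case $\epsilon\ge1$ being trivial), let $a,b$ be the minimum and maximum of $\overline S$ in the arc order; then $\overline S\subseteq\gamma:=[a,b]$ and, by $1$-bounded turning, $\diam\gamma=d(a,b)\le D$. So it suffices to cover the subarc $\gamma$ by at most $C\epsilon^{-s}$ sets of diameter $<\epsilon D$.

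\textbf{Strategy via stopping families.} For $\rho>0$ recall $A^*(\rho)=\{v:\D(v)<\rho\le\D(v^\uparrow)\}$ from Section \ref{sec:doubling}; by Remark \ref{rem:partition} it is an antichain and the subarcs $\mathcal{A}_v$ ($v\in A^*(\rho)$) cover $\mathcal{A}$, each with $\diam\mathcal{A}_v\le\D(v)<\rho$ by Lemma \ref{lem:dist}. I will cover $\gamma$ using those $\mathcal{A}_v$, $v\in A^*(\epsilon D)$, that meet it, and bound their number by the product of two quantities:
\[
\#\{v\in A^*(\epsilon D):\mathcal{A}_v\cap\gamma\ne\emptyset\}\ \le\ \#\{w\in A^*(D):\mathcal{A}_w\cap\gamma\ne\emptyset\}\cdot\max_{w\in A^*(D)}\#\{v\in A^*(\epsilon D): v\text{ extends }w\},
\]
which is legitimate since $\epsilon D<D$ implies every $v\in A^*(\epsilon D)$ has a unique prefix $w\in A^*(D)$, and then $\mathcal{A}_v\subseteq\mathcal{A}_w$. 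I claim the first factor is $\le C_0(M,\d)$ and the second is $\le M^s\epsilon^{-s}$; granting these, $\gamma$ is covered by at most $C_0M^s\epsilon^{-s}$ sets of diameter $<\epsilon D$, giving the result with $C=C_0M^s$.

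\textbf{The two bounds.} For the second factor, a word $w\in A^*(D)$ has $\D(w)\in[M^{-1}D,D)$; since passing to a child multiplies $\D$ by a factor in $\{M^{-1},\d\}\subseteq(0,\d]$, any descendant $v$ with $\D(v)\ge M^{-1}\epsilon D$ (in particular any $v\in A^*(\epsilon D)$ refining $w$) satisfies $|v|-|w|\le\log(M\D(w)/(\epsilon D))/\log(1/\d)=:l_{\max}$; as $A^*(\epsilon D)$ is an antichain, extending each such $v$ to level $|w|+l_{\max}$ injects them into $A^{|w|+l_{\max}}_w$, of cardinality $M^{l_{\max}}\le(M\D(w)/(\epsilon D))^{s}\le M^s\epsilon^{-s}$. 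For the first factor, list the pieces $\mathcal{A}_{w_1},\dots,\mathcal{A}_{w_m}$ ($w_i\in A^*(D)$) meeting $\gamma$ in arc order; since $a\in\mathcal{A}_{w_1}$ and $b\in\mathcal{A}_{w_m}$, the interior pieces $\mathcal{A}_{w_2},\dots,\mathcal{A}_{w_{m-1}}$ all lie inside $\gamma$. They form consecutive subarcs $[r_0,r_1],\dots,[r_{m-3},r_{m-2}]$ with each $\diam[r_i,r_{i+1}]=\diam\mathcal{A}_{w_{i+2}}\ge M^{-1}\D(w_{i+2})\ge M^{-2}D$, so for $i<j$ we get $d(r_i,r_j)=\diam[r_i,r_j]\ge M^{-2}D$ by bounded turning. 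Thus $r_0,\dots,r_{m-2}$ are $M^{-2}D$-separated points inside the set $\gamma$ of diameter $\le D$, and the doubling property of $\mathcal{A}$ bounds their number by $N_1=N_1(N_0,M)$, whence $m\le N_1+1=:C_0$.

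\textbf{Main obstacle.} The delicate step is the bound on $\#\{w\in A^*(D):\mathcal{A}_w\cap\gamma\ne\emptyset\}$: in a general metric arc, having many consecutive subarcs of diameter comparable to the diameter of their union is possible (tight "switchbacks"), so the argument genuinely needs both that the interior tiles meeting $\gamma$ are \emph{contained} in $\gamma$ and that $1$-bounded turning converts a chain of $m$ consecutive subarcs of diameter $\gtrsim D$ into $m$ points that are $\gtrsim D$-separated, after which doubling of $\mathcal{A}$ (with constant depending only on $M,\d$, via Proposition \ref{prop:qarcs}) finishes. Everything else is bookkeeping with the stopping families $A^*(\rho)$ together with the fact that at every word all $M$ children are scaled by a common factor in $\{M^{-1},\d\}$.
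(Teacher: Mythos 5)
The paper does not prove this lemma; it is quoted directly as \cite[Lemma 3.1]{HM}, so there is no in-paper argument to compare against. Your proof is a self-contained, essentially correct substitute, and the overall strategy is natural: reduce to covering a subarc $\gamma=[a,b]$ (valid because $1$-bounded turning gives $\diam\gamma=d(a,b)\le\diam S$), then decompose via the stopping families $A^*(\rho)$ and split the count into a ``coarse'' factor (how many tiles of $A^*(D)$ touch $\gamma$) times a ``refinement'' factor (how many $A^*(\epsilon D)$-descendants a single $A^*(D)$-tile has). The refinement bound is exactly where the exponent $s=\log M/\log(1/\d)$ appears, via $\D(v)\ge M^{-1}\epsilon D$, $\D(v)\le\d^{|v|-|w|}\D(w)$, and the identity $M^{l}=(1/\d)^{sl}$; and the coarse bound uses the genuinely combinatorial input that interior $A^*(D)$-tiles meeting $\gamma$ lie inside $\gamma$, have diameter $\ge M^{-2}D$ by Lemma \ref{lem:arcproperties}(3), and hence (by $1$-bounded turning) yield $M^{-2}D$-separated points, bounded in number by the doubling of $\cA$ from Proposition \ref{prop:qarcs}. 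There is no circularity: that doubling result comes from Proposition \ref{prop:doubling}/Lemma \ref{lem:qarcs}, not from the present lemma, and since the coarse factor only needs to be an $\epsilon$-independent constant, the non-sharp exponent furnished by doubling is harmless. The one place I would add a sentence is the claim that the $A^*(D)$-tiles line up as consecutive subarcs of $\cA$ even when they sit at different word-lengths; Lemma \ref{lem:arcproperties} is stated for a single level $A^k$. This is easily patched: $|v|\le\log(M/D)/\log(1/\d)$ for $v\in A^*(D)$, so one can refine all these tiles to a common level $k$ and invoke Lemma \ref{lem:arcproperties} there, with the interval structure of $A^k_v$ in lexicographic order carrying the ordering back up. With that detail spelled out, the argument is complete.
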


The following result can be obtained following the arguments of Theorem B of \cite{HM} essentially verbatim; we provide a brief reference to the necessary arguments.

\begin{prop}\label{lem:HM}
Let $s\geq 1$, $c\geq 1$, and $\G$ a $c$-bounded turning and $s$-homogeneous metric arc with $\diam{G}=1$. Then for any $M\in\{2,3,\dots\}$ and any $\d \in (M^{-1/s},1)$, there exists $\D \in \mathscr{D}(A,M^{-1},\d)$ and an $L$-bi-Lipschitz homeomorphism $f : \G \to (\mathcal{A},d_{\mathscr{C}_M,\D})$. The constant $L$ depends only on $c$, $s$, and $M$.
\end{prop}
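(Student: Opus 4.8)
The plan is to follow the proof of \cite[Theorem B]{HM}, adapting it to our combinatorial language. The goal is, given a $c$-bounded turning, $s$-homogeneous metric arc $\G$ with $\diam\G = 1$, and given $M$ and $\d\in(M^{-1/s},1)$, to build a decreasing sequence of ``tilings'' of $\G$ by subarcs indexed by $A^*$, and then to define the diameter function $\D$ to record the diameters of those tiles.

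First I would construct the tiles. Fix $\d\in(M^{-1/s},1)$. Proceeding by induction on word length, at each stage I subdivide each current tile $\G_w$ (a subarc of $\G$) into exactly $M$ consecutive subarcs $\G_{w1},\dots,\G_{wM}$, ordered along $\G_w$, in such a way that each child has diameter between $M^{-1}\diam\G_w$ and $\d\,\diam\G_w$. The existence of such a subdivision is where bounded turning and homogeneity enter: bounded turning guarantees that as one moves a cut point continuously along the arc the diameter of the initial piece varies ``continuously enough'' to hit any target value up to a bounded multiplicative error, and $s$-homogeneity (together with $\d > M^{-1/s}$) guarantees that $M$ pieces of the allowed size suffice to exhaust $\G_w$ — i.e. the constraints $M^{-1}\le \diam\G_{wi}/\diam\G_w\le \d$ are simultaneously satisfiable for all $M$ children. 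This is essentially the content of the construction in \cite[\textsection 4]{HM}, and I would cite it rather than reprove it. Having the tiles, I set $\D(w) := \diam\G_w$ (rescaled so $\D(\varepsilon)=1$), which by construction lies in $\mathscr{D}(A,M^{-1},\d)$, and I note that two tiles $\G_v,\G_{v'}$ with $|v|=|v'|$ intersect exactly when $v,v'$ are adjacent in the lexicographic graph $G_k$ of Example~\ref{ex:quasiarc}, and then in a single point.

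Next I would define the map. As in the proof of Theorem~\ref{thm:main}, each point $x\in\G$ lies in a nested sequence of tiles $\G_{w(1)}\supseteq\G_{w(2)}\supseteq\cdots$ determining an infinite word $w_x\in A^\N$ (non-uniqueness of $w_x$ only at the countably many shared endpoints, which are identified in $\mathcal{A}$), and I set $f(x)=[w_x]$. Surjectivity and well-definedness follow exactly as in Lemmas~\ref{lem:welldef}. For the bi-Lipschitz estimate I would repeat the argument of Proposition~\ref{prop:bilipschitz}: the lower bound $d_{\mathscr{C}_M,\D}(f(x),f(y))\gtrsim d(x,y)$ comes from the fact that any chain joining $w_x$ to $w_y$ gives, via Lemma~\ref{lem:arcproperties}(1) and Lemma~\ref{lem:tiletouch}-type reasoning, a connected chain of tiles from $x$ to $y$, whose diameters sum to at least $d(x,y)$; the upper bound comes from choosing the last word $w_0$ with $x,y\in\G_{w_0}$, using $c$-bounded turning of $\G$ together with the single-point-intersection property of sibling tiles to see $d(x,y)\gtrsim \diam\G_{w_0 i}\gtrsim \diam\G_{w_0}\asymp \D(w_0)\ge d_{\mathscr{C}_M,\D}(f(x),f(y))$. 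All implied constants depend only on $c$, $s$, $M$ (and $\d$, but $\d$ can be taken as a fixed function of $M,s$, or the dependence absorbed).

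\textbf{Main obstacle.} The delicate step is the tile construction: ensuring that one can \emph{simultaneously} cut $\G_w$ into $M$ consecutive subarcs each of controlled diameter. The upper bound $\diam\G_{wi}\le\d\,\diam\G_w$ and lower bound $\diam\G_{wi}\ge M^{-1}\diam\G_w$ are in tension, and reconciling them is exactly where $s$-homogeneity with the hypothesis $\d>M^{-1/s}$ is used; moreover, because $\G$ is only bounded turning and not geodesic, the ``diameter of the initial subarc'' is not a monotone continuous function of the cut point, so one must argue more carefully (as in \cite{HM}) that target diameters are achieved up to a controlled factor, and then absorb that factor into the constants. Since this is carried out in detail in \cite{HM}, I would keep the present proof brief and refer there for this step, spelling out only the translation into our combinatorial data $\mathscr{C}_M$ and diameter function $\D$.
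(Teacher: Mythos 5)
There is a genuine gap in your definition of the diameter function. You set $\D(w):=\diam\G_w$ (rescaled) and claim it lies in $\mathscr{D}(A,M^{-1},\d)$ ``by construction.'' But recall Definition \ref{def:diam}: membership in $\mathscr{D}(A,\d_1,\d_2)$ requires, for every $w\in A^*$ and all $i,j\in A$, both $\D(wi)=\D(wj)$ and $\D(wi)/\D(w)\in\{\d_1,\d_2\}$. Your subdivision only arranges that each child's diameter ratio lies in the \emph{interval} $[M^{-1},\d]$, which neither forces siblings to have equal diameters nor forces the ratio to be \emph{exactly} $M^{-1}$ or $\d$. So $\D(w)=\diam\G_w$ is at best a general diameter function in $\mathscr{D}(A)$, not one in $\mathscr{D}(A,M^{-1},\d)$, and the Proposition requires the latter.

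The point of the Herron--Meyer argument you cite is precisely to bridge this gap, and the paper's proof leans on it in the following way. Divide $\G_w$ into $M$ consecutive subarcs of \emph{equal} diameter (so siblings automatically agree), and let $\gamma_w$ denote the resulting tiles. These diameters will not, in general, be of the form $\prod (\text{ratio in }\{M^{-1},\d\})$. One then constructs a \emph{separate} diameter function $\D\in\mathscr{D}(A,M^{-1},\d)$ by choosing, at each step, the ratio $M^{-1}$ or $\d$ so that the running product stays comparable to $\diam\gamma_w$; this is where $s$-homogeneity and the hypothesis $\d>M^{-1/s}$ are used (equal-diameter subdivision yields tiles of diameter comparable to $M^{-1/s}\diam\gamma_w$, and $\d>M^{-1/s}$ gives enough headroom to track this with ratios in $\{M^{-1},\d\}$). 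The conclusion is $\D(w)\asymp\diam\gamma_w$, which is all the bi-Lipschitz estimate needs. Once you replace your ``$\D(w):=\diam\G_w$'' with this two-step construction (equal-diameter tiles, then a genuinely $\{M^{-1},\d\}$-valued diameter function comparable to the tile diameters), the rest of your outline — well-definedness, surjectivity, and the two-sided Lipschitz estimate via the arguments of Lemma \ref{lem:welldef} and Proposition \ref{prop:bilipschitz} — goes through as you describe.
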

\begin{proof}
Following exactly the procedure on \cite[p. 622]{HM}, we divide $\G$ into $M$ sub-arcs of equal diameter, then iterate this procedure on each sub-arc. Letting $\mathscr{C}_M = (A=\{1, \dots, M\}, G_k)$ as above, this yields an assignment to each element $w\in A^*$ of an arc $\gamma_w\subseteq\G$, with nesting and adjacency properties reflecting that of $\mathscr{C}_M$ and $\sup_{w\in A^k} \diam(\gamma_w) \rightarrow 0$ as $k\rightarrow\infty$.

The argument on \cite[p. 622-623]{HM} provides a diameter function $\D\in\mathscr{D}(A,M^{-1},\d)$ such that
$$ \D(w) \approx \diam(\gamma_w),$$
with implied constant depending only on $c$, $s$, and  $M$.

Defining $F\colon \cA \rightarrow \G$ by $F([w]) = \cap_{k=1}^\infty \gamma_{w(k)}$, we see exactly as in Lemma \ref{lem:welldef} and Proposition \ref{prop:bilipschitz} of the present paper that $F$ is well-defined, surjective, and bi-Lipschitz. Taking $f=F^{-1}$ completes the proof.
\end{proof}

We now fix parameters $M$ and $\delta$ that will enable us to use a construction from \cite{RV}. Given $s\geq 1$, let
\begin{itemize}
\item $n$ be the minimal integer satisfying $n>(\lfloor s \rfloor + 1 -s)^{-1}$,
\item $p=\lfloor s \rfloor - 1 + \frac{n-1}{n} = \lfloor s \rfloor - \frac{1}{n}>0$,
\item $M_0 = 9^{n(\lfloor s \rfloor + 1)}$,
\item $M = M_0^{1+p}$, and
\item $\delta = M_0^{-1}$.
\end{itemize}
The above parameters all depend on $s$, but we suppress this in the notation. Observe that $\delta > M^{-1/s} \geq M^{-1}$ in all cases, and in fact $\delta$ is an integer multiple of $M^{-1}$.  Only $\delta$ and $M$ will play a direct role below.

Given Proposition \ref{lem:HM}, the proof of Proposition \ref{prop:qcircles} now reduces to the following lemma.

\begin{lem}\label{lem:snowembed}
Let $s\geq 1$ and choose $M$ and $\delta$ as above. Let $\D\in \mathscr{D}(A,M^{-1},\delta)$. Then there is a bi-Lipschitz embedding of $(\mathcal{A},d_{\mathscr{C}_M,\D})$ into $\R^{\lfloor s \rfloor + 1}$ with bi-Lipschitz constant depending only on $M$, $\d$ and $s$, and thus only on $s$.
\end{lem}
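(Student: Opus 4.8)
The plan is to build the embedding of $(\mathcal{A}, d_{\mathscr{C}_M,\D})$ into $\R^{\lfloor s \rfloor + 1}$ by a Cantor-set-of-cubes scheme in the spirit of Romney--Vellis \cite{RV}, using the parameters $M_0, n, p, M, \delta$ fixed above. The point of these particular choices is that the word set $A^*$ with $\card A = M = M_0^{1+p}$ can be realized, level by level, by a self-similar subdivision of a $d$-dimensional pattern of small cubes inside a larger cube, where $d = \lfloor s \rfloor + 1$. First I would describe the geometric data: fix a unit cube $Q_\varepsilon = [0,1]^d$, and for each $w\in A^*$ assign a closed dyadic-type subcube $Q_w$ of $Q_{w^\uparrow}$ of sidelength comparable to $\D(w)$, in such a way that (i) the $M$ children $\{Q_{wi}\}_{i\in A}$ of $Q_w$ are pairwise essentially disjoint and arranged along a path so that $Q_{wi}$ and $Q_{w(i+1)}$ share a face or are within a controlled distance, mirroring the adjacency structure of $G_{|w|+1}$ from Example \ref{ex:quasiarc}; and (ii) $\side(Q_{wi})/\side(Q_w) \in \{M^{-1}, \delta\} = \{M_0^{-(1+p)}, M_0^{-1}\}$, matching the two allowed ratios of a diameter function in $\mathscr{D}(A,M^{-1},\delta)$. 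The combinatorial feasibility of this — packing $M$ or $\delta M$-many scaled copies into one cube along a Hamiltonian-type path with good separation — is exactly where the numerology $M_0 = 9^{n(\lfloor s\rfloor + 1)}$, $p = \lfloor s\rfloor - 1/n$ is used: one subdivides $Q_w$ into a $9^{n}\times\cdots\times 9^n$ (that is, $M_0$ columns in each of $d$ directions when taking the ``$\delta$'' step, or the $(1+p)$-fold iterate of this when taking the ``$M^{-1}$'' step) grid of subcubes and selects a connected ``snake'' through a suitable number of them so that the counts come out to exactly $M$ or $\delta M$ with uniform separation between non-adjacent selected subcubes. I would quote or adapt the relevant construction lemma from \cite{RV} (and the refinements in \cite{BH}, \cite{Wu}) rather than redo it.

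Next I would define the embedding $\Phi\colon \mathcal{A}\to\R^{d}$ by $\Phi([w]) = \bigcap_{k} Q_{w(k)}$; since $\side(Q_{w(k)})\to 0$ this intersection is a single point, and well-definedness on equivalence classes follows because $[w]=[u]$ forces $Q_{w(k)}$ and $Q_{u(k)}$ to be adjacent (or equal) for all large $k$, hence their intersection point is the same in the limit — this parallels Lemma \ref{lem:welldef} of the present paper. For the bi-Lipschitz estimate, the upper bound $|\Phi([w])-\Phi([u])| \lesssim d_{\mathscr{C}_M,\D}([w],[u])$ comes from summing diameters along a near-optimal chain: if $\{A^\N_{v_1},\dots,A^\N_{v_m}\}$ is a chain, then $Q_{v_i}\cap Q_{v_{i+1}}\neq\emptyset$, so the endpoints of $\Phi$ are joined by a path of total length $\lesssim \sum \side(Q_{v_i}) \approx \sum\D(v_i)$. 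For the lower bound, I would use the uniform separation built into the construction: if $[w]\ne[u]$, take the largest $k$ with $Q_{w(k)}\cap Q_{u(k)}\ne\emptyset$; then $Q_{w(k+1)}$ and $Q_{u(k+1)}$ are at distance $\gtrsim \side(Q_{w(k)}) \approx \D(w(k))$, while on the other hand the $1$-bounded turning of $\mathcal{A}$ (Proposition \ref{prop:comb-tree}/Lemma \ref{lem:bt}) together with the fact that $[w],[u]\in\mathcal{A}_{w(k)}$ and $\diam\mathcal{A}_{w(k)}\le\D(w(k))$ shows $d_{\mathscr{C}_M,\D}([w],[u])\lesssim \D(w(k))$ — combined with the complementary bound $d_{\mathscr{C}_M,\D}([w],[u])\gtrsim \D(w(k+1))\approx\D(w(k))$ obtained by a chain-separation argument exactly as in Claim \ref{claim:doubling2} of the proof of Proposition \ref{prop:doubling}. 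Matching the two sides gives $|\Phi([w])-\Phi([u])| \approx d_{\mathscr{C}_M,\D}([w],[u])$ with constants depending only on $M,\delta,s$.

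The main obstacle, and the step that needs the most care, is the combinatorial cube-packing lemma: one must verify that for the specific values $M = M_0^{1+p}$ and $\delta M = M_0^{p} = M_0^{\lfloor s\rfloor - 1/n}$, a grid of $M_0$ equal subcubes per coordinate direction in $\R^{\lfloor s\rfloor+1}$ admits a connected selection of exactly the required number of subcubes, realizing the path graph $G_{k+1}$ restricted to $A^{k+1}_w$, with pairwise separation between non-consecutive selected cubes bounded below by a fixed multiple of their sidelength, and that this can be done compatibly across the two possible ratios so that the resulting family $\{Q_w\}$ is genuinely nested and self-consistent. This is precisely the content of the Romney--Vellis construction, so I would organize the proof so that Lemma \ref{lem:snowembed} becomes: (1) invoke \cite{RV} to get the cube family $\{Q_w\}$ with the listed properties; (2) define $\Phi$ and check well-definedness; (3) prove the two-sided Lipschitz bound as above. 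I would then note, as the paper already records in the discussion after Proposition \ref{prop:qcircles}, that the dimension $\lfloor s\rfloor+1$ is optimal (witnessed by the snowflake $|\cdot|^{1/s}$), so no smaller target space can work in general, which is why the parameters are tuned to land exactly in $\R^{\lfloor s\rfloor+1}$.
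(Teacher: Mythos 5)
Your proposal follows essentially the same route as the paper's Lemma~\ref{lem:snowembed}: both invoke the Romney--Vellis construction of \cite{RV} to produce a nested family of compact regions in $\R^{\lfloor s\rfloor+1}$ indexed by $A^*$ whose adjacency and diameter data mirror $\mathscr{C}_M$ and $\D$, define the embedding by sending $[w]$ to the unique point in $\bigcap_k Q_{w(k)}$, and obtain the two-sided Lipschitz bound from a chain-summing upper estimate and a separation lower estimate at a critical level (your ``largest $k$ with $Q_{w(k)}\cap Q_{u(k)}\neq\emptyset$'' identifies the same scale that the paper finds via its Case~1/Case~2 analysis and Lemma~\ref{lem:pipes2}). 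Two small imprecisions in your account do not affect the argument: the regions in \cite{RV} (and the paper) are cubic thickenings $Q(\tau_w)$ of $I$- and $L$-segments rather than single dyadic subcubes, and you wrote ``$[w],[u]\in\mathcal{A}_{w(k)}$'' where in general $[u]$ lies in the adjacent tile $\mathcal{A}_{u(k)}$ --- but since $\mathcal{A}_{w(k)}$ and $\mathcal{A}_{u(k)}$ share a point at the critical level, the estimate $d_{\mathscr{C}_M,\D}([w],[u])\lesssim \D(w(k))$ you need still holds.
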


The construction of the embedding follows ideas and notation from \cite{RV}. We fix parameters $M$ and $\delta$ as in the statement of Lemma \ref{lem:snowembed} and write $\mathscr{C}=\mathscr{C}_M$. We also fix $\D\in \mathscr{D}(A,M^{-1},\delta)$ for the remainder of this subsection.

Let 
\begin{align*}
I &= [0,1]\times\{0\}^{\lfloor s \rfloor}\\
L&=\left( (\{0\}\times[0,1/2])\cup([0,1/2]\times\{1/2\})\right)\times\{0\}^{\lfloor s \rfloor-1}
\end{align*}
with the convention that $E\times\{0\}^0 = E$. An $I$-segment (resp. $L$-segment) is the image of $I$ (resp. $L$) under a similarity mapping of $\R^{\lfloor s \rfloor + 1}$, and is parallel to the coordinate axes. 

Given an $I$- or $L$- segment $\tau$ with length $\ell$ and endpoints $x^*,y^*$, we define the \emph{cubic thickening} $Q(\tau)$ of $\tau$ to be the union of all closed cubes parallel to coordinate axes, of side length $(1-2\delta)\ell$ and centered on points $z\in\tau$ such that 
\[ \min\{|z-x^*|,|z-y^*|\} \geq \ell(1-2\delta)/2.\] 
Define also $\mathcal{C}(\tau)$ to be the closed cube which is parallel to coordinate axes, has side length $\ell$, and is centered on the midpoint of $\tau$. The intersection $Q(\tau) \cap \partial\mathcal{C}(\tau)$ has exactly two components which we call the \emph{entrances} of $Q(\tau)$. 

For each $\tau\in\{I,L\}$ we  define two polygonal arcs $\mathcal{J}(\tau)$ and $\mathcal{J}_0(\tau)$ in the following lemma.

\begin{lem}\label{lem:RV}
Given $\tau \in \{I,L\}$ there exist two polygonal arcs $\mathcal{J}(\tau)$ and $\mathcal{J}_0(\tau)$, each contained in $Q(\tau)$, whose endpoints are the same as those of $\tau$ and that satisfy the following properties.
\begin{enumerate}
\item[(J1)] The arcs $\mathcal{J}(\tau), \mathcal{J}_0(\tau)$ consist of $M$-many $I$-segments and $L$-segments $\sigma_{i}$, $i\in \{1,\dots,M\}$, labeled according to their order in $\mathcal{J}(\tau)$ with $\sigma_1$ containing the origin. Each $\sigma_i$ in $\mathcal{J}(\tau)$ has length $\delta$ and each $\sigma_i$ in $\mathcal{J}_0(\tau)$ has length $M^{-1}$.
\item[(J2)] The segments $\sigma_{1}$ and $\sigma_{M}$ are $I$-segments.
\item[(J3)] For all $i\in \{1,\dots,M-1\}$, $Q(\sigma_i)\cap Q(\sigma_{i+1})$ is an entrance of $Q(\sigma_i)$ and an entrance of $Q(\sigma_{i+1})$. If $i,j \in \{1,\dots,M\}$, with $|i-j|>1$, then $Q(\sigma_i)\cap Q(\sigma_j) = \emptyset$.
\item[(J4)] If $E_1,E_2$ are the entrances of $Q(\tau)$, then an entrance of $Q(\sigma_1)$ is contained in $E_1$ and an entrance of $Q(\sigma_M)$ is contained in $E_2$. Moreover, for any $i\in\{2,\dots,M-1\}$, $Q(\sigma_i)\cap \partial Q(\tau) = \emptyset$.
\end{enumerate}
\end{lem}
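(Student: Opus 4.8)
The plan is to build $\mathcal{J}(\tau)$ and $\mathcal{J}_0(\tau)$ explicitly, as folding (``boustrophedon'') polygonal paths inside the tube-like region $Q(\tau)$, following the template of Romney--Vellis \cite{RV}. Since $I$-segments, $L$-segments and cubic thickenings are all defined via similarities of $\R^{\lfloor s\rfloor+1}$, it suffices to produce the two arcs when $\tau$ is one of the two reference configurations $I=[0,1]\times\{0\}^{\lfloor s\rfloor}$ and $L$; applying the ambient similarity that carries the reference segment onto a general $\tau$ transports $Q(\tau)$, its two entrances, and each of the properties (J1)--(J4) verbatim. Note also that $Q(\tau)$, for $\tau$ of length $\ell$, is precisely a rectangular box of longitudinal extent $\ell$ and transverse extent $(1-2\delta)\ell$; it is contained in $\mathcal{C}(\tau)$, and the two entrances are its two transverse end-faces, which are $(1-2\delta)\ell$-cubes. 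This last observation is what makes the verifications below mechanical.

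First I would treat $\tau=I$, which after rescaling we may take to have length $1$. The region $Q(I)$ contains on the order of $M_0^{\lfloor s\rfloor+1}$ cells of the grid of side $\delta=M_0^{-1}$, while $M=M_0^{1+p}=M_0^{\,\lfloor s\rfloor+1-1/n}$, so $M$ is smaller than the number of available cells by the factor $M_0^{1/n}\ge 9^{\lfloor s\rfloor+1}$, giving ample room. I would then choose a self-avoiding ``snake'' through exactly $M$ of these cells: it starts in a cell abutting the first entrance of $Q(I)$, runs straight in the $x_1$-direction, makes a $90^\circ$ turn when it reaches the opposite side, runs back, and so on, sweeping out a $(\lfloor s\rfloor+1)$-dimensional boustrophedon, arranged (using the spare cells) so that successive ``rows'' of the sweep are at grid-distance $\ge 1$ and so that the path terminates in a cell abutting the second entrance of $Q(I)$. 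Setting $\mathcal{J}(I)$ to be the polygonal path through the centers of these cells, broken into its maximal straight runs (which are $I$-segments of length $\delta$) and its single-corner pieces at the turns (which are $L$-segments of length $\delta$), one reads off (J1) and (J2), the latter because the two terminal pieces can be taken straight. Property (J3) holds because each $Q(\sigma_i)$ is a box of transverse size $(1-2\delta)\delta$ spanning $\sigma_i$ longitudinally: two consecutive pieces sharing a common grid face have thickenings meeting exactly in that shared $(1-2\delta)\delta$-cube, which is an entrance of each, whereas pieces with $|i-j|>1$ lie in cells at grid-distance $\ge 1$, so the slightly shrunken thickenings are disjoint. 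Property (J4) holds because only the first and last cells of the snake abut $\partial Q(I)$, and they do so through the two chosen entrances. The arc $\mathcal{J}_0(I)$ is obtained the same way with grid side $M^{-1}$ in place of $\delta$; since $M\cdot M^{-1}=1$, one may simply subdivide $I$ itself into $M$ equal $I$-segments, and then (J1)--(J4) are immediate from the description of $Q$ and its entrances.

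Next I would handle $\tau=L$. Here $Q(L)$ is an $L$-shaped tube, which I would split into two straight rectangular sub-tubes meeting in a corner box of side comparable to the tube width. Running the boustrophedon construction inside each straight sub-tube, and joining the two snakes through the corner box by a single $L$-segment $\sigma_{i_0}$, produces a polygonal arc from one endpoint of $L$ to the other; one distributes the $M$ pieces between the two sub-tubes and the one corner piece so that the total is exactly $M$, and arranges the two end pieces to be $I$-segments exiting through the entrances of $Q(L)$. Away from the corner the verification of (J1)--(J4) is identical to the $I$-case, and near the corner one checks directly that $Q(\sigma_{i_0})$ meets only its two neighbours, each in an entrance, using that $\delta\le 1/9$ (indeed $M_0=9^{n(\lfloor s\rfloor+1)}\ge 9$) leaves enough slack in the corner box.

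The main obstacle is the combinatorial bookkeeping in the snake: one must realize a self-avoiding lattice path of exactly the prescribed length $M$ that begins and ends in prescribed boundary cells, keeps non-consecutive segments grid-separated so that the $(1-2\delta)$-shrunken thickenings are genuinely disjoint, respects the (J2) constraint that the two terminal pieces be straight, and in the $L$-case negotiates the corner with a single $L$-piece. None of this is deep — it is exactly the place where the parameters $n,p,M_0,M,\delta$ were chosen so that $M$ fits inside $M_0^{\lfloor s\rfloor+1}$ with room to spare — and it is essentially the content of the corresponding construction in \cite{RV}, which can be followed here with only cosmetic changes; I would therefore present the construction and verification at the level of detail needed to invoke \cite{RV} and then check (J1)--(J4) on the resulting arcs.
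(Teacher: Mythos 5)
Your proposal takes essentially the same approach as the paper: construct $\mathcal{J}_0(\tau)$ by directly subdividing $\tau$ into $M$ equal pieces, and import the zigzag arcs $\mathcal{J}(\tau)$ from Romney--Vellis \cite{RV}, \S 6.1--6.2, together with the dictionary relating $M_0, M, n, p$ here to the parameters there. Your boustrophedon narrative accurately glosses what that citation contains, and, like the paper, you ultimately lean on \cite{RV} for the combinatorial bookkeeping that produces exactly $M$ well-separated pieces satisfying (J1)--(J4).
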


\begin{proof}
The constructions of $\mathcal{J}_0(I)$ and $\mathcal{J}_0(L)$ are quite simple. Write $I = \bigcup_{m=1}^{M}\sigma_m$ with 
\[ \sigma_m =\left[\frac{m-1}{M},\frac{m}{M}\right]\times\{0\}^{\lfloor s \rfloor} \subset \R^{\lfloor s \rfloor + 1}\] 
and set $\mathcal{J}_{0}(I) = \bigcup_{m=1}^M \sigma_m = I$. Similarly write $L = \bigcup_{m=1}^{M}\sigma_m$ where $\sigma_{m}$ is an $L$-segment if $m=\frac{M+1}{2}$ and an $I$-segment otherwise and each $\sigma_m$ has length $1/M$. Set $\mathcal{J}_{0}(L) = \bigcup_{m=1}^M \sigma_m$.

The constructions of $\mathcal{J}(I)$ and $\mathcal{J}(L)$ are more complicated and can be found in \cite[\textsection6.1, \textsection6.2]{RV} (where they are denoted as $J_I(N,n)$ and $J_L(N,n)$, respectively). 
%We will not describe the constructions here but we will give a rough sketch.
Without describing the construction, we briefly explain how our parameters match with those of \cite{RV}. The parameter $N$ appearing on \cite[p. 1181]{RV} matches our $\lfloor s \rfloor -1$. Our parameters $p$ and $n$ match the ones given there. Our parameter $M_0$ corresponds to $M$ on \cite[p. 1182]{RV}, and our parameter $M$ corresponds to $M^{1+p}$ on \cite[p. 1182]{RV}. Making allowances for the changes in notation, our desired properties of $\mathcal{J}(I)$ and $\mathcal{J}(L)$ are listed in Section 3.3 of \cite{RV} as properties (1)-(3).
%The parameter $N$ appearing on \cite[p. 4]{RV} matches our $\lfloor s \rfloor -1$. Our parameters $p$ and $n$ match the ones given there. Our parameter $M_0$ corresponds to $M$ on \cite[p. 4]{RV}, and our parameter $M$ corresponds to $M^{1+p}$ on \cite[p. 5]{RV}.
\end{proof}

We record a few more simple consequences of properties (J1)--(J4).
\begin{lem}\label{lem:pipelines}
Consider $\tau \in \{I,L\}$, $J \in \{\mathcal{J}(\tau) ,\mathcal{J}_0(\tau)\}$. Recall that $J$ is a union of sets $\{\sigma_i\}_{i=1}^M$, each of which is an $I$-segment or $L$-segment. Then:
\begin{enumerate}
\item For each $i\in \{1, \dots, M\}$, $Q(\sigma_i) \subset Q(\tau)$. 
\item For each $i\in\{2,\dots,M-1\}$,
\[ \dist(Q(\sigma_i),\partial Q(\tau)) \geq M^{-2} 
.\]
\item If $i,j \in \{1,\dots,M\}$ with $|i-j|>1$, then,
\[ \dist(Q(\sigma_i), Q(\sigma_j)) \geq M^{-2} 
.\]
\item Let $E$ be the entrance of $Q(\tau)$ that contains an endpoint of $\sigma_1$ (resp. endpoint of $\sigma_M$) and let $P$ be the $\lfloor s \rfloor$-dimensional plane that contains $E$. Then for all $i\in\{2,\dots,M\}$ (resp. $i\in\{1,\dots,M-1\}$)
\[ \dist(Q(\sigma_i),P) \geq M^{-2}.\] 
\end{enumerate}
\end{lem}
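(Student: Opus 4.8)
All four assertions are elementary facts about where the cubes $Q(\sigma_i)$ sit inside $Q(\tau)$, and the plan is to read them off from Lemma~\ref{lem:RV} together with the explicit coordinates of the arcs $\mathcal{J}_0(\tau)$ and $\mathcal{J}(\tau)$. Since $\tau$ is literally $I$ or $L$ there is nothing to rescale, and the only numerical inputs are $\delta=M_0^{-1}\le\tfrac14$ and $M=M_0^{1+p}$ with $p>0$, which give $M^{-2}=M_0^{-2(1+p)}\le M_0^{-2}=\delta^2$.

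The ``straight'' arcs $\mathcal{J}_0(I),\mathcal{J}_0(L)$ I would handle by direct computation from the definition of the cubic thickening. For $\tau=I$ one checks that $Q(I)=[0,1]\times[-\tfrac{1-2\delta}{2},\tfrac{1-2\delta}{2}]^{\lfloor s\rfloor}$ and, since $\sigma_i=[\tfrac{i-1}{M},\tfrac iM]\times\{0\}^{\lfloor s\rfloor}$, that $Q(\sigma_i)=[\tfrac{i-1}{M},\tfrac iM]\times[-\tfrac{1-2\delta}{2M},\tfrac{1-2\delta}{2M}]^{\lfloor s\rfloor}$. With these formulas (1)--(4) are immediate: (1) and (4) hold because the first coordinate of $Q(\sigma_i)$ lies in $[\tfrac{i-1}{M},\tfrac iM]$ while the entrances of $Q(I)$ lie in the hyperplanes $\{x_1=0\},\{x_1=1\}$; (2) and (3) hold because the relevant first-coordinate and transverse gaps are at least $\min\{\tfrac1M,\tfrac{1-2\delta}{2M}\}\ge M^{-2}$. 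The case $\tau=L$ is entirely analogous; there $Q(L)$ and the $Q(\sigma_i)$ are again explicit unions of axis-parallel cubes, the two entrances of $Q(L)$ lie in coordinate hyperplanes (namely $\{x_1=\tfrac12\}$ and $\{x_2=0\}$ in the normalization of the excerpt), and one only needs a little extra casework to separate segments lying in different arms of $L$.

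For the ``fractal'' arcs $\mathcal{J}(\tau)$ I would use the explicit construction of $J_I(N,n),J_L(N,n)$ (with $N=\lfloor s\rfloor-1$) from \cite[\S6]{RV} that underlies Lemma~\ref{lem:RV}. There every $\sigma_i$ is an axis-parallel segment of length $\delta$ occupying its own cell of an axis-parallel $\delta$-grid inside $Q(\tau)$, and its cube $Q(\sigma_i)$ of side $(1-2\delta)\delta$ fills that cell in the $\sigma_i$-direction while leaving a margin $\delta^2$ on each side transversally. Hence two cubes $Q(\sigma_i),Q(\sigma_j)$ that do not meet --- which for $|i-j|>1$ is exactly (J3) --- are separated by at least $\delta-(1-2\delta)\delta=2\delta^2\ge M^{-2}$, which is (3); the same comparison against the bounding hyperplanes of $Q(\tau)$, using (J4), gives (2), and against the coordinate hyperplane $P$ in (J4) gives (4). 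For (1) with $2\le i\le M-1$, (J4) gives $Q(\sigma_i)\cap\partial Q(\tau)=\emptyset$; since $Q(\sigma_i)$ is connected and meets $\mathrm{int}\,Q(\tau)$ (for instance at the midpoint of $\sigma_i$, which lies in $\sigma_i\subseteq Q(\tau)$ and, being in $Q(\sigma_i)$, avoids $\partial Q(\tau)$) while avoiding $\partial Q(\tau)$, and $\R^{\lfloor s\rfloor+1}\setminus\partial Q(\tau)$ is the disjoint union of the open sets $\mathrm{int}\,Q(\tau)$ and $\R^{\lfloor s\rfloor+1}\setminus Q(\tau)$, we conclude $Q(\sigma_i)\subseteq\mathrm{int}\,Q(\tau)\subseteq Q(\tau)$; and for $i\in\{1,M\}$, (J2) says $\sigma_i$ is a straight $I$-segment issuing from an endpoint of $\tau$ through the matching entrance of $Q(\tau)$, so $Q(\sigma_i)$ is a box attached at that entrance and pointing inward, whence $Q(\sigma_i)\subseteq Q(\tau)$.

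The only place that requires genuine care is matching the parameters of \cite{RV} to $N,n,p,M_0,M,\delta$ here and confirming the grid-layout claim used above --- that the elementary segments of $\mathcal{J}(\tau)$ occupy distinct cells of an axis-parallel $\delta$-grid contained in $Q(\tau)$. Once that bookkeeping is extracted from \cite[\S6]{RV}, every inequality in the statement follows in a line.
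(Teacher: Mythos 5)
Your proposal is correct and lands at the same place as the paper's proof, but the two arguments use different key observations. The paper's proof is slicker: it notes that, because $\delta$ is an integer multiple of $M^{-1}$, the sets $Q(\tau)$ and all the $Q(\sigma_i)$ (for both $\mathcal{J}_0(\tau)$ and $\mathcal{J}(\tau)$) are finite unions of axis-parallel cubes whose vertices lie on the $M^{-2}$-scale grid $M^{-2}\mathbb{Z}^{\lfloor s\rfloor+1}$; consequently any two disjoint such sets are automatically at distance at least $M^{-2}$, and statements (2)--(4) then follow in one line from (J3) and (J4). You never invoke the arithmetic fact that $\delta$ is a multiple of $M^{-1}$ and instead compute the actual gaps, getting the stronger transverse margin $\delta^2$ per side in the $\mathcal{J}(\tau)$ case. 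That computation is valid and in fact yields a better constant, but it is less robust: it requires your ``grid-layout claim'' that the elementary segments of $\mathcal{J}(\tau)$ occupy distinct cells of an axis-parallel $\delta$-grid, which you flag but do not verify. The paper's grid-alignment observation avoids that positional bookkeeping entirely, since it only needs that the \emph{vertices} of the cube-unions lie on a common $M^{-2}$-lattice, not that the segments themselves tile a $\delta$-grid. Your treatment of statement (1) (topological argument via connectedness of $Q(\sigma_i)$, together with the entrance-matching from (J4) for $i\in\{1,M\}$) is a reasonable expansion of what the paper leaves as ``immediate from $J\subseteq Q(\tau)$ and (J4).'' In short: same ingredients (J3), (J4), and the cubic-thickening geometry, but the paper substitutes a single arithmetic/grid observation for your explicit distance computations, which both shortens the proof and sidesteps the one unverified claim in your write-up.
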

\begin{proof}
All four statements are obvious in the case $J=\mathcal{J}_0(\tau)$, so we now assume that $J=\mathcal{J}(\tau)$. Statement (1) is an immediate consequence of the fact that $J\subseteq Q(\tau)$ and property (J4) of Lemma \ref{lem:RV}.

For the remaining three properties, it is useful to first observe that, since $\delta$ is an integer multiple of $M^{-1}$, the sets $Q(\tau)$ and $Q(\sigma_i)$ are each unions of axis-parallel cubes whose vertices lie on the $M^{-2}$-scale grid $M^{-2}\mathbb{Z}^{\lfloor s \rfloor + 1}.$ 

Statements (2) and (4) follow immediately from this observation and (J4). Statement (3) follows immediately from this observation and (J3).  
\end{proof}

We now use Lemma \ref{lem:RV} to construct arcs in $\R^{\lfloor s \rfloor +1}$ that mimic the metric properties of the combinatorial construction $\mathscr{C}, \D$ fixed below the statement of Lemma \ref{lem:snowembed}.

\begin{lem}\label{lem:defnoftau}
For each $w\in A^*$ there exists an $I$- or $L$-segment $\tau_w$ with the following properties.
\begin{enumerate}
\item If $w,u \in A^k$ are adjacent, then $\tau_w$ and $\tau_u$ intersect at an endpoint while $Q(\tau_w)\cap Q(\tau_u)$ is contained in an entrance of $Q(\tau_w)$ and an entrance of $Q(\tau_u)$. If $w,u \in A^k$ are distinct but not adjacent, then $Q(\tau_w)\cap Q(\tau_u)$ and $\tau_w\cap\tau_u$ are empty.
\item For any $w\in A^*$, there exists $\tau\in \{I,L\}$ such that $\tau_w$ and $Q(\tau_w)$ are scaled copies of $\tau$ and $Q(\tau)$, respectively, by a factor of $\D(w)$.
\end{enumerate}
\end{lem}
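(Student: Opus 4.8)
The plan is to build the segments $\tau_w$ by induction on $|w|$, in lockstep with the subdivision encoded by $\mathscr{C}_M$ and $\D$. Set $\tau_\varepsilon := I$, together with a once-and-for-all labeling of its two endpoints as ``first'' and ``second''. For the inductive step, suppose $\tau_w$ has been constructed as a similarity image of $I$ or $L$ scaled by $\D(w)$, with labeled endpoints. Since $\D\in\mathscr{D}(A,M^{-1},\delta)$, the ratio $\D(wi)/\D(w)$ does not depend on $i\in A$ and equals either $M^{-1}$ or $\delta$; in the first case apply the similarity taking the model ($I$ or $L$) onto $\tau_w$ to the polygonal arc $\mathcal{J}_0$ of Lemma \ref{lem:RV}, and in the second case apply it to $\mathcal{J}$. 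Either way this yields segments $\sigma_1,\dots,\sigma_M$, each an $I$- or $L$-segment obtained from the model case by scaling by $\D(w)$ and (by (J1)) of length exactly $\D(wi)$; set $\tau_{wi}:=\sigma_i$. Orient $\tau_{w1}$ so its first endpoint is the first endpoint of $\tau_w$, orient $\tau_{wM}$ so its second endpoint is the second endpoint of $\tau_w$, and orient each intermediate $\tau_{wi}$ so that consecutive children share the expected endpoint. This convention is dictated by the lexicographic structure of $\mathscr{C}_M$: within $A^{k+1}$ the children of $w$ form the path $w1\sim\cdots\sim wM$, and across two lex-consecutive parents $w\prec u$ the only adjacency among their children is $wM\sim u1$. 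Property (2) of the lemma is then immediate from the construction together with (J1)--(J2).

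It remains to check property (1). If $wi\sim w(i+1)$ share a parent $w$, then $\tau_{wi}=\sigma_i$ and $\tau_{w(i+1)}=\sigma_{i+1}$ are consecutive pieces of a polygonal arc, hence meet at an endpoint, and $Q(\sigma_i)\cap Q(\sigma_{i+1})$ is an entrance of each by (J3); if $wi,wj$ are distinct children of $w$ with $|i-j|>1$, then $Q(\sigma_i)\cap Q(\sigma_j)=\emptyset$ by (J3) (and $\tau_{wi}\cap\tau_{wj}=\emptyset$ since $\tau_{wi}\subseteq Q(\tau_{wi})$ always). Now take children $wi,uj$ of distinct parents $w\neq u$. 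If $w,u$ are not adjacent in $T_k$, the inductive hypothesis gives $Q(\tau_w)\cap Q(\tau_u)=\emptyset$, and since $Q(\tau_{wi})\subseteq Q(\tau_w)$ and $Q(\tau_{uj})\subseteq Q(\tau_u)$ by Lemma \ref{lem:pipelines}(1), the thickenings are disjoint. If $w,u$ are adjacent, say $w\prec u$, then the only adjacent pair of children is $wM\sim u1$. For that pair, $\tau_{wM}$ and $\tau_{u1}$ share the point $x$ where $\tau_w$ and $\tau_u$ meet, by the orientation convention; and $Q(\tau_{wM})\cap Q(\tau_{u1})\subseteq Q(\tau_w)\cap Q(\tau_u)$, which the inductive hypothesis identifies with an entrance of $Q(\tau_w)$ and of $Q(\tau_u)$, so using (J4) (an entrance of $Q(\sigma_M)$ lies inside that entrance of $Q(\tau_w)$, likewise for $Q(\sigma_1)$ on the $u$-side) one checks the intersection is exactly a common entrance of $Q(\tau_{wM})$ and $Q(\tau_{u1})$. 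For any remaining pair $wi,uj$ with $w\prec u$ adjacent but $(i,j)\neq(M,1)$, we may assume by symmetry that $i\leq M-1$; then by (J4) and Lemma \ref{lem:pipelines}(2),(4) the set $Q(\tau_{wi})$ either lies in the interior of $Q(\tau_w)$ (when $2\leq i\leq M-1$) or stays at distance $\geq M^{-2}\D(w)$ from the hyperplane carrying the entrance $Q(\tau_w)\cap Q(\tau_u)$ (when $i=1$); in both cases $Q(\tau_{wi})\cap Q(\tau_u)=\emptyset$, hence $Q(\tau_{wi})\cap Q(\tau_{uj})=\emptyset$, and the segments are disjoint a fortiori.

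I expect the bookkeeping in this last case to be the main obstacle: one must pin down the orientation convention so that the model arcs $\mathcal{J},\mathcal{J}_0$ glue along exactly the endpoints prescribed by the lexicographic order of $\mathscr{C}_M$, and then track which face of $Q(\tau_w)$ is the ``shared entrance'' with each neighbor of $w$. The clean way is to strengthen the inductive hypothesis to record this data --- namely that for each neighbor $v$ of $w$ in $T_{|w|}$ the set $Q(\tau_w)\cap Q(\tau_v)$ is precisely one of the two entrances of $Q(\tau_w)$, the one at the first (resp.\ second) endpoint of $\tau_w$ being used for the lex-predecessor (resp.\ lex-successor) of $w$ --- and to check that this strengthened statement propagates, using (J3)--(J4) and the separation estimates of Lemma \ref{lem:pipelines}. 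No new geometric estimates are needed beyond those two lemmas.
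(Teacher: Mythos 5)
Your proposal is correct and follows essentially the same strategy as the paper's proof: define $\tau_\varepsilon=I$, and at each step scale the model arcs $\mathcal{J}(\tau)$ or $\mathcal{J}_0(\tau)$ (chosen according to whether $\D(wi)/\D(w)$ is $\delta$ or $M^{-1}$) onto $\tau_w$ by a similarity, then verify property (1) by cases (same parent; non-adjacent parents; adjacent parents with the boundary pair $wM,u1$ versus the other pairs) using (J1)--(J4) and Lemma~\ref{lem:pipelines}. The only stylistic difference is that you carry an explicit ``first/second endpoint'' orientation convention in the inductive hypothesis, whereas the paper encodes the same data by specifying how the similarity $\phi_w$ maps the origin-containing entrance of $Q(\tau)$ to the entrance of $Q(\tau_w)$ facing the lexicographic predecessor; these are equivalent bookkeeping devices.
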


\begin{proof}
The construction is done in an inductive manner.

Let $\tau_{\varepsilon}:=I \subset \R^{\lfloor s \rfloor + 1}$. Property (1) of the lemma is vacuous in this base case, while property (2) is immediate. 

Assume now that for some integer $k\geq 0$ we have defined $I$- and $L$-segments $\tau_w$ (for all $j\leq k$ and $w\in A^j$) satisfying the properties of the lemma. Fix $w\in A^{k}$, and let $u$ be the preceding vertex of $A^k$ in lexicographic order, assuming for the moment that such a vertex exists. Let $E$ be the entrance of $Q(\tau_w)$ that intersects an entrance of $Q(\tau_u)$. Suppose that $\tau_w$ is a rescaled copy of $\tau \in \{I,L\}$. Let $\phi_w : \R^{\lfloor s\rfloor + 1} \to \R^{\lfloor s\rfloor + 1}$ be a similarity map such that $Q(\tau)$ is mapped onto $Q(\tau_w)$, the entrance of $Q(\tau)$ that contains the origin is mapped onto the entrance of $Q(\tau_w)$ that contains $Q(\tau_w)\cap Q(\tau_u)$, and the other entrance of $Q(\tau)$ is mapped to the other entrance of $Q(\tau_w)$.

If there is no $u\in A^k$ preceding $w$ in lexicographic order, then $w=1^k$ for some $k\geq 0$. In that case, if $k=0$ we set $\phi_w$ to be the identity, and if $k\geq 1$ we set $u=1^{k-1}2$ and do the analogous construction of $\phi_w$ to arrange that the entrance of $Q(\tau)$ that does not contain the origin is mapped onto the entrance of $Q(\tau_w)$ that contains $Q(\tau_w)\cap Q(\tau_u)$.

We now define $\tau_{wi}$ for each $i\in A$:
\begin{itemize}
\item If $\D(w1)=M^{-1}\D(w)$, then for each $i\in A$ set $\tau_{wi} = \phi_w(\sigma_i)$ where $\sigma_i \subset\mathcal{J}_0(\tau)$.
\item If $\D(w1) = \delta\D(w)$, then for each $i\in A$ set $\tau_{wi} = \phi_w(\sigma_i)$ where $\sigma_i \subset\mathcal{J}(\tau)$.
\end{itemize}

This completes the definition of the arcs $\tau_{w}$ for all $w\in A^{k+1}$. We now prove that the family $\{\tau_w : w\in A^{k+1}\}$ satisfies properties (1) and (2) of the lemma.

For property (2) of the lemma, by design, and the inductive hypothesis (2), for all $i\in A$
\[ \diam{\tau_{wi}} = \diam{\phi_w(\sigma_i)} = \frac{\diam{Q(\tau_w)}}{\diam{Q(\tau)}}\diam{\sigma_i} = \D(w)\diam{\sigma_i} = \D(wi)\diam{\tau'}\]
for some $\tau' \in \{I,L\}$. Therefore, $\diam{Q(\tau_{wi})} = \D(wi)\diam{Q(\tau')}$ for some $\tau' \in \{I,L\}$ and property (2) holds for $k+1$.

We now turn to the proof of (1). Let $w \in A^{k}$ and $i\in A$. Let also $u\in A^k$ and $j\in A$. We consider two cases.

\emph{Case 1.} Assume that $w=u$ and $i\neq j$. If $wi$ is adjacent to $wj$, then by design of paths $\mathcal{J}(\tau)$ and $\mathcal{J}_0(\tau)$, we have that $\tau_{wi}$ and $\tau_{wj}$ share an endpoint and by (J3) $Q(\tau_{wi})\cap Q(\tau_{wj})$ is a common entrance of $Q(\tau_{wi})$ and $Q(\tau_{wj})$. If $wi$ is not adjacent to $wj$, then again by (J3) $Q(\tau_{wi})\cap Q(\tau_{wj}) = \emptyset$ which also implies that $\tau_{wi}\cap \tau_{wj} = \emptyset$.

\emph{Case 2.} Assume that $u\neq w$. The proof splits in two subcases. 

\emph{Case 2.1}. Assume that $i\not\in\{1,M\}$. Then $wi$ is not adjacent to $uj$ and by (J4) $Q(\tau_{wi})$ is contained in the interior of $Q(\tau_{w})$ which is disjoint from $Q_{u}$ by the inductive hypothesis. Therefore, $Q(\tau_{wi})\cap Q(\tau_{uj})$ and $\tau_{wi}\cap\tau_{uj}$ are both empty. 

\emph{Case 2.2} Assume that $i\in\{1,M\}$. Without loss of generality, we assume that $i=1$; the case $i=M$ is similar. By design $Q(\tau_{wi})$ intersects one entrance of $Q(\tau_w)$ but not the other. Therefore, if $u$ is not adjacent to $w$ or if it is adjacent to $w$ but is preceded by $w$, then the inductive hypothesis implies that $Q(\tau_{w1})\cap Q(\tau_{uj})$ and $\tau_{w1}\cap\tau_{uj}$ are both empty. Assume now that $u$ is adjacent to $w$ and precedes $w$. Then, the only $j\in A$ for which $Q(\tau_{uj})$ intersects the entrance of $Q(\tau_u)$ which contains $Q(\tau_{w})\cap Q(\tau_{u})$ is $j=M$. In this case, $\tau_{uM}\cap \tau_{w1}$ is the common endpoint of $\tau_w$ and $\tau_u$. Therefore, $Q(\tau_{wi})\cap Q(\tau_{uj})$ is nonempty and is contained in an entrance of $Q(\tau_{wi})$ and an entrance of $Q(\tau_{uj})$.
\end{proof}

Lemma \ref{lem:defnoftau}(2) implies that for all $w\in A^*$,
\begin{equation}\label{eq:est1}
2^{-1/2}\D(w) \leq \diam{\tau_w} \leq \D(w).
\end{equation}

\begin{lem}\label{lem:pipes2}
Let $w,u \in A^k$ be adjacent words, with $w$ preceding $u$ in lexicographic order. If $i\in A\setminus \{M\}$ or if $j\in A\setminus \{1\}$, then
\[ \dist(Q(\tau_{wi}),Q(\tau_{uj})) \gtrsim_s \max\{\D(w),\D(u)\}.\]
\end{lem}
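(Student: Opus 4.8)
The plan is to pull the whole picture back to the finitely many model segments $I,L$ and their thickenings $Q(I),Q(L)$ through the similarities that built the $\tau_w$'s, and then read off the separation from Lemma \ref{lem:pipelines}. Recall from the construction that $\tau_{wi}=\phi_w(\sigma_i)$, where $\sigma_1,\dots,\sigma_M$ are the segments of one of $\mathcal{J}(I),\mathcal{J}(L),\mathcal{J}_0(I),\mathcal{J}_0(L)$ and $\phi_w$ is a similarity carrying $Q(\tau)$ onto $Q(\tau_w)$; by Lemma \ref{lem:defnoftau}(2) and \eqref{eq:est1} its scaling factor is comparable, with absolute constants, to $\diam\tau_w\approx\D(w)$, and likewise for $\phi_u$ and $\D(u)$. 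Since $w$ precedes $u$, the set $Q(\tau_w)\cap Q(\tau_u)$ lies, by Lemma \ref{lem:defnoftau}(1), in the entrance $E_w$ of $Q(\tau_w)$ that is the image under $\phi_w$ of the non-origin entrance of $Q(\tau)$, and in the entrance $E_u$ of $Q(\tau_u)$ that is the image under $\phi_u$ of the origin entrance. Write $P_w,P_u$ for the $\lfloor s\rfloor$-planes carrying $E_w,E_u$.

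The main estimate for the ``$w$ side'' is: for $i\ne M$ the tile $Q(\tau_{wi})$ is far from $P_w$. Indeed $\tau_{wi}=\phi_w(\sigma_i)$ with $\sigma_i\in\{\sigma_1,\dots,\sigma_{M-1}\}$, and the entrance of $Q(\tau)$ containing an endpoint of $\sigma_M$ is exactly the non-origin entrance, so the ``resp.'' case of Lemma \ref{lem:pipelines}(4) gives $\dist(Q(\sigma_i),\widetilde P)\ge M^{-2}$, where $\widetilde P=\phi_w^{-1}(P_w)$; transporting, $\dist(Q(\tau_{wi}),P_w)\gtrsim_s\D(w)$ (here $M$ depends only on $s$). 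Now $Q(\tau_w)=\phi_w(Q(\tau))$ lies in one of the closed half-spaces cut out by $P_w$ and meets $P_w$ only inside $E_w$ (this is the elementary fact that $Q(\tau)\subseteq\mathcal{C}(\tau)$ lies on one side of the plane of each of its entrances), while $Q(\tau_u)$, being attached to $Q(\tau_w)$ only along $E_w\subseteq P_w$, lies on the other side up to a controlled overspill at the junction. Hence, for $i\ne M$,
\[
\dist\bigl(Q(\tau_{wi}),Q(\tau_{uj})\bigr)\ \ge\ \dist\bigl(Q(\tau_{wi}),Q(\tau_u)\bigr)\ \ge\ \dist\bigl(Q(\tau_{wi}),P_w\bigr)\ \gtrsim_s\ \D(w).
\]
The mirror-image argument at $u$, using the other case of Lemma \ref{lem:pipelines}(4), gives for $j\ne1$ that $\dist(Q(\tau_{wi}),Q(\tau_{uj}))\ge\dist(Q(\tau_w),Q(\tau_{uj}))\gtrsim_s\D(u)$.

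Finally I would combine the two one-sided bounds using the hypothesis ``$i\ne M$ or $j\ne1$''. If $i\ne M$ and $j\ne1$ both hold, the two inequalities together give the claim with $\max\{\D(w),\D(u)\}$ at once. If only one holds --- say $i\ne M$ but $j=1$, the case $i=M$, $j\ne1$ being symmetric --- the work above yields only $\dist\gtrsim_s\D(w)$, and it remains to bound $\D(u)$ from above by $\D(w)$. This is the step I expect to be the crux: it is precisely the statement that adjacent words have comparable diameter functions, $\D(w)\approx_s\D(u)$ (equivalently $\diam\tau_w\approx_s\diam\tau_u$), which one must either import from the regularity of the diameter functions in play or extract directly from the junction picture (the tile $Q(\tau_{u1})$ touches $Q(\tau_w)$ only near the point $E_w\cap E_u$, which forces the two tiles to be geometrically comparable there). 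Granting this, the mixed cases collapse to the single-sided bound already proved, and the proof is complete.
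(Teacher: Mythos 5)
Your argument follows the paper's proof almost exactly: both identify the single plane $P$ containing $Q(\tau_w)\cap Q(\tau_u)$ (which is simultaneously the plane of the relevant entrance of $Q(\tau_w)$ and of $Q(\tau_u)$), observe that $P$ separates the two tiles, and invoke Lemma \ref{lem:pipelines}(4) to get the one-sided separations $\dist(Q(\tau_{wi}),P)\gtrsim_s\Delta(w)$ for $i\ne M$ and $\dist(Q(\tau_{uj}),P)\gtrsim_s\Delta(u)$ for $j\ne1$. In the case $i\ne M$ and $j\ne1$ the two bounds combine to the max, and that part of your write-up matches the paper.

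Where you and the paper part ways is at the mixed cases, and there you were actually \emph{more} careful than the paper. You correctly flagged that if, say, $i\ne M$ but $j=1$, the argument only yields $\dist\gtrsim_s\Delta(w)$, and one would need $\Delta(u)\lesssim_s\Delta(w)$ to upgrade this to $\max\{\Delta(w),\Delta(u)\}$. You guessed this comparability might be ``imported from the regularity of the diameter functions.'' It cannot: $\mathscr{D}(A,M^{-1},\delta)$ constrains the ratio $\Delta(vi)/\Delta(v)$ only as a function of $v$, not across siblings' descendants, so two words adjacent in $T_k$ that split apart at level $m<k$ can have $\Delta$-values differing by a factor as large as $(M\delta)^{k-m-1}$, which is unbounded. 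The geometry confirms the obstruction: if $\Delta(u)\gg\Delta(w)$ and $j=1$, then $Q(\tau_{u1})$ actually meets $Q(\tau_w)$ (it shares an entrance with $Q(\tau_{wM})$), so $\dist(Q(\tau_{wi}),Q(\tau_{u1}))\le\diam Q(\tau_w)\lesssim_s\Delta(w)\ll\max\{\Delta(w),\Delta(u)\}$, and the claimed inequality fails outright. The symmetric scenario with $i=M$, $j\ne1$, $\Delta(w)\gg\Delta(u)$ fails the same way.

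So the lemma as stated, with the ``or,'' is slightly too strong, and the paper's one-line chain of inequalities (which silently applies \emph{both} one-sided bounds from Lemma \ref{lem:pipelines}(4)) is not justified in the mixed cases. This is harmless for the paper: the only invocation, in Case 2 of the proof of Lemma \ref{lem:snowembed}, explicitly has $i'\ne M$ \emph{and} $j'\ne1$, so only the conjunctive case is ever used. But your hesitation at the mixed cases was well placed, and the ``crux'' you identified cannot be resolved in the way you hoped; the correct repair is to restate the hypothesis as a conjunction (or weaken the conclusion to the $\Delta$-value of the non-degenerate side).
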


\begin{proof}
Set $E = Q(\tau_{w})\cap Q(\tau_u)$. By Lemma \ref{lem:defnoftau}, $E$ is contained in an entrance of $Q(\tau_w)$ and in an entrance of $Q(\tau_u)$. Let $P$ be the $\lfloor s\rfloor$-dimensional plane in $\R^{\lfloor s\rfloor+1}$ that contains $E$. Then, $P$ separates the interior of $Q(\tau_{wi})$ from the interior $Q(\tau_{uj})$. By Lemma \ref{lem:pipelines},
\[ \dist(Q(\tau_{wi}),Q(\tau_{uj})) \geq \max\{\dist(Q(\tau_{wi}),P), \dist(Q(\tau_{uj}),P)\} \gtrsim_s \max\{\D(w),\D(u)\}. \qedhere\]
\end{proof} 

For each $w\in A^*$ and $k\geq |w|$, set 
\[ \mathcal{Q}^{(k)}_w := \bigcup_{u \in A^{k}_w}Q(\tau_u),\quad \mathcal{Q}^{(k)} := \bigcup_{u\in A^k}Q(\tau_u),\quad \mathcal{Q}_w :=  \bigcap_{n\geq |w|}\mathcal{Q}^{(n)}_w.\]

By \eqref{eq:est1}, if $w \in A^{\N}$, then $\lim_{n\to\infty}\diam{Q(\tau_{w(n)})} \leq \lim_{n\to\infty}(\lfloor s \rfloor + 1)^{1/2}\delta^{n} = 0$. For each $w\in A^{\N}$ denote by $x_w$ the unique point 
\[ \{x_w\} := \bigcap_{n\in\N} Q(\tau_{w(n)}) = \bigcap_{n\in\N}\mathcal{Q}_{w(n)}.\]

Define a map $F : (\mathcal{A},d_{\mathscr{C},\D}) \to \mathcal{Q}_{\varepsilon} \subset\R^{\lfloor s \rfloor + 1}$ by $F([w]) =x_w$. 

\begin{lem}
$F$ is well-defined, and $F(\mathcal{A}_w) = \mathcal{Q}_w$ for all $w\in A^*$.
\end{lem}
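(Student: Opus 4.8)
The plan is to establish well-definedness first, then the identity $F(\mathcal{A}_w) = \mathcal{Q}_w$. For well-definedness, I need to show that if $[w] = [u]$ in $\mathcal{A}$, then $x_w = x_u$. The natural approach mirrors the proof of Lemma \ref{lem:welldef} and the lower bound part of Proposition \ref{prop:bilipschitz}: given a chain $\{A^\N_{v_1}, \dots, A^\N_{v_n}\}$ joining $w$ with $u$, the words $v_i$ satisfy $x_w \in Q(\tau_{v_1})$, $x_u \in Q(\tau_{v_n})$, and consecutive cubical thickenings $Q(\tau_{v_i})$ and $Q(\tau_{v_{i+1}})$ intersect (because combinatorial intersection of $A^\N_{v_i}$ and $A^\N_{v_{i+1}}$ forces, via Lemma \ref{lem:intersection1}, the existence of adjacent words below them, and adjacency of words at level $k$ gives $Q(\tau)$-intersection by Lemma \ref{lem:defnoftau}(1), while nesting gives $Q(\tau_{v'}) \subseteq Q(\tau_{v})$ when $v'$ extends $v$). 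Hence by the triangle inequality and \eqref{eq:est1},
\[
|x_w - x_u| \leq \sum_{i=1}^n \diam(Q(\tau_{v_i})) \lesssim_s \sum_{i=1}^n \D(v_i),
\]
and taking the infimum over all chains gives $|x_w - x_u| \lesssim_s d_{\mathscr{C},\D}([w],[u]) = 0$, so $x_w = x_u$. (Here one must be slightly careful that $\diam Q(\tau_{v_i}) \lesssim_s \D(v_i)$, which follows from Lemma \ref{lem:defnoftau}(2) since $Q(I)$ and $Q(L)$ have diameter bounded in terms of $s$.) This shows $F$ is well-defined.

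Next, for the identity $F(\mathcal{A}_w) = \mathcal{Q}_w$, I would prove the two inclusions separately. For $F(\mathcal{A}_w) \subseteq \mathcal{Q}_w$: given $[v] \in \mathcal{A}_w$, we may choose the representative $v \in A^\N_w$, so $v(n) \in A^n_w$ for all $n \geq |w|$, hence $x_v \in Q(\tau_{v(n)}) \subseteq \mathcal{Q}^{(n)}_w$ for every $n \geq |w|$, giving $x_v \in \mathcal{Q}_w$. For the reverse inclusion $\mathcal{Q}_w \subseteq F(\mathcal{A}_w)$: take $y \in \mathcal{Q}_w = \bigcap_{n \geq |w|} \mathcal{Q}^{(n)}_w$. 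For each $n \geq |w|$, since $y \in \mathcal{Q}^{(n)}_w = \bigcup_{u \in A^n_w} Q(\tau_u)$, there is some $u_n \in A^n_w$ with $y \in Q(\tau_{u_n})$. I want to extract from the $u_n$ a single infinite word $v \in A^\N_w$ with $y \in Q(\tau_{v(n)})$ for all $n$; then $\{x_v\} = \bigcap_n Q(\tau_{v(n)}) \ni y$ forces $x_v = y$ and $[v] \in \mathcal{A}_w$, so $y = F([v]) \in F(\mathcal{A}_w)$. To get such a $v$: the cubes $Q(\tau_u)$ for $u \in A^n$ need not be nested in $n$, so I cannot directly take $v = $ limit of $u_n$. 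Instead I argue by a compactness/tree argument: for each $n$, let $S_n = \{u \in A^n : y \in Q(\tau_u)\} \neq \emptyset$ (using that $y \in \mathcal{Q}^{(n)} \supseteq \mathcal{Q}^{(n)}_w$, or rather working within $A^n_w$), and observe that if $u \in S_{n+1}$ then $u^\uparrow \in S_n$ because $Q(\tau_u) \subseteq Q(\tau_{u^\uparrow})$ by the containment in Lemma \ref{lem:defnoftau}(1) (or by its proof, via $Q(\sigma_i) \subseteq Q(\tau)$ from Lemma \ref{lem:pipelines}(1)). Thus the $S_n$ form a decreasing sequence of levels of a finitely-branching tree, nonempty at every level, so by König's lemma there is an infinite word $v$ with $v(n) \in S_n$ for all $n$, i.e. $y \in Q(\tau_{v(n)})$ for all $n$; and since $v(|w|) \in S_{|w|} \subseteq A^{|w|}_w$ we get $v \in A^\N_w$.

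The main obstacle is the reverse inclusion $\mathcal{Q}_w \subseteq F(\mathcal{A}_w)$, specifically the extraction of a single infinite word compatible with a given point $y$. The subtlety is that the cubical thickenings $Q(\tau_u)$ at a fixed level $n$ are not pairwise disjoint (adjacent words give overlapping thickenings at their entrances), and they are not nested across levels in an a priori obvious way — so one genuinely needs the nesting property $Q(\tau_{ui}) \subseteq Q(\tau_u)$ (which is Lemma \ref{lem:pipelines}(1), applied within $\phi_w$) together with a König's lemma argument on the finite alphabet $A$. Everything else is a routine application of Lemma \ref{lem:defnoftau}, \eqref{eq:est1}, and the definition of the chain metric, closely paralleling Lemma \ref{lem:welldef} and Proposition \ref{prop:bilipschitz}.
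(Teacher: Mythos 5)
Your proof is correct, and it mostly tracks the paper's argument, but the well-definedness step is handled in a genuinely different way. The paper invokes Lemma~\ref{lem:arcproperties} to pin down exactly which pairs of infinite words get identified — namely, if $[w]=[v]$ and $w\neq v$ then (after locating the first level $n$ where they differ) $w=w(n)M^\infty$ and $v=v(n)1^\infty$ for adjacent $w(n),v(n)$ — and then notes that for each $k$ the thickenings $Q(\tau_{w(n)M^k})$ and $Q(\tau_{v(n)1^k})$ share an entrance, whose common nested intersection pins down $x_w=x_v$. You instead bound $|x_w-x_u|$ directly by summing $\diam Q(\tau_{v_i})\lesssim_s\Delta(v_i)$ over a chain joining $w$ to $u$ and taking the infimum over chains; this is valid (the triangle inequality works since consecutive thickenings intersect, by Lemma~\ref{lem:intersection1}, Lemma~\ref{lem:defnoftau}(1), and nesting), and it is arguably more robust since it does not use the special combinatorial structure of quasi-arcs captured in Lemma~\ref{lem:arcproperties}. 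For the identity $F(\mathcal{A}_w)=\mathcal{Q}_w$, both you and the paper use the same idea: the forward inclusion is immediate from nesting of the $Q(\tau_{v(n)})$'s, and the reverse inclusion extracts an infinite word compatible with a given point $y\in\mathcal{Q}_w$ by a finitely-branching-tree / K\"onig's-lemma argument built on the nesting $Q(\tau_{ui})\subseteq Q(\tau_u)$; the paper phrases this as an inductive choice of children $v_k$ with $p\in\mathcal{Q}_{v_k}$, while you phrase it as K\"onig's lemma applied to the sets $S_n=\{u\in A^n_w : y\in Q(\tau_u)\}$. Your version makes the pigeonhole step explicit, which the paper leaves implicit.
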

\begin{proof}
Let $[w]=[v]\in\mathcal{A}$, with $w\neq v$. By Lemma \ref{lem:arcproperties}, there is a $n\in\mathbb{N}$ and $u,u'$ adjacent in $A^n$ such that $w=uM^\infty$ and $v=u'1^\infty$ (or vice versa).

For each $n\in\N$, $Q(\tau_{uM^n})$ intersects with $Q(\tau_{u'1^n})$ on a common entrance. Denote by $p$ the unique point in $\bigcap_{n\in\N}(Q(\tau_{uM^n})\cap Q(\tau_{u'1^n}))$. Then $Q(\tau_{w(k)})$ and $Q(\tau_{v(k)})$ both contain $p$ for all $k$, and hence $F([v])=F([w])=p$. So $F$ is well-defined. 

For the second part, fix $n\in\mathbb{N}$ and $w\in A^n$. For $k\geq n$,
note that $\{\mathcal{Q}_w^{(k)}\}$ converges in Hausdorff distance to $\mathcal{Q}_w$. By construction, each point of $F([\mathcal{A}_w])$ is contained in the Hausdorff limit of the sets $\mathcal{Q}_w^{(k)}$, and hence in $\mathcal{Q}_w$. Thus, $F(\mathcal{A}_w) \subseteq \mathcal{Q}_w$. 

For the other inclusion, fix $p\in \mathcal{Q}_w$. Let $v_0=w$. For each $k\geq 1$, we inductively set $v_k\in A^{|w|+k}_{v_{k-1}} \subseteq A^{|w|+k}_w$ to be a word with $p\in \mathcal{Q}_{v_k}$. Let $v$ be the infinite word such that $v(|w|+k)=v_k$ for all $k\geq 0$. Then immediately $p=F([v])$. Therefore, $\mathcal{Q}_w\subseteq F(\mathcal{A}_w)$.
\end{proof}

It remains to show now that $F$ is $L$-bi-Lipschitz with $L$ depending only on $s$.

\begin{proof}[{Proof of Lemma \ref{lem:snowembed}}]
Fix distinct $[w],[w']\in \mathcal{A}$. Without loss of generality, assume that $w$ precedes $w'$ in lexicographic order. 
Let $\sigma$ be the unique arc in $\mathcal{A}$ whose endpoints are $[w]$ and $[w']$. Let also $w_0\in A^*$ be the longest word such that $[w],[w']\in \mathcal{A}_{w_0}$. Let also $i,j \in A$ such that $[w]\in \mathcal{A}_{w_0i}$ and $[w']\in \mathcal{A}_{w_0j}$. By maximality of $w_0$ we have that $i\neq j$. We consider the following possible two cases.

\emph{Case 1.} Suppose that $|i-j|>1$. On one hand, there exists $i' \in A$ such that $\mathcal{A}_{w_0i'} \subset \sigma$ which implies that
\[M^{-1}\D(w_0) \leq \D(w_0i') \leq \diam{\sigma}  = d_{\mathscr{C},\D}([w],[w']) \leq \D(w_0).\]
On the other hand, $F([w]) \in Q(\tau_{w_0i})$, $F([w']) \in Q(\tau_{w_0j})$ and by Lemma \ref{lem:pipelines},
\[M^{-2}\D(w_0) \leq \dist(Q(\tau_{w_0i}),Q(\tau_{w_0j})) \leq |F([w])-F([w'])| \leq \diam{Q(\tau_{w_0})} \leq (\lfloor s\rfloor +1)^{1/2}\D(w_0).\]
Therefore, $d_{\mathscr{C},\D}([w],[w']) \approx_{s} \D(w_0) \approx_s |F([w])-F([w'])|$. This completes the proof in Case 1.

\emph{Case 2.} Suppose that $|i-j|=1$. Without loss of generality, assume that $j=i+1$. Let $k$ and $l$ be the unique integers such that 
\[ \mathcal{A}_{w_0iM^k}\cup \mathcal{A}_{w_0j1^l} \subset \sigma \subset \mathcal{A}_{w_0iM^{k-1}}\cup \mathcal{A}_{w_0j1^{l-1}}.\]
Let also $i',j'\in A$ such that $[w] \in \mathcal{A}_{w_0iM^{k-1}i'}$ and $[w'] \in \mathcal{A}_{w_0j1^{l-1}j'}$. Note that $i' \neq M$ while $j' \neq 1$.
On one hand, using the $1$-bounded turning property of $(\cA, d_{\mathscr{C},\D})$ and Lemma \ref{lem:arcproperties}, we have
\begin{align*} 
\max\{\D(w_0iM^k), \D(w_0j1^l)\}  
&\leq M d_{\mathscr{C},\D}([w],[w'])\\
&\leq M \diam(\mathcal{A}_{w_0iM^{k-1}}\cup \mathcal{A}_{w_0j1^{l-1}})\\
&\leq 2M\max\{ \D(w_0iM^{k-1}) , \D(w_0j1^{l-1})\} \\
&\leq 2M^2\max\{\D(w_0iM^k), \D(w_0j1^l)\}.
\end{align*}

On the other hand, by Lemma \ref{lem:pipes2},
\begin{align*} 
|F([w])-F([w'])| &\lesssim \max\{\diam{Q(\tau_{w_0iM^{k-1}})}, \diam{ Q(\tau_{w_0j1^{l-1}})}\}\\
&\lesssim \max\{\D(w_0iM^{k-1}), \D(w_0j1^{l-1})\}\\
&\lesssim \dist( Q(\tau_{w_0iM^{k-1}i'}), Q(\tau_{w_0iM^{k-1}i'}))\\
&\leq |F([w])-F([w'])|,
\end{align*}
with implied constants depending on the parameter $s$.

Therefore,
\[ |F([w])-F([w'])| \approx_s \max\{\D(w_0iM^{k-1}), \D(w_0j1^{l-1})\} \approx_{s} d_{\mathscr{C},\D}([w],[w']).\]
This completes the proof in Case 2 and the proof of the lemma.
\end{proof}

\subsection{Proof of Theorem \ref{thm:mainembed2}}\label{sec:Seo-method}

Here we prove Theorem \ref{thm:mainembed} using two bi-Lipschitz embedding results of Lang and Plaut \cite{LP} and of Seo \cite{Seo}. The first result says that one can ``glue" two bi-Lipschitz embeddings into a single embedding.

\begin{thm}[{\cite[Theorem 3.2]{LP}}]\label{thm:welding}
Let $X$ be a metric space and let $X_1, X_2 \subset X$ be closed subsets such that $X = X_1 \cup X_2$. If $X_1$ $L_1$-bi-Lipschitz embeds in $\R^{n_1}$ and $X_2$ $L_2$-bi-Lipschitz embeds in $\R^{n_2}$, then $X$ $L$-bi-Lipschitz embeds in $\R^{n_1+n_2+1}$ with $L$ depending on $L_1$, $L_2$, $n_1$ and $n_2$.
\end{thm}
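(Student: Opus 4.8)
The plan is to assemble the two given bi-Lipschitz embeddings $f_1\colon X_1\to\R^{n_1}$ and $f_2\colon X_2\to\R^{n_2}$ into a single map into $\R^{n_1+n_2+1}$, using one extra scalar coordinate to break the symmetry between the two pieces. First I would extend $f_1$ to an $L_1$-Lipschitz map $\bar f_1\colon X\to\R^{n_1}$ --- by Kirszbraun's theorem, or coordinatewise by the McShane--Whitney extension at the cost of a dimensional factor --- and likewise extend $f_2$ to an $L_2$-Lipschitz map $\bar f_2\colon X\to\R^{n_2}$. Then I would set
\[ F(x) := \bigl(\bar f_1(x),\ \bar f_2(x),\ \dist(x,X_1)\bigr)\in\R^{n_1}\times\R^{n_2}\times\R. \]
Each of the three blocks is Lipschitz (with constants controlled by $L_1$, $L_2$, and $1$), so $F$ is Lipschitz with constant depending only on $L_1,L_2$ (and on $n_1,n_2$ if one uses the McShane extension). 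The content is the lower bound.

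For the lower bound I would fix $x,y\in X$ and, since $X=X_1\cup X_2$, after relabeling the pieces reduce to three cases. If $x,y\in X_1$, the first block already gives $|F(x)-F(y)|\ge|f_1(x)-f_1(y)|\ge L_1^{-1}d(x,y)$, and symmetrically the second block handles $x,y\in X_2$. The only genuine case is $x\in X_1$, $y\in X_2\setminus X_1$. Here I would fix a threshold $\eta=\tfrac{1}{2(1+L_1^2)}$ and split once more. If $\dist(y,X_1)\ge\eta\,d(x,y)$, the last coordinate alone suffices: since $\dist(x,X_1)=0$, $|F(x)-F(y)|\ge\dist(y,X_1)\ge\eta\,d(x,y)$. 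If instead $\dist(y,X_1)<\eta\,d(x,y)$, choose $z\in X_1$ with $d(y,z)<\eta\,d(x,y)$; then $d(x,z)\ge(1-\eta)d(x,y)$, and since $\bar f_1$ agrees with $f_1$ on $X_1\ni x,z$ and is $L_1$-Lipschitz,
\[ |F(x)-F(y)|\ge|\bar f_1(x)-\bar f_1(y)|\ge L_1^{-1}d(x,z)-L_1 d(z,y)\ge\Bigl(L_1^{-1}(1-\eta)-L_1\eta\Bigr)d(x,y)=\frac{1}{2L_1}\,d(x,y), \]
by the choice of $\eta$. Taking the worst of the cases gives $|F(x)-F(y)|\ge c\,d(x,y)$ with $c$ depending only on $L_1,L_2$, so $F$ is $L$-bi-Lipschitz with $L=L(L_1,L_2,n_1,n_2)$.

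The hard part is precisely this cross-case: the Lipschitz extensions $\bar f_1,\bar f_2$ carry no lower bound away from $X_1$, resp.\ $X_2$, so some device is needed to compare $d(x,y)$ with the images when $x,y$ lie in different pieces. The coordinate $\dist(\cdot,X_1)$ supplies exactly this --- it measures how deep $y$ sits inside $X_2$ --- and the dichotomy on its size lets us either read off the lower bound directly or transfer the estimate to a true pair of points of $X_1$ via the triangle inequality, which forces $\eta$ to be small relative to $L_1$. A minor point to check: if $X$ is not proper the infimum defining $\dist(y,X_1)$ need not be attained, but this is harmless, as an almost-minimizer $z$ with $d(y,z)$ arbitrarily close to $\dist(y,X_1)$ works verbatim above.
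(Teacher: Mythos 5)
The paper does not give its own proof of this statement: it is imported verbatim as Theorem 3.2 of Lang and Plaut, so there is no in-paper argument to compare against, and your proposal has to stand on its own. On its own it is essentially sound --- extend both embeddings Lipschitz-ly over all of $X$, adjoin the scalar coordinate $\dist(\cdot,X_1)$, and split the lower bound into the two same-piece cases plus a cross case governed by a dichotomy on $\dist(y,X_1)$ --- and this is the standard welding strategy, close to what Lang--Plaut do. A few small corrections and clarifications:

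\emph{Kirszbraun does not apply.} Kirszbraun's theorem requires the domain to be a subset of a Hilbert space, but your $X$ is an arbitrary metric space. Only the coordinatewise McShane--Whitney extension is available, so $\bar f_i$ has Lipschitz constant $\approx\sqrt{n_i}\,L_i$ rather than $L_i$. This is harmless since the theorem allows $L$ to depend on $n_1,n_2$, but you should drop the Kirszbraun option rather than offer it as a valid alternative.

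\emph{Adjust $\eta$ accordingly.} In the subcase $\dist(y,X_1)<\eta\,d(x,y)$, the subtracted term $|\bar f_1(z)-\bar f_1(y)|$ is bounded by $\operatorname{Lip}(\bar f_1)\,d(z,y)$, not by $L_1\,d(z,y)$. So the threshold should be $\eta\asymp\bigl(1+\sqrt{n_1}\,L_1^2\bigr)^{-1}$, giving a final lower bound of order $1/(L_1\sqrt{n_1})$ in that case. Again harmless, but worth stating correctly.

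\emph{Relabel the points, not the pieces.} Once $F$ is fixed (with the specific coordinate $\dist(\cdot,X_1)$), the cross case with $x\in X_2\setminus X_1$, $y\in X_1\setminus X_2$ is handled by swapping the roles of $x$ and $y$ in your argument; swapping $X_1$ and $X_2$ would change $F$ itself.

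\emph{The approximation remark is not needed.} In the subcase you invoke it, $\dist(y,X_1)<\eta\,d(x,y)$ is a strict inequality, so a point $z\in X_1$ with $d(y,z)<\eta\,d(x,y)$ exists outright, no almost-minimizer or properness discussion required.

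None of these affects the validity of the argument.
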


Using Theorem \ref{thm:welding} we show that balls of $X$ that are appropriately far from $\LL(X)$ admit a bi-Lipschitz embedding into some Euclidean space quantitatively.

\begin{lem}\label{lem:Whitneycube}
Let $X$ be a doubling, bounded turning tree. For every $0<\beta<1$, there exist $L$ and $N$ depending only on the doubling constant of $X$, the bounded turning constant of $X$, and $\beta$ such that if $B(x,r)$ is a ball with $x\in X\setminus \LL(X)$ and $ r < \beta\dist(x,\LL(X))$, then $B(x,r)$ admits an $L$-bi-Lipschitz embedding into $\R^{N}$.
\end{lem}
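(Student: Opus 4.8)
The plan is to trap $B(x,r)$ inside a subtree $Y\subseteq X$ that has only boundedly many leaves, write $Y$ as a union of boundedly many quasi-arcs, embed each quasi-arc via Proposition~\ref{prop:qcircles}, and weld the embeddings together with Theorem~\ref{thm:welding}. Throughout, $N$ denotes the doubling constant of $X$.

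\textbf{Step 1: the subtree $Y$.} I would let $Y$ be the convex hull of $\bar B(x,r)$ in $X$, i.e.\ the smallest subtree of $X$ containing $\bar B(x,r)$; concretely $Y=\bigcup\{[a,b]:a,b\in\bar B(x,r)\}$, where $[a,b]$ is the arc of $X$ with endpoints $a,b$. Standard facts about dendrites give that $Y$ is a compact subtree; it contains $B(x,r)$; it is $C$-bounded turning (the arc of $Y$ joining two of its points is the arc of $X$ joining them); it is doubling with constant controlled by $N$; and $\diam[a,b]\le Cd(a,b)\le 2Cr$ forces $Y\subseteq\bar B(x,(2C+1)r)$. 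Also, every leaf $\ell$ of $Y$ must be an \emph{endpoint} of some arc $[a,b]\subseteq Y$ with $a,b\in\bar B(x,r)$ (otherwise $\ell$ is interior to such an arc and has two $Y$-directions), so $\ell\in\bar B(x,r)$, whence $\dist(\ell,\LL(X))\ge\dist(x,\LL(X))-r>r(1-\beta)/\beta>0$ and $\ell\notin\LL(X)$.

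\textbf{Step 2: $Y$ has at most $K=K(N,C,\beta)$ leaves.} This is the crux. Fix a leaf $\ell$ of $Y$. Since $\ell\notin\LL(X)$, the set $X\setminus\{\ell\}$ is disconnected; since $Y\setminus\{\ell\}$ is connected it lies in one component, so there is a component $W_\ell$ of $X\setminus\{\ell\}$ with $W_\ell\cap Y=\emptyset$. The subtree $\overline{W_\ell}=W_\ell\cup\{\ell\}$ is non-degenerate, hence has a leaf $m_\ell\ne\ell$; a short connectedness argument (write $X\setminus\{m_\ell\}=(\overline{W_\ell}\setminus\{m_\ell\})\cup(X\setminus W_\ell)$, two connected sets sharing $\ell$) shows $m_\ell\in\LL(X)$, so $d(\ell,m_\ell)\ge\dist(\ell,\LL(X))>r(1-\beta)/\beta$. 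Set $\rho=\rho(\beta):=\min\{(1-\beta)/(2\beta),1\}\in(0,1]$ and pick $p_\ell$ on the arc $[\ell,m_\ell]\subseteq\overline{W_\ell}$ with $d(\ell,p_\ell)=\rho r$ (possible since $\rho r<d(\ell,m_\ell)$); then $p_\ell\in W_\ell$ and $p_\ell\in\bar B(x,2r)$. For distinct leaves $\ell,\ell'$ of $Y$ one checks $W_\ell\cap W_{\ell'}=\emptyset$, so the arc $[p_\ell,p_{\ell'}]$ passes through $\ell$; hence $\diam[p_\ell,p_{\ell'}]\ge d(p_\ell,\ell)=\rho r$ and, by $C$-bounded turning of $X$, $d(p_\ell,p_{\ell'})\ge\rho r/C$. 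Thus $\{p_\ell\}$ is a $(\rho r/C)$-separated subset of $\bar B(x,2r)$; since $2r/(\rho r/C)=2C/\rho$ is independent of $r$, the doubling property of $X$ bounds the number of such points — and therefore the number of leaves of $Y$ — by some $K=K(N,C,\beta)$.

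\textbf{Step 3: decomposition and welding.} Discard the trivial case where $Y$ is a single point (then $B(x,r)$ embeds isometrically in $\R$). Otherwise let $\ell_1,\dots,\ell_k$, $k\le K$, be the leaves of $Y$. Since every point of a dendrite lies on an arc between two of its leaves, $Y=\bigcup_{1\le i<j\le k}[\ell_i,\ell_j]$ is a union of at most $\binom{K}{2}$ arcs. Each $[\ell_i,\ell_j]$, with the inherited metric, is a $C$-bounded turning arc that is doubling with constant controlled by $N$, hence $(C_0,s)$-homogeneous for some $C_0,s$ depending only on $N$; by Proposition~\ref{prop:qcircles} it admits an $L_0$-bi-Lipschitz embedding into $\R^{\lfloor s\rfloor+1}$ with $L_0=L_0(N,C)$. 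Writing $Y=[\ell_1,\ell_2]\cup(\text{rest})$ and iterating Theorem~\ref{thm:welding} at most $\binom{K}{2}$ times — each intermediate piece being a finite union of arcs, hence closed in $Y$ — produces an $L$-bi-Lipschitz embedding of $Y$ into some $\R^{N'}$, with $N'$ and $L$ depending only on $K$, $s$, and $L_0$, i.e.\ only on $N$, $C$, and $\beta$. Restricting to $B(x,r)\subseteq Y$ completes the proof.

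The main obstacle is Step~2: producing, for every leaf of $Y$, an honest leaf of $X$ at a definite distance inside a branch avoiding $Y$, and then converting this into a uniformly separated family inside a fixed ball so that doubling gives a bound independent of $x$ and $r$. A secondary, more routine difficulty is the dendrite bookkeeping — that convex hulls of closed sets are compact subtrees, that the leaves of $Y$ behave as claimed, and that a dendrite with finitely many leaves is a finite union of arcs between them — all of which are standard.
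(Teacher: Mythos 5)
Your proof is correct, but it takes a genuinely different route from the paper's, and the structural fact you isolate is nice. The paper never introduces the convex hull $Y$: it works directly with the family $\Gamma$ of arcs from the center $x$ to leaves of $X$, records for each $\gamma\in\Gamma$ the "exit point" $x_\gamma$ from $\overline{B}(x,r)$ and the exit point $y_\gamma$ from the larger ball $\overline{B}(x,r/\beta)$, and then shows the $y_\gamma$'s with distinct $x_\gamma$'s are $\eta r$-separated (because $[y_\gamma,y_{\gamma'}]$ passes through $x_\gamma$, forcing $\diam[y_\gamma,y_{\gamma'}]\geq(1/\beta-1)r$, and then bounded turning gives separation). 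Doubling bounds the number of distinct $x_\gamma$, hence the number of arcs $[x,x_\gamma]$, and these arcs cover $B$. Your argument instead bounds the number of leaves of the convex hull $Y\supseteq B(x,r)$ by planting a marker $p_\ell$ in a branch of $X\setminus\{\ell\}$ that avoids $Y$; the separation of the $p_\ell$'s comes from the same bounded-turning-through-a-midpoint mechanism (here the midpoint is the leaf $\ell$ itself rather than the exit $x_\gamma$), and then $Y$ is written as the $\binom{K}{2}$ arcs between its leaves. Both versions finish identically via Proposition~\ref{prop:qcircles} and Theorem~\ref{thm:welding}. The paper's argument is a bit more direct and avoids dendrite bookkeeping (compactness of convex hulls, location of leaves of $Y$, the decomposition of a finitely-leaved dendrite into arcs), all of which you invoke as "standard" — they are, but they require continuity of $(a,b)\mapsto[a,b]$ in the Hausdorff metric and some care. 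Your version has the advantage of producing a cleaner structural statement (the convex hull of a ball at depth $\beta$ from $\LL(X)$ is a subtree with at most $K(N,C,\beta)$ leaves), and your markers stay inside $\overline{B}(x,2r)$ rather than going out to radius $r/\beta$, which is a cosmetic improvement. If you keep this version, I'd suggest either citing the arc-continuity fact explicitly (e.g.\ from Charatonik's survey, as the paper does elsewhere) or replacing $Y$ by the closure of the union of arcs, which is manifestly compact; the rest of your reasoning about leaves of $Y$ goes through unchanged.
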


\begin{proof}
Fix $0<\beta<1$. Let $B=\overline{B}(x,r)$ be a ball with $x\in X\setminus\LL(X)$ and $ r < \beta\dist(x,\LL(X))$. Let $D$ denote the doubling constant of $X$ and $H$ the bounded turning constant. We will argue that $B$ is contained in a union of at most $K=K(\beta, D,H)$ quasi-arcs. By  Proposition \ref{prop:qcircles} and Theorem \ref{thm:welding}, the latter implies that $B$ admits an $L$-bi-Lipschitz embedding into $\R^{N}$ with $N$ and $L$ depending only on $K$ and $D$, hence only on $\beta$, $D$ and $H$.

Let $\Gamma$ be the collection of all arcs in $X$ that join $x$ to a leaf of $X$. For each $\gamma\in \Gamma$, parametrize it by a continuous $\gamma\colon [0,1]\rightarrow X$ such that $\gamma(0)=x$ and $\gamma(1)\in\LL(X)$. Let $x_\gamma=\gamma(t_\gamma)$, where
$$ t_\gamma = \sup \{t\in [0,1] : \gamma(t)\in B\}.$$
In other words, $x_\gamma$ is the ``last'' point on $\gamma$ contained in $B$. Similarly, let $y_\gamma$ denote the last point on $\gamma$ contained in $\overline{B}(x,r/\beta)$. Note that $B$ and $\overline{B}(x,r/\beta)$ are disjoint from $\LL(X)$ by assumption, so the points $x_\gamma$ and $y_\gamma$ must exist for each $\gamma\in\Gamma$. 

Two properties of these points are clear:
\begin{enumerate}
\item If $x_\gamma \neq x_{\gamma'}$, then $y_\gamma \neq y_{\gamma'}$. In particular, 
\begin{equation}\label{eq:card}
\text{card}{\{x_{\g} : \g \in \G\}} \leq \text{card}{\{y_{\g} : \g \in \G\}}.
\end{equation}
\item We have $d(x_\gamma, x)=r$ and $d(y_\gamma,x)=r/\beta$ for each $\gamma\in \Gamma$.
\end{enumerate}

Finally, let $\Gamma_0$ be the collection of arcs joining $x$ to $x_\gamma$, as $\gamma$ ranges in  $\Gamma$. We will show that $\Gamma_0$ contains a controlled finite number of distinct elements, by showing that the collection $\{x_\gamma : \gamma\in \Gamma\}$ contains a controlled number of distinct elements. Since $B$ is contained in the union of all arcs of $\Gamma_0$, this will complete the proof.

Suppose $\gamma, \gamma'\in \Gamma$ have $x_\gamma \neq x_{\gamma'}$. We then claim that 
$$ d(y_\gamma, y_{\gamma'}) \geq \eta r,$$
for some constant $\eta$ depending only on $D$ and $H$. 

Indeed, the arc $[y_\gamma, y_{\gamma'}]$ must contain $x_\gamma$, and hence its diameter is at least
$$ d(y_\gamma, x_\gamma) \geq \left(\frac{1}{\beta} - 1\right)r,$$
and so
$$ d(y_\gamma, y_{\gamma'}) \geq \frac{1}{H}\diam([y_\gamma, y_{\gamma'}]) \geq \frac{1}{H} \left(\frac{1}{\beta} - 1\right)r = \eta r.$$

The total number of different arcs in $\Gamma_0$ is controlled by the total number of distinct $x_\gamma$, which is controlled by $\text{card}\{y_\gamma: \gamma\in\Gamma\}$ by \eqref{eq:card}. The points $y_\gamma$ form an $\eta r$-separated set in $\overline{B}(x,r/\beta)$, and so the cardinality of this set is bounded by a constant $K$ depending only on $\eta$, $\beta$, and the doubling constant $D$.
\end{proof}

The second bi-Lipschitz embedding result that we need is Seo's general bi-Lipschitz embeddability criterion \cite{Seo}. In fact, we use a simplified version of Seo's result presented by Romney in \cite[Theorem 2.2]{Romney}. Before stating the result we recall a generalized notion of Whitney decomposition for metric measure spaces due to Christ \cite{Christ} and Seo \cite{Seo}.

\begin{definition}[{\cite{Christ,Seo, Romney}}]\label{def:ChristWhitney}
Let $(X,d,\mu)$ be a metric measure space and let $\Omega$ be an open proper subset of $X$. A collection $\mathscr{Q}$ of open subsets of $\Omega$ is a \emph{Christ-Whitney decomposition} of $\Omega$ if there exist constants $\d\in(0,1)$, $C_1>c_0>0$, and $a\geq 4$ such that the following properties are satisfied:
\begin{enumerate}
\item $\bigcup_{Q\in\mathscr{Q}} Q$ is dense in $\Omega$.
\item For every $Q,Q' \in \mathscr{Q}$ with $Q\neq Q'$ we have $Q\cap Q' = \emptyset$.
\item For every $Q\in \mathscr{Q}$, there exists $x\in \Omega$ and $k\in\mathbb{Z}$ such that
\[ B(x,c_0 \d^k) \subset Q \subset B(x,C_1\d^k)\]
and
\[ (a-2)C_1\delta^k \leq \dist(Q,X\setminus\Omega) \leq \left(\frac{a C_1}{\delta}\right)\delta^k.\]
\end{enumerate}
\end{definition}

\begin{lem}[{\cite[Theorem 11]{Christ}, \cite[Lemma 2.1]{Seo}, \cite[Lemma 2.5]{Romney}}]\label{lem:CWdecomp}
Let $X$ be a doubling metric space and $Y$ be a nonempty closed proper subset of $X$. Then $X\setminus Y$ has a Christ-Whitney decomposition, with constants $\delta, c_0, C_1, a$ absolute.
\end{lem}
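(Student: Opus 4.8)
The plan is to combine two classical ingredients: Christ's construction of a system of ``dyadic cubes'' adapted to $X$, and the standard Whitney-type stopping-time selection of cubes whose size is comparable to their distance from $Y$. Throughout we write $\Omega = X\setminus Y$, which is nonempty (as $Y\neq X$) and a proper open subset of $X$ (as $Y\neq\emptyset$ is closed).

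First, since $X$ is doubling, it carries a doubling Borel measure $\mu$: when $X$ is complete this is a standard existence theorem, and in general one passes to the completion $\overline{X}$, which remains doubling and contains $X$ as a dense subset, performs the construction there, and then restricts. With $(X,d,\mu)$ in hand, Christ's theorem \cite{Christ} provides absolute constants $\delta\in(0,1)$ and $0<c_0<C_1$ (depending only on the doubling constant) together with a family $\{Q_\alpha^k : k\in\Z,\ \alpha\in I_k\}$ of Borel sets such that: for each fixed $k$ the sets $Q_\alpha^k$ are pairwise disjoint and cover $X$ up to a $\mu$-null set; the family is nested across generations, so each $Q_\beta^{k+1}$ is contained in a unique ``parent'' $Q_\alpha^k$; and each $Q_\alpha^k$ satisfies $B(z_\alpha^k,c_0\delta^k)\subseteq Q_\alpha^k\subseteq B(z_\alpha^k,C_1\delta^k)$ for some $z_\alpha^k\in X$. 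One may also arrange that the interiors of the generation-$k$ cubes are dense in $X$.

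Next, fix an absolute $a\geq 4$ and define $\mathscr{Q}$ to be the collection of cubes $Q=Q_\alpha^k$ that satisfy
\[ (a-2)C_1\delta^k \leq \dist(Q,X\setminus\Omega) \]
but whose parent does not. Since $X\setminus\Omega=Y\neq\emptyset$, these distances are finite; and since any $x\in\Omega$ has $\dist(x,Y)>0$ while the generation-$k$ cube through $x$ has diameter at most $2C_1\delta^k\to 0$, the displayed inequality holds at every sufficiently large generation through $x$ and fails at every sufficiently negative one. Choosing $a$ (equivalently, widening the admissible range of the ratio $\dist(Q,Y)/\delta^k$, exactly as in \cite{Seo,Romney}) so that this range cannot be re-entered after being left, we get that each point of $\Omega$ lies in exactly one cube of $\mathscr{Q}$, apart from the $\mu$-null set of cube boundaries; combined with the density of cube interiors this yields properties (1) and (2) of Definition \ref{def:ChristWhitney}. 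The ball-sandwiching condition in (3) is exactly Christ's property, and the two-sided distance estimate in (3) follows from the selection: the lower bound is the displayed inequality, while the upper bound comes from the parent $Q'=Q_\beta^{k-1}$ failing it, i.e.\ $\dist(Q',X\setminus\Omega)<(a-2)C_1\delta^{k-1}$, together with $\dist(Q,X\setminus\Omega)\leq\dist(Q',X\setminus\Omega)$ and $\delta<1$, giving the bound $(aC_1/\delta)\delta^k$ after the same adjustment of constants.

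The main obstacle --- really the only delicate point --- is the passage from ``doubling metric space'' to ``doubling metric measure space'', since Christ's cubes in \cite{Christ} are built from a doubling measure; this is handled by the existence of doubling measures on (complete) doubling metric spaces and the observation that density of $\bigcup_{Q\in\mathscr{Q}}Q$ in $\Omega$ and all the metric estimates are unaffected by restricting from $\overline{X}$ back to the dense subset $X$. Since $\delta$, $c_0$, $C_1$ come from Christ's construction and $a$ is chosen as an absolute constant, all resulting parameters depend only on the doubling constant, which is the sense in which they are ``absolute''. For explicit write-ups along exactly these lines, see \cite[Lemma 2.1]{Seo} and \cite[Lemma 2.5]{Romney}.
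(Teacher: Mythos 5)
The paper does not prove this lemma; it is cited directly from Christ, Seo, and Romney, so there is no ``paper's own proof'' to compare against. Your sketch follows the standard route those references take --- build a doubling measure (via Vol'berg--Konyagin / Luukkainen--Saksman, passing to the completion if needed), invoke Christ's dyadic cubes, then run a Whitney stopping-time selection --- and the overall structure is sound.

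One step is stated backwards. You write $\dist(Q,X\setminus\Omega)\leq\dist(Q',X\setminus\Omega)$ where $Q'$ is the parent of $Q$, but since $Q\subset Q'$ the infimum defining $\dist(Q,Y)$ runs over a \emph{smaller} set, so in fact $\dist(Q,Y)\geq\dist(Q',Y)$. The inequality you actually need for the upper bound in Definition \ref{def:ChristWhitney}(3) is $\dist(Q,Y)\leq\dist(Q',Y)+\diam(Q')$, which, combined with the parent's failure of the selection criterion and $\diam(Q')\leq 2C_1\delta^{k-1}$, gives $\dist(Q,Y)<(a-2)C_1\delta^{k-1}+2C_1\delta^{k-1}=(aC_1/\delta)\delta^k$; the constant $a$ appears on the nose, not after a further ``adjustment.'' Also worth noting: you invoke a choice of $a$ so that the admissible range of $k$ ``cannot be re-entered after being left,'' but in fact this is automatic for any $a$, since $Q^{k+1}\subset Q^k$ forces $\dist(Q^{k+1},Y)\geq\dist(Q^k,Y)\geq(a-2)C_1\delta^k>(a-2)C_1\delta^{k+1}$, so the set of good generations through a fixed point of $\Omega$ is always an interval $[k_0,\infty)$. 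With these two points cleaned up, the argument is essentially the cited one.
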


\begin{thm}[{\cite[Theorem 1.1]{Seo}, \cite[Theorem 2.2]{Romney}}]\label{thm:seo}
Let $X$ be a complete metric measure space. Then $X$ admits an $L$-bi-Lipschitz embedding into some Euclidean space $\R^M$ if and only if the following conditions hold for some constants $L_1, L_2, M_1, M_2$:
\begin{enumerate}
\item\label{eq:seo1} $X$ is doubling.
\item\label{eq:seo2} There is a non-empty closed subset of $Y\subseteq X$ which admits an $L_1$-bi-Lipschitz embedding into some $\R^{M_1}$.
\item\label{eq:seo3} There is a Christ-Whitney decomposition of $X\setminus Y$ such that each cube admits an $L_2$-bi-Lipschitz embedding into some $\R^{M_2}$. 
\end{enumerate}
The distortion $L$ and target dimension $M$ of the embedding of $X$ depend only on the doubling constant of $\mu$, $M_1$, $M_2$, and $L_1$, $L_2$.
\end{thm}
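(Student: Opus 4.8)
The plan is to prove the two implications separately; necessity is routine, and sufficiency carries all the weight.

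For necessity, suppose $\iota\colon X\to\R^M$ is $L$-bi-Lipschitz. Then $X$ is doubling with constant depending only on $L$ and $M$, since $\R^M$ is doubling and this property passes to bi-Lipschitz copies; this is \eqref{eq:seo1}. Taking $Y$ to be any nonempty closed subset of $X$ (say a single point), $\iota|_Y$ is an $L$-bi-Lipschitz embedding into $\R^M$, giving \eqref{eq:seo2}. Finally, since $X$ is doubling and $Y$ is closed, Lemma \ref{lem:CWdecomp} produces a Christ-Whitney decomposition $\mathscr{Q}$ of $X\setminus Y$, and for each cube $Q\in\mathscr{Q}$ the restriction $\iota|_Q$ is an $L$-bi-Lipschitz embedding into $\R^M$, giving \eqref{eq:seo3}.

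For the converse, write $\Omega=X\setminus Y$. First I would fix an $L_1$-bi-Lipschitz embedding $f\colon Y\to\R^{M_1}$, rescale it to be $1$-Lipschitz (hence $L_1^{-2}$-bi-Lipschitz from below), and extend it coordinatewise by the McShane formula to a Lipschitz map $F\colon X\to\R^{M_1}$ with $\Lip(F)\lesssim\sqrt{M_1}$. Next, I would choose a Christ-Whitney decomposition $\mathscr{Q}$ of $\Omega$, possibly refining the one furnished by \eqref{eq:seo3} so that its parameter $a$ in Definition \ref{def:ChristWhitney} is large, depending on $L_1$, $M_1$, and the doubling constant: the cubes of such a refinement still admit uniform bi-Lipschitz embeddings into a fixed $\R^{M_2'}$, because each such cube -- and even a controlled enlargement $\widetilde Q\Subset\Omega$ of it -- is contained in a uniformly bounded number of the cubes supplied by \eqref{eq:seo3} (by doubling and the separation built into Definition \ref{def:ChristWhitney}), so finitely many applications of the Lang-Plaut welding theorem (Theorem \ref{thm:welding}) yield a bi-Lipschitz embedding $g_Q\colon\widetilde Q\to\R^{M_2'}$ with $M_2'$ and the distortion depending only on $M_2$, $L_2$, and the doubling constant. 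Rescaling and translating, I may assume $g_Q$ is $1$-Lipschitz, $\gtrsim L_2^{-2}$-bi-Lipschitz from below, and vanishes at the center of $Q$, so $\|g_Q\|_\infty\lesssim\dist(Q,Y)$. I would then multiply (a McShane extension of) $g_Q$ by a Lipschitz cutoff $\phi_Q$ that equals $1$ on a ball containing $Q$ and is supported in $\widetilde Q$, with $\Lip(\phi_Q)\lesssim\dist(Q,Y)^{-1}$, obtaining a uniformly Lipschitz map $\psi_Q=\phi_Q\,g_Q\colon X\to\R^{M_2'}$ supported in $\widetilde Q$. A coloring argument based on doubling (the enlargements $\widetilde Q$ have bounded overlap within each generation, and enlargements meeting a fixed one have comparable size) gives $\chi\colon\mathscr{Q}\to\{1,\dots,K\}$ with $K=K(M_1,M_2,L_1,L_2,\mu)$ under which same-colored enlargements are pairwise disjoint; set $\Psi_c=\sum_{\chi(Q)=c}\psi_Q$, a locally finite (at most one nonzero term per color at each point) and uniformly Lipschitz sum. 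The candidate embedding is $\Phi=(F,\Psi_1,\dots,\Psi_K)\colon X\to\R^{N}$ with $N=M_1+KM_2'$.

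The upper Lipschitz bound for $\Phi$ is immediate from the uniform Lipschitz bounds on $F$ and each $\Psi_c$, so the heart of the proof is the lower bound. Given $x\ne y$ with $d=d(x,y)$, I would compare $d$ with $\rho_x=\dist(x,Y)$ and $\rho_y=\dist(y,Y)$ and argue by cases. If $\max\{\rho_x,\rho_y\}\le\theta d$ for a small threshold $\theta=\theta(M_1,L_1)$, then the nearest points $x',y'\in Y$ satisfy $d(x',y')\ge d/2$, and $|F(x)-F(y)|\ge L_1^{-2}d(x',y')-\Lip(F)(\rho_x+\rho_y)\gtrsim d$ finishes the case via the $F$-coordinate. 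If some point, say $x$, has $\rho_x$ at least a large multiple of $d$, then its cube $Q$ has $\dist(Q,Y)\approx\rho_x\gg d$, so $y$ lies in the ball where $\phi_Q\equiv1$ and $g_Q$ is defined and bi-Lipschitz, whence $|\Psi_{\chi(Q)}(x)-\Psi_{\chi(Q)}(y)|=|g_Q(x)-g_Q(y)|\gtrsim L_2^{-2}d$. \textbf{The main obstacle} is the intermediate regime, in which $\rho_x$ and $\rho_y$ are comparable to $d$ -- possibly with $x$ and $y$ ``shadowing'' nearby points of $Y$ -- where neither $F$ nor a single cube coordinate separates $x$ from $y$ by itself. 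I expect this to require a multiscale telescoping argument in the spirit of Assouad's embedding theorem: organizing a chain of Christ-Whitney cubes joining $x$ to $y$ and extracting the separating displacement at the appropriate scale, exploiting the largeness of the parameter $a$ of the chosen decomposition. Carrying out this case analysis with all constants tracked completes the proof, and the resulting $N$ and $L$ depend only on the data named in the statement.
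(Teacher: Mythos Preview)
The paper does not contain a proof of this statement: Theorem~\ref{thm:seo} is quoted verbatim from \cite{Seo} and \cite{Romney} and is used as a black box in the proof of Theorem~\ref{thm:mainembed2}. There is therefore no ``paper's own proof'' to compare your proposal against.

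As a standalone sketch of the Seo--Romney argument, your outline is broadly faithful to the original: McShane extension of the embedding of $Y$, localized cube embeddings cut off by bump functions, a coloring by bounded overlap, and a case analysis on $\dist(x,Y)$ versus $d(x,y)$. You are right that the intermediate regime is where the real difficulty lies. One point to be careful about: in Seo's argument the coloring is not just over cubes at a single scale but over all scales at once, and the ``far from $Y$'' and ``intermediate'' cases are handled by showing that for \emph{some} cube $Q$ in the decomposition the cutoff $\phi_Q$ is identically $1$ on both $x$ and $y$; this uses the Christ--Whitney geometry (each point of $\Omega$ lies in a cube whose distance to $Y$ is comparable to its diameter, and nearby points at comparable distance to $Y$ lie in adjacent cubes of comparable size). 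The ``multiscale telescoping'' you anticipate is not actually needed in the way it is in Assouad's theorem: the Christ--Whitney structure already picks out the single correct scale. So your sketch is on the right track but overcomplicates the crucial case; the honest gap is that you have not verified that a single cube captures both points in the intermediate regime, which is the heart of Seo's lower-bound argument.
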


\begin{proof}[{Proof of Theorem \ref{thm:mainembed2}}]
It suffices to show that $X$ satisfies the conditions of Theorem \ref{thm:seo} with $Y = \overline{\LL(X)}$. The doubling property \eqref{eq:seo1} in Theorem \ref{thm:seo} is satisfied by assumption. We assume that $\LL(X)$, hence $Y$, admits a bi-Lipschitz embedding into some $\R^{M_1}$, so \eqref{eq:seo2} is assumed to hold in Theorem \ref{thm:main}. It remains to prove \eqref{eq:seo3}. 

By Lemma \ref{lem:CWdecomp} there exists a Christ-Whitney decomposition $\mathscr{Q}$ for some constants $\d\in(0,1)$, $C_1>c_0>0$, and $a\geq 4$. Let $Q\in\mathscr{Q}$ be an arbitrary cube of this decomposition.

The doubling property of $X$ implies that there exists $N\in\N$, depending only on the doubling constant of $X$ and the constants of the Christ-Whitney decomposition, and at most $N$ balls $B_1,\dots,B_n$ with centers on $Q$ and of radius $\frac{1}{3}\dist(Q,Y)$, such that $Q\subset B_1\cup\cdots\cup B_n$. In particular, the balls $B_i$ each satisfy the assumptions of Lemma \ref{lem:Whitneycube} with $\beta=\frac{1}{2}$.

Thus, by Lemma \ref{lem:Whitneycube}, each $B_i$ admits an $L'$-bi-Lipschitz embedding into $\R^{M'}$, where $L'$ and $M'$ depend only on the doubling and bounded turning constants of $X$. By Theorem \ref{thm:welding}, $Q\subseteq B_1\cup\cdots\cup B_n$ admits an $L_2$-bi-Lipschitz embedding into $\R^{M_2}$, where $L_2$ and $M_2$ depend only on the doubling and bounded turning constants of $X$. This verifies condition \eqref{eq:seo3} of Theorem \ref{thm:seo} and completes the proof of Theorem \ref{thm:mainembed2}.
\end{proof}

\bibliography{quasitrees-ref}
\bibliographystyle{amsbeta}

\end{document}